\def\e{\varepsilon}
\def\Om{\Omega}
\def \p{\partial}
\def \0{\mathbf{0}}
\newcommand{\R}{{\mathbb R}}
\newcommand{\N}{\mathbb{N}}
\newcommand{\be}{\begin{equation}}
\newcommand{\ee}{\end{equation}}
\newcommand{\ba}{\begin{array}}
\newcommand{\ea}{\end{array}}
\newtheorem{theorem}{\textbf{Theorem}}[section]
\newtheorem{remark}[theorem]{\textbf{Remark}}
\newtheorem{lemma}[theorem]{\textbf{Lemma}}
\newtheorem{example}[theorem]{\textbf{Example}}
\newtheorem{corollary}[theorem]{\textbf{Corollary}}
\newtheorem{proposition}[theorem]{\textbf{Proposition}}
\newtheorem{definition}[theorem]{\textbf{Definition}}
\providecommand{\customgenericname}{}
\newcommand{\newcustomtheorem}[2]{%
  \newenvironment{#1}[1]
  {%
   \renewcommand\customgenericname{#2}%
   \renewcommand\theinnercustomgeneric{##1}%
   \innercustomgeneric
  }
  {\endinnercustomgeneric}
}
\newtheorem{assumption}{\textbf{Assumption}}
\numberwithin{equation}{section}
\begin{document}
\title[]{The nonlocal Stefan Problem via a Martingale transport}
\author{Raymond Chu, Inwon Kim, Young-Heon Kim, Kyeongsik Nam} 
\begin{abstract}

    We study the nonlocal Stefan problem, where the phase transition is described by  a nonlocal diffusion as well as the change of enthalpy functions. By using a stochastic optimization approach introduced in \cite{kim2021stefan}, we construct global-time weak solutions and give a probabilistic interpretation for the solutions.
    An important ingredient in our analysis is a probabilistic interpretation of the enthalpy and temperature variables in terms of a particle system. Our approach in particular establishes the connection between the parabolic obstacle problem and the Stefan Problem for the nonlocal diffusions. For the melting problem, we show that our temperature-based solution coincides with the enthalpy-based ones studied in \cite{athana_caf2010, del2017uniqueness, del2017distributional,del2021one}, and obtain a new exponential convergence result.
\end{abstract}

\address[Raymond Chu]{Department of Mathematics, UCLA, California}
\email{rchu@math.ucla.edu}

\address[Inwon Kim]{Department of Mathematics, UCLA, California}
\email{ikim@math.ucla.edu}

\address[Young-Heon Kim]{Department of Mathematics, University of British Columbia, Canada}
\email{yhkim@math.ubc.ca}

\address[Kyeongsik Nam]{Department of Mathematical Sciences, KAIST, South Korea}
\email{ksnam@kaist.ac.kr}

\maketitle


\maketitle


\section{Introduction}

The one-phase Stefan problem is a phase transition model between liquid and solid that describes the melting of ice or the freezing of supercooled water \cite{rubinshte71,meirmanov11}. In this model the phase transition is described by a diffusion in the liquid phase and a change in the latent heat between two phases.   Both effects are recorded in the {\it enthalpy} variable $h$.   For the freezing problem, it can be written as 
\begin{equation}  \tag{$St_1$} \label{St1} 
\p_t h + (-\Delta)^s \eta = 0, \quad h = \eta -1\hbox { on } \{\eta>0\}, 
\end{equation}
\noindent where $\eta$ represents the negative temperature of the fluid and $s \in (0,1]$ is fixed. 

The melting problem can be written as 
\begin{equation} \tag{$St_2$} \label{St2}
\p_t h + (-\Delta)^s \eta = 0, \quad h = \eta +1 \hbox { on } \{\eta>0\}.
\end{equation}
where  $\eta$ now  represents the temperature of the fluid.
\medskip

While the classical models consider the local diffusion $s=1$, we allow the diffusion to be either local or nonlocal by setting $s\in (0,1]$. This extension is natural from the viewpoint of applications (for instance see \cite{liu2004exact,voller2014fractional,zhang2017review}).

\medskip

 The nonlinear dynamics in the Stefan problem occurs at the interface $\Gamma= \partial\{\eta>0\}$. Below we will regard the set  $\{\eta>0\}$ as the water/liquid region and $\{\eta=0\}$ as the ice/solid region. We will see that the set  $\{\eta=0\}$ increases in time for \eqref{St1}, and decreases in time for \eqref{St2}. The challenge in studying either  type of Stefan problem lies in the unknown regularity of both $\Gamma$ and the enthalpy variable $h$ across $\Gamma$.

\medskip

While formally \eqref{St1} and \eqref{St2} fully describe the evolution of $h$ and $\eta$, it is challenging to describe the evolution of the problem as a system, especially in the nonlocal case where $h$ does not stay constant in the zero set of $\eta$; the reason being that the underlying particles could jump. The difficulty lies in the description of $h$ on the interface $\partial\{\eta>0\}$.  For the nonlocal case, any type of regularity properties of $\Gamma$  is unknown for $0<s<1$ for either \eqref{St1} or \eqref{St2}, and likely $\Gamma$ may be highly irregular due to the nonlocal diffusion present in the equation. For the local case, \eqref{St1} is well-known to have non-unique evolution with jump-discontinuities \cite{sherman1970general,chayes2012supercooled},  though recently a physically natural notion of minimal-jump solutions was shown to generate unique solutions in one space dimension \cite{delarue}. It is likely that the solutions have very little regularity for the interface \cite{ckk24,misha}. On the other hand, the local version of \eqref{St2} is well-known to generate stable weak solutions with regularizing properties, extensively studied in the literature (see e.g. \cite{acs,choikim,  figalli_stefan, hadzic, meirmanov11}). For the nonlocal case, most available result regarding \eqref{St2} focus on  well-posedness result for the enthalpy-version of the problem (see \eqref{Ste} below): see  \cite{athana_caf2010,athana_caf2022,del2017distributional, del2017uniqueness, del2021one}. Later our analysis will in particular show that our temperature-based solution of $\eqref{St2}$ is equivalent to the enthalpy-based solution of $\eqref{Ste}$, when  $(h-\eta)(0,x)\in \{0,1\}$ (see Theorem~\ref{melting_theorem} ($c$)). We also point out that our notion of weak solutions for $\eqref{Ste}$ is the same as those studied in the aforementioned references.

\medskip

 Our goal in this paper is to characterize the enthalpy variable in both \eqref{St1} and \eqref{St2} with a particle dynamics approach, aiming for a better understanding of enthalpy behavior across the free boundary, in particular in the event of an irregular interface. More precisely, our main result is construction of global-in-time solutions of \eqref{St1} and \eqref{St2} based on the temperature variable $\eta$ (see Theorems~ \ref{meltingtheorem} and \ref{freeze:theorem}). As we will explain, this is a much more delicate task than the local case $s=1$, where $h$ is constant in the zero set of $\eta.$ For \eqref{St2} our solutions are also the enthalpy-based weak solutions in the literature (Theorem~\ref{meltingtheorem}), yet they come with stronger properties. For instance, using the new characterization of enthalpy for \eqref{St2} and a variational approach to the associated particle dynamics, we show that the time integral of the temperature variable $\eta$ in \eqref{St2} solves a parabolic nonlocal obstacle problem (see \eqref{obstacle_p} in Theorem \ref{melting_theorem}). This connection is established for the first time in the nonlocal case $0<s<1$ (for the local case this is a classical and very useful fact, see \cite{duvaut1973resolution, kim2010homogenization}).
Our approach also yields a new quantitative convergence result for the enthalpy variable (Theorem \ref{melting_theorem} ($d$)), answering one of the open questions addressed in \cite{del2021one}.

 \medskip
 
  To achieve our result, we use a particle dynamics interpretation, which was considered in \cite{ghoussoub2019pde} and adapted by  \cite{kim2021stefan} to study the local Stefan problem. We will show that both \eqref{St1} and \eqref{St2}
 can be written in terms of the temperature variable as 
\begin{equation} \tag{$St$} \label{St}
    \p_t \eta +(-\Delta)^{s} \eta = -\rho, \quad  \rho([0,\infty),\cdot)  = \nu(\cdot).
\end{equation} 

Here, while $\eta$ denotes the temperature or the distribution of active heat particles as in the aforementioned problems, $\rho$ is a space-time measure that represents the distribution of \textit{stopped particles} at time $t$, as the particles change their  state from mobile to immobile: see Appendix \ref{measures} for the definition of $\rho([0,t), x)$, which can be formally considered as $\int_0^t \rho(s,x)ds$. 
The measure $\nu$ corresponds to the spatial capacity of the phase transition, or the final accumulated distribution of stopped particles, which represents the final outcome of the phase transition. The particles follow stochastic process $X_t$ corresponding to the fractional Laplacian $(-\Delta)^s$, called $2s$-stable L\'evy process, where particles can jump; see Definition~\ref{stable}. 
 Extending the analysis of \cite{kim2021stefan}, we will rigorously justify this characterization of $\rho$ and $\nu$, using the particle dynamics based on an optimization problem we describe below;  see \eqref{free} and  Theorem~\ref{th:free-target}.\\

To formulate \eqref{St1} and \eqref{St2} in terms of the variables in \eqref{St},  we  will verify that the enthalpy is given by the following formula:
\begin{equation} \label{enthalpy0} \tag{1.1}
   h(t,x) := \begin{cases}
    \eta(t,x) + \rho([0,t),x) - \nu(x) &\text{ for } \eqref{St1}, \\
    \eta(t,x) +  \rho([0,t),x) &\text{ for } \eqref{St2}.
\end{cases} 
\end{equation} 
For \eqref{St2}, $h$ denotes the distribution of total, active and stopped, heat particles. For \eqref{St1}, $h$ also reflects  the  enthalpy loss in the active (liquid) phase. This formula is motivated by the fact that the stopped distribution $\rho$ in \eqref{St} occurs where the moving particles of $\eta$ are stopped by a space-time barrier that has a certain time-monotonicity (corresponding to \eqref{St1} or \eqref{St2}). As we will see in Section~
\ref{sec:target} such a barrier as well as the distribution $\nu$ will be uniquely determined by $\eta$, so that every term in the formula \eqref{enthalpy0} is determined by $\eta$.

 \medskip

  Unlike the local case of the Stefan problem, in the nonlocal case the particles jump,
which generates a significant difference in the relation between $\eta$ and $h$.  In particular, while $h = \eta \pm \chi_{\{\eta>0\}}$ for the local case $s=1$, for the nonlocal case $0<s<1$  the variables $h$ and $\eta$ have  less explicit relation due to nonlocal effects.
Our idea is 
to study properties of the rather singular measure $\rho$ to show that $h$ given by the formula \eqref{enthalpy0} paired with $\eta$ indeed solves \eqref{St1}-\eqref{St2}.  This is a delicate task we carry out in Section \ref{sec:Stefan}, with a careful analysis on the properties of $\rho$. We will discuss more about the differences of the local and non-local case in 
Section \ref{sec 1.1}.

\medskip

 While there are parallel parts in our analysis of the optimization problem compared to the local case $s=1$ in \cite{kim2021stefan}, there are significant challenges that are new for the nonlocal case $0<s<1$. A fundamental difference lies in the associated stochastic process: while Brownian motion for $s=1$ has continuous sample paths, the symmetric  $2s$-stable L\'evy process for $0<s<1$ is a pure jump process with arbitrarily large jump sizes. As a consequence, our enthalpy function is no longer  constant in the solid phase. This makes our particle-based definition of enthalpy more useful even for the stable melting problem  \eqref{St2}.  The jumps also bring interesting new challenges already in early parts of the analysis, as we will discuss in the next subsection: for instance the stopped particles $\rho$ are no longer concentrated on $\Gamma$ (see Figure \ref{fig:Stopped_Active_Region}). 
 This leads to the lack of compactness for the existence property of  {our optimization problem}
 $\mathcal{P}_f(\mu)$ ({see Section \ref{sec 1.1} below for the definition}),   which we overcome with the idea of \emph{compact active region}, see the discussion in Section \ref{sec2}. Combining  this compactness with the results in  \cite{ghoussoub2021optimal} leads to  the well-posedness of $\mathcal{P}_f(\mu)$.

\medskip

 Let us mention that our methods of constructing solutions to \eqref{St} do not rely on special properties of the $2s$-stable processes. The key properties of the  stable process we exploit are that its sample path is c\`adl\`ag,  i.e.
 right continuous with left limits, the infinitesimal generator is  uniformly elliptic, and that the process is transient (see Assumption \ref{assume1}). In particular, we expect that  our methods can be generalized to  other various types of non-local operators and even mixed local and non-local diffusion, e.g. $(-\Delta)^s - \Delta$.  It can also be easily extended to domains in Riemanninan manifolds.

\subsection{Summary of main ideas and results}\label{sec 1.1}

Throughout the paper, we denote by $(X_t)_{t\ge 0}$ the isotropic symmetric  $2s$-stable L\'evy process (see Definition \ref{stable}  for a precise definition).
In what is below much of the terminology and notions are from \cite{kim2021stefan}.
For the optimization problem to be stated below, we consider costs of the form
\begin{equation} \label{Cost} \tag{1.2} \mathcal{C}(\tau) := \mathbb{E} \left[ \int_0^{\tau} L(s,X_s) ds \right], 
\end{equation}
 where $\tau$ denotes a randomized stopping time (see Definition \ref{randomized} for details) and $L(t,x)$ is a  non-negative bounded function that is strictly monotone in time. We  say that $L$ is {\it of Type (I) (resp. Type (II))} if $L$ is strictly increasing (resp. strictly decreasing) in time. 
\medskip

For a non-negative function $f \in L^{\infty}(\R^d)$ and a non-negative, bounded, compactly supported, and absolutely continuous measure $\mu$ on $\R^d$,  we consider the following optimization problem introduced in \cite{kim2021stefan}:
\begin{equation} \label{free} \tag{1.3}
    \mathcal{P}_f(\mu) := \inf_{ (\tau,\nu) } \{ \mathcal{C}(\tau) : X_0 \sim \mu, \ X_{\tau} \sim \nu \text{ and } \nu(\cdot) \leq f(\cdot) \},
\end{equation}
where $\nu \ll \text{Leb}$ and $\nu(\cdot)$ represents its Lebesgue density, and $\nu$ has an active compact region (see Definition \ref{active_region}). 
This is a free-target variant of the more classical problem,  called the optimal Skorokhod problem in the literature, given as 
\begin{equation} \label{Var_Skoro} \tag{1.4} \mathcal{P}_0(\mu,\nu) := \inf_{ \tau \in \mathcal{T}_r(\mu,\nu) } \mathcal{C}(\tau),
\end{equation} 
where  
$\mathcal{T}_r(\mu,\nu)$ is the set of randomized stopping times $\tau$ that satisfies  $ X_0 \sim \mu$ and $ X_{\tau} \sim \nu$, which stays less than or equal to  $\tau_r := \inf\{t \ge 0: X_t \notin B_r(0)\}$. Here, the randomized stopping time is a relaxed version of stopping time by introducing an external randomization, which will be precisely defined in Definition \ref{randomized} later. The notion of a randomized stopping time has appeared in the literature 
\cite{baxter,rost,beiglbock2017optimal}.

\medskip

Let us briefly describe the literature in the local case ($s=1$), i.e. $(X_t)_{t\ge 0}$ is the Brownian motion. 
In this case, it was shown that the optimal stopping time of \eqref{Var_Skoro} can be determined by solving the local parabolic obstacle problem (see \cite{gassiat2015root} for $d=1$, and \cite{ghoussoub2019pde,dweik2021stochastic} for $d>1$). For more general processes, we refer to \cite{ghoussoub2021optimal,gassiat2021free, rost1976skorokhod}.
The connection between the free target problem  $\mathcal{P}_f(\mu)$ and the Stefan problems has been established in \cite{kim2021stefan} for the local case $s=1$.

 \medskip
 
We extend the analysis of \cite{kim2021stefan,ghoussoub2019pde} for the case $s=1$ to construct solutions of the \emph{nonlocal} Stefan problems, i.e. $0<s< 1$,  using the free target problem   $\mathcal{P}_f(\mu)$. Below we will illustrate the similarity and novelty of the results in the nonlocal case.

\medskip

As mentioned above, the presence of unbounded jumps in the stable processes has made the study of its Skorohord embedding problem $\mathcal{P}_0(\mu, \nu)$ in \eqref{Var_Skoro} more challenging compared to that of the Brownian motion. For instance, conditions under which  one can find a stopping time $\tau$ with $\mathbb{E}[\tau]<\infty$ such that $X_0 \sim \mu$ and $X_{\tau} \sim \nu$ is well known for Brownian motion (see for instance \cite{obloj2004skorokhod} or \cite[Theorem 1.5]{ghoussoub2020optimal}). In the case of one dimensional L\'evy processes, necessary and sufficient conditions for $\mathbb{E}[\tau]<\infty$   were derived in the recent paper \cite{doring2019skorokhod}. On the other hand, by using potential theory,  \cite{gassiat2021free} established a connection for general Markov processes between the Skorohord embedding problem and the fractional parabolic obstacle problem for Type (I), though a similar analysis for Type (II) appears to be  much more challenging.  

\medskip

Under a series of compactness assumptions, the Skorohord embedding problem  $\mathcal{P}_0(\mu, \nu)$ was considered for a class of general processes in \cite{ghoussoub2021optimal}, using a dual problem approach. In particular, by restricting ourselves to target measures $\nu$ that have a \emph{compact active region}, we are able to apply their results to characterize the optimal stopping time as a hitting time to  a certain space-time barrier $R$ (with some potential randomization at the initial time for Type (II)). We further proceed to show that the barrier is closed, with the aid of potential theory and elliptic regularity. 

 \medskip
 
We first state the assumptions required for our results. Throughout this paper, we assume that Lagrangian $L$ in the cost function, $f$, and $s$ satisfy certain conditions (see  Assumptions~\ref{assume1}, \ref{assum2}, and \ref{assume3} for details), and the initial measure $\mu$ is given by $\mu_0dx$, where $\mu_0\in L^{\infty}(\R^d)$ has a non-empty and compact support. Also with a slight abuse of notation, we interchangeably refer to a measure and its corresponding Lebesgue density. With these assumptions, we were able to obtain the following result, a nonlocal extension of the
similar result in \cite{kim2021stefan}.

\begin{theorem}[Theorem \ref{barrier_existence}, Theorem \ref{exist_unique_Pf}, Theorem \ref{saturation}, and Theorem \ref{nu_identify}]\label{th:free-target} Once it has an admissible solution, the variational problem
$\mathcal{P}_f(\mu)$ in \eqref{free} has a unique optimal stopping time $\tau$ and an optimal target measure $\nu$ with bounded density. Moreover, 
\begin{itemize}
    \item[(a)] $\tau$ is the hitting time  to  a certain space-time barrier $R$ (with a possible randomization on $\{\tau=0\}$ for Type (II)).
    \item[(b)] $R$ is closed after a modification on a set of space-time measure zero.
    \item[(c)]  $\nu$ is independent of the choice or type of the cost.
    \item[(d)] $\nu = f$ in the set $\{x\in \R^d: (t,x)\notin R \hbox{ for some } t>0$\}.  
    \item[(e)] $\nu = \mu - (-\Delta)^s u$, where $u$ solves the elliptic obstacle problem \eqref{initial_set_expansion}. 
    \end{itemize} \label{Pf_solutions}
\end{theorem}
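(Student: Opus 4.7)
The plan is to split the four assertions into two logically separated tasks: first, existence and uniqueness of the optimal pair $(\tau,\nu)$ together with its barrier structure (parts (a) and (b)), and second, the cost-independence and saturation properties of $\nu$ (parts (c) and (d)). The overall scheme mirrors the free-target argument of \cite{kim2021stefan}, but now the pure-jump nature of $X_t$ forces us to work with the compact active region hypothesis throughout and to replace Brownian regularity arguments by potential-theoretic inputs for the $2s$-stable process.

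I would first establish existence of an optimal target. The set of admissible target measures $\nu$ with $\nu\le f$ and with active region contained in a fixed compact set is weak-$\ast$ compact, and the functional $\nu\mapsto \inf_{\tau\in\mathcal{T}_r(\mu,\nu)}\mathcal{C}(\tau)=\mathcal{P}_0(\mu,\nu)$ is lower semicontinuous along such sequences (by a standard disintegration plus the Portmanteau theorem, using that $L$ is bounded and that the c\`adl\`ag property of $X$ gives tightness of the associated randomized stopping measures). Extracting a weak-$\ast$ limit along a minimizing sequence produces an optimal $\nu^\ast$; uniqueness of $\nu^\ast$ follows from strict convexity of the problem in the target variable, which in turn is a consequence of the strict time-monotonicity of $L$ combined with the monotone-rearrangement idea of \cite{kim2021stefan}: if two optimal targets coexisted, one could interpolate and gain a strictly smaller cost from the Type (I)/Type (II) character of $L$.

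With $\nu^\ast$ in hand, the fixed-target problem $\mathcal{P}_0(\mu,\nu^\ast)$ falls under the hypotheses of \cite{ghoussoub2021optimal} once the compact active region assumption is used to localize the process (a stopping time bounded by the exit time of a large ball). Their duality then yields a space-time barrier $R$ such that the optimal $\tau$ is the hitting time to $R$, with a possible randomization on $\{\tau=0\}$ in the Type (II) case (the Type (I) case has no randomization because increasing $L$ makes early stopping strictly costly). The key obstacle, and the step I expect to spend the most work on, is showing that $R$ is closed up to a null set. I would proceed by identifying $R$ through the associated fractional parabolic obstacle problem: define an obstacle/value function $u$ via the dual variables from \cite{ghoussoub2021optimal}, show it satisfies an obstacle problem of the form $\min\{\partial_t u+(-\Delta)^s u - L, u-\psi\}=0$ with a suitable obstacle $\psi$, then invoke the $C^{1,\alpha}$ regularity of such solutions (available in the nonlocal parabolic literature) to conclude that the coincidence set $\{u=\psi\}$ is closed. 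Modifying $R$ on the measure-zero set of removable points gives the closed version. Here the jumps of $X_t$ create new difficulties: the stopped measure $\rho$ is no longer supported on $\partial\{\eta>0\}$, and accessibility arguments that use continuity of sample paths must be replaced by hitting-time estimates for the $2s$-stable process using potential theoretic estimates (Green function and capacity bounds) to rule out isolated unreached points in the closure of $R$.

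For part (c), once uniqueness of $\nu^\ast$ is established, I would observe that the construction of $\nu^\ast$ is cost-free in the following sense: it is characterized intrinsically by the requirement that $X_\tau\sim\nu$, $\nu\le f$, $\nu$ has a compact active region, and the process has been run long enough that no further mass can be embedded below $f$ without contradicting admissibility. This last property is exactly the saturation statement in (d): on $\{x: (t,x)\notin R\text{ for some }t>0\}$ a particle is not forced to stop before time $t$, so by monotonicity of the barrier (inherited from the time-monotonicity of $L$) the stopping distribution at $x$ has been able to accumulate up to the cap $f(x)$, forcing $\nu^\ast(x)=f(x)$; the reverse inequality $\nu^\ast\le f$ is built into admissibility. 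Since this intrinsic characterization of $\nu^\ast$ depends only on $\mu$, $f$ and the monotonicity type of $L$ but not on its particular profile, two different admissible Lagrangians of the same type must produce the same $\nu^\ast$, giving (c).
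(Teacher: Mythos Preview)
Your overall architecture is reasonable, but several key steps diverge from the paper's actual arguments in ways that matter, and one step has a genuine gap.

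\textbf{Uniqueness and universality.} You propose to get uniqueness of $\nu^\ast$ from ``strict convexity of the problem in the target variable.'' The paper does not claim or prove any such strict convexity; indeed the map $\nu\mapsto\mathcal{P}_0(\mu,\nu)=\mathcal{D}_0(\mu,\nu)$ is merely convex, and strictness is not evident. Instead the paper proves a \emph{monotonicity theorem} (Theorem~\ref{monotonicity}): if $\mu_1\le\mu_2$ and $(\nu_i,\tau_i)$ are optimal for $\mathcal{P}_f(\mu_i)$, then $\tau_1\le\tau_2$ and $\nu_1\le\nu_2$. Uniqueness follows by applying this with $\mu_1=\mu_2$. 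The same monotonicity theorem is what drives (c): universality across costs of possibly different types (Theorem~\ref{universal}) is proved by comparing two optimal pairs via monotonicity, not by any intrinsic ``cost-free'' characterization. Your proposed characterization in (c) at best shows independence within one monotonicity type, and even that is not rigorous as stated. Also, lower semicontinuity in the existence step is obtained in the paper via the duality $\mathcal{P}_0=\mathcal{D}_0$ (the dual being convex in $\nu$ is weakly l.s.c.), not via Portmanteau on the primal side; and the monotonicity theorem is again used to show that the optimizers of the truncated problems $\mathcal{P}_{R,f}$ stabilize in $R$.

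\textbf{Closedness of $R$.} You plan to identify $R$ as the coincidence set of the dual value function $\varphi$ solving an obstacle problem with Lagrangian $L$, and then invoke $C^{1,\alpha}$ parabolic obstacle regularity. The paper takes an entirely different route: it introduces the \emph{potential flow} $U_{\mu_t}(x)=\int N(x-y)\,\mu_t(dy)$ of the stopped process, proves directly that $U_{\mu_t}$ is $C^\alpha$ in space (via elliptic regularity for $(-\Delta)^s U_{\mu_t}=\mu_t\in L^\infty$ with decay) and Lipschitz in time, and defines the closed potential barrier $R^U$ as the coincidence set $\{U_{\mu_t}=U_\nu\}$ (Type~(I)) or $\{U_{\mu_t}=U_\mu\}$ (Type~(II)). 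Closedness is then immediate from continuity of $(t,x)\mapsto U_{\mu_t}(x)$; the substantial work is showing $R=R^U$ a.e.\ (Theorems~\ref{equality_hitting_time_type_1} and \ref{equality_of_barrier}), via the identity $\{\eta>0\}=(R^U)^c$ and, for Type~(I), the uniqueness theorem for Eulerian variables. The obstacle problem that appears in the paper (Theorem~\ref{theorem 3.13}) is for $w=U_{\mu_t}-U_\nu$ with data $\mu,\nu$, not for the dual value function with data $L$; and its role is a consequence, not the mechanism for closedness.

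\textbf{Saturation (d).} Your argument (``the stopping distribution at $x$ has been able to accumulate up to the cap $f(x)$'') is not a proof. The paper's argument (Theorem~\ref{saturation}) is a concrete perturbation: if $|\{\nu^\ast\le f-\delta\}\cap\{f>2\delta\}\cap\{\eta(t,\cdot)>0\}|>0$ for some $t$, then the Eulerian characterization of $\eta$ yields a time $\bar t$ and a positive-probability event $\{\bar t<\tau^\ast,\ X_{\bar t}\in H_\delta\}$; one then constructs a competitor stopping time that halts some of these still-active particles at $\bar t$, strictly lowering the cost while keeping the target below $f$. You should supply this variational step explicitly.
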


A key step for proving $(b)$ in Theorem \ref{Pf_solutions} is to characterize  the set $R$ as a space-time barrier generated by the potentials, which is a coincidence set of the  parabolic obstacle problem for both Type (I) and Type (II), similar to the strategy taken by   \cite{kim2021stefan,gassiat2021free}. 

This characterization of the barrier set would provide a connection between Stefan problems and parabolic obstacle problems as we will see below. In particular, the relation between parabolic obstacle problems for Type (II) and \eqref{Var_Skoro} appears to  be new  for $2s$-stable processes. For Type (I),  $(b)$ still provides an improvement for the result of \cite{gassiat2021free} where the barrier is shown to be only \emph{finely} closed. The saturation property $(d)$ is important: it will be used along with a characteristic function  $f$ to derive a solution of the Stefan problem.
 
\medskip

To proceed with the PDE formulation of the problem, we {use} the {\it Eulerian variables} associated with initial data $\mu$ and a stopping time $\tau$,  introduced in \cite{ghoussoub2019pde}. Namely we denote $\eta$ and $\rho$ to be the distribution of respectively active particles and stopped particles associated to $(\mu,\tau)$ (see Section \ref{Eulerian_variable_section}, in particular Lemma \ref{Eulerian_Variables}, for the full definition). Let us mention that, in terms of the barrier set $R$ in Theorem \ref{Pf_solutions}, $\{\eta>0\}$ coincides with $R^c$ (see Figure \ref{fig:Stopped_Active_Region} and Lemma \ref{open_barrier}).

\begin{figure}
    \centering
    \begin{minipage}{.5\textwidth}
        \centering
        \includegraphics[width=\textwidth]{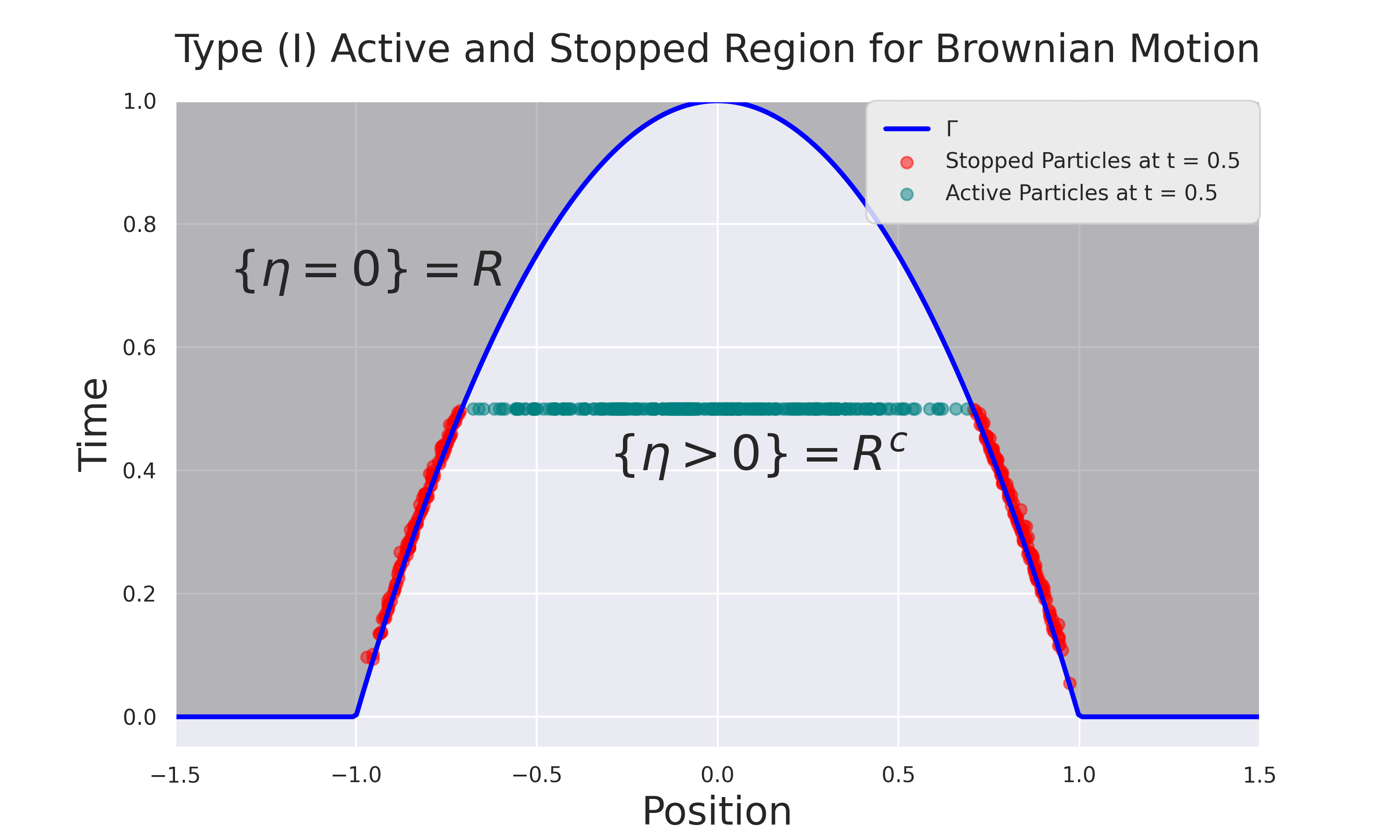}
        \caption*{(a) Brownian motion}
    \end{minipage}%
    \hfill
    \begin{minipage}{.5\textwidth}
        \centering
        \includegraphics[width=\textwidth]{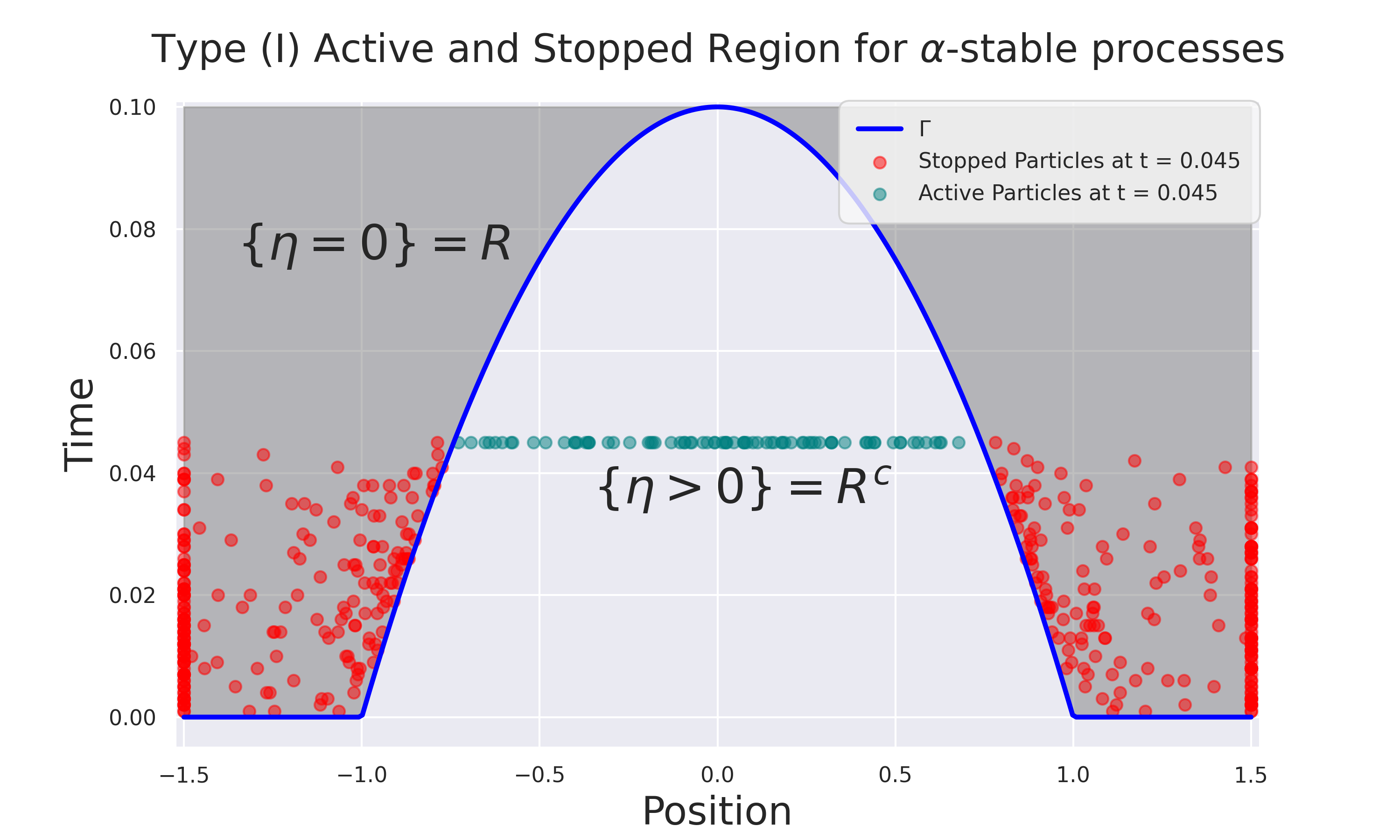}
        \caption*{(b) $\alpha$-stable process}
    \end{minipage}
    \caption{Comparison of the Brownian motion (a) and $\alpha$-stable process (b). Each dot represents $(X_{t \wedge \tau}, t \wedge \tau)$, where $\tau$ is the first entry time to the ice region $R$, and particles are normalized to the maximum norm of 1.5. Observe that $\rho$ is supported on $\Gamma = \partial R$ for the Brownian motion, whereas $\rho$ is supported on $R$ for the $\alpha$-stable processes.}
    \label{fig:Stopped_Active_Region}
\end{figure}

\medskip

Let us first discuss our result for \eqref{St2}.  As mentioned above, this problem has well-posed weak solutions for the enthalpy-based form of the equation: 
\begin{equation} \tag{$St_h$} \label{Ste}
    \p_t h +  (-\Delta)^{s} (h-1)_+ = 0.
\end{equation} 
For the notion of our weak solutions, we refer to Definition \ref{weighted_stefan_def} - \ref{def_St} for \eqref{St2} and to Definition \ref{enthalpy_weighted_def} for  \eqref{Ste}. We will show that our unique weak solution $\eta$ for the temperature-based equation \eqref{St2}  generates the unique weak solution $h$ of \eqref{Ste}. Moreover, thanks to the probabilistic interpretation of $h$ (Theorem \ref{stefan_enthalpy}), we provide a new exponential convergence result for \eqref{Ste}.

\begin{theorem}[Melting Stefan problem \eqref{St2}: Theorem \ref{melting_solve}, Theorem \ref{associated_obstacle_melt}, and Theorem~\ref{stefan_enthalpy}]\label{meltingtheorem}
Suppose that $\mu>1$ on its support. Let $(\nu,\tau)$ be the pair of optimal target measure and optimal stopping time for $\mathcal{P}_f(\mu)$ with $f \equiv 1$ and with a Type (II) cost.  Let $(\eta,\rho)$  be the Eulerian variables associated to $(\mu,\tau)$, and define $h(t,x) := \eta(t,x) + \rho([0,t),x)$. Then the following statements hold:
\begin{itemize}
\item[(a)] The pair $(h,\eta)$ is the unique weak solution to  \eqref{St2} with initial data $h(0,x) = \mu(x)$ and $\eta(0,x) = (\mu(x)-1)_+$, with the property that the set $\cap_{t>0} \{\eta(t,\cdot)>0\}$  coincides with the support of $\mu$. 
\item[(b)]  $w(t,x) := \int_0^t \eta(s,x) ds$  is the unique weak solution to   the parabolic obstacle problem
\begin{equation}\label{obstacle_p} \tag{1.5}
    \min \{ \p_t w + (-\Delta)^s w + 1 - \mu, w \} = 0 \text{ in } (0,\infty) \times \R^d \text{ with } w(0,\cdot)=0. 
    \end{equation}
 \item[(c)]  $h$ is the unique weak solution to \eqref{Ste} with initial data $\mu$. 

 \item[(d)] There exist  constants $\gamma,C>0$ depending only on $\mu,d$ and $s$ such that for any $t \ge 0,$
\[ ||h(t,\cdot)-\nu||_{L^1(\R^d)} \leq Ce^{-\gamma t}.  \]

     \end{itemize} \label{melting_theorem} 

\end{theorem}\vspace{-5mm}


\vspace{0mm}
The key challenge that we face in the nonlocal case is to track the distribution of $\rho$, as mentioned earlier. See Section~\ref{sec:Stefan} for more discussion on this and a comparison to the local case. The connection  to the parabolic obstacle problem (Theorem~\ref{melting_solve})  is new for $0<s<1$: we hope that the regularity analysis of \eqref{obstacle_p} provides a better understanding on \eqref{St2}, see Section~\ref{connection_stefan} for the further discussion on this.

\medskip

Note that our choice of initial data in Theorem~\ref{meltingtheorem} covers the general initial data considered in the literature with no initial {\it mushy region}, namely $\{x\in \R^d: 0<(h-\eta) (0,x)<1\} = \emptyset$. This restriction comes from our approach where we view $h$ as a function of $\eta$ and $\rho$, which does not account the initial mushy region.
 \medskip

For Type (I) cost, while we can address the general initial data $\mu$, the initial domain (or the {\it initial trace}) $E:=\bigcup_{t>0} \{\eta(t,\cdot)>0\}$ cannot be arbitrarily chosen.  Indeed  we show that for a given initial data $\mu$ and a set $G$ that contains $\{x\in \R^d: 0<\mu(x) \leq 1\}$, there is a unique solution $\eta$ of \eqref{St1} that generates $E$ and it matches $G$ as its {\it insulated region}, namely 
$$
\Sigma(\eta):=\bigcap_{t > 0}\{\eta(t,\cdot)>0\} = G.
$$
We will see that $E$ is always larger than the support of $\mu$, and strictly larger in most cases. Hence our solutions go through an instant expansion of its active region at initial time, from which the freezing process starts.

\begin{figure}[ht]
    \centering
    \includegraphics[width=0.40\textwidth]{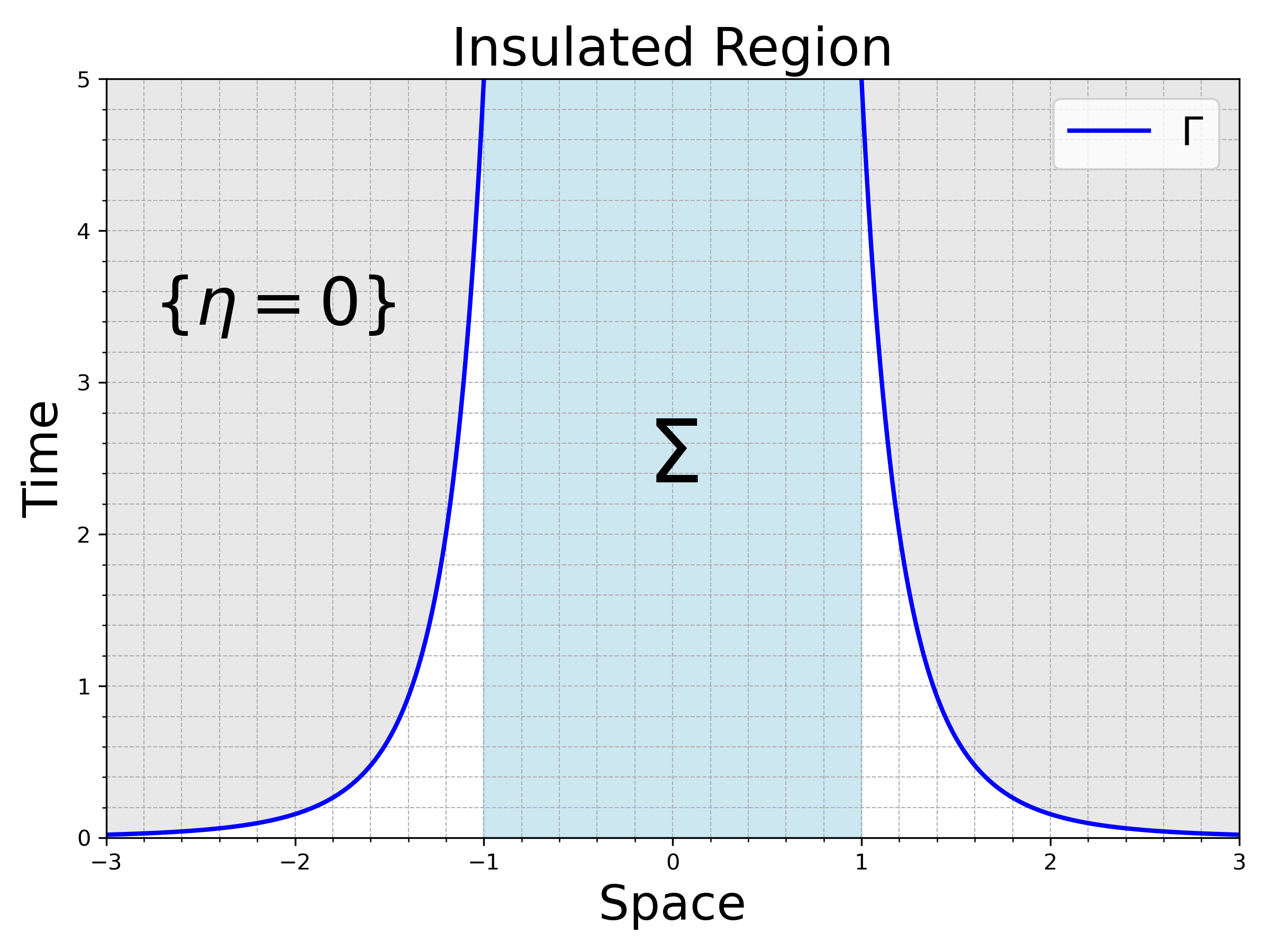}
    \caption{An illustration of the insulated region.}
    \label{fig:insulated_region}
\end{figure}

We now state our well-posedness result for the supercooled nonlocal Stefan problem (see Definition \ref{weighted_stefan_def} -\ref{def_St} for the notion of our weak solutions \eqref{St1}):
\begin{theorem}[Freezing Stefan Problem \eqref{St1}: Theorem \ref{associated_freezing_obstacle}, Theorem \ref{Solve_Insulated}, Theorem \ref{uniqueness_St1}, and Corollary \ref{vanish_finite_time}]\label{freeze:theorem} 
Let $G$ be a bounded open set in $\R^d$ that contains  $\{x\in \R^d: 0 < \mu(x) \leq 1\}$.
Let $(\eta, \rho)$ be as in Theorem \ref{melting_theorem}, but with a Type (I) cost and with $f:= 1-\chi_{G}$. Then the following statements hold.
\begin{itemize}
\item[(a)] The pair $(h,\eta)$ is the unique weak solution of \eqref{St1}  satisfying $\eta(0,\cdot)=\mu$ and $\Sigma(\eta)=G$.
\item[(b)] The initial domain $E:=\cup_{t>0} \{\eta(t,\cdot)>0\}$ is given as a positivity set  of the solution to the elliptic obstacle problem \eqref{initial_set_expansion} with $f = 1-\chi_{G}$. Also, $E$ contains the support of $\mu$.
\item[(c)] $w(t,x):= \int_t^{\infty} \eta(s,x) ds$ solves the parabolic obstacle problem
$$
\min \{ \p_t w + (-\Delta)^s w +\nu  ,w\}=0 \hbox{ in } \R^+ \times \R^d \text{ with } w(0,\cdot) = (-\Delta)^{-s}(\mu-\nu),
$$  where $\nu$ is the optimal target measure for $\mathcal{P}_f(\mu)$ with $f = 1 - \chi_G$. 
\item[(d)] In particular, if $\mu$ is greater than $1$ on its support, then one can choose $G$ to be an empty set and $f\equiv 1$, for which $(h,\eta)$ is the unique weak solution of \eqref{St1} with initial data $\mu$ that  vanishes in a finite time.
\end{itemize} \vspace{-5mm}
\end{theorem}
It is worth pointing out that $G$ can be chosen without any topological assumption, whereas in the local case $G$ must be simply connected. This is because, due to the nonlocal diffusion, particles can jump and stop anywhere outside of the active region. 

\medskip

While we do not pursue showing further stability of this class of solutions, we expect parallel results hold as in the local problem \cite{kim2021stefan}, where stability and comparison principle holds for the corresponding solutions with respect to perturbation of $\mu$ and $\Sigma$.
Lastly we point out that, after completion of our manuscript, a global existence result for general initial measures $0 \le \mu<1$ and initial domains  is  recently achieved for the local Stefan problem \cite{ckk24}. Given this, it is likely that the corresponding result holds for our problem, but we do not explore it further here.

\medskip

{\bf Acknowledgement}  I.K and R.C are partially supported by NSF grant DMS-2153254. Y.K is partially supported by Natural Sciences and Engineering Research Council of Canada (NSERC), Discovery Grant (RGPIN-2019-03926), as well as  Exploration Grant NFRFE-2019-00944 from the New Frontiers in Research Fund (NFRF). Y.K is a member of the Kantorovich Initiative (KI) that is supported by PIMS Research Network (PRN) program. We thank PIMS for their generous support.
K.N is partially supported by  the National Research Foundation of Korea (NRF-2019R1A6A1A10073887, RS-2019-NR040050).  We would like to thank Xavier Ros-Oton and Hugo Panzo for helpful discussions. 
 Part of this work is completed during Y.K’s visit at KAIST, and he thanks KAIST's hospitality and excellent environment.

\section{Preliminaries} \label{sec2}

Let us start with reviewing preliminary results that we will use in this paper.
We first define $\alpha$-stable L\'evy  processes (see \cite{applebaum2009levy} for more details about L\'evy  processes).

\begin{definition} [$\alpha$-stable L\'evy 
 process] \label{stable}
A continuous–time process 
$X= (X_t)_{t\ge 0}$  taking values in $\R^d$ is called a \emph{L\'evy process} if the following holds:
\begin{itemize}
    \item Its sample
paths are right-continuous and have left limits at every time.
\item   It has
stationary and independent increments.
\end{itemize}  
For $0<\alpha<2,$ the  L\'evy  process $X= (X_t)_{t\ge 0}$
 with initial value $X_0=0$ is called  \emph{$\alpha$-stable process} if it satisfies the self-similarity property  
\begin{align*}
 X_t\sim t^{1/\alpha} X_1,\quad \forall t>0.
\end{align*}
 (we say that $X\sim Y$ if $X$ and $Y$ have the same law). Furthermore, the $\alpha$-stable process is called \emph{symmetric} if $X_t\sim -X_t$ for any $t>0.$  
\end{definition}
If $X= (X_t)_{t\ge 0}$ is a symmetric $\alpha$-stable L\'evy  process,  
then there exists a constant $c > 0$ such that for any $\theta\in \R$,
\begin{align}
    \mathbb{E} [e^{i \theta X_t}] = e^{-ct|\theta|^\alpha},\quad \forall t > 0.
\end{align}
We say that  $X= (X_t)_{t\ge 0}$ is an \emph{isotropic} symmetric $\alpha$-stable L\'evy  process if $c=1$. Notice that Brownian motion corresponds to the case $\alpha=2$ and $c=\frac{1}{2}$.

Throughout the paper, we set the parameter $s\in (0,1)$ as $s := \alpha/2$. We work with the isotropic symmetric $2s$-stable L\'evy  process (from now on, we abbreviate it to the  $2s$-stable  process) $X= (X_t)_{t\ge 0}$, where $s$ satisfies 
\begin{align} \label{s}
    \begin{cases} s \in (0,\frac{1}{2}) &\text{ if } d = 1, \\ s \in (0,1) &\text{ else}.
    \end{cases}
\end{align}
The above assumption is due to the fact that $X_t$ is a recurrent process if $d=1$ and $s  \ge  1/2$  (see \cite{bertoin1996levy}). We restrict ourselves to the regime where  $X_t$ is transient because we will use the associated potential (see Section \ref{barrier_property}) to analyze $X_t$ and transisence is a typical assumption when one works with potentials \cite[Remark 3.9]{gassiat2021free}.

\medskip

Now we assume the same assumptions as in \cite{ghoussoub2021optimal} about our probability space. Let   $(\Om,\mathcal{F}, (\mathcal{F}_t)_{t \ge 0}, \mathbb{P})$ be a  filtered probability space  which  supports the  $2s$-stable  process 
$X= (X_t)_{t\ge 0}$ in $\R^d$.
We assume that  $\Om$ is a Polish space and $X : \Om \rightarrow D(\R^+;\R^d)$ (here we set $\R^+ := [0,\infty)$) is a continuous map onto the Skorokhod space of c\`{a}dl\`{a}g paths (i.e. $\{X_t(\omega)\}_{t \in \R+}$ is right continuous with left limits) with respect to the  Skorokhod path space metric. We also  assume that the filtration   $(\mathcal{F}_t)_{t\ge 0}$  is right continuous and complete such that the process $X$ is adapted to the filtration.

\medskip

{For our further connections with a nonlocal Stefan problem, we allow  the law of the initial state $X_0$ to be any finite Radon measure on $\R^d.$ First for any $x\in \R^d$, we denote by $ (X^x_t)_{t\ge 0} $  the  process initially starting from $x$ (i.e. translation of  the  $2s$-stable process by $x$). Now for any finite Radon measure $\mu$ on $\R^d$, $X_0\sim \mu$ means that the law of the process $(X_t)_{t\ge 0}$ is given by $\int  \mathbb{P}( (X^x_t)_{t\ge 0} \in \cdot)  d\mu(x)$}.

\medskip

A random variable $\tau : \Om \rightarrow \R^+$ is called a \emph{stopping time}  if  $\tau^{-1}([0,t]) \in \mathcal{F}_t$ for  any $t \geq 0$. Stopping times can be regarded as a rule that prescribes when each particle of the process stops moving. Also the particles
 are determined 
to stop based 
 only on  the information in the present or past. 
 
\medskip

It turns out that a relaxed notion of the stopping time, called a \textit{randomized stopping time}, is useful to analyze the Skorokhod's embedding problem. Randomized stopping time assigns a probability distribution to each particle on when the particle should stop moving. Let $\mathcal{M}_1(\R^+)$ be the collection of probability measures on $\R^+.$

\begin{definition} [Randomized stopping time]  \label{randomized} (See \cite{baxter,rost,beiglbock2017optimal}.)
A randomized stopping time $\tau : \Om \rightarrow \mathcal{M}_1(\R^+)$ is a measure-valued random variable such that 
for any $\omega \in \Om$,  $\tau_{\omega} := \tau(\omega)$ is a  probability measure on  $\R^+$ and 
for each $t \geq 0$,
\[ \omega \in \Omega \mapsto \tau_{\omega}([0,t]) \text{ is } \mathcal{F}_t\textup{-measurable}. \]
We let  $\mathcal{S}$ 
denote
the collection of randomized stopping times. 
\end{definition}

For a randomized stopping time $\tau$ and $\varphi :\R^+ \to \R,$ we interpret 
\[ \mathbb{E}[\varphi (\tau)] = \mathbb{E} \left[ \int_{\R^+} \varphi(t) \tau(dt) \right]. \]

\subsection{Fractional Subharmonic Ordering}

For $s\in (0,1)$, the fractional Laplacian  $(-\Delta)^{s}$ is a non-local generalization of the standard Laplacian.
For  Schwartz functions $\psi$ of rapid decay,
\begin{equation} (-\Delta)^{s} \psi(x) := C_{d,s} \text{P.V.} \int_{\R^d} \frac{\psi(x)-\psi(y)}{|x-y|^{d+2s}}  dy = C_{d,s} \lim_{\e \rightarrow 0} \int_{\R^d \setminus B_{\e}(x) }  \frac{\psi(x)-\psi(y)}{|x-y|^{d+2s}} dy,
\label{frac_laplacian}  \end{equation} 
where  $|\cdot|$ denotes the $\ell^2$-norm and   the normalizing constant $C_{d,s}$ is chosen so that $(-\Delta)^{s}$ is a Fourier multiplier by $|\xi|^{2s}$ (see \cite[Proposition 3.3]{di2012hitchhiker} for details).
We state the integration by parts formula for the fractional Laplacian: for    Schwartz functions  $\psi$ and $\varphi$,
\[ \int_{\R^d} \varphi \cdot  (-\Delta)^s \psi = \int_{\R^d} (-\Delta)^{\frac{s}{2}} \varphi \cdot (-\Delta)^{\frac{s}{2}} \psi = \int_{\R^d} \psi \cdot (-\Delta)^s \varphi,  \] which can be easily deduced with 
an application of  Plancherel's theorem.

\medskip

The (negation of) fractional Laplacian  $-(-\Delta)^{s}$ is the  infinitesimal generator of the   $2s$-stable   process $X = (X_t)_{t\ge 0}$ (see \cite{applebaum2009levy}):
\begin{align*}
    -(-\Delta)^{s}\psi(x) = \lim_{t\to 0} \frac{\mathbb{E} [\psi(X^x_t)] - \psi(x)}{t} .
\end{align*}

Next, we define  $s$-subharmonic functions. {We denote by $ C_b^2(\R^d)$ the collection of functions $f\in C^2(\R^d)$ such that  $f,\nabla f, \nabla^2 f$ are all bounded.}
\begin{definition}[Fractional subharmonic]  A function $\varphi \in C_b^2(\R^d)$ is called a  \emph{$s$-fractional subharmonic} function if $(-\Delta)^{s} \varphi \leq 0$ on $\R^d$.
\end{definition}

In addition, we define the ordering of measures with respect to the fractional Laplacian $(-\Delta)^{s}$. This notion of ordering is crucially related to  
the existence of a randomized stopping time in the Skorokhod embedding problem.
\begin{definition}[Fractional Subharmonic Ordering] \label{deffso}
Two finite Radon measures $\mu$ and $\nu$  on $\R^d$ are in \emph{$s$-fractional subharmonic ordering} if for any $s$-fractional subharmonic function $\varphi \in C_b^{2}(\R^d)$,  
\[ \int_{\R^d} \varphi(x) \mu(dx) \leq \int_{\R^d} \varphi(x) \nu(dx). \] We denote this relationship as {$\mu \leq_{s\textup{-SH}} \nu$}. Note that this in particular implies $\mu(\R^d)=\nu(\R^d)$.
\end{definition}

\noindent  It is known that the fractional subharmonic ordering ensures the existence of a randomized stopping time (see \cite[Theorem 4.3]{gassiat2021free} and \cite[Theorems 1 and 3]{rost1976skorokhod}).

\begin{proposition}[Theorems 1 and 3 in \cite{rost1976skorokhod}]  \label{prop 2.1}
Assume that $\mu$ and $\nu$ are finite Radon measures on $\R^d$ that satisfy $\mu \leq_{s\textup{-SH}} \nu$. Then,  there exists a randomized stopping time $\tau$ such that $X_0 \sim \mu$ and $X_{\tau} \sim \nu$. In addition,  $\tau$ can be chosen so that it has a ``minimal residual expectation". That is, if $\sigma$ is any randomized stopping time such that  $X_0 \sim \mu$ and $X_{\sigma} \sim \nu$, then  \label{subharmonic_order}
\[ \mathbb{E}[F(\tau)] \leq \mathbb{E}[F(\sigma)] \]
for any non-decreasing convex function $F: \R^+ \to \R$.  
\end{proposition}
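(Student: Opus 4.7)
\medskip
\noindent\textbf{Proof plan.} My approach is to translate the subharmonic ordering into a pointwise inequality between Riesz potentials, construct an embedding by a Rost-style filling scheme, and identify the resulting stopping time with the hitting time of a time-decreasing barrier in order to derive the minimal-residual-expectation bound against arbitrary non-decreasing convex $F$.

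The starting point is potential theory. In the transient regime \eqref{s}, the Riesz potential
$$
U^s\mu(x) := c_{d,s}\int_{\R^d}|x-y|^{2s-d}\,d\mu(y)
$$
is finite quasi-everywhere and satisfies $(-\Delta)^s U^s\mu = \mu$ in the distributional sense. I would first prove that $\mu\leq_{s\textup{-SH}}\nu$ is equivalent, up to a null set, to the pointwise bound $U^s\nu\leq U^s\mu$. The nontrivial direction proceeds by contradiction: if this bound fails on a set of positive Lebesgue measure, then for any non-negative compactly supported smooth $\chi$ one can form $\varphi:=-U^s\chi$, which lies in $C_b^2(\R^d)$ after mollification/truncation and is automatically $s$-subharmonic because $(-\Delta)^s\varphi=-\chi\leq 0$; an integration-by-parts in Fourier space yields $\int\varphi\,d\mu-\int\varphi\,d\nu=\int\chi(U^s\nu-U^s\mu)$, which could be made strictly positive by a suitable choice of $\chi$, contradicting Definition~\ref{deffso}. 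The converse is an immediate consequence of pairing and the fact that every non-positive $(-\Delta)^s\varphi$ arises, up to approximation, in this potential form.

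With the pointwise potential inequality in hand, I would implement the filling scheme. Setting $\mu_0 := \mu$, one iteratively defines a decreasing sequence of ``still-moving'' sub-measures $\mu_n$ by balayage of $\mu_{n-1}$ onto the closed set $\{U^s\mu_{n-1}=U^s\nu\}$, recording at each stage the ``stopped'' mass $\mu_{n-1}-\mu_n$ which contributes to $\nu$; transience together with $U^s\mu_n\searrow U^s\nu$ lets the per-particle stopping instants assemble into a randomized stopping time $\tau\in\mathcal{S}$ with $X_0\sim\mu$ and $X_\tau\sim\nu$. For the minimal-residual-expectation property, I would then show that this $\tau$ can be realized as the hitting time of a time-decreasing closed-in-space set $R\subset\R^+\times\R^d$ (the Rost barrier), defined via the coincidence set of the evolving potentials. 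Given any competitor $\sigma$ with $X_\sigma\sim\nu$, a pathwise exchange argument then compares $\{\tau<\sigma\}$ and $\{\sigma<\tau\}$: on the first event the $\tau$-particle is already inside $R$ while the $\sigma$-particle is not, and vice versa on the second; since $R$ is decreasing in time, one can swap the two stopping decisions and use the non-decreasing convexity of $F$ (via a Jensen-type rearrangement on the induced laws of $\tau\wedge\sigma$ and $\tau\vee\sigma$) to conclude $\mathbb{E}[F(\tau)]\leq\mathbb{E}[F(\sigma)]$. The main obstacle, which does not arise in the Brownian case, is that the $2s$-stable process can cross the barrier by a jump without ever hitting it; this forces one to work with $R$ modulo a set of space-time null measure and to approximate $\tau$ from below by hitting times of slightly enlarged open sets, passing to the limit by right-continuity of the c\`adl\`ag filtration — precisely the nonlocal-jump issue that reappears throughout the paper and is visible in Figure~\ref{fig:Stopped_Active_Region}.
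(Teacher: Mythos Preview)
The paper does not prove this proposition: it is stated as a citation of Theorems~1 and~3 in Rost~\cite{rost1976skorokhod} (see also the remark pointing to \cite[Theorem 4.3]{gassiat2021free}), and no argument is supplied beyond the attribution. So there is no ``paper's own proof'' to compare against; your task here was essentially to reconstruct Rost's classical result.

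That said, your sketch is in the right spirit but has soft spots. The equivalence between $\mu\leq_{s\textup{-SH}}\nu$ and $U^s\nu\leq U^s\mu$ is correct and your test-function argument is the standard one. However, your description of the filling scheme is garbled: one does not balayage $\mu_{n-1}$ \emph{onto} the coincidence set; rather, Rost's scheme alternates applying the semigroup for a short time with peeling off the part of the evolved measure that already lies below $\nu$ (the ``filled'' mass), and the limit of the residuals is shown to vanish by the potential inequality. More seriously, the minimal-residual-expectation claim is not established by a pathwise exchange/Jensen rearrangement of the kind you describe. Rost's argument is potential-theoretic: the filling-scheme stopping time $\tau$ has the property that for every $t\geq 0$ the law of $X_{t\wedge\tau}$ has the smallest potential among all laws of $X_{t\wedge\sigma}$ with $X_\sigma\sim\nu$, and this pointwise-in-$t$ domination of potentials translates (via the excessive-function characterization) into $\mathbb{P}(\tau>t)\leq\mathbb{P}(\sigma>t)$ for all $t$, which is exactly the statement that $\mathbb{E}[F(\tau)]\leq\mathbb{E}[F(\sigma)]$ for every non-decreasing convex $F$. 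Your swap argument between $\{\tau<\sigma\}$ and $\{\sigma<\tau\}$ does not obviously produce an admissible competitor and would need a coupling that you have not constructed. Finally, the jump issue you flag is real for the barrier analysis later in the paper, but Rost's filling scheme is formulated for general standard Markov processes and does not require continuous paths, so no separate approximation by enlarged open sets is needed at this stage.
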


Conversely,  the existence of a randomized stopping time $\tau$ with $\mathbb{E} [\tau] < \infty$ satisfying $X_0 \sim \mu$ and $X_{\tau} \sim \nu$ implies $\mu \leq_{s\textup{-SH}} \nu$.
To see this,   by  It\^{o}'s formula (see Proposition \ref{ito} below), 
for any $s$-fractional subharmonic function $\varphi \in C^2_b(\R^d)$,
\[ \int_{\R^d} \varphi(x) \mu(dx) \leq \int_{\R^d} \varphi(x) \nu(dx). \] This indeed implies that $\mu \leq_{s\textup{-SH}} \nu$. 

\begin{proposition} [It\^{o}'s Formula] \label{ito} 
For any  $\varphi \in C^2_b(\R^d)$  and 
 a randomized stopping time $\tau$ with $\mathbb{E}[\tau]<\infty$,
\[ \mathbb{E}[\varphi(X_{\tau})] =  \mathbb{E}[\varphi(X_0)] + \mathbb{E} \left[ \int_0^{\tau} -(-\Delta)^s \varphi(X_s) ds \right].  \] 
\end{proposition}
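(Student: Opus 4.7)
The plan is to reduce the statement to the classical Dynkin/It\^o formula for the $2s$-stable L\'evy process evaluated at a deterministic time, then extend successively to ordinary stopping times via optional stopping, and finally to randomized stopping times via the standard randomization trick.

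First I would verify that $(-\Delta)^s \varphi \in L^\infty(\R^d)$ whenever $\varphi \in C_b^2(\R^d)$: splitting the principal-value integral in \eqref{frac_laplacian} into $|y|\leq 1$ and $|y|>1$, and using the symmetric second-difference form $\tfrac{1}{2}(\varphi(x+y)+\varphi(x-y)-2\varphi(x))$, the quadratic bound $\|\nabla^2\varphi\|_\infty |y|^2$ handles the singular part (integrable against $|y|^{-d-2s}$ since $s<1$), while $\|\varphi\|_\infty$ controls the tail. Together with the fact that $-(-\Delta)^s$ is the infinitesimal generator of $X$, this yields the well-known martingale identity (see \cite{applebaum2009levy}) that
\[ M_t := \varphi(X_t) - \varphi(X_0) - \int_0^t \bigl(-(-\Delta)^s\varphi\bigr)(X_u)\,du \]
is a right-continuous $(\mathcal{F}_t)$-martingale with $|M_t| \leq 2\|\varphi\|_\infty + t\,\|(-\Delta)^s\varphi\|_\infty$.

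For a non-randomized stopping time $\sigma$ with $\mathbb{E}[\sigma]<\infty$, applying optional stopping to $M_{t\wedge\sigma}$ and passing to the limit $t\to\infty$ by dominated convergence (with dominator $2\|\varphi\|_\infty + \|(-\Delta)^s\varphi\|_\infty\, \sigma \in L^1$) gives $\mathbb{E}[M_\sigma]=0$, which is the claim for $\sigma$. To promote this to a randomized stopping time $\tau$, I would use the standard randomization construction: on an enlarged right-continuous filtered space $(\widetilde\Omega,\widetilde{\mathcal F}_t,\widetilde{\mathbb P}) = (\Omega\times [0,1], \mathcal{F}_t\otimes\sigma(U),\mathbb{P}\otimes du)$ with $U$ uniform on $[0,1]$ and independent of $\mathcal{F}_\infty$, define
\[ \widetilde\tau(\omega,u) := \inf\{t \geq 0 : \tau_\omega([0,t]) \geq u\}. \]
Then $\widetilde\tau$ is a genuine stopping time in the enlarged filtration, $X$ remains a $2s$-stable L\'evy process there, the law of $(X_0, X_{\widetilde\tau})$ coincides with that of $(X_0, X_\tau)$, and $\mathbb{E}_{\widetilde{\mathbb P}}[\widetilde\tau] = \mathbb{E}[\int t\,\tau(dt)] = \mathbb{E}[\tau] < \infty$. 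Applying the stopping-time case to $\widetilde\tau$ and unfolding via Fubini the definitions $\mathbb{E}[\varphi(X_\tau)] = \mathbb{E}[\int\varphi(X_t)\,\tau(dt)]$ and the analogous identity for the running integral yields the claim.

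The only genuinely delicate point is the randomization step: one must invoke the standard lemma that $\widetilde\tau$ is indeed a stopping time with respect to a filtration under which $X$ preserves its law as a $2s$-stable L\'evy process (see \cite{ghoussoub2021optimal}). This is routine because only $\mathcal{F}_0$-information is augmented by the independent variable $U$; once this is in place, the remainder is dominated convergence and the generator theory recalled in Step~1.
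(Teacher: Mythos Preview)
Your proof is correct. The paper does not actually prove this proposition: it simply records it as Dynkin's formula and points to standard references (Le Gall, Kallenberg) for general Feller processes. Your argument is therefore strictly more detailed than the paper's treatment, supplying the $L^\infty$ bound on $(-\Delta)^s\varphi$ for $\varphi\in C_b^2$, the optional stopping with an explicit integrable dominator, and the reduction of the randomized stopping time to an ordinary one via initial enlargement by an independent uniform variable. One small phrasing issue: you write that ``only $\mathcal{F}_0$-information is augmented by $U$,'' but in fact $\sigma(U)$ is adjoined to every $\mathcal{F}_t$; the point that matters (and that you correctly use) is that $U$ is independent of $\mathcal{F}_\infty$, so $X$ retains its L\'evy property and $M_t$ remains a martingale in the enlarged filtration.
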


\begin{remark} We remark that the above It\^{o}'s formula is more commonly referred to as Dynkin's formula in the literature and that it holds for more general Feller processes (see \cite[Exercise 6.29]{le2016brownian} or \cite[Lemma 19.21]{kallenberg1997foundations}).
\end{remark}

\section{Optimal Skorokhod problem with compact active region}

In this section, we introduce precise settings for the variational problem \eqref{Var_Skoro}, where one of the notable new components lies in considering the particular class of  stopping times $\tau$.  In the nonlocal case $0<s<1$,  the randomized stopping time $\tau$ from Proposition \ref{prop 2.1} could satisfy $\mathbb{E}[\tau] = \infty$.  We refer to \cite{doring2019skorokhod} for discussions of necessary and sufficient conditions for the integrability  of $\tau$ in the particular case $d=1$. As we will see in Section \ref{Eulerian_variable_section}, we  need a stronger condition  $\mathbb{E}[\exp(\gamma \tau)]<\infty$ ($\gamma>0$ is some constant) to obtain enough compactness in the time variable of the space-time process $(t \wedge \tau,X_{t \wedge \tau}  )$.

\medskip

These observations motivate us to introduce the following class of stopping times:

\begin{definition}[Compact Active Region]\label{active_region} For  a randomized stopping time $\tau$, we say that a pair $(X,\tau)$, which induces a  process $(X_{t \wedge \tau})_{t\ge 0}$, has a \emph{compact active region}  if for some $r>0$,
\begin{align} \label{stopping ball}
    \tau \leq \tau_r:=  \inf \{t \ge 0 : X_t \notin B_r(0)\},
\end{align}
where $B_r(0) :=\{x \in \R^d: |x|<r \}$.
We interchangeably say that $\nu$ has a compact active region  if there exists $r>0$ and a  {randomized}
stopping time $\tau\le \tau_r$ such that  $X_{\tau} \sim \nu$. For such $r$, we say that $(X,\tau)$ or $\nu$ has a compact active region in $B_r(0)$. 
\end{definition}

Intuitively speaking, $\tau\le \tau_r$ implies that particles are only allowed to actively move from inside $B_r(0)$, and even though they are allowed to jump to outside of the ball they  must stop  immediately once  they  arrived  outside $B_r(0)$.

\begin{remark}[Exponential Moments of $\tau_r$]\label{exp_moments_exit_time}
The condition $\tau \leq \tau_r$ implies that $\tau$ not only has a  finite expectation but also has finite exponential moments, as long as $\mu$ is a finite measure. Indeed, by \cite[page 82]{bogdan2009potential},  if $\lambda > 0$ denotes the principal eigenvalue of the Dirichlet problem
\begin{equation}  \label{poincare}
\begin{cases}  (-\Delta)^{s} u = \lambda u &\textup{ in } B_r(0),
\\ u = 0 &\textup{ on } B_r(0)^c,
\end{cases} 
\end{equation} then 
\begin{equation} \lambda = -\lim_{t \rightarrow \infty} \frac{1}{t} \log\mathbb{P}(\tau_r > t | X_0 = x),\quad  \forall x \in B_r(0). \label{tail_tau_r} \end{equation}
Also, $\mathbb{P}(\tau_r = 0 | X_{0}=x)=1$ if $x \notin B_r(0)$. Therefore, for any $\gamma \in (0, \lambda)$, 
\[ \mathbb{E}[ \exp(\gamma \tau_r) ] = \int_{\R^d} \mathbb{E}[ \exp(\gamma \tau_r) | X_0=x] d\mu(x) \leq \int_{\R^d} \mathbb{E}[ \exp(\gamma \tau_r) | X_0=0] d\mu(x)  < \infty.   \]  We refer the reader to Appendix \ref{Hs_review} for more information about $\lambda$ and its relation to a Poincare's inequality for the fractional Laplacian.

\end{remark}

\begin{remark}[Fractional Subharmonic Ordering in $B_r(0)$] For $\mu \leq_{s\textup{-SH}} \nu$, if $\nu$ has a compact active region in $B_r(0)$, then   for any $\varphi \in C^2_b(B_r(0)) \cap C_b(\R^d)$ such that $(-\Delta)^s  \varphi \leq 0$ in $B_r(0)$,
\[ \int_{\R^d} \varphi(x) \mu(dx) \leq \int_{\R^d} \varphi(x) \nu(dx), \] due to It\^{o}'s Formula. This is a stronger condition than a fractional sub-harmonic ordering in the sense of Definition  \ref{deffso}, since fractional sub-harmonic functions on $B_r(0)$ are not necessarily fractional sub-harmonic on $\R^d$.
   
\end{remark}

Next, we show that the target measure $\nu$ associated with stopping times having a compact active region possesses a nice decay property.  In the  local case, the compact active region condition is equivalent to the target measure being compactly supported.  Let us first  consider the following example, showing that the distribution of $X_{\tau_r}$  (see \eqref{stopping ball} for the definition of $\tau_r$) is not compactly supported, though it obviously has a compact active region $B_r(0)$.

\begin{example} \label{exit_ball_density}
 For $s\in (0,1)$ satisfying  the condition \eqref{s}, let $r>0$ and $X = (X^x_t)_{t\ge 0}$ be a $2s$-stable process with  $X_0 \sim \delta_x$ and  $|x| < r$. Define the exit time $\tau_r^x := \inf \{t \geq 0: X_t^x \notin B_r(0) \}$. Then the distribution of $X_{\tau^x_r}$ is absolutely continuous with respect to the Lebesgue measure on $\R^d$, whose  density   is given by (see \cite{blumenthal1961distribution})
 \begin{align} \label{hm ball}
     \nu_r^x(y) = C_{r,s,d} \left( \frac{r^2-|x|^2}{|y|^2-r^2} \right)^{s} \frac{1}{|x-y|^d} \chi_{  \{|y| > r\}}.
 \end{align}
  In addition, if $X_0 \sim \mu$, where $\mu$ is bounded and has a compact  support contained in $B_{r/2}(0)$, then the density $\nu_r$ of $X_{\tau_r}$ is given by
\begin{equation}
	\nu_r(y) = \int_{\R^d} \nu_r^x(y) d\mu(x) \leq C  \frac{1}{ (|y|^2-r^2)^{s}}   \frac{1}{(|y|-\frac{r}{2})^d }   \chi_{ \{|y| > r\} } \in L^1(\R^d).\label{nu_R_bound}
\end{equation} 
This in particular implies
\begin{align} \label{decay}
    \nu_r(y) = O( |y|^{-2s-d}), \quad |y|\to \infty.
\end{align}  
\end{example}

Now, we state a decay property on target measures having a compact active region in $B_r(0)$.

\begin{lemma} 
Let $X = (X_t)_{t\ge 0}$ be a $2s$-stable process and $\tau$ be a randomized stopping time which has a compact active region in $B_r(0)$ (i.e. $\tau \le \tau_r$).  Let $X_{\tau} \sim \nu$, then $\nu \leq \nu_r$ on  $B_r(0)^c$, where $\nu_r$ denotes the distribution of $X_{\tau_r}$ from Example \ref{exit_ball_density}. \label{nu_decay}
\end{lemma}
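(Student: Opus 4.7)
The plan is to test the measure $\nu$ against indicators of Borel sets $A\subset B_r(0)^c$ and exploit the fact that, by the c\`adl\`ag property of $X$, the only way the process can be in $A$ at the stopping time is by jumping out of $B_r(0)$ at time exactly $\tau_r$.

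First I would fix the meaning of $\tau\le \tau_r$ for a randomized stopping time: namely, for $\mathbb{P}$-a.e.\ $\omega$ the probability measure $\tau_\omega$ on $\R^+$ is supported in $[0,\tau_r(\omega)]$. In particular $\tau_\omega(\{\tau_r(\omega)\})\le 1$. Recall also that for a bounded Borel function $\varphi$ the law $\nu$ of $X_\tau$ is characterized by
\begin{equation*}
\int_{\R^d}\varphi\,d\nu \;=\; \mathbb{E}\!\left[\int_{\R^+}\varphi(X_t)\,\tau_\omega(dt)\right].
\end{equation*}

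Next I would use the c\`adl\`ag property of the $2s$-stable process together with the definition $\tau_r=\inf\{t\ge 0:X_t\notin B_r(0)\}$: this gives $X_t\in B_r(0)$ for every $t\in[0,\tau_r(\omega))$ and every $\omega$. Now take any Borel set $A\subset B_r(0)^c$ and plug $\varphi=\chi_A$ into the formula above. Since $\tau_\omega$ is supported in $[0,\tau_r(\omega)]$ and $X_t\notin A$ for $t<\tau_r(\omega)$, the integrand vanishes except possibly at the single atom $t=\tau_r(\omega)$, so
\begin{equation*}
\nu(A) \;=\; \mathbb{E}\!\left[\chi_A(X_{\tau_r})\,\tau_\omega(\{\tau_r\})\right] \;\le\; \mathbb{E}[\chi_A(X_{\tau_r})] \;=\; \nu_r(A).
\end{equation*}
Since $A\subset B_r(0)^c$ was arbitrary, this gives $\nu\le \nu_r$ on $B_r(0)^c$, as desired.

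The argument is essentially a one-line computation once the bookkeeping is set up, so the only genuine subtlety is making sure the randomized-stopping-time formalism is used correctly: specifically, interpreting $\tau\le\tau_r$ as a statement about the support of $\tau_\omega$, and using the c\`adl\`ag sample paths to conclude $X_t\in B_r(0)$ for all $t<\tau_r(\omega)$ so that only the atom at $\tau_r$ contributes. No potential theory or compactness is needed; the estimate \eqref{nu_R_bound} from Example~\ref{exit_ball_density} then transfers automatically to $\nu$, which will be used later in the paper.
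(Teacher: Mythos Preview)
Your proof is correct and follows essentially the same idea as the paper's: for any Borel $A\subset B_r(0)^c$, the only way $X$ can be in $A$ at the stopping time is to be there at $t=\tau_r$, so $\nu(A)\le\nu_r(A)$. The paper phrases this as the event inclusion $\{X_\tau\in A\}\subset\{X_{\tau_r}\in A\}$ (implicitly treating $\tau$ as a genuine stopping time on an enlarged space), whereas you unwind the randomized-stopping-time integral explicitly and bound the atom $\tau_\omega(\{\tau_r\})\le 1$; the two arguments are the same once translated.
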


\begin{proof} 
Let $A \subset B_r(0)^c$ be any Borel set. Under the event $X_{\tau} \in A$,  by the definition of $\tau_r$, we have $\tau\geq\tau_r$, which together with the condition $\tau\leq\tau_r$ implies  $\tau=\tau_r$. Hence, $\{\omega: X_{\tau} \in A\} \subset \{ \omega : X_{\tau_r} \in A\},$  which implies the desired bound.
\end{proof}

From now on, we work with measures that are absolutely continuous with respect to the Lebesgue measure and we will abuse notation by using the same symbol for both the measure and density.
For $M,R>0$ and a finite Radon measure $\mu$ on $\R^d$, define
\begin{equation} \mathcal{A}_{\mu,M,R} := \{ \nu \in \mathcal{M}(\R^d)  : \mu \leq_{s\textup{-SH}} \nu, \  \nu \text{ has a compact active region in } B_R(0) \text{ and } \nu \leq M  \} \label{A_mu_e_R}\end{equation}
($\nu \le M$ means that $\nu$ has a density w.r.t. Lebesgue measure, which  is  bounded by $M$).

 \begin{lemma}  
 Let $M>0$ and $\mu$ be a  compactly supported  measure on $\R^d$ such that  $\mu \in L^1(\R^d)$. Then, $\mathcal{A}_{\mu,M,R} \neq \emptyset$ for large enough $R>0$ (depending only on $M$, the support of $\mu$, and the $L^1$ norm of $\mu$). In particular, there exists $\nu \in \mathcal{A}_{\mu,M,R}$ such that $\nu$ is supported on $B_R(0)^c$ for large enough $R>0$. \label{non_empty_P_f}
 \end{lemma}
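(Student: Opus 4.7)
The plan is to take $\nu$ to be the exit distribution $\nu_R$ from Example \ref{exit_ball_density}, namely the law of $X_{\tau_R}$ when $X_0\sim\mu$. This choice is natural because it immediately delivers two of the three defining properties of $\mathcal{A}_{\mu,M,R}$ together with the ``in particular'' support condition: by definition of $\tau_R$, the random variable $X_{\tau_R}$ lies in $B_R(0)^c$ almost surely, so $\nu_R$ is supported on $B_R(0)^c$; and the (non-randomized) stopping time $\tau=\tau_R$ trivially satisfies $\tau\le\tau_R$, so $\nu_R$ has a compact active region in $B_R(0)$. Thus only the subharmonic ordering and the density bound remain to be checked, and it is the latter that will dictate how large $R$ must be chosen.

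For the subharmonic ordering $\mu\le_{s\textup{-SH}}\nu_R$, I would apply It\^o's formula (Proposition \ref{ito}), which is available because $\mathbb{E}[\tau_R]<\infty$ by Remark \ref{exp_moments_exit_time} (here the compactness of the support of $\mu$ and finiteness of its mass enter, so that the exponential moment bound applies uniformly in the initial condition). For any bounded $s$-subharmonic test function $\varphi\in C_b^2(\R^d)$,
\[
\int\varphi\,d\nu_R-\int\varphi\,d\mu \;=\; -\mathbb{E}\!\left[\int_0^{\tau_R}(-\Delta)^s\varphi(X_s)\,ds\right]\;\ge\;0,
\]
which is exactly $\mu\le_{s\textup{-SH}}\nu_R$.

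For the density bound $\nu_R\le M$, I would invoke the explicit estimate \eqref{nu_R_bound} from Example \ref{exit_ball_density}, which after fixing $R$ large enough so that $\mathrm{supp}(\mu)\subset B_{R/2}(0)$ gives
\[
\nu_R(y) \;\lesssim\; \|\mu\|_\infty\,(|y|^2-R^2)^{-s}(|y|-R/2)^{-d}\chi_{\{|y|>R\}}.
\]
On the bulk region $\{|y|\ge cR\}$ for any fixed $c>1$, the right-hand side is of order $R^{-(2s+d)}$, and hence can be made $\le M$ by choosing $R$ sufficiently large as a function of $M$, $\|\mu\|_1$ (or $\|\mu\|_\infty$), and the support of $\mu$.

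The main obstacle is the boundary layer $\{R<|y|<cR\}$, where the Poisson kernel carries an integrable but unbounded singularity of order $(|y|-R)^{-s}$. Controlling $\nu_R$ uniformly here is the delicate point; I expect the argument to either (a) absorb the singular contribution into the quantitative choice of $R$ via a more careful use of the hypothesis that $\mu$ is supported strictly inside $B_{R/2}(0)$ and is in $L^1$, or (b) slightly refine the construction of $\nu$ (for instance, composing $\tau_R$ with an independent randomization that redistributes mass concentrated near $\partial B_R$) so that the target density is bounded while preserving support in $B_R(0)^c$. This boundary-layer analysis is the crux of the proof; the remaining ingredients are essentially It\^o's formula and the Poisson-kernel bound already recorded in Example \ref{exit_ball_density}.
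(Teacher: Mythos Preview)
Your overall strategy is right, and you have correctly identified the crux: the boundary singularity of the Poisson kernel. But your option (a) is a dead end, not just a delicate point. The formula \eqref{hm ball} shows that for every fixed $R$ (and every $x$ in the support of $\mu$) the density $\nu_R^x(y)$ blows up like $(|y|-R)^{-s}$ as $|y|\to R^+$; hence $\nu_R(y)=\int \nu_R^x(y)\,d\mu(x)$ is unbounded near $\partial B_R(0)$ no matter how large $R$ is, and the requirement $\nu\le M$ can never be met by the raw exit distribution.

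The paper's proof is exactly your option (b), with a specific and clean randomization: instead of exiting a single ball, one randomizes the \emph{radius}. Let $U$ be uniform on $[R,2R]$, independent of the process, and set $\tilde\tau:=\tau_U$. Then $X_{\tilde\tau}\sim \tilde\nu_R:=\frac{1}{R}\int_R^{2R}\nu_r\,dr$. Averaging the singular factor $(|y|^2-r^2)^{-s}$ over $r\in[R,|y|]$ integrates out the blow-up: one gets $\tilde\nu_R^x(y)\le C R^{-d}$ uniformly in $y$ and in $|x|\le R/2$ (splitting into $R\le|y|\le 4R$ and $|y|\ge 4R$), and then $\tilde\nu_R(y)\le C R^{-d}\|\mu\|_{L^1}\le M$ for $R$ large. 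The subharmonic ordering follows by averaging the ordering $\delta_x\le_{s\text{-SH}}\nu_r^x$ over $r$ and then over $\mu$ (equivalently, It\^o as you wrote). The active region is now $B_{2R}(0)$ since $\tilde\tau\le\tau_{2R}$, and $\tilde\nu_R$ is supported on $\{|y|>R\}$. A minor remark: the bound \eqref{nu_R_bound} you quote is an $L^1$-type estimate obtained by integrating against $\mu$, so the relevant constant is $\|\mu\|_{L^1}$, not $\|\mu\|_{L^\infty}$, in line with the hypotheses of the lemma.
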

 In the case of Brownian motion starting from the origin, one can construct an element of $\mathcal{A}_{\mu,M, R}$ using the fact that $\delta_0$ is in subharmonic order with the uniform probability measure on any annuli centered at 0. We extend this argument to the case of  $2s$-stable processes.
 
 \begin{proof}

 First let us consider the case when  $\mu=\delta_x$ for some $x\in \R^d$. 
 Let $R>0$ such that $|x| \leq R/2$.
By It\^{o}'s formula,  for any $s$-fractional subharmonic function $\varphi \in C^2_b(\R^d)$ and $r\ge R,$
\begin{equation} \label{FSO_1} 
\varphi(x) \leq \mathbb{E}[\varphi(X_{\tau_r^x})] =  \int_{\R^d}  \varphi(y) \nu_r^x(y) dy
\end{equation} 
(recall that $\nu_r^x$ is defined in \eqref{hm ball}).
By averaging in $r$ over $[R,2R],$
\begin{equation}\label{FSO_2} 
\varphi(x) \leq 
\frac{1}{R}\int_{\R^d}  \int_{R}^{2R}  \varphi(y) \nu_r^x(y) dr dy.
\end{equation}
Hence, using the formula \eqref{hm ball}, setting
\begin{equation} \tilde{\nu}_R^x(y)  : = \frac{1}{R} \int_{R}^{2R} \nu_r^x(y) dr=\frac{C_{s,d}}{R} \int_{R}^{2R} \left( \frac{r^2-|x|^2}{|y|^2-r^2} \right)^{s} \frac{1}{|x-y|^d} \chi_{\{|y|>r\} }  dr, \label{tilde_nu_x} \end{equation} 
we have $\delta_x  \leq_{s\textup{-SH}} \tilde{\nu}_R^x$. 

~
We show that for any $\e>0$,  $\tilde{\nu}_R^x \leq \e$  for large enough $R>0$. Let us divide into the cases  $|y| \geq 4R$ and $R \leq |y| \leq 4R$ (by \eqref{tilde_nu_x}, $\tilde{\nu}^x ( y)=0$ for $|y| < R$).
First when $|y| \geq 4R$, using the fact $|x-y|^d \geq ||y|-|x||^d \geq 3^dR^d$ and $|y|^2-r^2 \geq 12R^2$ for $r \in [R,2R]$,
we see from \eqref{tilde_nu_x} that
\begin{equation} \sup_{|y| \geq 4R} \tilde{\nu}_R^x(y) \leq C \frac{1}{R} \int_{R}^{2R}  \frac{R^{2s}}{ R^{2s+d} } dr = C R^{-d}. \label{nu_large_y}  \end{equation}  
In addition when $R \le |y| \leq 4R$, using $|x-y|^d \geq | |y|-|x| |^d \geq R^d/2^d$, 
\begin{align}\label{nu_small_y}
    \tilde{\nu}_R^x(y) &\leq \frac{C}{R^{d+1} }  \int_{R}^{2R} \frac{R^{2s}}{ (|y|^2-r^2)^{s} } \chi_{  \{|y|> r\}}  dr \leq \frac{C}{R^{d+1}} \int_{R}^{|y|} \frac{R^{2s}}{ (|y|-r)^{s} (|y|+r)^{s} }  dr \nonumber \\& \leq  \frac{C}{R^{d+1-s}} \int_{R}^{|y|} \frac{1}{ (|y|-r)^{s} } dr = \frac{C}{R^{d+1-s}} (|y|-R)^{1-s}  \le CR^{-d}.
\end{align}
Therefore, by \eqref{nu_large_y} and \eqref{nu_small_y}, we conclude the following: 
\begin{equation}\label{claim_1}
\hbox{For any }\e>0, \quad \tilde{\nu}_R^x(y) \leq \e \hbox{ for } |x|\leq R/2, \hbox{ if } R\hbox{ is sufficiently large.} 
\end{equation}

\medskip

We now extend this to general compactly supported finite measures $\mu$.  Let $R>0$ be such that $\text{supp}(\mu) \subset B_{R/2}(0)$. Then,  by integrating \eqref{FSO_2} with respect to $\mu$, we deduce $\mu \leq_{s\textup{-SH}} \tilde{\nu}_R$ with  
\begin{equation} \tilde{\nu}_R(y)  := \int_{\R^d} \tilde{\nu}_R^x(y) d\mu(x)=  \frac{1}{R} \int_{R}^{2R} \int_{\R^d} \nu_r^x(y) d\mu(x) dr.\label{tilde_nu} \end{equation}
From \eqref{claim_1}, for any $\e>0$, we have $\tilde{\nu}_R(y) \leq\e ||\mu||_{L^1}$ for large enough $R>0.$ This yields $\tilde{\nu}_R \le M$  by taking  small $\e>0$. 

\medskip

Finally, we show that $\tilde{\nu}_R$ has a compact active region in $B_{2R}(0)$. Indeed, let $\tilde{\tau}$ be a randomized stopping time such that 
\begin{align} \label{uniform}
    \tilde{\tau} \sim   \tau_U,
\end{align}
where $U$ denotes the uniform distribution on $[R,2R]$ independent of the process.  Then we have $X_{\tilde{\tau}} \sim \tilde{\nu}_R$, since
for any Borel  $A \subset \R^d$,
\[ \mathbb{P}( X_{\tilde{\tau}} \in A) = \frac{1}{R} \int_{R}^{2R} \mathbb{P}(X_{\tilde{\tau}} \in A \mid  U=r) dr = \frac{1}{R} \int_{R}^{2R} \nu_r[A] dr \overset{\eqref{tilde_nu}}{=} \tilde{\nu}_R[A] . \] 
Since $\tilde{\tau} \leq \tau_{2R}$ by \eqref{uniform}, we  conclude the proof.

 \end{proof}

\subsection{Assumptions} In this section, we state our assumptions in the  optimal Skorokhod's embedding problem \eqref{Var_Skoro}. 
\begin{assumption} \label{assume1} We assume that $s \in (0,1)$ for $d \geq 2$ and $s \in (0,1/2)$ for $d=1$. This condition ensures that the $2s$-stable process  $X = (X_t)_{t\ge 0}$ is transient. 

In addition, the measures $\mu,\nu\in \mathcal{M}(\R^d)$ satisfy 
\begin{enumerate}
\item The source measure is non-trivial i.e. $\mu(\R^d)>0$.
    \item  $\mu \leq_{s\textup{-SH}} \nu$.
    \item   
$\mu$ and $\nu$  are absolutely continuous  with respect to the Lebesgue measure, whose densities satisfy $\mu \in L^\infty(\R^d)$ and $\nu \in L^{\infty}(\R^d)$.
\item  There exists $r>0$ such that $\mu$ is supported in $  B_r(0) $ and {$\nu$ has a compact active region in $ B_r(0) $}.

\end{enumerate}
\end{assumption}
By Proposition \ref{prop 2.1}, there exists a randomized stopping time $\tau$ such that $X_0\sim \mu$ and $X_\tau \sim \nu$. The last condition above implies that such $\tau $ satisfying $\tau \le \tau_r$ exists. Then by Remark \ref{exp_moments_exit_time}, $\tau$ has finite exponential moments. 

\medskip

The following is the assumption on our cost functional.
\begin{assumption} \label{assum2}
The Lagrangian $L$ satisfies
\begin{enumerate}
    \item $L \in C_b(\R^+ \times \R^d)$ is non-negative with $\p_t L \in C_b(\R^+ \times \R^d)$.
    \item $L$ is strictly monotone in time. We say that $L$ is of \emph{Type (I)} if $t \mapsto L(t,x) $ is strictly increasing for any $x$, and is of \emph{Type (II)} if it is strictly decreasing  for any $x$. 
\end{enumerate} 
\end{assumption}

Finally, we induce a constraint on the upper bound $f$ in the variational problem $\mathcal{P}_f(\mu)$ to ensure that the optimization set of $\mathcal{P}_f(\mu)$ is non-empty:

\begin{assumption} \label{assume3} We assume that $f \in L^{\infty}(\R^d)$ and  there exists $\delta>0$ and a compact set $K \subseteq \R^d$ such that $f(x) \geq \delta$ for $x \notin K$.
\end{assumption}

By Lemma \ref{non_empty_P_f},  the optimization set for the variational problem $\mathcal{P}_f(\mu)$ is non-empty under Assumption \ref{assume3}. 

\medskip

For the pair of measures $(\mu,\nu)$ and the Lagrangian $L$ satisfying Assumptions \ref{assume1} and \ref{assum2}, the existence and uniqueness of the optimal stopping time for the variational problem \eqref{Var_Skoro} follows from \cite{ghoussoub2021optimal}.

\subsection{Eulerian Variables} 
\label{Eulerian_variable_section}

In this section, we introduce {\it Eulerian variables} $(\eta, \rho)$ that correspond to the optimal Skorokhod problem \eqref{Var_Skoro}. These play a central role in connecting our problem with the PDE formulation later. The local case $s=1$ was 
 introduced in  \cite{ghoussoub2019pde}. While our analysis largely follow the local case, we present the full proof for the nonlocal case due to the subtle differences  of 
function spaces caused by jumps in the process.

\medskip

 Let   $(\mu,\nu)$ be a pair satisfying Assumption \ref{assume1}  and  $X = (X_t)_{t \geq 0}$ be the   $2s$-stable  process   with $X_0 \sim \mu$. From now on, we regard $r>0$ from Assumption \ref{assume1} as a fixed quantity and use the abbreviated notation $$B := B_r(0).$$ Let $\overline{B} $ be the (topological) closure of $B$. Define
\begin{align} \label{st}
    \mathcal{T}_r(\mu,\nu) := \{  \tau \in \mathcal{S} : X_0 \sim \mu, X_{\tau} \sim \nu \text{ and } \tau \leq \tau_r \}
\end{align}
(recall that $\mathcal{S}$ denotes the set of randomized stopping times defined in Definition \ref{randomized}, and $\tau_r$ denotes the exit time defined in \eqref{stopping ball}). Note that $X_{\tau} \sim \nu$ means that
\[
    \mathbb{E}[g(X_{\tau})] = \int_{\Om} \int_{\R^+} g(X_{t}(\omega)) \tau_{\omega}(dt) \mathbb{P}(d \omega) = \int_{\R^d} g(y) \nu(dy),\quad \forall g \in C_b(\R^d).
\] Observe that $\mathcal{T}_r(\mu,\nu)$ is a convex set. \\

Next, we define function spaces as in \cite{ghoussoub2019pde}. For $\gamma >0$, define 
$$C_{-\gamma}(\R^+ \times \overline{B}) :=\{w \in C(\R^+ \times \overline{B}): \sup_{x\in \overline{B}} e^{-\gamma t} |w|(t,x)  \to 0
 \text{ as } t \rightarrow \infty\},$$ with the norm   {$$||w||_{C_{-\gamma}(\R^+ \times \overline{B})} := \sup_{(t,x) \in \R^+ \times \overline{B}} |e^{-\gamma t} w(t,x)|.$$} 
In addition, let $C^{1,2}_{-\gamma}(\R^+ \times \overline{B})$ be the set of functions {in $C_{-\gamma}(\R^+ \times \overline{B})$}  whose first-order time derivative and second-order spatial derivative belong to $C_{-\gamma}(\R^+ \times \overline{B})$.

We also define some notation regarding the set of measures. For $\gamma >0$, let
  $$\mathcal{M}_{\gamma}(\R^+ \times \overline{B}):= \{\mu\in \mathcal{M}(\R^+ \times \overline{B}): \lim_{t\to \infty} e^{\gamma t} |\mu|(\overline{B}) = 0 \}. $$
One can see that the dual space of $ C_{-\gamma}(\R^+ \times \overline{B})$ is $\mathcal{M}_{\gamma}(\R^+ \times \overline{B})$.

\medskip

In addition, we define time-dependent weighted  Sobolev spaces. Let
 $$L^2_{\gamma}(\R^+ ; H^s_0(B)) := \Big\{ u : u(t,x) = 0 \text{ for } x \notin B \text{ and  } \int_{\R^+} e^{\gamma t} [u(t,\cdot)]^2_{H^s(\R^d)} dt < \infty\Big\} ,$$  where $[\cdot]_{H^s(\R^d)}$ denotes the Gagliardo seminorm (see Appendix \ref{hs_info} for more details about $H^s$). Lastly, define 
 $$
 \mathcal{X}:=\{f \in L^2_{-\gamma}(\R^+;H_0^s(B)):  \p_t f \in L^2_{-\gamma}(\R^+;H^{-s}(B)) \},
 $$where $H^{-s}(B)$ denotes the dual space of $H^s_0(B)$. The associated   norm is  
 \begin{align} \label{x norm}
     ||f||^2_{\mathcal{X} } := \int_{\R^+} e^{- \gamma t } \left[ ||f(t,\cdot)||^2_{H^s_0(B)} + || \p_t f(t,\cdot)||^2_{H^{-s}(B)} \right].
 \end{align}

From now on we assume that $\gamma \in (0, \lambda)$, where $\lambda$ is defined in Remark \ref{exp_moments_exit_time}. 

\medskip

We now introduce the notion of \emph{Eulerian Variables} $(\eta, \rho)$.

 \begin{lemma}[Eulerian variables]\label{Eulerian_Variables}
For any $\tau \in \mathcal{T}_r(\mu,\nu),$  there exist $\eta \in \mathcal{M}_{\gamma}(  \R^+ \times \overline{B} )$ and $\rho \in \mathcal{M}( \R^+ \times \R^d )$ such that for any  $\varphi \in C_{-\gamma}(\R^+ \times \overline{B})$ and $\psi \in C_{c}(\R^+ \times \R^d)$,
\begin{equation} \begin{cases} \mathbb{E}\left[ \int_0^{\tau} \varphi(s,X_s) ds \right] = \iint_{ \R^+ \times \overline{B}} \varphi(s,x) \eta(ds,dx), \\ \mathbb{E}[ \psi(\tau,X_{\tau}) ] = \iint_{ \R^+ \times \R^d} \psi(s,x) \rho(ds,dx) .
\end{cases}\label{def_eta_rho}  \end{equation} 
{Also, $\eta$ is absolutely continuous with respect to the space-time Lebesgue measure.}

In addition for any test function $w \in C^{\infty}_c( \R^+ \times B)$,
\begin{equation} \iint_{ \R^+ \times B  } w(t,x) \rho(dt,dx) = \int_{B} w(0,x) \mu(dx) + \iint_{\R^+ \times B } \left( \p_t w(t,x) - (-\Delta)^s w(t,x) \right) \eta(dt,dx) \label{non_local_heat_weak}. 
\end{equation} 
Hence if $\tau>0$ almost surely, then $(\eta,\rho)$ solves as distributions
\begin{equation} \label{non_local_heat} \begin{cases} \partial_t \eta + \rho =  -(-\Delta)^s \eta  &(t,x)\in  (0,\infty) \times B, \\ \eta(t,x) = 0 & (t,x)\in  (0,\infty) \times B^c, \\ \eta(0,x) = \mu(x)\quad & {x\in \R^d}.  \end{cases} 
\end{equation}

\end{lemma}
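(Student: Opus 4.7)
The plan is to construct $\eta$ and $\rho$ by Riesz--Markov representation of the natural expectation functionals, establish their regularity using the exponential moment of $\tau_r$ from Remark \ref{exp_moments_exit_time}, and then derive \eqref{non_local_heat_weak} and \eqref{non_local_heat} via (time-dependent) Dynkin's formula.

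First I would define the positive linear functionals
\[
\Lambda_\eta(\varphi) := \mathbb{E}\!\left[\int_0^\tau \varphi(s,X_s)\,ds\right], \qquad \Lambda_\rho(\psi) := \mathbb{E}[\psi(\tau,X_\tau)],
\]
on $C_{-\gamma}(\R^+\times\overline{B})$ and $C_c(\R^+\times\R^d)$ respectively. For any $\gamma\in(0,\lambda)$, since $\tau\le\tau_r$, Remark \ref{exp_moments_exit_time} gives $\mathbb{E}[e^{\gamma\tau}]\le \mathbb{E}[e^{\gamma\tau_r}]<\infty$, hence $|\Lambda_\eta(\varphi)|\le \|\varphi\|_{C_{-\gamma}}\gamma^{-1}\mathbb{E}[e^{\gamma\tau}-1]<\infty$, while $|\Lambda_\rho(\psi)|\le \|\psi\|_\infty\,\mu(\R^d)$. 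Riesz--Markov then yields nonnegative Radon measures $\eta$ on $\R^+\times\overline{B}$ and $\rho$ on $\R^+\times\R^d$ satisfying \eqref{def_eta_rho}. To verify $\eta\in\mathcal{M}_\gamma$ I would fix $\gamma'\in(\gamma,\lambda)$, note $\eta([T,\infty)\times\overline{B}) = \mathbb{E}[(\tau-T)_+]$, and use the pointwise domination $e^{\gamma T}(\tau-T)_+\le C e^{\gamma'\tau}$ (valid because $\gamma'>\gamma$): dominated convergence then gives $e^{\gamma T}\eta([T,\infty)\times\overline{B})\to 0$.

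Next, the absolute continuity of $\eta$ reduces to the existence of a transition density $p_s(x,y)$ for the $2s$-stable process for $s>0$. For a Lebesgue-null Borel set $A\subset \R^+\times\overline{B}$, Fubini gives $|A_s|=0$ for a.e.\ $s$, and conditioning on the starting point yields
\[
\eta(A) = \int_0^\infty \mathbb{E}[\chi_{A_s}(X_s)\chi_{s\le\tau}]\,ds \le \int_0^\infty\!\int_{\R^d}\!\int_{A_s} p_s(x,y)\,dy\,d\mu(x)\,ds = 0.
\]
For the weak formulation, I would apply Dynkin's formula in the space--time form to the Feller process $(t,X_t)$, whose extended generator acts on smooth time-dependent test functions as $\p_t-(-\Delta)^s$. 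For $w\in C^\infty_c(\R^+\times B)$ this yields
\[
\mathbb{E}[w(\tau,X_\tau)] - \mathbb{E}[w(0,X_0)] = \mathbb{E}\!\left[\int_0^\tau (\p_t w - (-\Delta)^s w)(s,X_s)\,ds\right],
\]
which, via \eqref{def_eta_rho} and the fact that $\text{supp}(w)\subset\R^+\times B$, is exactly \eqref{non_local_heat_weak}. Under $\tau>0$ a.s., $\rho$ places no mass on $\{0\}\times\R^d$; testing \eqref{non_local_heat_weak} against $w\in C^\infty_c((0,\infty)\times B)$ eliminates the initial term and gives $\p_t\eta+(-\Delta)^s\eta+\rho=0$ distributionally in $(0,\infty)\times B$, the support of $\eta$ gives $\eta\equiv 0$ on $(0,\infty)\times B^c$, and admitting test functions that do not vanish at $t=0$ identifies the initial trace of $\eta$ with $\mu$, yielding \eqref{non_local_heat}.

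The main obstacle is the sharp moment bookkeeping: both the continuity of $\Lambda_\eta$ on $C_{-\gamma}$ and the $\mathcal{M}_\gamma$ tail decay of $\eta$ hinge on $\mathbb{E}[e^{\gamma\tau}]<\infty$ for $\gamma$ strictly below the Dirichlet eigenvalue threshold $\lambda$, and one must pick an auxiliary $\gamma'\in(\gamma,\lambda)$ to run the dominated convergence argument for the tail. The other steps (transition density argument for absolute continuity, time-dependent Dynkin's formula for a c\`adl\`ag Feller process) are essentially standard once the moment framework is in place; the only point requiring care is that, unlike the local case where $(-\Delta)^s$ is replaced by $-\frac{1}{2}\Delta$ and all test functions have compactly supported relevant terms, here $(-\Delta)^s w$ has a nonlocal tail in $x$, but this is harmless since it remains bounded on the time-support of $w$.
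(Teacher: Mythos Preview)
Your proposal is correct and follows essentially the same approach as the paper: Riesz representation (using the exponential moment bound from Remark~\ref{exp_moments_exit_time}) to construct $\eta$ and $\rho$, the transition density/Fubini argument for absolute continuity of $\eta$, and Dynkin's formula for the weak equation \eqref{non_local_heat_weak}, then specializing to $\tau>0$ to obtain \eqref{non_local_heat}. The only cosmetic difference is that you verify the $\mathcal{M}_\gamma$ tail decay of $\eta$ explicitly via the auxiliary exponent $\gamma'\in(\gamma,\lambda)$ and dominated convergence, whereas the paper simply invokes the stated duality between $C_{-\gamma}$ and $\mathcal{M}_\gamma$; your version is a welcome spelling-out of that step.
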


\begin{proof} 
 Since $\tau \leq \tau_r$, $\tau$ has finite exponential moments (see  Remark \ref{exp_moments_exit_time}). Hence,  $\varphi \mapsto \mathbb{E} \left[ \int_0^{\tau} \varphi(s,X_s) ds \right]$ is a continuous linear functional on $C_{-\gamma}(\R^+ \times \overline{B})$. Thus, by  Riesz representation, there exists   $\eta \in \mathcal{M}_{\gamma}(\R^+ \times \overline{B})$ such that for any  $\varphi\in C_{-\gamma}(\R^+ \times \overline{B})$,
\[ \mathbb{E} \left[ \int_0^{\tau} \varphi(t,X_t) ds \right] = \iint_{\R^+ \times \overline{B}} \varphi(t,x) \eta(dt,dx). \] 
As $\tau \leq \tau_r$, {$\eta$ is supported on $\R^+ \times \overline{B}$.} 

\medskip

{Now we show that $\eta$ is absolutely continuous with respect to the space-time Lebesgue measure. Let $A \subset \R^+ \times \R^d$ be a null set, then by Fubini's theorem, its time section $A_t := \{ x\in \R^d : (t,x) \in A \}$ satisfies $|A_t|=0$ for $t$-a.e. Also \eqref{def_eta_rho} along with Fubini's theorem imply that  
\[ \eta[A] = \int_0^{\infty} \mathbb{P}[ t<\tau, (t,X_t) \in A ] dt = \int_0^{\infty} \mathbb{P}[ t<\tau, X_t \in A_t ] dt.  \] 
Since the law of $X_t$ is absolutely continuous with respect to the spatial Lebesgue measure and $|A_t|=0$, we have $0=\mathbb{P}[ X_t \in A_t] \geq \mathbb{P}[t<\tau,X_t \in A_t]$ for a.e. $t \geq 0$. Hence, $\eta[A]=0$, which verifies the absolutely continuity.} 

\medskip

Next, define $\rho \in \mathcal{M}(\R^+ \times \R^d)$ to be the law of $(\tau,X_{\tau})$, i.e. for any $\psi \in C_c(\R^+ \times \R^d)$,
\[ \mathbb{E}[\psi(\tau,X_{\tau})] = \iint_{\R^+ \times \R^d} \psi(t,x) \rho(dt,dx). \]
Then, \eqref{non_local_heat_weak} is obtained as a direct consequence of It\^{o}'s formula: 
\begin{equation} \mathbb{E} \left[ w(\tau,X_{\tau}) \right] = \int_{\R^d} w(0,x) \mu(dx)  + \mathbb{E} \left[ \int_0^{\tau} \left[ \partial_t w(t,X_t) - (-\Delta)^s w(t,X_t)\right]  dt \right]. \label{Ito_Eulerian} \end{equation} 

{Finally to establish \eqref{non_local_heat} under the condition $\tau>0,$ it remains to obtain \eqref{non_local_heat_weak} with a time interval $\R^+$ replaced by $(0,\infty)$, i.e. we rule out the possibility of  integrals in \eqref{non_local_heat_weak}  over $\R^+ \times \R^d$ having a mass at $t=0$.}
Observe that
\begin{equation} \iint_{\R^+ \times \R^d} w(t,x) \rho(dt,dx) = \iint_{ (0,\infty) \times \R^d } w(t,x) \rho(dt,dx) + \int_{\R^d} w(0,x) \rho(\{0\},dx). \label{rho_time} \end{equation} 
Since $\tau>0$ a.s., $\rho(\{0\} \times \R^d)=\mathbb{P}(\tau=0)=0$, implying that the last term above is zero. 
 Recalling that {$\eta$ is absolutely continuous with respect to the space-time Lebesgue measure, from  \eqref{non_local_heat_weak}, we obtain}
{\[ \iint_{ (0,\infty) \times B  } w(t,x) \rho(dt,dx) = \int_{B} w(0,x) \mu(dx) + \iint_{(0,\infty)  \times B } \left( \p_t w(t,x) - (-\Delta)^s w(t,x) \right) \eta(dt,dx).
\]} 
{This implies that $\eta(0,\cdot)=\mu(\cdot)$ in the sense of distributions, and thus we deduce \eqref{non_local_heat}.} 
 
\end{proof}

\begin{remark}  \label{eul_initial_data}
Note that \eqref{def_eta_rho} defines $\eta$ and $\rho$ respectively as the distribution of active and stopped particles associated with $\tau$. 
If $\tau=0$ with a positive probability, then the initial data in \eqref{non_local_heat} has to be revised as $\mu(x) - \rho(\{0\},x)$. The instantly stopped distribution, $\rho(\{0\},\cdot)$, is defined in Definition \ref{disintegration}.   
\end{remark}

\begin{lemma}[Parabolic regularity]\label{regularity_lemma}
Let $(\eta,\rho)$ solve \eqref{non_local_heat} in the sense of distributions. Then for any $\gamma \in (0, \lambda)$, $(\eta,\rho) \in L^2_{\gamma}(\R^+;H_0^s(B)) \times \mathcal{X}^*$. Moreover there is $C=C(\gamma,d,s,B)$ such that 
\[ ||\eta||_{L^2_{\gamma}(\R^+;H_0^{s}(B))} \leq C||\mu||_{L^2(\R^d)} \text{ and } ||\eta(t,\cdot)||_{L^2(B)} \leq e^{- \gamma t} ||\mu||_{L^2(\R^d)}. \]

\end{lemma}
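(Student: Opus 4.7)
The proof follows the standard fractional parabolic energy method applied to $\partial_t \eta + (-\Delta)^s \eta = -\rho$ on $\R^+ \times B$ with Dirichlet exterior condition and initial datum $\mu$, with the exponential weight $e^{\gamma t}$. The two points requiring care are (i) justifying that $\eta$ itself is an admissible test function, and (ii) handling the nonnegative measure $\rho$ on the right hand side.

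For (i), I would regularize: take $\mu_n \in C^\infty_c(B)$ with $\mu_n \to \mu$ in $L^2$ and $\mu_n \geq 0$, and mollify $\rho$ in time to obtain $\rho_n \in C^\infty$ with $\rho_n \geq 0$ converging weakly$^*$ to $\rho$. Let $\eta_n$ be the unique smooth solution of $\partial_t \eta_n + (-\Delta)^s \eta_n + \rho_n = 0$ in $B$ with Dirichlet exterior condition and $\eta_n(0,\cdot) = \mu_n$, constructed via the Duhamel formula with the Dirichlet fractional heat semigroup on $B$; the maximum principle gives $\eta_n \geq 0$. Testing the equation against $\eta_n$ itself and using the fractional integration by parts \eqref{frac_laplacian} yields
\begin{equation*}
\tfrac{1}{2}\tfrac{d}{dt}\|\eta_n(t,\cdot)\|_{L^2(\R^d)}^2 + [\eta_n(t,\cdot)]_{H^s(\R^d)}^2 + \int_B \rho_n\, \eta_n\, dx = 0,
\end{equation*}
and the integrand $\rho_n \eta_n \geq 0$ is dropped, resolving (ii) at the approximate level.

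Combined with the fractional Poincaré inequality $\lambda\|\eta_n\|_{L^2}^2 \leq [\eta_n]_{H^s}^2$ on $H^s_0(B)$ from Remark~\ref{exp_moments_exit_time}, Grönwall gives $\|\eta_n(t,\cdot)\|_{L^2}^2 \leq e^{-2\lambda t}\|\mu_n\|_{L^2}^2$; passing $n\to\infty$ and using $\gamma < \lambda$ yields $\|\eta(t,\cdot)\|_{L^2(B)} \leq e^{-\gamma t}\|\mu\|_{L^2}$. For the weighted bound, multiply the energy identity by $e^{\gamma t}$, integrate on $[0,T]$, and integrate by parts in time to obtain
\begin{equation*}
\int_0^T e^{\gamma t}[\eta_n]_{H^s}^2\, dt \leq \tfrac{1}{2}\|\mu_n\|_{L^2}^2 + \tfrac{\gamma}{2}\int_0^T e^{\gamma t}\|\eta_n\|_{L^2}^2\, dt \leq \left(\tfrac{1}{2} + \tfrac{\gamma}{2\lambda}\right)\|\mu_n\|_{L^2}^2,
\end{equation*}
where the pointwise $L^2$ decay handles the second term. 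Letting $T\to\infty$ and $n\to\infty$ gives $\|\eta\|_{L^2_\gamma(\R^+; H^s_0(B))} \leq C\|\mu\|_{L^2}$.

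For $\rho \in \mathcal{X}^*$, I would use \eqref{non_local_heat_weak} as the definition of the pairing: for $w \in \mathcal{X}$,
\begin{equation*}
\langle \rho, w\rangle = \int_B w(0,x)\, \mu(x)\, dx + \iint_{\R^+ \times B}\bigl(\partial_t w - (-\Delta)^s w\bigr)\, \eta\, dt\, dx.
\end{equation*}
The bulk term is bounded by $\|\eta\|_{L^2_\gamma(H^s_0)}\|w\|_{\mathcal{X}}$ via Cauchy--Schwarz in the weighted norms (pairing $\partial_t w$ against $\eta$ through $H^{-s}/H^s_0$ duality and $(-\Delta)^s w$ against $\eta$ through $(-\Delta)^{s/2}$ on each side), while the initial term is controlled by a parabolic trace inequality $\|w(0,\cdot)\|_{L^2(B)} \leq C\|w\|_{\mathcal{X}}$, standard for spaces with $\partial_t w \in L^2_{-\gamma}(H^{-s})$ and $w \in L^2_{-\gamma}(H^s_0)$. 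Combined with the bound on $\eta$ just established, this yields $\|\rho\|_{\mathcal{X}^*} \leq C\|\mu\|_{L^2}$.

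The main obstacle is the regularization step: one must check that the weak formulation \eqref{non_local_heat_weak} uniquely determines $\eta_n$ given $(\mu_n, \rho_n)$ (so that the approximate problem really does converge to the original) and that the strong convergences $\mu_n \to \mu$ in $L^2$ and $\rho_n \to \rho$ in the weak$^*$ sense suffice to pass to the limit in the lower semicontinuous energy quantities. Uniqueness follows by applying the same energy estimate to the difference of two solutions with identical data (for which the $\rho$-term vanishes), and lower semicontinuity of $\|\cdot\|_{L^2}$ and $[\cdot]_{H^s}$ under weak convergence secures the limiting bounds.
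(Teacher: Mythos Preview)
Your proposal is correct and follows essentially the same approach as the paper. The paper's proof is only a two-line sketch (``By an approximation procedure, it suffices to consider the case when $\eta$ and $\rho$ are smooth. We multiply \eqref{non_local_heat} by $e^{\gamma t}\eta$ and proceed with integration by parts. Then Poincar\'e's inequality yields the desired bounds''), and you have filled in exactly this outline: regularize, test against $\eta$ with the exponential weight, drop the nonnegative $\rho\eta$ term, apply the fractional Poincar\'e inequality, and pass to the limit via lower semicontinuity. Your treatment of $\rho\in\mathcal{X}^*$ via the weak formulation and the parabolic trace inequality is not spelled out in the paper at all, so there you have supplied strictly more detail.
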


\begin{proof} 
As the above estimates are direct consequence of regularity theory of fractional heat equations, we only briefly sketch the proof (we refer to \cite{ghoussoub2019pde} in the case of heat equations). We first mollify $(\eta,\rho)$ in space-time to construct smooth approximations $\{(\eta_{\e},\rho_{\e})\}_{\e>0}$. Then we have $\p_t \eta_{\e} + \rho_{\e} = -(-\Delta)^s \eta_{\e}$ classically. Then we multiply this equation by $e^{\gamma t} \eta_{\e}$ and proceed with integration by parts. Afterwords, Poincare's inequality (see Appendix \ref{Hs_review}) yields the desired bounds in the limit $\e \to 0$.
\end{proof}

Let us finalize this section by characterizing the optimal stopping time associated to \eqref{Var_Skoro} as a hitting time to some space-time barrier set. The result follows from \cite{ghoussoub2021optimal}, see Appendix \ref{opt_sko_embed}.

\begin{definition} [Forward/backward Barrier] A set  $R \subset \R^+ \times \R^d$ is called a \emph{forward barrier} if $(t,x) \in R$, then $(t+\delta,x) \in R$ for any $\delta>0$. 	Conversely,  $R$ is called a \emph{backward barrier} if $(t,x)\in R$, then $(t-\delta,x) \in R$ for any $\delta \in [0,t]$.
\end{definition}

\begin{theorem}[Optimal Stopping time as a Hitting time] \label{barrier_existence}
$\empty$
Let $\mu,\nu \in \mathcal{M}(\R^d)$ satisfy Assumption \ref{assume1} and the Lagrangian $L$ satisfies Assumption \ref{assum2}. Then there is a unique optimal stopping time $\tau^*$ of \eqref{Var_Skoro} that has a compact active region in $B_r(0)$ (where $r$ is from Assumption \ref{assume1}). Moreover the following holds for the   optimal stopping time  $\tau^*$: 
 \begin{itemize}
\item[1.] When L is Type (I), there exists a forward barrier $R_1^* = \{ (t,x) \in \R^+ \times \R^d: t \geq s^*_1(x) \}$ with $s^*_1:\R^d \rightarrow \R^+$ measurable such that $\tau^* = \inf\{ t \ge 0: (t,X_t) \in R_1^* \}$. In particular, $\tau^*$ is a non-randomized stopping time (of the space-time process).

\medskip

\item[2.]When $L$ is Type (II), there exists a backward barrier $R_2^* = \{ (t,x) \in \R^+ \times \R^d : t\le  s_2^*(x)  \}$ with  $s_2^*:\R^d \rightarrow \R^+$ measurable such that  $\tau^* = \inf \{ t \ge 0 : (t,X_t) \in R^*_2 \}$ on $\{\tau^*>0\}$. In particular $\tau^*$ is only randomized on $\{\tau^*=0\}$.\end{itemize}
\end{theorem}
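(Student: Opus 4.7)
The plan is to reduce the theorem to the general duality framework of \cite{ghoussoub2021optimal} for optimal Skorokhod embedding, which is exactly the purpose of the compact active region constraint $\tau \le \tau_r$ introduced in this section. First I would check that the admissible set $\mathcal{T}_r(\mu,\nu)$ from \eqref{st} is non-empty: Assumption \ref{assume1}(4) guarantees a randomized stopping time embedding $\mu$ into $\nu$ that is dominated by $\tau_r$, via Proposition \ref{prop 2.1}. By Remark \ref{exp_moments_exit_time}, any $\tau \in \mathcal{T}_r(\mu,\nu)$ has finite exponential moments, so the cost $\mathcal{C}(\tau) = \mathbb{E}[\int_0^\tau L(s,X_s)\,ds]$ is uniformly bounded by $\|L\|_\infty \mathbb{E}[\tau_r] < \infty$; this gives the a priori integrability needed by the abstract results.

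Next I would establish existence and uniqueness of an optimizer $\tau^*$. For existence, one takes a minimizing sequence $(\tau^n) \subset \mathcal{T}_r(\mu,\nu)$; since each $\tau^n \le \tau_r$ and $X$ is c\`adl\`ag, the joint laws of $(\tau^n, X_{\cdot \wedge \tau^n})$ are tight on the Skorokhod space, and a weak limit $\tau^*$ still satisfies $\tau^* \le \tau_r$, $X_0 \sim \mu$, $X_{\tau^*} \sim \nu$. Lower semicontinuity of $\mathcal{C}$ comes from bounded-convergence applied to the continuous bounded integrand $L$. For uniqueness, I would use convexity of $\mathcal{T}_r(\mu,\nu)$ together with the barrier structure obtained below: if $\tau_1 \ne \tau_2$ were both optimal, strict time-monotonicity of $L$ applied to the randomized convex combination $\tfrac{1}{2}(\tau_1 + \tau_2)$ contradicts the hitting-time characterization.

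The core step is the barrier characterization, obtained by invoking the duality arguments of \cite{ghoussoub2021optimal} summarized in Appendix \ref{opt_sko_embed}. One constructs a dual potential $u(t,x)$ (for Type (I)) or the time-reversed analog (for Type (II)) such that $u$ majorizes the running cost along the trajectory and is matched by the optimal $\tau^*$ via complementary slackness. The strict monotonicity of $L$ forces the contact set $\{u = \text{value}\}$ to inherit the correct time-monotonicity: in Type (I) the cost of waiting grows in $t$, so the optimal stopping region must be a forward barrier $R_1^*$ and $\tau^* = \inf\{t : (t,X_t) \in R_1^*\}$ is non-randomized; in Type (II) the cost of waiting decreases in $t$, so the optimal region is a backward barrier $R_2^*$ and randomization only appears on $\{\tau^* = 0\}$, where the initial mass $\mu$ restricted to $R_2^*$ must be split between stopping at once (to produce part of $\nu$) and continuing to match the remainder.

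The main obstacle I expect is verifying that the hypotheses of \cite{ghoussoub2021optimal} are met for the $2s$-stable process, whose unbounded jumps prevent direct use of Brownian-type arguments. The key point is that $\tau \le \tau_r$ together with Remark \ref{exp_moments_exit_time} supplies the compactness of $\mathcal{T}_r(\mu,\nu)$ in the natural topology on randomized stopping times, and confines the active distribution $\eta$ to $\overline{B}$; combined with the Feller property and c\`adl\`ag sample paths of $X$, this is precisely the data required by their abstract framework. Once these hypotheses are in place, the forward/backward barrier structure, the possible randomization only at $\{\tau^*=0\}$ in Type (II), and uniqueness all follow from direct application of the cited results.
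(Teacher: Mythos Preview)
Your overall strategy---reduce to the abstract framework of \cite{ghoussoub2021optimal} using the compact active region constraint---matches the paper's approach, and the intuition you give for why strict monotonicity of $L$ produces forward/backward barriers is correct. But the proposal is too vague on the technical content that actually constitutes the proof, and two of your sub-arguments are flawed.

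First, the paper's proof is not ``apply \cite{ghoussoub2021optimal} after checking compactness''; it is a systematic verification of a specific list of structural hypotheses \textbf{(A0)}, \textbf{(A1)}, \textbf{(B0)}, \textbf{(B1)}, \textbf{(C0)}--\textbf{(C2)}, \textbf{(D0)}, \textbf{(D1)} from \cite[Theorem~6.2]{ghoussoub2021optimal}. Some of these require real work: \textbf{(B0)} asks for continuity of the r{\'e}duite of the dual potential, which the paper obtains from regularity of the fractional obstacle problem; \textbf{(C1)} needs Lemma~\ref{visc_dist}. In addition, before any of this can be applied, the paper reduces the dual optimization to potentials $\psi$ vanishing outside $B_r(0)$ by subtracting off the $s$-harmonic extension $\hat\psi$---this step exploits $\mu \leq_{s\text{-SH}} \nu$ and is what makes the compact active region usable in the dual. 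You do not identify any of this, and ``Feller property plus c\`adl\`ag paths plus Remark~\ref{exp_moments_exit_time}'' is not the same data.

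Second, your uniqueness argument does not work as written. The cost $\mathcal{C}$ is \emph{linear} in the randomized stopping time, so the convex combination $\tfrac12(\tau_1+\tau_2)$ of two optimizers is automatically optimal---there is no strict inequality to extract from monotonicity of $L$ here. Uniqueness in the paper comes directly from \cite{ghoussoub2021optimal} via the barrier representation (and the strict monotonicity of $L$ enters there, in forcing the contact set to be a barrier, not through convexity of the admissible set).

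Third, for Type~(II) you gloss over a genuine issue: \cite[Theorem~6.2]{ghoussoub2021optimal} requires the source and target to be mutually singular. The paper handles this by subtracting the common mass $\mu_0 = \mu \wedge \nu$ and applying the result to $(\tilde\mu,\tilde\nu) = (\mu-\mu_0,\nu-\mu_0)$, observing that optimality forces $\tau^*$ to stop $\mu_0$ instantly; this is exactly the source of the randomization on $\{\tau^*=0\}$. Your discussion of ``splitting the initial mass'' is the right picture but does not supply this reduction.
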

The proof of Theorem \ref{barrier_existence} will be presented in Appendix \ref{opt_sko_embed}.

\section{Properties of the Barrier Set}
\label{barrier_property}
In this section we show that the optimal barriers $R$ from Theorem \ref{barrier_existence} are in fact closed in the usual topology on $\R^+ \times \R^d$. Although the barrier for Type (I) has been explored in both local and non-local contexts in \cite{gassiat2021free}, our approach yields stronger regularity results on $R$ by leveraging the  elliptic regularity theory. While we use potential theoretic arguments similar to \cite{kim2021stefan,gassiat2021free}, our proof strongly utilizes the probabilistic interpretation of the Eulerian variables. \\

From now on, we assume that $s \in (0,1)$ satisfies the condition   \eqref{s}.

\subsection{Potential Flow} \label{pot_flow} 
Define the Riesz kernel of order $2s$
\begin{equation} \label{riesz}
	N(y) :=  C_{s,d} \frac{1}{|y|^{d-2s}},
\end{equation} 
where $C_{s,d}>0$ is a normalizing constant (see \cite{stinga2019user}).
Also for $m \in \mathcal{M}(\R^d)$,  define the potential of $m$ as
\begin{align}\label{U-m}
 U_m(y) := \int_{\R^d} N(x-y) m(dy).
 \end{align}
Then, $(-\Delta)^s U_m = m$.

In the next lemma, we state the spatial regularity of the potential $U_m$.

\begin{lemma} [Spatial regularity] \label{spatial_reg_1} Assume that   there exist $C,\e>0$ such that $|m(x)| \leq C/(1+|x|^{2s+\e})$ for all $x\in \R^d$. Then, $U_m \in C^{\alpha}(\R^d)$ for any $\alpha \neq 1$ with  $0 < \alpha < 2s$ and satisfies
$$||U_m||_{C^{\alpha}} \leq C (||U_m||_{L^{\infty}} + ||m||_{L^{\infty}}).$$ 
Here   for $\alpha\geq 1$, $C^{\alpha}$ denotes $C^{1, \alpha-1}$.
\end{lemma}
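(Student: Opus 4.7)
My plan is to read the statement as a Hölder regularity estimate for solutions of the fractional Poisson equation: since $(-\Delta)^s U_m = m$ in $\R^d$ and both $m$ and (once we check it) $U_m$ are bounded, the conclusion should follow from the standard interior Schauder theory for the fractional Laplacian, with the restriction $\alpha \ne 1$ accounting only for the bookkeeping convention that $C^\alpha = C^{1,\alpha-1}$ for $\alpha > 1$.

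First, I would verify that $U_m \in L^\infty(\R^d)$. Split the defining convolution
\[
 U_m(y) = \int_{|x-y|\le 1} N(x-y)\, m(x)\, dx + \int_{|x-y|>1} N(x-y)\, m(x)\, dx.
\]
The inner piece is controlled by $\|m\|_{L^\infty}\int_{B_1} |z|^{2s-d}\,dz$, which is finite since $2s-d > -d$. For the outer piece, the decay $|m(x)|\le C(1+|x|)^{-2s-\e}$ together with $N(x-y)\lesssim |x-y|^{2s-d}$ yields a uniformly convergent integral (split again into, say, $|x-y|\in[1,|y|/2]$ and the rest, using the decay of $m$ in the second region). This bounds $\|U_m\|_{L^\infty}$ in terms of $\|m\|_{L^\infty}$ and the tail constants.

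Next, with $U_m, m \in L^\infty(\R^d)$ and $(-\Delta)^s U_m = m$ distributionally, I would invoke the interior Hölder regularity for the fractional Laplacian (Silvestre's regularity theorem, or equivalently Proposition~2.1.8 of Silvestre's thesis and the Schauder-type improvements in Ros-Oton's survey). That statement says: if $u, f \in L^\infty(\R^d)$ solve $(-\Delta)^s u = f$, then $u \in C^\alpha(\R^d)$ for every $\alpha \in (0,2s)$ with $\alpha \ne 1$, with the estimate
\[
 \|u\|_{C^\alpha} \le C\bigl(\|u\|_{L^\infty} + \|f\|_{L^\infty}\bigr),
\]
where as in the lemma $C^\alpha$ denotes $C^{1,\alpha-1}$ when $\alpha > 1$. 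Applying this to $u = U_m$ and $f = m$ yields exactly the stated estimate. A self-contained alternative is to split $N = N\chi_{B_1} + N\chi_{B_1^c}$: the convolution $(N\chi_{B_1^c})\ast m$ is smooth with all derivatives controlled locally (by differentiating under the integral, using the decay of $m$), while Hölder estimates for the truncated Riesz potential $(N\chi_{B_1})\ast m$ with $L^\infty$ data give the desired modulus.

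The main obstacle is the regime $2s > 1$, where the conclusion demands $C^{1,2s-1}$ regularity rather than plain Hölder continuity. Handling this requires either a bootstrap via incremental quotients (apply $(-\Delta)^s$ to $h^{-\beta}(U_m(\cdot + h) - U_m(\cdot))$ and iterate, gaining regularity up to $C^{2s-\delta}$ for any $\delta>0$), or the direct estimate on $\nabla(N\chi_{B_1})\ast m$ via the kernel $\nabla N(y) \sim |y|^{2s-d-1}$, combined with the smooth tail piece. The decay hypothesis on $m$ is used precisely to ensure gradient integrability at infinity in this second approach; otherwise the argument is purely local and scale-invariant.
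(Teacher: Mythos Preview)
Your approach is correct and essentially identical to the paper's: the paper also first observes that the decay hypothesis on $m$ forces $U_m \in L^\infty(\R^d)$, and then directly invokes Silvestre's elliptic regularity estimate \cite[Proposition~2.9]{silvestre2007regularity}, namely $\|f\|_{C^\alpha(\R^d)} \le C(\|f\|_{L^\infty(\R^d)} + \|(-\Delta)^s f\|_{L^\infty(\R^d)})$, applied to $f = U_m$. Your additional remarks (the kernel splitting and the bootstrap for $2s>1$) are not needed once one is willing to quote that black box, which is exactly what the paper does.
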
 

\begin{proof} Note  that $(-\Delta)^s U_m = m \in L^{\infty}(\R^d)$ and the  decay conditions on $m$ implies $||U_m||_{L^{\infty}(\R^d)} \leq C$. By the elliptic regularity estimate \cite[Proposition 2.9]{silvestre2007regularity}
\[ ||f||_{C^{\alpha}(\R^d)} \leq C \left[ ||f||_{L^{\infty}(\R^d)} + ||(-\Delta)^s  f ||_{L^{\infty}(\R^d)} \right], \]
we conclude the proof.
\end{proof}

\medskip
 
From now on, for measures $\mu,\nu \in \mathcal{M}(\R^d)$ satisfying Assumption \ref{assume1}, we assume that $\tau \in \mathcal{S}$ is a randomized stopping time such that $X_0 \sim \mu$ and $X_{\tau} \sim \nu$. Let $(\eta,\rho)$ be the 
 Eulerian variables associated  to $\tau$.  Then, the distribution of $X_{t \wedge \tau}$, which we call $\mu_t$, can be  decomposed into the active and stopped mass up to time $t$, namely 
 \begin{align} \label{dec}
     \mu_t (x)= \eta(t, x) + \rho([0,t), x),
 \end{align}   
where $\rho([0,t),\cdot)$ is defined in Definition \ref{disintegration}. In the next lemma, we rigorously verify this decomposition.

\begin{lemma}[Characterization of $\mu_t$] 
{Let $\tau\in \mathcal{S}$ be such that $X_0 \sim \mu$ and $X_{\tau} \sim \nu$, and let $(\eta,\rho)$ be  the associated  Eulerian variables. For $t\ge 0,$ define $\mu_t$ to be the distribution of $X_{t \wedge \tau}$.}
Then, for any $\varphi \in C^{\infty}_c(\R^+ \times \R^d)$,
\[ \iint_{\R^+ \times \R^d} \varphi(t,x) \mu_t(dx) dt = \iint_{\R^+ \times \R^d} \varphi(t,x) \eta(t,x) dtdx + \iint_{\R^+ \times \R^d} \varphi(t,x) \rho([0,t],x)dtdx. \]  
 In other words,  \eqref{dec} holds $(t,x)$-a.e. 
\label{mu_t_formula}
\end{lemma}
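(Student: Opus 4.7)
The plan is to unfold both sides by their definitions and apply Fubini, carefully handling the randomization of $\tau$. For a test function $\varphi\in C_c^\infty(\R^+\times\R^d)$, I first write
\[
\iint \varphi(t,x)\,\mu_t(dx)\,dt
=\int_0^\infty \mathbb{E}[\varphi(t,X_{t\wedge\tau})]\,dt
=\int_\Om \int_{\R^+} \int_0^\infty \varphi(t,X_{t\wedge u}(\omega))\,dt\,\tau_\omega(du)\,\mathbb{P}(d\omega),
\]
using the randomized-stopping-time convention together with Fubini (permissible because $\varphi$ has compact $t$-support and each $\tau_\omega$ is a probability measure). Splitting the innermost $t$-integral at $t=u$, using $t\wedge u = t$ on $\{t<u\}$ and $t\wedge u = u$ on $\{t\ge u\}$, produces $A+B$ with
\[
A=\int_\Om \int_{\R^+} \int_0^u \varphi(t,X_t(\omega))\,dt\,\tau_\omega(du)\,\mathbb{P}(d\omega),\qquad
B=\int_\Om \int_{\R^+} \Psi(u,X_u(\omega))\,\tau_\omega(du)\,\mathbb{P}(d\omega),
\]
where $\Psi(u,x):=\int_u^\infty \varphi(t,x)\,dt$.

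Next I identify each piece via \eqref{def_eta_rho}. The quantity $A$ is exactly $\mathbb{E}\bigl[\int_0^\tau\varphi(t,X_t)\,dt\bigr]=\iint \varphi\,d\eta$, and Lemma~\ref{Eulerian_Variables} (absolute continuity of $\eta$) converts this to $\iint \varphi(t,x)\,\eta(t,x)\,dt\,dx$. Similarly $B=\mathbb{E}[\Psi(\tau,X_\tau)]=\iint \Psi\,d\rho$ by the definition of $\rho$, and one more Fubini yields
\[
B=\iint_{\R^+\times\R^d}\int_s^\infty \varphi(t,x)\,dt\,\rho(ds,dx)=\int_0^\infty \int_{\R^d}\varphi(t,x)\,\rho([0,t],dx)\,dt.
\]

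To promote $\rho([0,t],dx)$ to an absolutely continuous density $\rho([0,t],x)\,dx$, I observe that for every Lebesgue-null Borel set $A\subset\R^d$,
\[
\rho([0,t],A)\le \rho(\R^+,A)=\mathbb{P}(X_\tau\in A)=\nu(A)=0
\]
by Assumption~\ref{assume1}(3), so $\rho([0,t],\cdot)\ll\Leb$ with density $\rho([0,t],x)$. Combining $A$ and $B$ yields the integrated identity, and since $\varphi\in C_c^\infty$ is arbitrary, \eqref{dec} follows $(t,x)$-a.e. The only delicate point is the first step: the Fubini interchange with a randomized $\tau$ combined with the $t\wedge u$ splitting must be done cleanly; once this is in place, the remainder is a direct application of \eqref{def_eta_rho} together with the absolute-continuity observations.
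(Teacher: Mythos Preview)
Your proof is correct and follows essentially the same route as the paper: both split $\varphi(t,X_{t\wedge\tau})$ into the active part ($t<\tau$) and the stopped part ($t\ge\tau$), identify these with $\eta$ and $\rho$ via \eqref{def_eta_rho}, and apply Fubini; you simply unfold the randomized stopping time $\tau_\omega(du)$ more explicitly than the paper does. The one small step you omit is the passage from $\rho([0,t],x)$ in the integral identity to $\rho([0,t),x)$ in \eqref{dec}, which the paper handles by invoking Lemma~\ref{lambda_equality}.
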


\begin{proof} By the definition of $\mu_t$,  
\begin{align*}
     \int_{\R^+} \int_{\R^d} \varphi(t,x) \mu_t(dx) dt &= \int_{\R^+} \mathbb{E}[\varphi(t,X_{t \wedge \tau})] dt\\
 &= \mathbb{E} \left[  \int_{\R^+}  \varphi(t,X_{t}) \chi_{ \{t < \tau\}}  dt \right] +  \int_{\R^+}  \mathbb{E}[\varphi(t,X_{\tau}) \chi_{ \{\tau \leq t \}}] dt .
\end{align*}
By the definition of $(\eta,\rho)$ from Lemma \ref{Eulerian_Variables}, the above quantity is equal to 
\[  \iint_{\R^+ \times \R^d} \varphi(t,x) \eta(t,x) dtdx + \int_{\R^+} \left[ \iint_{\R^+ \times \R^d} \varphi(t,x) \chi_{ \{s \leq t \} } \rho(ds,dx) \right]dt \]
\[ = \iint_{\R^+ \times \R^d} \varphi(t,x) \eta(t,x) dtdx + \iint_{\R^+ \times \R^d} \varphi(t,x) \rho([0,t],x) dtdx.  \] In particular,  this implies that  $dt \otimes \mu_t(dx) $ is absolutely continuous w.r.t. Lebesgue measure on $\R^+ \times \R^d$. Finally we point out that  for any $x\in \R^d,$ $\rho([0,t),x)=\rho([0,t],x)$ $t$-a.e. due to  Lemma \ref{lambda_equality}, and thus we deduce \eqref{dec} for $(t,x)$-a.e.

\medskip

\end{proof}

\begin{definition}[Potential Flow] {For $\tau\in \mathcal{S}$ and $t\ge 0$, let $\mu_t$ be the distribution of $X_{t \wedge \tau}$.} Then,  $(U_{\mu_t})_{t\ge 0}$  (see \eqref{U-m})  is called the potential flow associated to $\tau$. \label{potential_flow}
    
\end{definition}
 
\begin{lemma} Let $\tau \in \mathcal{S}$ be such that $\tau \leq \tau_r = \inf\{t : X_t \notin B_r(0) \}$. Then, there exists  $C>0$ such that $\mu_{t}(x)  \leq  C(1+|x|^{d+2s})^{-1}$ for all $x\in 
 \R^d$ and $t\ge 0$. In particular, $\sup_{t\ge 0} ||U_{\mu_{t}}||_{C^{\alpha}(\R^d)} < \infty$ for any $0 < \alpha < 2s$ with $\alpha \neq 1$. \label{spatial_reg}
\end{lemma}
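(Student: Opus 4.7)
The plan is to bound $\mu_t$ pointwise by combining a uniform $L^\infty$ bound with a polynomial decay estimate outside $B_r(0)$, and then to deduce the regularity of $U_{\mu_t}$ via Lemma \ref{spatial_reg_1}.

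For the uniform $L^\infty$ bound, I would decompose the law $\mu_t$ of $X_{t\wedge\tau}$ according to whether the particle has been stopped by time $t$: for any Borel set $A$,
\[ \mu_t(A) \;=\; \mathbb{E}\!\left[\tau((t,\infty))\,\chi_A(X_t)\right] + \mathbb{E}\!\left[\int_0^t \chi_A(X_u)\,\tau(du)\right] \;\le\; (P_t\mu)(A) + \nu(A), \]
where $P_t$ denotes the semigroup of the $2s$-stable process. Since $P_t$ is an $L^\infty$-contraction and $\mu,\nu\in L^\infty(\R^d)$ by Assumption \ref{assume1}, this gives $\|\mu_t\|_{L^\infty}\le \|\mu\|_{L^\infty}+\|\nu\|_{L^\infty}$ uniformly in $t$.

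For the decay, I would observe that $\tau\wedge t$ is itself a randomized stopping time with compact active region in $B_r(0)$ and $X_{\tau\wedge t}\sim \mu_t$, so Lemma \ref{nu_decay} gives $\mu_t\le \nu_r$ on $B_r(0)^c$, where $\nu_r$ is the exit distribution from Example \ref{exit_ball_density}. For $|x|\ge 2r$ and $y\in\supp\mu\subset B_r(0)$, estimating the formula \eqref{hm ball} using $|x-y|\ge |x|/2$ and $|x|^2-r^2\ge 3|x|^2/4$ yields $\nu_r^y(x)\le C|x|^{-(d+2s)}$; integrating against the finite measure $\mu$ then gives $\nu_r(x)\le C|x|^{-(d+2s)}$ for $|x|\ge 2r$ (an estimate analogous to \eqref{nu_R_bound} but adapted to $\supp\mu\subset B_r(0)$ instead of $B_{r/2}(0)$). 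Combining with the uniform $L^\infty$ bound for $|x|\le 2r$ produces the pointwise estimate $\mu_t(x)\le C(1+|x|^{d+2s})^{-1}$ uniformly in $t$.

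The main conceptual subtlety is that Lemma \ref{nu_decay} alone cannot deliver a pointwise bound on all of $\R^d$, since $\nu_r$ blows up like $(|x|^2-r^2)^{-s}$ as $|x|\to r^+$; the independent $L^\infty$ bound from the first step is what absorbs this near-boundary singularity. Once the uniform decay of $\mu_t$ is in hand, the hypothesis of Lemma \ref{spatial_reg_1} is met with $\e=d>0$, and its conclusion (whose proof already supplies a uniform bound on $\|U_m\|_{L^\infty}$ from the decay and the $L^\infty$ bound of $m$) immediately yields $\sup_{t\ge 0}\|U_{\mu_t}\|_{C^{\alpha}(\R^d)}<\infty$ for any $0<\alpha<2s$ with $\alpha\ne 1$.
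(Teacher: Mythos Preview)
Your proposal is correct and leads to the same conclusion, but the route differs from the paper's. The paper decomposes $\mu_t=\eta(t,\cdot)+\rho([0,t),\cdot)$ via Lemma~\ref{mu_t_formula}, then bounds $\eta$ by the solution of the Dirichlet fractional heat equation on $B$ (comparison principle), yielding $\eta\le\|\mu\|_{L^\infty}\chi_{\overline B}$, and bounds $\rho([0,t),\cdot)\le\nu$ using Lemma~\ref{nu_decay} and \eqref{decay}. You instead bound $\mu_t$ directly: the semigroup inequality $\mu_t\le P_t\mu+\nu$ gives the uniform $L^\infty$ control, and applying Lemma~\ref{nu_decay} to the stopping time $\tau\wedge t$ (rather than to $\tau$) gives $\mu_t\le\nu_r$ on $B_r(0)^c$, from which you extract the $|x|^{-(d+2s)}$ decay for $|x|\ge 2r$. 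Your argument is more elementary in that it bypasses the Eulerian decomposition and the PDE comparison step; the paper's argument, by contrast, stays within the Eulerian framework and gives the slightly sharper localization $\eta\le\|\mu\|_{L^\infty}\chi_{\overline B}$ as a by-product. Both then finish identically through Lemma~\ref{spatial_reg_1}.
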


\begin{proof} First recall from Assumption \ref{assume1} that $\mu$ is compactly supported on $B:=B_r(0) $. Let $(\eta,\rho)$ be the Eulerian variables associated to $\tau$, as given in  Lemma \ref{Eulerian_Variables}. This then implies from \eqref{non_local_heat} and the comparison principle imply that $\eta \leq u$, where $u$ solves  
\[ \begin{cases} \p_t u = -(-\Delta)^s u &(t,x)\in  (0,\infty) \times B   , \\ u (t,x)= 0 &(t,x)\in (0,\infty) \times B^c , \\ u(0,x) = \mu(x) &x\in  B
\end{cases}\]  
(note that \eqref{non_local_heat}  is applicable even if $\tau=0$ with a  positive probability since  by Remark \ref{eul_initial_data}, the initial data of $\eta$ is at most $\mu$).
{By the comparison principle, $0 \leq u \leq ||\mu||_{L^{\infty}}$ and $u$ is compactly supported in $\overline{B}$. As $\eta\le u$, it follows that $\eta \leq ||\mu||_{L^{\infty}} \chi_B$.} 

Next, Lemma \ref{nu_decay}, \eqref{decay}, and the fact $\nu \in L^{\infty}$ implies that
\[ \rho([0,t),x) \leq \nu(x) \leq \frac{C}{1+|x|^{d+2s}} . \]
Thus Lemma \ref{mu_t_formula} yields that $\mu_{t}(x) \leq C(1+|x|^{d+2s})^{-1}$.  Therefore, as $\mu_t$ satisfies the condition in Lemma \ref{spatial_reg_1} uniformly in $t$, we conclude the proof.
\end{proof}

Parallel arguments to  \cite[Lemma 4.8]{kim2021stefan} yield a time regularity  for the potential flow:

\begin{lemma} For $\tau \in \mathcal{S}$ with $\mathbb{E}[\tau]<\infty$, let  $(U_{\mu_t})_{t\ge 0}$   be the associated potential flow. Then there exists $C>0$ such that for any $0 \leq t \leq t'$,
\[ -C(t'-t) \leq  U_{\mu_{t'}}(x) - U_{\mu_t}(x) \leq 0, \] 
in particular the potential flow is uniformly Lipschitz in time. \label{time_reg}
\end{lemma}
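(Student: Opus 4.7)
The plan is to show that the potential flow satisfies the pointwise identity
\[
U_{\mu_{t'}}(x) - U_{\mu_t}(x) = -\int_t^{t'} \eta(s,x)\, ds,
\]
from which both claims follow: non-positivity from $\eta \geq 0$, and the Lipschitz bound from a uniform $L^\infty$ estimate on $\eta$.

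First I would establish, in the distributional sense on $\R^d$, the identity
\[
\partial_t \mu_t = -(-\Delta)^s \eta(t,\cdot).
\]
For $\varphi \in C_c^\infty(\R^d)$ and $0 \leq t \leq t'$, applying Dynkin's formula (Proposition \ref{ito}) to the stopped process $X_{\cdot \wedge \tau}$ between times $t$ and $t'$ yields
\[
\E[\varphi(X_{t'\wedge \tau})] - \E[\varphi(X_{t\wedge \tau})] = \E\!\left[\int_t^{t'} \chi_{\{s<\tau\}} (-(-\Delta)^s \varphi)(X_s)\, ds\right],
\]
and the right-hand side equals $-\int_t^{t'}\!\int_{\R^d} \varphi(x) (-\Delta)^s \eta(s,x)\, dx\, ds$ by Lemma \ref{Eulerian_Variables} and integration by parts (valid since $\eta(s,\cdot)$ is compactly supported in $\overline B$ by Lemma \ref{spatial_reg} and its proof).

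Next I would convolve with the Riesz kernel $N$. Pairing the previous identity with $N * \varphi$ in place of $\varphi$ and using $(-\Delta)^s N = \delta_0$ gives, for every $\varphi \in C_c^\infty(\R^d)$,
\[
\int_{\R^d} \varphi(x)\bigl(U_{\mu_{t'}}(x) - U_{\mu_t}(x)\bigr)\, dx = -\int_t^{t'}\!\int_{\R^d} \varphi(x) \eta(s,x)\, dx\, ds,
\]
so the pointwise identity displayed at the start of this plan holds almost everywhere, and in fact everywhere by the spatial continuity of $U_{\mu_t}$ given by Lemma \ref{spatial_reg}. The upper bound $U_{\mu_{t'}} - U_{\mu_t} \leq 0$ is then immediate since $\eta \geq 0$.

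Finally, for the lower bound I would invoke the uniform estimate $\eta \leq \|\mu\|_{L^\infty}$ established in the proof of Lemma \ref{spatial_reg} (via comparison with the fractional heat equation on $B$ with initial datum $\mu$). This yields
\[
U_{\mu_{t'}}(x) - U_{\mu_t}(x) \geq -\|\mu\|_{L^\infty}(t'-t),
\]
completing the proof with $C = \|\mu\|_{L^\infty}$. The only delicate point in the argument is the justification of the distributional identity $\partial_t \mu_t = -(-\Delta)^s \eta(t,\cdot)$ when $\tau=0$ with positive probability (cf.\ Remark \ref{eul_initial_data}): the Dynkin computation above still goes through because the contribution at time $0$ cancels when one compares $\mu_t$ to $\mu_{t'}$ for $t > 0$, and the case $t=0$ follows by continuity from below in $t$, which is itself a consequence of the Lipschitz bound on $(0,\infty)$.
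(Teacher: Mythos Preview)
Your argument is correct and is precisely the computation the paper has in mind: the paper itself does not give a proof here but defers to \cite[Lemma 4.8]{kim2021stefan}, and the identity $U_{\mu_{t'}}-U_{\mu_t}=-\int_t^{t'}\eta(s,\cdot)\,ds$ you derive is exactly what the paper later records as \eqref{w_time_deriv}--\eqref{w_explicit} in the proof of Lemma~\ref{open_barrier}. One minor point: you repeatedly invoke Lemma~\ref{spatial_reg} (for the bound $\eta\le\|\mu\|_{L^\infty}$, for compact support of $\eta(s,\cdot)$, and for continuity of $U_{\mu_t}$), but that lemma carries the stronger hypothesis $\tau\le\tau_r$, whereas the present lemma only assumes $\E[\tau]<\infty$. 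In the paper's setting this is harmless since every stopping time under consideration lies in $\mathcal{T}_r(\mu,\nu)$; if you want the statement in the generality written, the bound $\eta(t,\cdot)\le\|\mu\|_{L^\infty}$ follows instead from comparison with the free fractional heat semigroup $\mu*p_t$, and the pointwise identity can be left as an a.e.\ statement without affecting the conclusion.
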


As a consequence of Lemmas \ref{spatial_reg} and  \ref{time_reg}, we obtain the following fact.
 
\begin{lemma} For $\tau \in \mathcal{S}$ with $\mathbb{E}[\tau]<\infty$,  the associated  potential flow $(U_{\mu_t})_{t\ge 0}$ satisfies 
\begin{enumerate}[label=(\alph*)]
	\item $\lim_{t \rightarrow 0+} U_{\mu_t} = U_{\mu}$ \text{uniformly}.
	\item $\lim_{t \rightarrow \infty} U_{\mu_t} = U_{\nu}$ \text{uniformly}.
\end{enumerate} \label{unif_limits}
\end{lemma}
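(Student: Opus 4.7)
The plan is as follows. Part (a) follows essentially immediately from Lemma \ref{time_reg}: since $\mu_0$, the law of $X_{0\wedge\tau}=X_0$, equals $\mu$, applying Lemma \ref{time_reg} with the time pair $(t,t')=(0,t)$ gives $\sup_{x\in\R^d}|U_{\mu_t}(x)-U_\mu(x)|\le Ct$, which is uniform in $x$ and tends to $0$ as $t\to 0^+$.

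Part (b) requires more work. The plan is to combine the decomposition in Lemma \ref{mu_t_formula} with a near/far splitting of the singular Riesz kernel. By Lemma \ref{mu_t_formula}, $\mu_t=\eta(t,\cdot)+\rho([0,t),\cdot)$ a.e.\ in $(t,x)$, and $\rho([0,\infty),\cdot)=\nu$ by the construction of $\rho$ in Lemma \ref{Eulerian_Variables}. Hence
\[
U_{\mu_t}(y)-U_\nu(y)=\int_{\R^d}N(x-y)\eta(t,x)\,dx-\int_{\R^d}N(x-y)\rho([t,\infty),x)\,dx,
\]
and I will show that each of the two integrals tends to $0$ as $t\to\infty$, uniformly in $y\in\R^d$.

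The key uniform-in-$y$ estimate common to both terms is the following: for any non-negative density $g\in L^1(\R^d)\cap L^\infty(\R^d)$ and any $\delta>0$, splitting the integral of $N(x-y)g(x)$ at $|x-y|=\delta$ yields
\[
\sup_{y\in\R^d}\int_{\R^d}N(x-y)g(x)\,dx\le C\|g\|_{L^\infty}\delta^{2s}+C\|g\|_{L^1}\delta^{-(d-2s)},
\]
using $\int_{|z|<\delta}N(z)dz=C\delta^{2s}$ and $N(z)\le C\delta^{-(d-2s)}$ for $|z|\ge\delta$. Taking $g=\eta(t,\cdot)$: the comparison argument in the proof of Lemma \ref{spatial_reg} gives $\|\eta(t,\cdot)\|_{L^\infty}\le\|\mu\|_{L^\infty}$, while $\|\eta(t,\cdot)\|_{L^1}=\mathbb{P}(\tau>t)$ decays exponentially in $t$ by Remark \ref{exp_moments_exit_time} since $\tau\le\tau_r$. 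Taking $g=\rho([t,\infty),\cdot)$: the bound $\rho([t,\infty),\cdot)\le\nu$ (immediate from the definition of $\rho$) together with $\nu\in L^\infty$ controls $\|g\|_{L^\infty}$, and $\|\rho([t,\infty),\cdot)\|_{L^1}=\mathbb{P}(\tau\ge t)$ again decays exponentially. In each case, first choosing $\delta$ small and then $t$ large makes the right-hand side uniformly small.

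The main obstacle is keeping all bounds uniform in $y\in\R^d$ despite the non-integrable singularity and slow decay of the Riesz kernel $N$; this is handled in both terms by the same splitting trick, which balances the $L^\infty$ bound on the density near $y$ against the exponentially small $L^1$ tail far from $y$.
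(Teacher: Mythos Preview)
Your argument for (a) is correct and is exactly the natural one.

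For (b), your argument is correct but takes a different route from the paper. The paper deduces the lemma directly from Lemmas~\ref{spatial_reg} and~\ref{time_reg}: monotonicity in $t$ (Lemma~\ref{time_reg}) yields a pointwise limit, the uniform $C^\alpha$ bound on $U_{\mu_t}$ (Lemma~\ref{spatial_reg}) gives equicontinuity and uniform decay at spatial infinity, and then Dini's theorem on compacts together with that decay upgrades the convergence to global uniform convergence, the limit being identified as $U_\nu$ via the weak convergence $\mu_t\to\nu$. You instead bypass the H\"older regularity and the Dini step entirely by decomposing $\mu_t-\nu$ through the Eulerian variables (Lemma~\ref{mu_t_formula}) and applying an explicit $L^\infty$--$L^1$ interpolation estimate on the Riesz potential. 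Your approach is more direct and quantitative---optimising $\delta$ against $\mathbb{P}(\tau>t)$ even gives a rate---and relies only on the $L^\infty$ bounds on $\eta(t,\cdot)$ and $\nu$ rather than on any $C^\alpha$ regularity; the paper's approach is softer but shorter once Lemmas~\ref{spatial_reg} and~\ref{time_reg} are in hand. One small technicality: Lemma~\ref{mu_t_formula} gives the decomposition $\mu_t=\eta(t,\cdot)+\rho([0,t),\cdot)$ only for a.e.\ $t$, so your estimate is first obtained for a.e.\ $t$; the time-Lipschitz bound of Lemma~\ref{time_reg} then immediately extends it to all $t\to\infty$.
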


We now introduce the  notion of forward and backward barrier associated to the potential flow. This  notion has been used in \cite{gassiat2015root} for the one-dimension space and in \cite{kim2021stefan, gassiat2021free} for general dimensions.

\begin{definition} [Barrier associated to potential flows] \label{barrier def}
 For  $\tau \in \mathcal{S}$, 
 let  $(U_{\mu_t})_{t\ge 0}$ be the associated potential flow. The forward/backward barrier functions and regions are defined as
	\[ s^{U,f}(x) := \inf \{ t \ge 0 : U_{\mu_t}(x) = U_{\nu}(x) \},\quad  R^{U,f} := \{ (t,x)\in \R^+ \times \R^d: t \geq s^{U,f}(x) \} \] and
 		\[ s^{U,b}(x) := \sup \{ t \ge 0 : U_{\mu_t}(x) = U_{\mu}(x) \} ,\quad  R^{U,b} := \{ (t,x)\in \R^+ \times \R^d: t \leq s^{U,b}(x) \}. \] 
 The corresponding
	stopping times are defined as
	\[ \tau^{U,f} := \inf \{t  \ge 0: U_{\mu_t}(X_t) = U_{\nu}(X_t) \} ,\quad   \tau^{U,b} := \inf\{t > 0: U_{\mu_t}(X_t) = U_{\mu}(X_t) \}.  \]    
   \end{definition}

Equivalently, by the monotonicity of $U_{\mu_t}$,
		\[ \tau^{U,f} = \inf \{t \ge 0: t \geq s^{U,f}(X_t) \},\quad \tau^{U,b} = \inf \{ t > 0 :  t \leq s^{U,b}(X_t) \}. \] 
By Lemmas \ref{spatial_reg} and \ref{time_reg}, $s^{U,f}$ and $-s^{U,b}$ are lower semi-continuous. Hence,  barriers $R^{U,f}$ and $R^{U,b}$ are closed in $\R^+ \times \R^d$.

\subsection{Potential analysis on the barrier}  We begin with our analysis on the barriers associated to  potential flows. 
Throughout this section, we assume that  $\tau \in \mathcal{S}$ satisfying  $X_0 \sim \mu$ with $X_{\tau} \sim \nu$ is  given by
\begin{align} \label{condition}
    \tau = \begin{cases}
        \inf \{t : t \geq s(X_t) \} &(\text{called Type (I)}), \\
      \inf\{ t : 0 < t \leq s(X_t) \} &(\text{called Type (II)}),
    \end{cases}
\end{align}
 for some measurable function $s: \R^d \rightarrow \R^+$.
The  terminologies Type (I) and Type (II) are inspired by  Theorem \ref{barrier_existence}. \\

  Our goal is to establish that the potential barrier induced by $\tau$ (in the sense of Definition \ref{barrier def}) are essentially the same as the original barrier generated by a function $s$.
By a parallel argument as in \cite[Proposition 4.12]{kim2021stefan}, we obtain the following result:

\begin{lemma} 
Let $\tau$ be as given in  \eqref{condition}.  Then 
\[ | \{ x\in \R^d : s(x) < s^{U,f}(x) \} | = 0 \text{ for Type (I) and }  |\{x\in \R^d :  s(x) > s^{U,b}(x) \}| = 0 \text{ for Type (II)}.  \] \label{pot_barrier_biggest}
\end{lemma}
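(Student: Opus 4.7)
The plan is to exploit the identity
\[
U_{\mu_{t}}(x) - U_{\mu_{t'}}(x) = \int_t^{t'} \eta(s,x)\,ds \quad \text{for } 0 \le t \le t',
\]
which is the nonlocal analogue of the key relation used in \cite{kim2021stefan}. To justify it, I would start from $\partial_t \mu_t = -(-\Delta)^s \eta(t,\cdot)$, which follows by summing the identity $\partial_t \eta + \rho = -(-\Delta)^s\eta$ in \eqref{non_local_heat} with $\partial_t \rho([0,t),\cdot)=\rho(t,\cdot)$ and using the decomposition $\mu_t = \eta(t,\cdot)+\rho([0,t),\cdot)$ from Lemma \ref{mu_t_formula}. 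Integrating in time and then convolving with the Riesz kernel $N$ yields the displayed identity; the decay of $\mu_t$ from Lemma \ref{spatial_reg} together with the Lipschitz-in-time regularity of $U_{\mu_t}$ from Lemma \ref{time_reg} make this rigorous and give that $\partial_t U_{\mu_t}(x)=-\eta(t,x)$ for a.e.\ $(t,x)$.

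Once this identity is in hand, I would combine it with the uniform limits $U_{\mu_t}\to U_\nu$ and $U_{\mu_t}\to U_\mu$ from Lemma \ref{unif_limits} to rewrite the barrier functions via the support of $\eta(\cdot,x)$:
\[
s^{U,f}(x) = \inf\Bigl\{t\ge 0 : \int_t^\infty \eta(s,x)\,ds = 0\Bigr\},\qquad s^{U,b}(x) = \sup\Bigl\{t\ge 0 : \int_0^t \eta(s,x)\,ds = 0\Bigr\}.
\]
Thus the two claims reduce to showing that $\eta$ is essentially concentrated on $\{(t,x): t \le s(x)\}$ in Type (I) and on $\{(t,x):t \ge s(x)\}$ in Type (II).

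For Type (I), I use the defining relation $\tau = \inf\{t: t \ge s(X_t)\}$, so that for every $t\in[0,\tau)$ one has $t < s(X_t)$. Testing $\eta$ against $\chi_{\{t\ge s(x)\}}$ via the probabilistic representation \eqref{def_eta_rho} yields
\[
\iint_{\R^+\times \R^d} \chi_{\{t\ge s(x)\}}\,\eta(t,x)\,dt\,dx = \mathbb{E}\Bigl[\int_0^\tau \chi_{\{t\ge s(X_t)\}}\,dt\Bigr] = 0,
\]
so $\eta(t,x)=0$ for a.e.\ $(t,x)$ with $t\ge s(x)$. The identity then forces $U_{\mu_{s(x)}}(x)=U_\nu(x)$ for a.e.\ $x$, hence $s^{U,f}(x)\le s(x)$ a.e. The Type (II) case is symmetric: since $\tau = \inf\{t: 0<t\le s(X_t)\}$, for every $t\in (0,\tau)$ we have $t> s(X_t)$, so the same probabilistic computation (with $\chi_{\{t\le s(x)\}}\chi_{\{t>0\}}$) shows $\eta(t,x)=0$ for a.e.\ $(t,x)$ with $t<s(x)$, giving $s^{U,b}(x)\ge s(x)$ a.e.

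The main obstacle I anticipate is the rigorous justification of the potential identity in step one: $U_{\mu_t}$ is a convolution with a singular kernel and we want to commute $\partial_t$ with the convolution and with $(-\Delta)^{-s}$. This is where the decay estimate $\mu_t(x)\le C(1+|x|^{d+2s})^{-1}$ of Lemma \ref{spatial_reg} and the uniform H\"older bound from Lemma \ref{spatial_reg_1} do the real work, ensuring that $\int N(x-y)\eta(s,y)\,dy$ is absolutely integrable in $s$ over compact intervals and that Fubini can be applied. The remaining steps, once this is set up, are essentially measure-theoretic bookkeeping and the Type (I)/(II) definition of $\tau$.
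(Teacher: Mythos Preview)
Your proposal is correct and follows essentially the same approach as the paper, which defers to \cite[Proposition 4.12]{kim2021stefan}: you derive the potential identity $\partial_t U_{\mu_t}=-\eta$ (this is exactly what the paper later records as \eqref{w_time_deriv}--\eqref{w_explicit} in the proof of Lemma \ref{open_barrier}, and that derivation does not use Lemma \ref{pot_barrier_biggest}, so there is no circularity), and then use the probabilistic representation \eqref{def_eta_rho} to show $\eta$ vanishes on the appropriate side of the graph of $s$. The only cosmetic point is that \eqref{def_eta_rho} is stated for continuous test functions, but since $\eta$ is a nonnegative absolutely continuous measure the extension to the indicator $\chi_{\{t\ge s(x)\}}$ is immediate by monotone approximation.
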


In particular, Lemma \ref{pot_barrier_biggest} says that the potential forward/backward barrier is the largest such barrier that embeds $\mu$ to $\nu$. Using the short hand $\tau^U := \tau^{U,f}$ with $R^U := R^{U,f}$ for Type (I) and $\tau^U := \tau^{U,b}$ with $R^U := R^{U,b}$ for Type (II), we conclude the following:
\begin{corollary} \label{cor 4.7} 
Let $\tau$ be the optimizer of  $\mathcal{P}_0(\mu,\nu)$ in \eqref{Var_Skoro} (assume additionally that  $\tau>0$ a.s. for costs of Type (II)). Then 
$\tau^{U} \leq \tau$ a.s.  
\end{corollary}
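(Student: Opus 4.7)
The statement follows rather quickly from Lemma \ref{pot_barrier_biggest} combined with the absolute continuity of $\nu$, so the plan is to pass from an \emph{a.e.\ inclusion of barriers} to an \emph{a.s.\ inequality of stopping times} using that $X_\tau \sim \nu$ is absolutely continuous with respect to Lebesgue measure.

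First, consider Type (I). Set $N := \{x \in \R^d : s(x) < s^{U,f}(x)\}$, which by Lemma \ref{pot_barrier_biggest} has Lebesgue measure zero. Since $\nu \in L^\infty(\R^d)$ by Assumption \ref{assume1}, $\nu$ is absolutely continuous with respect to Lebesgue measure, and hence $\mathbb{P}(X_\tau \in N) = \nu(N) = 0$. On the event $\{\tau < \infty\} \cap \{X_\tau \notin N\}$, the representation $\tau = \inf\{t \geq 0 : t \geq s(X_t)\}$ gives $\tau \geq s(X_\tau)$, and by the definition of $N$, also $s(X_\tau) \geq s^{U,f}(X_\tau)$. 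Thus $(\tau, X_\tau) \in R^{U,f}$, and by the very definition of $\tau^{U,f}$ as the first entry time into $R^{U,f}$, we obtain $\tau^{U,f} \leq \tau$. On $\{\tau = \infty\}$ the inequality is trivial.

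For Type (II), the analogous argument works with $N := \{x \in \R^d : s(x) > s^{U,b}(x)\}$, which again has Lebesgue measure zero by Lemma \ref{pot_barrier_biggest}, and so $\mathbb{P}(X_\tau \in N) = 0$. By the standing assumption $\tau > 0$ a.s., the representation $\tau = \inf\{t > 0 : t \leq s(X_t)\}$ gives $\tau \leq s(X_\tau)$. On $\{X_\tau \notin N\}$ this forces $\tau \leq s^{U,b}(X_\tau)$, so $(\tau, X_\tau) \in R^{U,b}$, whence $\tau^{U,b} \leq \tau$.

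The argument really has no serious obstacle once Lemma \ref{pot_barrier_biggest} is in hand; the only point that deserves care is the passage from the \emph{spatial} Lebesgue-measure-zero statement about $N$ to an a.s.\ event. This is where absolute continuity of $\nu$ is essential: without it one could only compare the two barriers up to a $\nu$-null set after the fact. Since $\nu \in L^\infty$ by Assumption \ref{assume1}, the above suffices to conclude $\tau^U \leq \tau$ almost surely in both cases.
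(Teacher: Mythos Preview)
Your approach is essentially the same as the paper's, but there is one step that is taken for granted and actually needs justification. You assert that ``the representation $\tau = \inf\{t \geq 0 : t \geq s(X_t)\}$ gives $\tau \geq s(X_\tau)$'' (and analogously $\tau \leq s(X_\tau)$ for Type~(II)) directly from the hitting-time formula. This is not immediate: Theorem~\ref{barrier_existence} only gives $s$ measurable, so the barrier $R=\{(t,x):t\geq s(x)\}$ need not be closed, and right-continuity of the c\`adl\`ag path $t\mapsto X_t$ only yields $(\tau,X_\tau)\in\overline{R}$ in general, not $(\tau,X_\tau)\in R$. The paper closes exactly this gap by invoking Lemma~\ref{stop_inside_barrier}, which uses the dual characterization $\varphi(\tau,X_\tau)=\psi(X_\tau)$ from \cite{ghoussoub2021optimal} to force $(\tau,X_\tau)\in R$ a.s.

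Once you cite Lemma~\ref{stop_inside_barrier} at that point, your argument is line-for-line the paper's: combine $s(X_\tau)\geq s^{U,f}(X_\tau)$ a.s.\ (from Lemma~\ref{pot_barrier_biggest} and $\nu\ll\mathrm{Leb}$) with $\tau\geq s(X_\tau)$ a.s.\ to obtain $(\tau,X_\tau)\in R^{U,f}$, hence $\tau^{U,f}\leq\tau$; and similarly for Type~(II) using the extra assumption $\tau>0$ a.s.
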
 
\begin{proof}
{By Theorem \ref{barrier_existence}, the optimizer $\tau$ is given by the space-time stopping time \eqref{condition} for some measurable function $s:\R^d \rightarrow \R^+$.} Let us first consider the Type (I) case. By Lemma \ref{pot_barrier_biggest} and recalling $X_\tau \sim \nu$ and $\nu \ll \text{Leb}$, we have    $s(X_{\tau}) \geq s^{U,f}(X_{\tau})$ a.s. {In addition, by Lemma \ref{stop_inside_barrier}, the stopped particles are inside the barrier, i.e. $\tau \geq s(X_{\tau})$ a.s.  In summary, $\tau \geq s^{U,f}(X_{\tau})$, implying that  $\tau \geq \tau^{U,f}$ a.s.}

The proof is similar for Type (II), except that when applying {Lemma \ref{stop_inside_barrier}}, we need the condition $\tau>0$ a.s.
\end{proof} 

With this result we can show that the active region is open:

\begin{lemma} \label{open_barrier}
Let $\tau$ be as given in  \eqref{condition} and $(\eta,\rho)$ be the associated Eulerian variables. Denote by $R^U$ the potential barrier in the sense of Definition \ref{barrier def}.  Then up to a zero-measure set (w.r.t. space-time Lebesgue measure),
\[ \{\eta>0\} = (R^U)^c. \] In particular, recalling that $R^U$ is closed, the positive set of $\eta$ is open (up to a zero-measure set).	 
\end{lemma}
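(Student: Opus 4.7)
The plan rests on the distributional identity $\partial_t U_{\mu_t}(x) = -\eta(t,x)$. To derive it, I combine the decomposition $\mu_t = \eta(t,\cdot) + \rho([0,t),\cdot)$ from Lemma \ref{mu_t_formula} with the equation \eqref{non_local_heat}: differentiating in $t$ gives $\partial_t \mu_t = -(-\Delta)^s \eta$ as distributions, and applying the Riesz potential (the inverse of $(-\Delta)^s$) to both sides yields the identity. Integrating in $t$ with the uniform limits from Lemma \ref{unif_limits} produces the representations
\[
U_{\mu_t}(x) - U_\nu(x) = \int_t^\infty \eta(s,x) \, ds \quad \text{(Type I)}, \qquad U_\mu(x) - U_{\mu_t}(x) = \int_0^t \eta(s,x) \, ds \quad \text{(Type II)}.
\]
Together with the monotonicity of $t \mapsto U_{\mu_t}$ (Lemma \ref{time_reg}) and the definitions of $s^{U,f}, s^{U,b}$, these characterize the barriers by vanishing time-integrals:
\[
R^{U,f} = \Bigl\{(t,x):\int_t^\infty \eta(s,x)\, ds = 0\Bigr\}, \qquad R^{U,b} = \Bigl\{(t,x):\int_0^t \eta(s,x)\, ds = 0\Bigr\}.
\]

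The inclusion $\{\eta>0\} \subset (R^U)^c$ modulo a null set follows immediately by Fubini: on $R^U$ the $x$-sectional time-integrals vanish, so $\eta(\cdot,x) = 0$ for a.e.\ $s$ in those sections, making $\{\eta > 0\} \cap R^U$ null.

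For the reverse inclusion I would argue by contradiction. Suppose $\eta$ vanishes on a positive-measure subset $E \subset (R^U)^c$. At a Lebesgue density point $(t_0, x_0)$ of $E$ we have $s^{U,f}(x_0) > t_0$ (Type I), and no stopping has occurred at $x_0$ before $s^{U,f}(x_0)$ (so $\rho([0,t), x_0) = 0$ for $t \leq s^{U,f}(x_0)$, since stopping only occurs inside the barrier). Using the density together with the Lipschitz continuity of $U_{\mu_\cdot}(x_0)$ from Lemma \ref{time_reg}, one can choose $t_1 < t_0 < t_2 < s^{U,f}(x_0)$ with $U_{\mu_{t_1}}(x_0) = U_{\mu_{t_2}}(x_0)$. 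Equivalently, $W(y) := U_{\mu_{t_2}}(y) - U_{\mu_{t_1}}(y) \le 0$ attains its maximum $0$ at $y = x_0$, and $(-\Delta)^s W(x_0) = \mu_{t_2}(x_0) - \mu_{t_1}(x_0) = 0$ by the vanishing of $\eta(\cdot,x_0)$ and of $\rho([t_1,t_2), x_0)$. The strong $s$-harmonic maximum principle then forces $W \equiv 0$, so $\mu_t \equiv \mu_{t_1}$ on $[t_1, t_2]$; hence $-(-\Delta)^s \eta(t,\cdot) = \partial_t \mu_t = 0$ there, and the compact support of $\eta(t,\cdot)$ in $\overline{B}$ (Lemma \ref{Eulerian_Variables}) combined with the maximum principle for $(-\Delta)^s$ forces $\eta(t,\cdot) \equiv 0$ on $[t_1,t_2]$. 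Propagating this via the Duhamel representation for \eqref{non_local_heat} and the non-negativity of $\rho$ extends $\eta \equiv 0$ to $[t_1, \infty)$, so $\nu = \mu_{t_1}$ and $U_\nu = U_{\mu_{t_1}}$, contradicting $U_{\mu_{t_1}}(x_0) > U_\nu(x_0)$ coming from $t_1 < s^{U,f}(x_0)$. The Type (II) case runs analogously in reverse time, comparing to $U_\mu$ instead of $U_\nu$.

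The main obstacle is this last step: bridging the a.e.\ vanishing of $\eta$ on $E$ to pointwise strict monotonicity of the potential $U_{\mu_\cdot}(x_0)$, so that the strong $s$-harmonic maximum principle can be applied at a concrete pair of times, and carefully tracking any atomic part of $\rho$ in the time variable when propagating the vanishing forward.
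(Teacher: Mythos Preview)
Your derivation of the integral representations $U_{\mu_t}-U_\nu=\int_t^\infty\eta\,ds$ (Type I) and $U_\mu-U_{\mu_t}=\int_0^t\eta\,ds$ (Type II), and the resulting inclusion $\{\eta>0\}\subset (R^U)^c$ up to a null set, are correct and match the paper's argument exactly.

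The reverse inclusion, however, has a genuine gap---precisely the one you flag as the ``main obstacle.'' Lebesgue density of $E$ at a space-time point $(t_0,x_0)$ does not imply that the time section $E_{x_0}$ contains an interval, so you cannot conclude $\int_{t_1}^{t_2}\eta(s,x_0)\,ds=0$ for some $t_1<t_2$. For instance, $\{t:\eta(t,x_0)=0\}$ could be a fat Cantor set: positive measure, yet $U_{\mu_t}(x_0)$ remains strictly decreasing because every subinterval meets $\{\eta(\cdot,x_0)>0\}$ in positive measure. There is also a secondary issue: the pointwise evaluation $(-\Delta)^s W(x_0)=\mu_{t_2}(x_0)-\mu_{t_1}(x_0)$ needs $\mu_t$ to be meaningful at a single point, and the available regularity from Lemma~\ref{spatial_reg} is only $C^\alpha$ for $\alpha<2s$, which is borderline for this.

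The paper bypasses all of this with a one-line parabolic argument. By Lemma~\ref{pot_barrier_biggest} the original barrier $R$ is contained in $R^U$ up to a null set, and since $\rho$ is supported on $R$ (stopped particles land in the barrier), $\rho[(R^U)^c]=0$. Hence on the open set $(R^U)^c$ the Eulerian equation \eqref{non_local_heat} reduces to the fractional heat equation $\partial_t\eta+(-\Delta)^s\eta=0$ with nonnegative exterior data and initial data $\mu$, and the strong maximum principle for the parabolic problem gives $\eta>0$ on $(R^U)^c\cap\{t>0\}$ directly. You already invoke ``stopping only occurs inside the barrier'' in your argument, which is exactly this fact; using it this way avoids the density-point machinery entirely.
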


\begin{proof} We follow the proof in \cite{kim2021stefan}.  Let
\begin{align} \label{w}
    w(t,x) :=  \begin{cases} U_{\mu_t}(x)-U_{\nu}(x) \quad \text{ for Type (I)}, \\ U_{\mu}(x) - U_{\mu_t}(x) \quad  \text{ for Type (II)}.\end{cases}
\end{align}
Then  by Lemma \ref{time_reg}, $w \geq 0$. Also  by definition of $R^U$, we have {$ (R^U)^c = \{w>0\}$}. Hence, it suffices to prove that up to a zero-measure space-time set,
\begin{align} \label{relation}
    \{\eta>0\}=\{w>0\}.
\end{align}

For a test function $g \in C^{\infty}_c( (0,\infty) \times \R^d)$, let $\varphi(t,\cdot):= N*g(t,\cdot)$ (recall that  $N$ denotes the Riesz kernel, see \eqref{riesz}), which satisfies $(-\Delta)^s  \varphi(t,\cdot)=g(t,\cdot)$ and decays at infinity. By Lemmas \ref{mu_t_formula} and \ref{diff_rho_t}, in the case of  Type (I),  
\[ \iint g ( \p_t w )  dtdx = \iint \varphi (\p_t \mu_t) dtdx = \iint \varphi(\p_t \eta + \rho ) dtdx= \iint -\varphi \cdot  (-\Delta)^s  \eta dtdx = \iint -g \eta  dtdx. \] By a similar computation for Type (II), along with the Lipschitz regularity   of $w$ (in time), 
\begin{equation} \p_t w = \begin{cases} -\eta & \text{ for Type (I)}, \\ \eta & \text{ for Type (II)}  .\end{cases}\label{w_time_deriv}
\end{equation}
Integrating in time and using Lemma \ref{unif_limits},
\begin{equation} 
w(t,x) = \begin{cases} \int_t^{\infty} \eta(s,x) ds &\text{ for Type (I)}, \\ \int_0^t \eta(s,x) ds &\text{ for Type (II)}.  
\end{cases}
\label{w_explicit} 
\end{equation}

We claim that this implies that $\{\eta>0\} \subset \{w>0\}$ up to a 
 zero-measure set. Indeed, for Type (I), recalling $\{w(t,x)=0\}=\{t \geq s^U(x)\}$, we have $0=w(s^U(x),x)=\int_{s^U(x)}^{\infty} \eta(t,x) dt$ for every $x\in \R^d$, implying that $\eta(t,x)=0$ for a.e. $t \geq s^U(x)$.  Hence,  we
 verify the claim.

\medskip   

 For the reverse direction, observe that by Lemma \ref{pot_barrier_biggest},  $\rho [ (R^U)^c]=0$. Hence $\eta$ solves the fractional heat equation on the open set $(R^U)^c$ with a zero boundary data and initial data $\mu$. By the strong maximum principle,  $\eta>0$ on $(R^U)^c \cap \{t>0\}$. Therefore, we establish \eqref{relation}.
\end{proof}

\begin{remark} \label{modififcation}
{By Lemma \ref{open_barrier}, one can modify $\eta$ on a set of space-time Lebesgue measure zero so that $\{\eta>0\}$ is open in $\R^+ \times \R^d$ (in particular, $  \{\eta>0\}=\{w>0\}$). From now on, we  work with this modification of $\eta$.}
\end{remark}

Let $\tau$ be the optimizer  of $\mathcal{P}_0(\mu,\nu)$. Our goal is to show that in the case of Type (I) cost,  $\tau=\tau^U$ a.s.  
{Once this is verified, one can assume that the optimal stopping time of $\mathcal{P}_0(\mu,\nu)$ in Theorem \ref{barrier_existence} is $\tau^U$, which allows us to abuse the notation} $s=s^U$ and $R=R^U$.
\begin{theorem} 
Let $\tau$ optimize $\mathcal{P}_0(\mu,\nu)$  with a Type (I) cost. Then $\tau=\tau^U$ a.s.
\label{equality_hitting_time_type_1}
\end{theorem}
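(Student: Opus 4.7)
The plan is to use Corollary~\ref{cor 4.7}, which already gives $\tau^U \leq \tau$ a.s., and then establish the reverse inequality $\tau \leq \tau^U$ a.s. by showing that the process cannot enter the closed forward barrier $R^U$ before being stopped by $\tau$. Once both inequalities hold, we conclude $\tau=\tau^U$ a.s.

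The starting point is Lemma~\ref{open_barrier}, which (after the modification of Remark~\ref{modififcation}) gives $\{\eta>0\}=(R^U)^c$ as open subsets of $\R^+\times\R^d$, so $R^U$ is closed and $\eta\equiv 0$ on $R^U$. Combining this with the probabilistic interpretation from Lemma~\ref{Eulerian_Variables} (namely, $\eta(t,x)\,dt\,dx$ is the law of $(t,X_t)\mathbf{1}_{\{t<\tau\}}$ on $\R^+\times\R^d$) and Fubini yields
\[
\mathbb{E}\!\left[\int_0^{\tau}\mathbf{1}_{R^U}(t,X_t)\,dt\right] \;=\; \iint_{R^U}\eta(t,x)\,dt\,dx \;=\; 0,
\]
so almost surely the set $\{t\in[0,\tau):(t,X_t)\in R^U\}$ has zero Lebesgue measure.

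The key remaining step is to upgrade this Lebesgue-null statement to a pathwise emptiness, which I would do by contradiction using the strong Markov property. Suppose $\mathbb{P}(\tau^U<\tau)>0$. Because $R^U$ is closed and $X$ is c\`adl\`ag, on this event $(\tau^U,X_{\tau^U})\in R^U$. Applying the strong Markov property at $\tau^U$, the shifted process $(X_{\tau^U+u})_{u\geq 0}$ is an independent $2s$-stable process starting from $X_{\tau^U}$. Using the everywhere-positive transition density of the stable process together with the forward-barrier inclusion $R^U(\tau^U+u)\supseteq R^U(\tau^U)\ni X_{\tau^U}$, and invoking the structural properties of $\nu$ (absolutely continuous, bounded, with compact active region) together with the elliptic-regularity characterization of $R^U$ built up in Section~\ref{barrier_property}, one shows that $R^U(t)$ has positive Lebesgue measure for $t$ in an open interval to the right of $\tau^U$. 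This produces a positive-probability event on which $X_{\tau^U+u}\in R^U(\tau^U+u)$ with $\tau^U+u<\tau$ for $u$ in a set of positive measure; integration in $u$ then gives a strictly positive value for $\mathbb{E}[\int_0^{\tau}\mathbf{1}_{R^U}(t,X_t)\,dt]$, contradicting the identity above. Combined with Corollary~\ref{cor 4.7}, this yields $\tau=\tau^U$ a.s.

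The hard part will be the last step: upgrading \emph{Lebesgue-null time in $R^U$} to \emph{pathwise avoidance of $R^U$} before $\tau$. For pure-jump $2s$-stable processes a particle can in principle dart in and out of a closed set along a Lebesgue-null set of times, so the argument genuinely requires quantitative thickness of $R^U$ just after $\tau^U$, derived from the obstacle-problem characterization of the barrier and the spreading of the stable transition density. This is where I expect the technical heart of the proof to lie.
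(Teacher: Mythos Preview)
Your approach diverges from the paper's after the common first step $\eta[R^U]=0$. The paper does \emph{not} attempt a pathwise upgrade at all. Instead it introduces the Eulerian variables $(\eta^U,\rho^U)$ associated to $\tau^U$, checks directly that $\eta^U[R^U]=0$ and $\rho^U[R^U]=1$ (the former from the definition of $\tau^U$, the latter from closedness of $R^U$ and right-continuity of paths), and then observes that $(\eta,\rho)$ satisfies the same two conditions (by Lemma~\ref{open_barrier} and Lemma~\ref{pot_barrier_biggest} respectively). Both pairs therefore solve \eqref{non_local_heat} with the same forward barrier, so Theorem~\ref{uniqueness} gives $\rho=\rho^U$, hence $(\tau,X_\tau)\sim(\tau^U,X_{\tau^U})$ in law; combined with $\tau^U\le\tau$ this forces $\tau=\tau^U$ a.s. The whole argument is PDE/potential-theoretic and never touches the question of whether the path lingers in $R^U$.

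Your proposed final step, by contrast, has a genuine gap that you yourself flag but do not close. To run the strong-Markov argument you need, on the event $\{\tau^U<\tau\}$, a neighborhood (in space) of $X_{\tau^U}$ contained in the time-section $R^U(\tau^U+u)$ for $u$ in some interval; otherwise the shifted process can immediately leave $R^U$ and spend zero Lebesgue time there. But the only regularity available is that $s^{U}$ is \emph{lower} semicontinuous, which says precisely that nearby points may have \emph{larger} barrier function, i.e.\ may lie outside $R^U(t)$ for $t$ just above $\tau^U$. Your appeal to ``the elliptic-regularity characterization of $R^U$'' and ``structural properties of $\nu$'' does not supply a concrete mechanism that rules this out, and for a pure-jump process hitting a closed set at a single instant and departing immediately is entirely possible. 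So the sketch, as written, does not constitute a proof; the clean route is the uniqueness theorem for Eulerian variables (Theorem~\ref{uniqueness}), which you did not invoke.
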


\begin{proof}
By Corollary \ref{cor 4.7}, $\tau^U \leq \tau$. In particular, as $\tau \leq \tau_r = \inf\{t  \ge 0: X_t \notin B_r(0) \}$, we see that $(X,\tau^U)$ has an active compact region in $B_r(0)$.

Let  $(\eta^U,\rho^U)$ be the    Eulerian variables associated to $\tau^U$.  
  We claim that $\eta^U[ R^U ]= 0$ and $\rho^U[R^U]=1$. Indeed, by Fubini's Theorem,
\[ \eta^U[R^U] = \int_0^{\infty} \mathbb{P}(s < \tau^U, (s,X_s) \in R^U)ds.  \] 
Since $(s,X_s) \notin R^U$ for  $0\le s < \tau^U$, we have $\eta^U[R^U]=0$. Also, since $R^U$ is closed and $t\mapsto X_t$ is  right-continuous, $(\tau^U,X_{\tau^U}) \in  R^U$ a.s., which implies $\rho^U[R^U]=1$. 

Next, let $(\eta,\rho)$ be the Eulerian variables associated to $\tau$ from Lemma \ref{Eulerian_Variables}. By  Lemma \ref{open_barrier}, $\eta[R^U]=0$. In addition, by Lemma \ref{pot_barrier_biggest}, $\rho[R^U]=1$. Hence, by Theorem \ref{uniqueness}, we obtain that $\rho=\rho^U$. In particular, as $\rho \sim (\tau,X_{\tau})$ and $\rho^U \sim (\tau^U,X_{\tau^U})$, we conclude that $\tau \sim \tau^U$. Since $\tau^U \leq \tau$ by Corollary \ref{cor 4.7}, we deduce that $\tau=\tau^U$ a.s. 
\end{proof}

Before we discuss the corresponding result for Type (II), we introduce the PDE characterization of the potential variable $w$. This result will be important later in Section \ref{connection_stefan} where we derive the connection between the nonlocal Stefan problem and the parabolic obstacle problem for Type (II) (see Theorems \ref{associated_obstacle_melt} and \ref{associated_freezing_obstacle}). 
\begin{theorem}  \label{theorem 3.13}
Let $\tau$ optimize $\mathcal{P}_0(\mu,\nu)$  with a Type (I) or (II) cost (assume that $\tau>0$ a.s. in Type (II) case). 
Let $w$ be defined in \eqref{w}, where the potential flow is associated to $\tau$. Then, $w$ solves the following parabolic obstacle problem:
\begin{equation} \label{w_obstacle}
\begin{cases} \min \{ \p_t w + (-\Delta)^s w +\nu  ,w\}=0 \text{ with } w(0,\cdot) = U_{\mu} - U_{\nu} \quad & \text{ for Type (I)}, \\ \\ \min \{\p_t w + (-\Delta)^s w + \nu - \mu,w\} = 0 \text{ with } w(0,\cdot) = 0 &\text{ for Type (II)}. \end{cases}
\end{equation} 
\end{theorem}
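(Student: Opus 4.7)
The plan is to compute $\partial_t w + (-\Delta)^s w$ directly, using that $w$ is a difference of Riesz potentials and that $(-\Delta)^s U_m = m$ for the potential defined in \eqref{U-m}. The time derivative is already at hand from \eqref{w_time_deriv}: $\partial_t w = -\eta$ for Type (I) and $\partial_t w = \eta$ for Type (II). For the spatial operator, applying $(-\Delta)^s$ termwise to the definition in \eqref{w} yields $(-\Delta)^s w = \mu_t - \nu$ for Type (I) and $(-\Delta)^s w = \mu - \mu_t$ for Type (II). Substituting the decomposition $\mu_t = \eta + \rho([0,t),\cdot)$ from Lemma \ref{mu_t_formula} gives
\begin{equation*}
\partial_t w + (-\Delta)^s w + \nu = \rho([0,t),\cdot) \quad \text{(Type (I))},
\end{equation*}
\begin{equation*}
\partial_t w + (-\Delta)^s w + \nu - \mu = \nu - \rho([0,t),\cdot) = \rho([t,\infty),\cdot) \quad \text{(Type (II))},
\end{equation*}
where the second equality uses $\rho([0,\infty),\cdot) = \nu$. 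In both cases the right-hand side is a nonnegative measure.

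Next I verify the complementarity with $w$. After the modification from Remark \ref{modififcation}, Lemma \ref{open_barrier} gives $\{w>0\} = (R^U)^c$, so it suffices to show that $\rho([0,t),x) = 0$ for $(t,x) \notin R^U$ in Type (I), and that $\rho([t,\infty),x) = 0$ for $(t,x) \notin R^U$ in Type (II). For Type (I), $R^U = R^{U,f}$ is a forward barrier with cross-section at $x$ equal to $[s^{U,f}(x),\infty)$. Since $\rho$ is the law of $(\tau,X_\tau)$ and $\tau = \tau^{U,f}$ with $(\tau^{U,f}, X_{\tau^{U,f}}) \in R^{U,f}$ a.s. by right-continuity and closedness of $R^{U,f}$ (as in the proof of Theorem \ref{equality_hitting_time_type_1}), $\rho$ is supported in $R^{U,f}$. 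Hence for $t < s^{U,f}(x)$, the density $\rho([0,t),x)$ vanishes. The analogous statement for Type (II), using that $R^{U,b}$ has cross-section $[0,s^{U,b}(x)]$ at $x$, yields $\rho([t,\infty),x) = 0$ for $t > s^{U,b}(x)$.

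Finally, the initial data are read off: $w(0,x) = U_{\mu_0}(x) - U_\nu(x) = U_\mu(x) - U_\nu(x)$ for Type (I), and $w(0,x) = U_\mu(x) - U_{\mu_0}(x) = 0$ for Type (II). Combined with $w \geq 0$ (Lemma \ref{time_reg}) and the nonnegativity of the source term, this yields the obstacle problem \eqref{w_obstacle}.

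The main obstacle I expect is the rigorous interpretation of $(-\Delta)^s w$ and of the equation, since $w$ is only known to be Lipschitz in time and $C^\alpha$ in space (Lemmas \ref{spatial_reg}, \ref{time_reg}), while $\rho$ is a singular measure. I would interpret the identities above distributionally: test against $\varphi \in C^\infty_c((0,\infty) \times \R^d)$ and use $\int \varphi (-\Delta)^s w = \int w (-\Delta)^s \varphi$ together with the uniform decay of $\mu_t$ from Lemma \ref{spatial_reg} to justify moving $(-\Delta)^s$ inside the Riesz potential. The complementarity $\rho \cdot \mathbf{1}_{\{w>0\}} = 0$ then follows from the openness of $\{w>0\}$ and the support property of $\rho$ in $R^U$, matching the computation above on a full-measure set and thus yielding \eqref{w_obstacle} in the viscosity/distributional sense.
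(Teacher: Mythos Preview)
Your argument is correct and follows the same route as the paper: compute $\partial_t w + (-\Delta)^s w$ via $(-\Delta)^s U_m = m$ and the decomposition $\mu_t = \eta + \rho([0,t),\cdot)$, then verify complementarity by showing the source term vanishes on $\{w>0\}=(R^U)^c$.

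One point to tighten: for Type (II) you write ``the analogous statement'' after invoking $\tau=\tau^{U,f}$ from Theorem~\ref{equality_hitting_time_type_1} in the Type (I) case. At this stage of the paper there is no analogue of that theorem for Type (II); in fact the corresponding result (Theorem~\ref{equality_of_barrier}) comes \emph{after} Theorem~\ref{theorem 3.13} and uses it, so appealing to it here would be circular. The fix is exactly what the paper does: use Lemma~\ref{stop_inside_barrier} (which gives $\tau\le s(X_\tau)$ when $\tau>0$) together with Lemma~\ref{pot_barrier_biggest} (which gives $s\le s^{U,b}$ a.e., hence $\nu$-a.e.) to conclude $\tau\le s^{U,b}(X_\tau)$ a.s., i.e.\ $\rho$ is supported in $R^{U,b}$. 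This yields $\rho([t,\infty),x)=0$ for $t>s^{U,b}(x)$ without needing $\tau=\tau^{U,b}$.
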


\begin{proof} 
We just consider the Type (II) case since parallel arguments work for Type (I) case.   Let $(\eta,\rho)$ be the Eulerian variables associated to the optimal stopping time $\tau$.
By taking the fractional Laplacian in \eqref{w}, Lemma \ref{mu_t_formula} and \eqref{w_time_deriv} gives
\begin{equation}  \label{Theorem_4.13_identity} 
\p_t w(t,x) + (-\Delta)^s w(t,x) =  \mu(x)  - \rho([0,t),x). \end{equation}

By Lemmas \ref{spatial_reg_1} and \ref{time_reg},  $w$ is continuous. Also,  Lemma \ref{open_barrier}  (in particular, \eqref{relation}) and Lemma \ref{pot_barrier_biggest} yields that  if $ w(t,x)>0$ (equivalently, $  t > s^U(x) $),  then $\rho([0,t),x)=\nu(x)$. Hence,
\[ \p_t w + (-\Delta)^s w + \nu - \mu = 0 \quad \text{ on } \{w>0\}. \]
In addition,  since  $\nu(x) \geq \rho([0,t),x)$ for any $x\in \R^d$ and $t\ge 0$,  
\[ \p_t w + (-\Delta)^s w + \nu - \mu \geq \p_t w + (-\Delta)^s w + \rho([0,t),\cdot ) - \mu = 0, \]  where we used \eqref{Theorem_4.13_identity} in the final equality.
Thus we conclude that $w$ solves \eqref{w_obstacle}.

\end{proof} 

For Type (II), we present a direct  probabilistic  argument that does not rely on Theorem~\ref{uniqueness}, taking advantage of the nonlocal nature of  diffusion. While the argument is more intuitive and simpler, it achieves a slightly weaker result, namely we only show that $s=s^U$ a.e. Because of this reason we will not pursue our method for Type (I), which appears to require more careful analysis. We first state a consequence of Theorem~\ref{theorem 3.13} for Type (II).

\begin{corollary} Let $\tau$ be the optimal stopping time for $\mathcal{P}_0(\mu, \nu)$ with a Type (II) cost, and assume that $\tau>0$ a.s. For $t\ge 0$ and  $\delta>0$, assume that $A$ is a Borel set in  $\R^d$ of positive Lebesgue measure such that $A \subset \{ x\in \R^d: (t+\delta,x) \in R^U \}$. Then, $\rho( [t,t+\delta] \times A)>0$.
\label{rho_supported_RU}
\end{corollary}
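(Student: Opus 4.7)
The plan is to derive a closed-form identity for $\rho([t,t+\delta)\times A)$ as an integral of the nonnegative function $u(y) := \int_t^{t+\delta}\eta(s,y)\,ds = w(t+\delta,y) - w(t,y)$ against a strictly positive kernel, and then establish strict positivity by exhibiting a positive-measure set on which $u>0$. Proving the stronger bound $\rho([t,t+\delta)\times A) > 0$ clearly suffices.

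First, since $A$ lies in the $R^U$-section at time $t+\delta$ and $R^U$ is a backward barrier, the entire slab $[0,t+\delta]\times A$ sits inside $R^U$; by Lemma~\ref{open_barrier} and Remark~\ref{modififcation}, $\eta\equiv 0$ on $[0,t+\delta]\times A$, and in particular $u\equiv 0$ on $A$. Next, since $u = U_{\mu_t} - U_{\mu_{t+\delta}} = U_{\mu_t - \mu_{t+\delta}}$, one has $(-\Delta)^s u = \mu_t - \mu_{t+\delta}$ as absolutely continuous measures. Pair both sides against $\chi_A$: on one hand, Lemma~\ref{mu_t_formula} together with the vanishing of $\eta$ on $A$ at times $t$ and $t+\delta$ yields $(\mu_t - \mu_{t+\delta})(A) = -\rho([t,t+\delta)\times A)$; on the other hand, by symmetry of $(-\Delta)^s$ (justified by monotone approximation of $\chi_A$ from below by compactly supported smooth $\varphi_n$ in $A$, using $u = 0$ on $A$ to kill the contribution from $A$, and the pointwise limit $(-\Delta)^s\varphi_n(y) \to -C_{d,s}\int_A |x-y|^{-(d+2s)}\,dx$ for $y\in A^c$), one obtains $(\mu_t-\mu_{t+\delta})(A) = -C_{d,s}\int_{A^c}u(y)\int_A |x-y|^{-(d+2s)}\,dx\,dy$. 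Equating gives
\[ \rho([t,t+\delta)\times A) \;=\; C_{d,s}\int_{A^c} u(y)\int_A \frac{dx}{|x-y|^{d+2s}}\,dy. \]

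Since the inner kernel is strictly positive, it remains to show that $\{y \in A^c : u(y) > 0\}$ has positive Lebesgue measure. By Lemma~\ref{open_barrier} and Remark~\ref{modififcation}, $\{\eta>0\}=(R^U)^c$, so $u(y)>0$ precisely when $s^{U,b}(y)<t+\delta$; equivalently, $y$ lies in the open complement of the $R^U$-section at $t+\delta$, which is disjoint from $A$ by hypothesis. It therefore suffices to verify that $\{U_{\mu_{t+\delta}}<U_\mu\}$ is nonempty. Since $\mu$ is compactly supported in some ball $K$ and $\tau>0$ a.s., the unbounded heavy-tailed jumps of the $2s$-stable process give positive probability that $X_{(t+\delta)\wedge\tau}\notin K$, whence $\mu_{t+\delta}(K^c)>0 = \mu(K^c)$; injectivity of the Riesz potential $m\mapsto U_m$ then forces $U_{\mu_{t+\delta}}<U_\mu$ on a nonempty open set. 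The main technical obstacle is rigorously justifying the integration by parts against the nonsmooth $\chi_A$, handled via the monotone approximation sketched above, the Hölder regularity $u\in L^\infty\cap C^\alpha$ from Lemma~\ref{spatial_reg}, and dominated convergence (with integrable dominant because $u$ is the potential of a zero-mass compactly supported signed measure and hence decays sufficiently fast at infinity).
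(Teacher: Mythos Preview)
Your approach is essentially the paper's, repackaged. Both arguments hinge on the same identity: for $x\in A$ one has $-\rho([t,t+\delta),x) = (-\Delta)^s u(x) = -C_{d,s}\int u(y)\,|x-y|^{-(d+2s)}\,dy$ with $u:=w(t+\delta,\cdot)-w(t,\cdot)\ge 0$ vanishing on $A$, and then strict positivity follows once $u>0$ somewhere. The paper reaches this more directly by subtracting the identity $\partial_t w + (-\Delta)^s w = \mu - \rho([0,t),\cdot)$ of Theorem~\ref{theorem 3.13} at the two times, using $\partial_t w=\eta=0$ and $\mu=0$ on $A$, and reading off the pointwise kernel formula for $(-\Delta)^s w$ on its zero set; no integration by parts against $\chi_A$ is needed. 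Your monotone-approximation route also carries a technical snag: a Borel set $A$ of positive measure need not contain any open set, so smooth $\varphi_n$ supported in $A$ with $\varphi_n\uparrow\chi_A$ may simply fail to exist.

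The real gap is your positivity step. The claim that $X_{(t+\delta)\wedge\tau}\notin K$ with positive probability is not justified: nothing you invoke rules out the barrier stopping every particle before it leaves $K=\mathrm{supp}(\mu)$ (equivalently, $\nu$ could be supported in $K$, and ``heavy-tailed jumps'' alone do not prevent this once the path is constrained by $\tau$). What you actually need is weaker and immediate: since $w(t+\delta,\cdot)=\int_0^{t+\delta}\eta(s,\cdot)\,ds$ and $\iint_{[0,t+\delta]\times\R^d}\eta = \mathbb{E}[(t+\delta)\wedge\tau]>0$ (because $\tau>0$ a.s. and $\mu$ is nontrivial), $w(t+\delta,\cdot)$ is not identically zero; being continuous and nonnegative, it is positive on a nonempty open set, automatically disjoint from $A$. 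This is precisely the paper's argument, phrased there as ``$\{\eta(t,\cdot)>0\}$ is nondecreasing in $t$ and nontrivial''.
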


\begin{proof} 
Let $w$ be the potential for Type (II), defined in  \eqref{w}.   Recalling that $w$ is non-decreasing and  Lipschitz in time (see Lemma \ref{time_reg}), \eqref{w_time_deriv} implies that $\p_t w = \eta$ a.e. As $\tau>0$, $\eta(0,\cdot) = \mu(\cdot)$. Since $\mu$ is non-trivial and  $\{\eta(t,\cdot)>0\}$ is non-decreasing in $t$, we have $\p_t w(t,\cdot) > 0$ on a set of positive measure for any   $t>0$. 

\medskip

If  $(t+\delta,x) \in R^U$ (i.e. $w(t+\delta,x)=0$), then $w(t,x)=0$. Thus,
\begin{equation} 
(-\Delta)^s w(t,x) = -C_{d,s} \int_{\R^d} \frac{w(t,y)}{|x-y|^{d+2s}} dy > -C_{d,s} \int_{\R^d} \frac{w(t+\delta,y)}{|x-y|^{d+2s}} dy = (-\Delta)^s w(t+\delta,x). \label{fsign} \end{equation} Note that $\p_t w = \eta = 0$ a.e. on $R^U$ and $\mu = 0$ a.e. on $\{ x\in \R^d : (t,x) \in R^U \text{ for some }t \}$. Hence by \eqref{Theorem_4.13_identity},  
\[ -\rho( [t,t+\delta),x) = (-\Delta)^s w(t+\delta,x) - (-\Delta)^s w(t,x) < 0,   \]
where  we used \eqref{fsign} in the last inequality. {Now by integrating over $A$, we conclude the proof.}
\end{proof}

\begin{theorem} Let $\mu,\nu$ and $\tau$ be as given in Corollary~\ref{rho_supported_RU} for a Type (II) cost. Then $R=R^U$ a.e. in $\R^+ \times \R^d$ and $s=s^U$ a.e. in $\R^d$. 
\label{equality_of_barrier}
\end{theorem}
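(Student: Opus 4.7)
The inclusion $R \subset R^U$ (up to a null set) is essentially free from Lemma \ref{pot_barrier_biggest}, which for Type (II) gives $s(x) \leq s^U(x)$ for almost every $x$. The task is therefore to rule out a macroscopic gap between the two barriers, i.e.\ to show $|\{x \in \R^d : s(x) < s^U(x)\}| = 0$. The strategy is to derive a contradiction between two facts about the stopping measure $\rho$: Corollary \ref{rho_supported_RU} forces $\rho$ to deposit positive mass in any space-time slab lying under the potential barrier, while the barrier structure of the optimizer forces $\rho$ to be supported inside $R$.

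\textbf{First step.} Assume for contradiction that $|\{s < s^U\}| > 0$. By monotone exhaustion we can choose rationals $t, \delta > 0$ and a Borel set $A \subset \R^d$ of positive Lebesgue measure such that
\begin{equation*}
s(x) < t \quad \text{and} \quad s^U(x) > t + \delta \qquad \text{for every } x \in A.
\end{equation*}
In particular $A \subset \{x \in \R^d : (t+\delta,x) \in R^U\}$, so Corollary \ref{rho_supported_RU} applied to $A$ yields
\begin{equation*}
\rho\bigl([t,t+\delta] \times A\bigr) > 0.
\end{equation*}

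\textbf{Second step.} On the other hand, since $\tau$ is the optimizer for the Type (II) cost, Theorem \ref{barrier_existence} gives $\tau = \inf\{t > 0 : (t, X_t) \in R\}$ on $\{\tau > 0\}$, and the assumption $\tau > 0$ a.s.\ is in force. Since particles stop inside the backward barrier $R$ (the fact used in Corollary \ref{cor 4.7} via Lemma \ref{stop_inside_barrier}), we have $\tau \leq s(X_\tau)$ almost surely. Thus on the event $\{X_\tau \in A\}$ we get $\tau \leq s(X_\tau) < t$, which is incompatible with $\tau \in [t,t+\delta]$. Consequently
\begin{equation*}
\rho\bigl([t,t+\delta] \times A\bigr) = \mathbb{P}\bigl(\tau \in [t,t+\delta],\ X_\tau \in A\bigr) = 0,
\end{equation*}
contradicting the previous step. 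Hence $s = s^U$ almost everywhere, and therefore $R = R^U$ up to a set of space-time Lebesgue measure zero.

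\textbf{Expected obstacle.} The conceptual work has already been done in Corollary \ref{rho_supported_RU}, whose proof exploits the strictly nonlocal fact \eqref{fsign}: the fractional Laplacian at a barrier point sees the positive values of $w$ at distant points, which forces mass of $\rho$ into cylinders below a barrier point. The only care needed here is a measure-theoretic selection of the set $A$ and rationals $(t, \delta)$ from the hypothetical positive-measure set $\{s < s^U\}$; this is routine since $\{s < s^U\} = \bigcup_{q_1 < q_2 \in \mathbb{Q}_+} \{s < q_1 < q_2 < s^U\}$, so one of the slabs must have positive horizontal section. No further compactness or regularity input is required beyond what has been established.
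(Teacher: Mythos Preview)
Your proof is correct and follows essentially the same approach as the paper: both argue by contradiction, using Lemma \ref{stop_inside_barrier} to conclude $\rho$ avoids any slab strictly above $s$, and Corollary \ref{rho_supported_RU} to force $\rho$-mass into any slab below $s^U$. The only cosmetic difference is the extraction of the slab: you decompose $\{s<s^U\}$ via rational levels $\{s<q_1<q_2<s^U\}$, while the paper applies Fubini to the space-time set $R^U\setminus R$ and then thickens a positive-measure time section.
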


\begin{proof} 
Note that $R = R^U$ a.e. implies that the barrier functions coincide, i.e. $s=s^U$ a.e. Hence it suffices to show $R=R^U$ a.e. Lemma \ref{pot_barrier_biggest} implies that $R \subset R^U$ a.e., and thus it remains to show that $|R^U \setminus R|=0$. 
By  {Lemma \ref{stop_inside_barrier}}, $\rho$ is supported on $R$, and thus $\rho( R^U \setminus R ) = 0$. 

\medskip

Assume for the sake of a contradiction that $|R^U \setminus R|>0$. Set  $Z := R^U \setminus R$ and define the time section $Z_t := \{x\in \R^d: (t,x)\in Z\}$. By Fubini's theorem, there is $t>0$ such that $|Z_t|>0$.
Since $R = \{(t,x) \in \R^+ \times \R^d: t \leq s(x) \}$ and $\{t\} \times Z_t \notin R$, we have $t>s(x)$ for $x\in Z_t$. Thus, recalling $|Z_t|>0$, there is  $n$ with $t>1/n$ such that $Z_t^{(n)} := \{ x \in Z_t : t - \frac{1}{n}  > s(x) \}$ has a positive Lebesgue measure. Since $[t-\frac{1}{n},t] \times Z_t^{(n)}  \subset R^U \setminus R$ and $\rho( R^U \setminus R ) = 0$, we have  $\rho( [t-\frac{1}{n},t] \times  Z_t^{(n)} ) = 0$, contradicting Corollary \ref{rho_supported_RU}. 
\end{proof}

Now by combining Theorems \ref{equality_hitting_time_type_1} and \ref{equality_of_barrier} along with our assumption that $\nu \ll \text{Leb}$, we conclude the following.

\begin{theorem} Under the assumptions of Corollary \ref{cor 4.7}, we have for both types that $s=s^U$ $\nu$-a.e. For Type (I), we further have that $\tau = \tau^U$ a.s. \label{equality_hitting_time}
\end{theorem}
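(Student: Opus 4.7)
The plan is to deduce the theorem as a direct consequence of Theorems \ref{equality_hitting_time_type_1} and \ref{equality_of_barrier}, Lemma \ref{pot_barrier_biggest}, and the absolute continuity $\nu\ll\text{Leb}$ guaranteed by Assumption \ref{assume1}. No fundamentally new machinery is needed; the work lies in lining up the pieces correctly.

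The Type (I) assertion $\tau=\tau^U$ a.s.\ is a verbatim restatement of Theorem \ref{equality_hitting_time_type_1}, so nothing further is required there. For the $\nu$-a.e.\ identity $s=s^U$, I would treat the two cost types in parallel. For Type (II), Theorem \ref{equality_of_barrier} already gives $s=s^U$ outside a $d$-dimensional Lebesgue-null set on $\R^d$; because $\nu\ll\text{Leb}$, this null set is also $\nu$-null, which yields the claim at once. For Type (I), Lemma \ref{pot_barrier_biggest} furnishes the one-sided estimate $s\ge s^U$ Lebesgue-a.e., and hence $\nu$-a.e.\ by absolute continuity, so only the reverse inequality must be extracted.

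For the reverse inequality in Type (I) my plan is to leverage the equality of stopped space-time measures $\rho=\rho^U$ obtained inside the proof of Theorem \ref{equality_hitting_time_type_1}. Since $\rho^U$ is supported in the closed forward barrier $R^U=\{t\ge s^U(x)\}$ and $\rho$ is supported in $R=\{t\ge s(x)\}$ by Lemma \ref{stop_inside_barrier}, their coincidence forces $\rho$ to concentrate on $R\cap R^U$. Projecting to the spatial marginal $\nu=\pi_x\#\rho$ and using the Lebesgue-a.e.\ inclusion $R\subset R^U$ from Lemma \ref{pot_barrier_biggest}, one deduces that $\nu$ cannot charge $\{s>s^U\}$, which combined with the already-established direction yields $s=s^U$ $\nu$-a.e.

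I expect the most delicate point to be this final Type (I) step. Because the $2s$-stable process admits arbitrarily large jumps, one cannot rely on a pointwise identification $\tau^U=s^U(X_{\tau^U})$ — the path may leap directly into the interior of $R^U$ — so the argument available for Brownian motion breaks down. The correct route, as above, is the measure-theoretic comparison through $\rho=\rho^U$ together with the absolute continuity $\nu\ll\text{Leb}$, rather than a pointwise comparison of $\tau$ with $s(X_\tau)$.
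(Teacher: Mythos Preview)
Your overall plan matches the paper's: the theorem is simply a packaging of Theorem~\ref{equality_hitting_time_type_1}, Theorem~\ref{equality_of_barrier}, and $\nu\ll\text{Leb}$, and for Type~(II) and for the Type~(I) assertion $\tau=\tau^U$ you have lined these up correctly.

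There is, however, a genuine gap in your Type~(I) step for $s\le s^U$ $\nu$-a.e. Knowing $\rho=\rho^U$ and that $\rho$ sits in $R$ while $\rho^U$ sits in $R^U$ only tells you $\rho$ is concentrated on $R\cap R^U$; since $R\subset R^U$ Lebesgue-a.e.\ (by Lemma~\ref{pot_barrier_biggest}), this intersection is just $R$ again, and you learn nothing new. In particular, nothing prevents $\nu$ from charging $\{s>s^U\}$: a point $x$ with $s(x)>s^U(x)$ still has $R_x\subset R^U_x$, and $\rho_x$ supported on $R_x$ is perfectly compatible with everything you wrote. The equality $\tau=\tau^U$ is consistent with the process entering $R^U$ only by jumping directly into the (possibly strictly smaller) $R$, so the ``gap'' strip $\{s^U(x)\le t<s(x)\}$ need never be visited, and no contradiction arises.

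The paper does not attempt this reverse inequality at all. Instead, it observes just before Theorem~\ref{equality_hitting_time_type_1} that the barrier function in Theorem~\ref{barrier_existence} is not unique: once $\tau=\tau^U$ is established, $s^U$ is itself an admissible barrier function for $\tau$, and the paper simply \emph{chooses} $s:=s^U$ (``abuse the notation $s=s^U$ and $R=R^U$''). The $\nu$-a.e.\ qualifier in the theorem statement is there for Type~(II), where Theorem~\ref{equality_of_barrier} genuinely only yields Lebesgue-a.e.\ equality; for Type~(I) the identification is by fiat, not by a measure-theoretic argument.
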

 
We conclude this section by introducing the identity for $(-\Delta)^s w$, which will be useful when studying the associated Stefan problem in Section \ref{sec:Stefan}.

\begin{proposition}\label{w_s_laplacian_remark}
 Let $\tau$ and $w$ be as in Theorem~\ref{theorem 3.13} and let $(\eta,\rho)$ be the Eulerian variables associated to  $\tau$.
Then for any $t \geq 0$, 
\begin{equation}    \label{Type1Laplacian}
    \left[ (-\Delta)^s w(t,x) \right] \cdot  \chi_{ \{\eta=0\} }(t,x)   =    \rho((t,\infty),x) \cdot \chi_{ \{\eta=0\} }(t,x)  
\end{equation} 
for Type (I), and 
\begin{equation}
       -\left[ (-\Delta)^s w(t,x) \right]  \cdot  \chi_{ \{\eta=0\} } (t,x) =   \rho((0,t),x)  \cdot \chi_{ \{\eta=0\} }(t,x) \label{Type2Laplacian}
\end{equation}
for Type (II) (if $\tau>0$ a.s.).
\end{proposition} 

\begin{proof}  By  \eqref{w} and Lemma \ref{mu_t_formula},
\begin{equation} (-\Delta)^s  w(t,x) = \begin{cases}  \mu_t(x)-\nu(x)= \eta(t,x) - \rho((t,\infty),x)  \quad &\text{ for Type (I)},\\  \mu(x)-\mu_t(x) = \mu (x)-\eta(t,x)-\rho([0,t),x) \quad &\text{ for Type (II)}.
\end{cases}\label{w_s_laplacian} \end{equation} 
We just consider the Type (II) case since parallel arguments work for Type (I) case. 
Since $\tau>0$ a.s.,   $\rho([0,t),x) = \rho( (0,t),x)$ $x$-a.e. for any $t > 0 $. In addition, $\tau>0$ a.s. implies that  $\{\mu(\cdot)>0\} \subset \{\eta(t,\cdot)>0\}$ for any $t \geq 0$, since $\{\eta(t,\cdot)>0\}$ is non-decreasing in $t$. This implies that $(-\Delta)^s w(t,x) \cdot \chi_{ \{\eta=0\} }(t,x) = - \rho( (0,t),x) \cdot \chi_{ \{\eta=0\} }(t,x)$. 
\end{proof}

\section{Associated Stefan Problem}\label{sec:Stefan}
Suppose that $\tau$ is the optimal stopping time of the variational problem $\mathcal{P}_0(\mu,\nu)$ in \eqref{Var_Skoro}, and let $(\eta,\rho)$ be the  associated Eulerian variables  as in Lemma~\ref{Eulerian_Variables}.   In this section, we establish that $\eta$ solves the nonlocal Stefan problem with an initial distribution $\mu$ and the weight $\nu$. Before introducing the definition of solutions, let us present a heuristic discussion on the  characterization of the enthalpy variable in terms of the Eulerian variables. 

\medskip

$\circ$ {\it Review for the case $s=1$:} Let us briefly recall the proof for the local case $s=1$. In this case,  \eqref{St1} and \eqref{St2} can be written as 
\begin{equation}\label{eta}
\partial_t h -\Delta \eta = 0,
\end{equation}

\noindent where the enthalpy $h$ is given as a function of $\eta$ by  
\begin{align*}
        h = \begin{cases}
         \eta - \chi_{\{\eta>0\}} &\text{ for } (St_1), \\ 
         \eta + \chi_{\{\eta>0\}}  &\text{ for } (St_2).
    \end{cases}
\end{align*}
The discontinuity of $h$ represents the unit amount of heat energy change associated with the phase transition. To illustrate our interpretation  more clearly, we consider a generalized version of this problem, the equation \eqref{eta} that represents a {\it weighted} rate of energy change $\nu$, namely $h$ is given by 
\begin{align*}
        h = \begin{cases}
         \eta - \nu \chi_{\{\eta>0\}} &\text{ for } \eqref{St1nu}, \\ 
         \eta + \nu\chi_{\{\eta>0\}}  &\text{ for } \eqref{St2nu}
    \end{cases}
\end{align*}
(see below for the definition of  \eqref{St1nu} and \eqref{St2nu}).

It was shown  \cite{kim2021stefan}, following \cite{ghoussoub2019pde}, that in the case of the Brownian motion, this can be solved with the aid of optimal Eulerian variables $(\eta, \rho)$  of \eqref{St} associated to the target measure $\nu$. A crucial step in the analysis is the particle interpretation of the enthalpy variable  in terms of the distributions of particles at $(t,x)$. Precisely, \cite{kim2021stefan} deduced that  the Eulerian variables generated by the cost of Type (I)  and   Type (II) yield the  solutions to \eqref{St1nu} and \eqref{St2nu} respectively, with {the enthalpy}:
\begin{equation}\label{enthalpy}
    h(t,x) = \begin{cases}
         \eta(t,x) - \rho((t,\infty),x) &\text{ for } \eqref{St1nu}, \\ 
         \eta(t,x) + \rho([0,t),x) &\text{ for }  \eqref{St2nu},
    \end{cases}
\end{equation}
where $\rho((t,\infty),x):= \nu(x) - \rho([0,t],x).$ Note that this {formula for the enthalpy} is consistent with the {enthalpy formula} \eqref{enthalpy0} since $$\rho( [0,t),x) - \nu(x) = \rho( [0,t),x ) - \rho( [0,\infty),x) = -\rho([t,\infty),x) = -\rho( (t,\infty),x) ,$$ where the final equality holds $t$-a.e. thanks to Lemma \ref{lambda_equality}.   We remark that the choice of an open left end point on $\rho( (t,\infty),x)$ and  a closed right point on $\rho([0,t),x)$ is so that their values at time $t=0$ equals their limit as $t \rightarrow 0^+$, which will be used in our definition of weak solutions to the Stefan problem.

\medskip

In the local case, it is crucial that $\rho$ can be written in terms of $\eta$, for instance  $\rho([0,t),x)=\nu\chi_{\{\eta>0\}}$  for Type (II). This allows us to write the Eulerian PDE  solely in terms of $\eta$, which then leads to the characterization of the equation as the Stefan problem.

\medskip

$\circ$ {\it Heuristics for the nonlocal case.} Our task is to extend the characterization  of the enthalpy \eqref{enthalpy0} to the non-local case. We aim to represent $\rho$ in terms of $\eta$  using the Eulerian equation \eqref{non_local_heat} and the forward/backward time-monotonicity of the barrier set $\{ \eta =0\}$ for the Type (I)/(II) cases respectively.
 Our description of $\rho$ should now reflect the fact that the stopped particles are no longer concentrated on the boundary, due to the jumps in the $2s$-stable process (see Figure \ref{fig:Stopped_Active_Region}). In other words $\rho$  is  supported on the whole $\{\eta=0\}$ instead of only on the free boundary.

\medskip

We present the heuristic argument for the representation of $\rho$. We consider Type (II) with $\tau>0$. As $\tau$ is the   hitting time to the backward barrier {$R$ that is a.e. equal} to $\{\eta=0\}$  (see Theorem~\ref{barrier_existence} (2)), we have 
 $$\rho([0,t),x) = \rho([0,\infty),x)=\nu(x) \hbox{ on }\{\eta>0\}.
 $$
 On the other hand,  from Proposition \ref{w_s_laplacian_remark}, we have  
 $$-(-\Delta)^sw(t,x) = \rho( [0,t),x ) \hbox{ on } \{\eta=0\} \hbox{ with } w(t,x) = \int_0^t \eta (a,x) da.$$ 
 Putting these two identities together, we have that
\begin{equation}\label{main_rho_2} 
 \rho([0,t), x)  = \nu(x)\chi_{\{\eta(t,x)>0\}} + \kappa_2 (t,x)\chi_{ \{\eta(t,x)=0\} }
\end{equation}
where we define 
\begin{equation}\label{Type_II_jump}
\kappa_2(t,x) := -(-\Delta)^s w(t,x) = -\int_0^t (-\Delta)^s \eta(a,x) da.
\end{equation}


It is crucial to note that in contrast to the local case,  $\kappa_2 \neq 0$ on Int$(\{\eta=0\})$ unless $\eta$ is identically zero. 
 Together with \eqref{non_local_heat} this, at least formally, show
that our optimal Eulerian variables for the cost of Type (II) satisfy  the nonlocal melting Stefan problem 
\begin{equation}
\p_t h + (-\Delta)^s \eta = 0, \quad \quad h =h_2(\eta):=\eta + \nu\chi_{\{\eta>0\}} +  \kappa_2 \chi_{ \{\eta=0\} }.\tag{$St_{2,\nu}$}     \label{St2nu}
\end{equation}
Note that this expression of $h$ is consistent with \eqref{enthalpy0}. 

Similarly for Type (I) costs, at least formally, we have  that 
\begin{equation}\label{main_rho}
\rho((t,\infty), x) =\nu (x)\chi_{\{\eta(t,x)>0\}} + \kappa_1 (t,x)\chi_{\{\eta(t,x)=0\}}
\end{equation}
with 
\begin{equation}\label{Type_I_jump}
\kappa_1(t,x)  := -(-\Delta)^sw(t,x) = -\int_t^\infty [(-\Delta)^s \eta(a,x)]  da  ,
\end{equation}
and establish that $\eta$ solves the nonlocal freezing Stefan problem  
\begin{equation}
      \partial_t h   +(-\Delta)^s \eta = 0, \qquad h=h_1(\eta):=\eta -\nu \chi_{\{\eta>0\}} - \kappa_1 \chi_{ \{\eta=0\} }, \label{St1nu} \tag{$St_{1,\nu}$}
\end{equation}
 thus verifying the formula \eqref{enthalpy0}. 

 \begin{remark} \label{kappa_remark}
For $\eta$ and its associated potential $w$, Proposition \ref{w_s_laplacian_remark} establishes that for any $i=1,2$ and $t \geq 0$,  we have $0 \leq \kappa_i(t,\cdot) \chi_{ \{ {\eta=0} \} }(t,\cdot) \leq \nu(\cdot) \in L^1 \cap L^{\infty}$. 
 \end{remark}

\medskip
Below we give rigorous justification of the above heuristic arguments. To this end we introduce a weak solution to \eqref{St1nu} 
 and \eqref{St2nu} based on the above discussions. For $u \in L^1(\R^+;\R^d)$ for which there is a version  $\tilde{u}$  of $u$  (in $\R^+ \times \R^d$) such that $\{ \tilde{u}(t,\cdot)>0\}$ is non-decreasing or non-increasing in $t$ and $\{ \tilde{u}>0\}$ is open in $\R^+ \times \R^d$, we define the {\it initial domain} of $u$ as  
\begin{equation}\label{initial}
E(u) := \lim_{t\to 0^+} \{ u(t,\cdot)>0\}.
\end{equation} 
In the following remark, we verify that the initial domain $E(u)$ is well-defined (up to zero-measure).
\begin{remark}  \label{inital_domain_well_posed}
 We show that the initial domain is uniquely determined a.e. in $\R^d$. We only consider the case when $\{\tilde{u}(t,\cdot)>0\}$ is non-increasing in $t$, since the non-decreasing case similarly follows. 
We claim that if $u_1$ and $u_2$ are two versions of $u$ with $\{u_i(t,\cdot)>0\}$ non-increasing in $t$ and $\{u_i>0\}$ open ($i=1,2$),  then for any $t>0$,
\begin{equation} | \{ u_1(t,\cdot) > 0 \} \setminus \{ u_2(t,\cdot) > 0 \} | = 0.  \label{level_set_0_measure} \end{equation} 
 Suppose that the above does not hold for some $t>0.$ Let $A:= \{u_1(t,\cdot)>0\} \cap  \{ u_2(t,\cdot)=0 \}$.
As $\{u_1>0\}$ is open, for all $x \in A$, there is $r_x>0$ such that {$[t,t+r_x] \times \{x\} \subset \{u_1>0\}$}. Setting $A_k := \{ x \in A: r_x \geq 1/k \}$, there exists $n$ such that $|A_n|>0.$ Then $|[t,t+1/n] \times A_n|>0$ and by monotonicity of positivity sets, $[t,t+1/n] \times A_n \subset$ $\{u_1>0\} \cap \{u_2=0\}$.  This contradicts the fact that both $u_1$ and $u_2$ are  versions of $\eta$. \\
As $\{\tilde{u_i}(t,\cdot)>0\}$ is non-increasing in $t$, the initial domain $E(u_i)$   can be written as
\[ E(u_i) = \bigcup_{n \in \N} \{ u_i(1/n,\cdot)>0 \}. \] 
Hence  by \eqref{level_set_0_measure},  we deduce that $ E(u_1)= E(u_2)$ up to zero-measure.
\end{remark}

We now define the notion of weak solutions  to \eqref{St1nu} 
 and \eqref{St2nu}  for a non-negative initial data  $u_0\in L^1(\R^d)\cap L^{\infty}(\R^d)$ and an initial domain  $E$ containing the support of $u_0$.

In the definition below, we introduce the variables $u$ and $v_i$ ($i=1,2$) for the definition of general weak solutions for weighted Stefan problems, with the understanding that they will later correspond to the Eulerian variable $\eta$ and its associated potential $w$ respectively, generated by $\mathcal{P}_0(\mu,\nu)$.

\begin{definition}[Weighted Stefan Problem]  \label{weighted_stefan_def}
Let $\nu$ and $u_0$ be two bounded and nonnegative functions on $\R^d$, and let $E \subset \R^d$ be a measurable set. Let $u \in L^1 \cap L^{\infty} $ be a non-negative  function on $\R^+ \times \R^d$, and define its time integrated version:
\[ v_1(t,x) := \int_t^{\infty} u(a,x) da \,\,\text{ and }\,\, v_2(t,x) := \int_0^t u(a,x) da. \] 

\begin{itemize}
    \item [(1)]  We say that $u$ is a weak solution to \eqref{St1nu} with initial data $(u_0,E)$ if $E(u)=E$ and 
\begin{enumerate}
    \item[(a)] $\{u>0\}$ is open in $\R^+ \times \R^d$,
    \item[(b)] $\{u(t,\cdot)>0\}$ is uniformly bounded and  non-increasing in $t$,
    \item[(c)]   for any  $t \geq 0$, $0 \leq [-(-\Delta)^s v_1 \cdot \chi_{ \{u=0\} }](t,\cdot) \leq \nu(\cdot)$,
    \item[(d)] setting $h=h_1(u) := u - \nu \chi_{ \{u>0\} } + [(-\Delta)^s v_1] \chi_{ \{u=0\} }$, for any $\varphi \in C^{\infty}_c(\R^+ \times \R^d)$,
\begin{equation}\label{weak_eq_St_1}
    \iint (-\p_t \varphi \cdot  h + \left[ (-\Delta)^s \varphi\right]  u ) dtdx  = \int \varphi(0,\cdot) (u_0 - \nu\chi_E +  [(-\Delta)^s v_1](0,\cdot)\chi_{E^c}) dx.
\end{equation}

\end{enumerate}

\item[(2)] We say that $u$ is a weak solution to \eqref{St2nu} with initial data $(u_0,E)$ if $E(u)=E$ and
\begin{enumerate}
\item[(a')]  $\{u>0\}$ is open in $\R^+ \times \R^d$,
\item[(b')] $\{u(t,\cdot)>0\}$ uniformly bounded and non-decreasing in $t$, 
\item[(c')] for any $t \geq 0$, $ 0\leq [-(-\Delta)^s v_2 \cdot \chi_{ \{u=0\} }](t,\cdot) \leq \nu(\cdot)$.
\item[(d')] setting $h = h_2(u) = u + \nu \chi_{ \{u>0\} } - [(-\Delta)^s v_2] \chi_{ \{u=0\} }$,
 for  any  $\varphi \in C^{\infty}_c(\R^+ \times \R^d)$,
\begin{equation} \label{weak_eq_St_2}
    \iint (-\p_t \varphi \cdot h + \left[ (-\Delta)^s\varphi \right] u) dtdx = \int \varphi(0,\cdot) (u_0 + \nu \chi_{E}) dx.
\end{equation}
\end{enumerate}
\end{itemize}
We say that $(h_i,u)$ solves $(St_{i,\nu})$ if $u$ solves it with $h_i = h_i(u)$ as defined above, for $i=1,2$.


\end{definition}

  Note that, by expanding out the definition of the enthalpy variable $h$,  \eqref{weak_eq_St_1}  reads as
\begin{multline*}
    \iint (-\p_t \varphi \cdot (u - \nu \chi_{ \{u>0\} }  + (-\Delta)^s v_1 \cdot \chi_{ \{u=0\} } ) + u \cdot (-\Delta)^s \varphi )dtdx \\
    =  \int \varphi(0,\cdot) (u_0 - \nu \chi_E + (-\Delta)^sv_1(0,\cdot) \cdot \chi_{E^c}  ) dx, 
\end{multline*}
and \eqref{weak_eq_St_2} reads as
\[ \iint (-\p_t \varphi \cdot (u + \nu \chi_{ \{u>0\} }  - (-\Delta)^s v_2 \cdot \chi_{ \{u=0\} } )    +  u \cdot (-\Delta)^s \varphi)dtdx =  \int \varphi(0,\cdot) (u_0 + \nu \chi_E ) dx.  \]




\medskip

Using the  above definition of a solution to the weighted Stefan problem, we now define the  solution to the Stefan problem \eqref{St1} and \eqref{St2}.

\begin{definition}\label{def_St}
 We say that $u$ solves \eqref{St1} (resp. \eqref{St2}) if it solves \eqref{St1nu}  (resp. \eqref{St2nu}) with some (non-negative) $\nu  \leq 1$ that is  equal to $1$ on $\{x \in \R^d:u(t,x)>0 \hbox{ for some } t>0\}$. We also say that $(h_i,u)$ solves $(St_{i})$ if $u$ solves it with $h_i = h_i(u)$ (and with corresponding $\nu$) as defined in Definition \ref{weighted_stefan_def}, for $i=1,2$.
\end{definition} 

The condition $\nu\leq 1$ reflects the fact that the maximal capacity of the inactive region to hold ice particles in our phase transition process in \eqref{St1} or \eqref{St2} equals one. Note that, in contrast to the local case \cite{kim2021stefan},  $\nu$ can not be chosen to be a characteristic function due to the jumps of the underlying process.

\medskip

In Section \ref{solution_stefans_1}, we  show that our notion of solutions corresponds to those generated by the Eulerian variables $(\eta,\rho)$ from Lemma \ref{Eulerian_Variables} with a target measure $\nu$. In particular, the solution is in the stronger regularity class, namely $\eta\in L^2(\R^+; H^s(\R^d))$ (see Lemma \ref{regularity_lemma}). In Section \ref{enthalpy_section} we further show that for the equation \eqref{St2}, our notion of solution coincides with that of  \eqref{Ste} which was considered in several recent papers \cite{del2021one, del2017distributional, del2017uniqueness}. This provides a new Sobolev regularity for solutions of \eqref{Ste}, which was only known to be continuous \cite{del2021one}. 

\medskip

In our proof with the Eulerian variables $(\eta,\rho)$, the characterization of $\rho$ in terms of $\eta$  is a main step in showing  that  $\eta$ is a weak solution to the weighted Stefan problems. Namely, we deduce \eqref{main_rho} for Type (I) and \eqref{main_rho_2} for Type (II).

\medskip

\subsection{Identification of  $\rho$}
From now on, $\eta$ denotes the modification of the Eulerian variable such that $\{\eta>0\}$ is open in $\R^+ \times \R^d$ (see 
Remark \ref{modififcation}).
\begin{theorem}[$\rho$ for Type (I)]   \label{rho_Type_1}
Let $\tau$ be the optimal stopping time for $\mathcal{P}_0(\mu,\nu)$ for a Type (I) cost and $(\eta,\rho)$ be the associated Eulerian variables.  Then, for any  $\varphi \in C_c^{\infty}(\R^+ \times \R^d)$, 
\begin{multline}\label{decomposition}  
    \iint_{\R^+ \times \R^d} \varphi(t,x) \rho(dt,dx)  \\
    = \iint_{\R^+ \times \R^d} \left( \p_t  \varphi \cdot   \nu \chi_{ \{\eta>0\}} + \p_t \varphi \cdot 
 \rho((t,\infty), x)  \chi_{ \{\eta=0\}} \right)   dtdx+ \int_{\R^d} \varphi(0,x) \nu(x) dx. 
\end{multline} 

\noindent If $\tau>0$ a.s., then \eqref{main_rho} holds $(t,x)$-a.e. 

\end{theorem}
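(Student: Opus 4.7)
The plan is to first establish the integral identity \eqref{decomposition} by a time integration-by-parts applied to the measure $\rho$, and then upgrade this on $\{\eta=0\}$ to the pointwise formula \eqref{main_rho} by appealing to Proposition~\ref{w_s_laplacian_remark}.

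First, I would write $\varphi(t,x)=\varphi(0,x)+\int_0^t\partial_s\varphi(s,x)\,ds$ and integrate against $\rho$. The constant-in-$t$ term yields $\int\varphi(0,x)\,\rho([0,\infty),x)\,dx=\int\varphi(0,x)\nu(x)\,dx$, using $X_\tau\sim\nu$. For the remaining term, I would disintegrate $\rho(dt,dx)=\rho_x(dt)\,dx$ (valid since the spatial marginal of $\rho$ is the $L^\infty$ measure $\nu$) and apply Fubini to obtain
\[
\iint\int_0^t\partial_s\varphi(s,x)\,ds\,d\rho(t,x)=\iint\partial_s\varphi(s,x)\,\rho((s,\infty),x)\,ds\,dx,
\]
where $\rho((s,\infty),x)$ is the Lebesgue density from Appendix~\ref{measures} and the open vs.\ closed endpoint in $t$ can be exchanged a.e.\ thanks to Lemma~\ref{lambda_equality}. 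The decomposition \eqref{decomposition} then reduces to the claim $\rho((t,\infty),x)=\nu(x)$ for a.e.\ $(t,x)\in\{\eta>0\}$. Since for Type~(I) the barrier $R$ is forward in time and $\rho$ is supported in $R$ (Lemma~\ref{stop_inside_barrier}), and since $\{\eta>0\}=R^c$ up to a null set (Lemma~\ref{open_barrier}), any $(t,x)\in\{\eta>0\}$ has $(s,x)\notin R$ for every $s\le t$, so $\rho([0,t],x)=0$ and hence $\rho((t,\infty),x)=\nu(x)-\rho([0,t],x)=\nu(x)$.

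For the second assertion, assume $\tau>0$ a.s.; it remains to identify $\rho((t,\infty),x)$ on $\{\eta=0\}$. From \eqref{w_explicit} for Type~(I), $w(t,x)=\int_t^\infty\eta(s,x)\,ds$. Pulling $(-\Delta)^s$ inside the time integral, justified by the uniform decay $\mu_t(x)\lesssim(1+|x|)^{-d-2s}$ from Lemma~\ref{spatial_reg}, gives $(-\Delta)^s w(t,x)=\int_t^\infty(-\Delta)^s\eta(a,x)\,da$. By the Type~(I) monotonicity of $\{\eta(t,\cdot)>0\}$ in $t$, on $\{\eta(t,x)=0\}$ we have $\eta(a,x)=0$ for every $a\ge t$, whence
\[
[(-\Delta)^s w(t,x)]\,\chi_{\{\eta(t,x)=0\}} = \Big(\int_t^\infty[(-\Delta)^s\eta(a,x)]\,\chi_{\{\eta(a,x)=0\}}\,da\Big)\chi_{\{\eta(t,x)=0\}} = -\kappa_1(t,x)\chi_{\{\eta(t,x)=0\}}.
\]
Substituting into \eqref{Type1Laplacian} and varying $\varphi\in C_c^\infty$ gives $\rho((t,\infty),x)=\kappa_1(t,x)$ a.e.\ on $\{\eta=0\}$, which combined with the identification on $\{\eta>0\}$ proves \eqref{main_rho}.

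The principal technical obstacle will be the two Fubini interchanges: the time-by-space exchange against the Radon measure $\rho$, for which the existence of the density $\rho((t,\infty),x)$ from Appendix~\ref{measures} is essential, and the commutation of $(-\Delta)^s$ with the time integral defining $w$, which requires the uniform $L^1\cap L^\infty$ decay of $\mu_t$ coming from Lemmas~\ref{spatial_reg_1}--\ref{spatial_reg}. A minor subtlety is that \eqref{decomposition} is asserted without the hypothesis $\tau>0$ a.s., so $\rho$ may carry an atom at $t=0$; this is harmless because the full mass $\rho([0,\infty),x)=\nu(x)$ is already absorbed into the boundary term $\int\varphi(0,x)\nu(x)\,dx$.
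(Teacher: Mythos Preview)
Your proposal is correct and follows essentially the same route as the paper: the fundamental-theorem-of-calculus/Fubini manipulation on $\rho$ together with the disintegration and Lemma~\ref{lambda_equality} yields \eqref{decomposition}, the identification $\rho((t,\infty),x)=\nu(x)$ on $\{\eta>0\}$ comes from Lemma~\ref{stop_inside_barrier} and Lemma~\ref{open_barrier}, and the pointwise form \eqref{main_rho} on $\{\eta=0\}$ is obtained by combining \eqref{Type1Laplacian} with \eqref{w_explicit} and the Type~(I) time-monotonicity of $\{\eta(a,\cdot)=0\}$, exactly as in the paper's chain~\eqref{444}. The technical caveats you flag (Fubini, commuting $(-\Delta)^s$ with $\int_t^\infty$, the possible atom at $t=0$) are the right ones and are handled the same way the paper (implicitly) does.
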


\begin{proof} 
Using the disintegration formula in Definition \ref{disintegration} (the map $x\mapsto\rho_x \in \mathcal{M}(\R^+)$ denotes the disintegration of $\rho$ with respect to the spatial measure $\nu$),  
\begin{align}\label{rho_1} 
     \iint_{\R^+ \times \R^d} \varphi(t,x) &\rho(dt,dx) = \int_{\R^d}\int_{0}^{\infty} \varphi(t,x) \rho_x(dt) \nu(x)dx \nonumber\\
     &=\int_{\R^d} \int_0^{\infty} \left[ \int_0^t \p_t \varphi(w,x) dw \right] \rho_x(dt) \nu(x) dx + \int_{\R^d} \int_0^{\infty} \varphi(0,x) \rho_x(dt) \nu(x) dx.
\end{align} 
 By Fubini's theorem and noting that $\rho_x[\R^+]=1$ $\nu$-a.e., \eqref{rho_1} reduces to
\begin{equation} 
\int_{\R^d} \int_0^{\infty} \p_t \varphi(w,x) \left[ \int_w^{\infty} \rho_x(dt) \right] \nu(x) dw dx  + \int_{\R^d} \varphi(0,x) \nu(x) dx. \label{rho_2} \end{equation} Observe that by Lemma \ref{lambda_equality}, $\int_w^{\infty} \rho_x(dt) = \rho_x((w,\infty))$ $\nu$-a.e.   By Theorems \ref{barrier_existence} and \ref{equality_hitting_time}, 
there is $s^U:\R^d \to \R^+$  such that  $\tau$ is the hitting time to the barrier $\{(t,x) \in \R^+ \times \R^d:t \geq s^U(x)\}$. {By Lemma \ref{stop_inside_barrier},}  $\tau \geq s^U(X_{\tau})$ a.s. This implies that  for $t<s^U(x)$ (equivalently, $\eta(t,x)>0$ by Lemma \ref{open_barrier}), 
$\rho_x((t,\infty)) = \mathbb{P}( \tau > t \mid  X_{\tau}=x) =  1$. Therefore,  
\[ \iint \varphi(t,x) \rho(dt,dx) = \iint (\p_t \varphi \cdot \nu \chi_{ \{\eta>0\}}  +  \p_t \varphi \cdot  \nu  \rho_x((t,\infty)) \chi_{ \{\eta=0\}} ) dtdx + \int \varphi(0,x) \nu(x)dx. \] 
Again by the disintegration formula, we obtain \eqref{decomposition}.

\medskip

Now because we verified \eqref{decomposition}, we will use it to prove \eqref{main_rho}. If $\tau>0$ a.s. then $\rho(\{0\} \times \R^d) = 0$. Thus by Lemmas  \ref{lambda_equality} and \ref{diff_rho_t},
{\begin{equation} 
 \iint \varphi(t,x) \rho(dt,dx) = -  \iint \p_t \varphi \cdot  \rho((0,t),x) dtdx =   -\iint (\p_t \varphi) \left[ \nu(x) - \rho((t,\infty),x) \right] dtdx . \label{rho_time_deriv_e} \end{equation}} 
Combining this with \eqref{decomposition},
\[ \iint \p_t \varphi   \cdot \rho((t,\infty),x)  dtdx = \iint \left( \p_t \varphi 
 \cdot \nu \chi_{ \{\eta>0\}  } + \p_t \varphi  \cdot \rho( (t,\infty) ,x)  \chi_{ \{ \eta =0 \} } \right) dtdx.   \] Note that by Proposition \ref{w_s_laplacian_remark},
 \begin{equation} 
   \rho( (t,\infty),x) \chi_{ \{\eta=0\} } = -[(-\Delta)^s w(t,x)] \chi_{ \{\eta=0\} }  = \kappa_1(t,x) \chi_{ \{\eta=0\} } . \\
 \end{equation}
 Hence, we deduce \eqref{main_rho}. 
\end{proof}

Similarly we prove \eqref{main_rho_2} for Type (II).

\begin{theorem}[$\rho$ for Type (II)] 
Let $\tau$ be the optimal stopping time for $\mathcal{P}_0(\mu,\nu)$ with a Type (II) cost and $(\eta,\rho)$ be the associated Eulerian variables.
If $\tau>0$ a.s., then for any $\varphi \in C^{\infty}_c( \R^+ \times \R^d)$,
 \begin{align} \label{442}
     \iint_{  \R^+ \times \R^d } \varphi(t,x) \rho(dt,dx) = - \iint_{  \R^+ \times \R^d }  ( \p_t \varphi \cdot  \nu  \chi_{ \{\eta>0\} } +\p_t  \varphi  \cdot  \rho((0,t),x) \chi_{ \{\eta=0\} } ) dtdx.
 \end{align}
In particular, \eqref{main_rho_2} holds  $(t,x)$-a.e.
\label{rho_Type_2}
\end{theorem}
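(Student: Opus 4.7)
The plan is to mirror the proof of Theorem \ref{rho_Type_1} with the forward-barrier input replaced by its backward-barrier analogue, and with the representation $\varphi(t,x)=\int_0^t \partial_w\varphi(w,x)\,dw+\varphi(0,x)$ swapped for $\varphi(t,x)=-\int_t^\infty \partial_w\varphi(w,x)\,dw$ so that the mass $\rho([0,w),x)$ naturally appears.

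First I would disintegrate $\rho$ against $\nu$ via Definition \ref{disintegration} and write $\rho(dt,dx)=\rho_x(dt)\,\nu(x)\,dx$ for $\nu$-a.e. $x$, where each $\rho_x$ is a probability measure on $\R^+$. Plugging the integral representation of $\varphi$ into $\iint \varphi\,\rho$, Fubini gives
\[
\iint \varphi(t,x)\,\rho(dt,dx)=-\int_{\R^d}\int_0^\infty \partial_w\varphi(w,x)\,\rho_x([0,w])\,\nu(x)\,dw\,dx.
\]
By Lemma \ref{lambda_equality} we may replace $\rho_x([0,w])$ by $\rho_x([0,w))$ for a.e.\ $w$, and after re-contracting the disintegration this equals $-\iint \partial_w\varphi(w,x)\,\rho([0,w),x)\,dw\,dx$.

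The next step is the identification $\rho([0,w),x)=\nu(x)$ on $\{\eta(w,\cdot)>0\}$. By Theorems \ref{barrier_existence} and \ref{equality_of_barrier}, the Type (II) optimizer $\tau$ is (up to randomization on $\{\tau=0\}$, which is ruled out here) the hitting time to the backward barrier $R^U=\{(t,x):t\le s^U(x)\}$. Lemma \ref{stop_inside_barrier} states that the stopped particles lie in the barrier, so $\tau\le s^U(X_\tau)$ a.s.; combined with Lemma \ref{open_barrier} which gives $\{\eta>0\}=(R^U)^c$, the condition $\eta(w,x)>0$ translates to $w>s^U(x)$, hence conditional on $X_\tau=x$ one has $\tau\le s^U(x)<w$ and $\rho_x([0,w))=1$. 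Splitting $\{\eta>0\}\sqcup\{\eta=0\}$ in the integral above and using $\tau>0$ a.s.\ (which gives $\rho(\{0\}\times\R^d)=0$ so that $\rho([0,w),x)=\rho((0,w),x)$ on $\{\eta=0\}$) produces \eqref{442}.

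For the pointwise formula \eqref{main_rho_2}, it remains to identify $\rho((0,t),x)\chi_{\{\eta=0\}}$ as $\kappa_2(t,x)\chi_{\{\eta=0\}}$. The Type (II) case of Proposition \ref{w_s_laplacian_remark} yields $\rho((0,t),x)\chi_{\{\eta=0\}}=-[(-\Delta)^s w(t,x)]\chi_{\{\eta=0\}}$, and with $w(t,x)=\int_0^t\eta(a,x)\,da$ from \eqref{w_explicit} together with the monotonicity $\{\eta(a,\cdot)>0\}$ non-decreasing in $a$, one may insert $\chi_{\{\eta(a,x)=0\}}$ under the time integral whenever $\eta(t,x)=0$; this rewrites the expression precisely as $\kappa_2(t,x)\chi_{\{\eta=0\}}$ by \eqref{Type_II_jump}. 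Combining with the already-established $\rho([0,t),x)=\nu(x)$ on $\{\eta>0\}$ gives \eqref{main_rho_2}.

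The conceptual obstacle lies entirely in the barrier-based identification on $\{\eta>0\}$: the fact that $\rho_x([0,w))=1$ there relies on three pieces of machinery developed earlier (Theorem \ref{equality_of_barrier}, Lemma \ref{stop_inside_barrier}, Lemma \ref{open_barrier}), together with the hypothesis $\tau>0$ needed to kill any atom at the origin. Once these are in place, the argument is essentially a change-of-variable exercise parallel to the Type (I) proof, and the main novelty compared to the local case $s=1$ is that $\kappa_2$ is genuinely nonzero in the interior of $\{\eta=0\}$ because of the jump contribution carried by $(-\Delta)^s\eta$.
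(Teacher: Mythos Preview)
Your proposal is correct and follows essentially the same route as the paper's proof: disintegrate $\rho$ against $\nu$, use the representation $\varphi(t,x)=-\int_t^\infty\partial_w\varphi(w,x)\,dw$ together with Fubini to pull out $\rho_x([0,w))$, invoke Lemma~\ref{stop_inside_barrier} and Theorem~\ref{equality_hitting_time} (i.e.\ Theorem~\ref{equality_of_barrier} for Type~(II)) plus Lemma~\ref{open_barrier} to get $\rho_x((0,w))=1$ on $\{\eta>0\}$, and then identify $\rho((0,t),x)\chi_{\{\eta=0\}}=\kappa_2\chi_{\{\eta=0\}}$ via Proposition~\ref{w_s_laplacian_remark} and \eqref{w_explicit} exactly as in the Type~(I) computation \eqref{444}. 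The only cosmetic difference is that the paper obtains \eqref{main_rho_2} by combining \eqref{442} with Lemma~\ref{diff_rho_t}, whereas you read it off directly from the pointwise identities you have already established; both are fine.
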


\begin{proof}
Again by the disintegration formula,
\begin{align*}
    \iint_{  \R^+ \times \R^d } \varphi(t,x) \rho(dt,dx) &= \int_{\R^d} \int_{0}^{\infty} \varphi(t,x) \rho_x(dt) \nu(x)dx  \\
    &= -\iint_{  \R^+ \times \R^d } \left[ \int_t^{\infty} \p_t \varphi(w,x) dw \right] \rho_x(dt) \nu(x)dx \\
&=  -\int_{\R^d} \int_0^{\infty} \p_t \varphi(w,x) \left[ \int_0^{w} \rho_x(dt) \right] \nu(x) dw dx.
\end{align*} Recall that by Lemma \ref{lambda_equality} and $\tau>0$ a.s. that $\int_0^{w} \rho_x(dt) = \rho_x((0,w) )$ a.e.  Now {as $\tau>0$ a.s.},  {Lemma \ref{stop_inside_barrier}} implies that $\tau \leq s(X_{\tau})$ a.s.  Since  $s=s^U$ $\nu$-a.e. by  Theorem \ref{equality_hitting_time}, we have 
 $\tau \leq s(X_{\tau}) = s^U(X_{\tau})$ a.s.  Hence for $t>s^U(x)$ (equivalently,  $  \eta(t,x)>0$ by Lemma \ref{open_barrier}),  $\rho_x( (0,t) ) = 1$ $\nu$-a.e., which implies \eqref{442}.

This together with by Lemmas  \ref{lambda_equality} and \ref{diff_rho_t} imply
\[ \iint_{\R^+\times \R^d} \p_t \varphi \cdot \rho( (0,t),x) dtdx = \iint_{\R^+\times \R^d} (\p_t \varphi \cdot \nu(x) \chi_{\{\eta>0\} } +  \p_t \varphi \cdot \rho((0,t),x) \chi_{ \{\eta=0\} }) dtdx.  \] {As in Theorem \ref{rho_Type_1}, $\rho((0,t),x) \chi_{ \{\eta=0\} } = \kappa_2(t,x) \chi_{ \{\eta=0\} }$, and thus we deduce 
\eqref{main_rho_2}.}

\end{proof}

 Theorem \ref{rho_Type_1} implies that it suffices to  verify the initial condition $\kappa_1(0,\cdot) \chi_{E^c} = \nu \chi_{E^c}$ to establish that the Eulerian variable $\eta$ (with Type (I) cost)  solves \eqref{St1nu}. Indeed, from \eqref{non_local_heat} and \eqref{decomposition}, we have that for any $\varphi \in C^{\infty}_c( \R^+ \times \R^d)$,
    \[ \iint -\p_t \varphi( \eta - \nu \chi_{ \{\eta>0\} } - \rho( (t,\infty),x) \chi_{ \{\eta=0\} }) + \iint (-\Delta)^s \varphi \cdot \eta = \int \varphi(0,x)(\mu(x)-\nu(x)). \] By Proposition \ref{w_s_laplacian_remark}, this expression is equal to
  \[ \iint -\p_t \varphi( \eta - \nu \chi_{ \{\eta>0\} } + (-\Delta)^sw \cdot \chi_{ \{\eta=0\} }) + \iint (-\Delta)^s \varphi \cdot \eta  = \int \varphi(0,x)(\mu(x)-\nu(x)). \] 
  Hence in order to obtain that $\eta$ is a weak solution to \eqref{St1nu} with initial data $\mu$ and initial domain $E$, it remains to verify $\kappa_1(0,\cdot) \chi_{E^c} = \nu \chi_{E^c}$. We will prove this in Lemma \ref{kappa_1(0,x)}. \\
  
  Similarly Theorem \ref{rho_Type_2} implies that it suffices to  verify the initial condition 
  $\nu \chi_E = 0$ to establish that  the Eulerian variable $\eta$ (with Type (II) cost)  solves \eqref{St2nu}. Indeed, as above,  Proposition \ref{w_s_laplacian_remark} implies $\rho( (0,t),x) \cdot \chi_{ \{\eta(t,x)=0\} } = -(-\Delta)^sw  \cdot \chi_{ \{\eta(t,x)=0\} }$, and thus by \eqref{442} and \eqref{non_local_heat},  for any test function $\varphi \in C^{\infty}_c(\R^+ \times \R^d)$,
  \[ \iint -\p_t \varphi(\eta + \nu \chi_{ \{\eta>0\}  } - (-\Delta)^sw \cdot \chi_{ \{\eta=0\} }) + \iint (-\Delta)^s \varphi \cdot \eta = \int \varphi(0,x) \mu(x). \] We will prove that $\nu \chi_E = 0$ in Lemma \ref{nu_1_E}.
    \\

Now, we verify the initial condition.
Recall that $E= E(\eta)$ denotes   the initial domain of $\eta$, defined in \eqref{initial}.
\begin{lemma}[Initial data for Type (I)]  \label{kappa_1(0,x)}
Let $\tau$ be the optimal stopping time of $\mathcal{P}_0(\mu,\nu)$ for Type (I) and  $(\eta,\rho)$ be the associated Eulerian variables.  Then $\kappa_1(0,\cdot) = \nu(\cdot)$ a.e. outside of $E$ if $\tau>0$ a.s. 
\end{lemma}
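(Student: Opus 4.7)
The plan is to reduce the claim to two sub-claims: (i) $\kappa_1(0,x)=\nu(x)-\mu(x)$ a.e.\ on $E^c$, and (ii) $\mu=0$ a.e.\ on $E^c$. Together these yield $\kappa_1(0,x)=\nu(x)$ a.e.\ on $E^c$.

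For (i), I exploit the Type (I) monotonicity of the active region and work with the open-set version of $\eta$ from Remark \ref{modififcation}: for every $x\notin E$ one has $\eta(a,x)=0$ for all $a>0$, so the cut-off $\chi_{\{\eta(a,x)=0\}}$ in the definition of $\kappa_1$ is identically $1$ at such $x$, and
$$\kappa_1(0,x)=-\int_0^{\infty}(-\Delta)^s\eta(a,x)\,da.$$
Testing this identity against an arbitrary $\psi\in C_c^\infty(E^c)$, using Fubini together with the self-adjointness of $(-\Delta)^s$, and recognizing $w(0,x)=\int_0^\infty \eta(a,x)\,da$ gives
$$\int\psi\,\kappa_1(0,\cdot)\,dx=-\int\psi\,(-\Delta)^s w(0,\cdot)\,dx=\int\psi\,(\nu-\mu)\,dx,$$
where the last equality comes from \eqref{w_s_laplacian} for Type (I) at $t=0$ (noting $\mu_0=\mu$ since $\tau>0$ a.s.). The arbitrariness of $\psi$ yields (i).

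For (ii), the plan is to test the weak Eulerian formulation \eqref{non_local_heat_weak} against $\varphi_\epsilon(t,x)=\phi_\epsilon(t)\psi(x)$, with $\psi\in C_c^\infty(E^c)$ and $\phi_\epsilon(t):=(1-t/\epsilon)_+$, and let $\epsilon\to 0^+$. The hypothesis $\tau>0$ a.s.\ forces $\rho(\{0\}\times\R^d)=0$, so the $\rho$-integral on the left tends to $0$. On the right, the term with $(-\Delta)^s\psi$ vanishes in the limit by $\|\phi_\epsilon\|_{L^1}\to 0$ together with the $L^\infty$-bound on $\eta$, while the $\partial_t\varphi_\epsilon$-term equals $-\epsilon^{-1}\int_0^\epsilon\int\psi\,\eta(t,\cdot)\,dx\,dt$, which is identically zero since $\supp(\psi)\subset E^c$ and $\eta(t,\cdot)$ vanishes there for every $t>0$. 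The limit reads $\int\psi\,\mu\,dx=0$ for all admissible $\psi$, hence (ii).

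The delicate point I anticipate is the legitimacy of the pointwise statement ``$\eta(t,x)=0$ for every $t>0$ when $x\notin E$'' used in both steps; this relies on working with the open-set representative of $\eta$ from Remark \ref{modififcation} together with the Type (I) monotonicity of $t\mapsto\{\eta(t,\cdot)>0\}$, and on the fact that $E$ is then a.e.\ equal to the open set $\bigcup_{t>0}\{\eta(t,\cdot)>0\}$.
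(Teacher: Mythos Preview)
Your decomposition into (i) $\kappa_1(0,\cdot)=\nu-\mu$ on $E^c$ and (ii) $\mu=0$ on $E^c$ is sound and makes explicit a point the paper treats briefly. Step (i) is essentially the paper's own computation: on $E^c$ one has $\kappa_1(0,x)=-(-\Delta)^s w(0,x)$ via \eqref{w_explicit}, and $(-\Delta)^s w(0,\cdot)=\mu-\nu$ from \eqref{w_s_laplacian}. The paper then reaches $\kappa_1(0,x)=\nu(x)$ by invoking Proposition~\ref{w_s_laplacian_remark} on $\R^+\times E^c$ and using $\nu=\rho((0,\infty),\cdot)$, which amounts to either taking $t\to 0^+$ in the identity $-(-\Delta)^s w(t,\cdot)=\rho((t,\infty),\cdot)$ on $\{\eta=0\}$, or implicitly using (ii).

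There is, however, a genuine gap in your argument for (ii). Since $E=\{w(0,\cdot)>0\}$ is open, $E^c$ is closed; any smooth $\psi$ with $\supp(\psi)\subset E^c$ must vanish on $\overline{E}$, hence on $\partial E$. So testing \eqref{non_local_heat_weak} against $\phi_\epsilon(t)\psi(x)$ only yields $\int\psi\,\mu=0$ for $\psi$ supported in $\operatorname{int}(E^c)$, which gives $\mu=0$ on $\operatorname{int}(E^c)$ but not on $\partial E$. Without a separate reason why $|\partial E|=0$ (which is not established here), this does not conclude $\mu=0$ a.e.\ on $E^c$.

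A short probabilistic argument closes the gap cleanly and replaces your step (ii). By Theorems~\ref{barrier_existence} and~\ref{equality_hitting_time}, $\tau=\inf\{t\ge 0:\,t\ge s^U(X_t)\}$, so $s^U(X_0)=0$ forces $\tau=0$. Thus $\tau>0$ a.s.\ implies $\mu(\{s^U=0\})=0$. Since $E=\{s^U>0\}$ (the initial domain coincides with $\{w(0,\cdot)>0\}$, and $w(0,x)>0$ iff $s^U(x)>0$ by the definition of $s^{U,f}$), this gives $\mu=0$ a.e.\ on $E^c$. With this fix your proof is complete and equivalent to the paper's.
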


\begin{proof} Since $\{\eta(t,\cdot)>0\}$ is non-increasing in $t$, we have $ \R^+ \times E^c  \subset \{\eta=0\}$.  
Also by Proposition \ref{w_s_laplacian_remark}, { $-(-\Delta)^s w(t,x) =  \rho( (t,\infty),x)$ on $\R^+ \times E^c$}. Noting that the condition $\tau>0$ a.s. implies $\nu(\cdot) = \rho([0,\infty),\cdot)=\rho((0,\infty),\cdot) $, for $x\in E^c,$ we have
\begin{align*}
    \nu(x) = \rho((0,\infty),x) = -(-\Delta)^s w(0,x) = \kappa_1(0,x),
\end{align*} 
where the last identity follows from \eqref{Type_I_jump}.
\end{proof}

\begin{lemma}[Initial data for Type (II)]\label{nu_1_E}
With the same notation and assumptions as Theorem \ref{rho_Type_2} for Type (II), we have  $\nu=0$ a.e. in $E$ if $\tau>0$ a.s.
\end{lemma}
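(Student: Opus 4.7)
The plan is to show that for Type (II) the potential backward barrier function vanishes on the initial domain $E$, and then to use the assumption $\tau > 0$ a.s.\ together with the fact that stopped particles lie inside the barrier to conclude that $X_\tau$ cannot land in $E$.

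First, I would identify the barrier function on $E$. Using the modification of Lemma \ref{open_barrier} discussed in Remark \ref{modififcation}, we may assume $\{\eta>0\} = (R^U)^c$ is open in $\R^+\times \R^d$. Since for Type (II) the set $\{\eta(t,\cdot)>0\}$ is non-decreasing in $t$, by Remark \ref{inital_domain_well_posed} the initial domain equals (up to a Lebesgue null set)
\[
E \;=\; \bigcap_{n\ge 1}\{\eta(1/n,\cdot)>0\}.
\]
For each $x\in E$ and each $n$, the point $(1/n,x)\in \{\eta>0\} = (R^{U,b})^c$, where $R^{U,b} = \{(t,y):t\le s^{U,b}(y)\}$. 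Hence $1/n > s^{U,b}(x)$ for every $n$, which forces $s^{U}(x) = 0$ for a.e.\ $x\in E$.

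Next, I would transfer this to a $\nu$-a.e.\ statement for the original barrier function $s$. By Theorem \ref{equality_hitting_time}, we have $s=s^{U}$ $\nu$-a.e., and since $\nu\ll \mathrm{Leb}$ (Assumption \ref{assume1}), the previous step gives $s(x) = 0$ for $\nu$-a.e.\ $x\in E$.

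Finally, I would combine this with the fact that stopped particles lie in the barrier. Under the hypothesis $\tau>0$ a.s., Lemma \ref{stop_inside_barrier} yields $\tau \le s(X_\tau)$ almost surely. If one had $\nu(E)=\mathbb{P}(X_\tau \in E) > 0$, then on the positive-probability event $\{X_\tau\in E\}$ the previous paragraph gives $s(X_\tau)=0$, forcing $\tau=0$ there and contradicting $\tau>0$ a.s. Hence $\nu(E)=0$, and absolute continuity of $\nu$ gives $\nu=0$ a.e.\ in $E$. The proof is short, with the only subtlety being the careful book-keeping between Lebesgue-a.e.\ and $\nu$-a.e.\ statements, which is handled by the absolute continuity of $\nu$; I do not anticipate any significant technical obstacle beyond this.
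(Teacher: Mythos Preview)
Your proposal is correct and follows essentially the same approach as the paper's proof: both combine Lemma~\ref{stop_inside_barrier} (giving $\tau\le s(X_\tau)$), Theorem~\ref{equality_hitting_time} (giving $s=s^U$ $\nu$-a.e.), and the fact that $s^U=0$ on $E$ (equivalently, $E\subset\{\eta(t,\cdot)>0\}$ for all $t>0$) to conclude $X_\tau\notin E$ a.s. The only cosmetic difference is that the paper phrases the last step directly as $(\tau,X_\tau)\in\{\eta=0\}$ together with $\eta>0$ on $E$ for all positive times, while you translate this into the barrier function and argue by contradiction.
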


\begin{proof} 
 Let $s:\R^d \rightarrow \R^+$ be the barrier function associated  to $\tau$. Then {since $\tau>0$ a.s.,}  by  {Lemma \ref{stop_inside_barrier}}, we have $\tau \leq s(X_{\tau})$.   Also by Theorem \ref{equality_hitting_time},  $s(X_{\tau})=s^U(X_{\tau})$ a.s. Hence,  as $\tau  \leq s^U(X_{\tau})$, we have  $(\tau,X_{\tau}) \in \{w=0\} = \{\eta=0\}$ a.s., implying that $\eta(\tau,X_{\tau})=0$ a.s. 
Since  $\eta(t,x)>0$ for any $x \in E$ and $t\in \R^+$, we deduce that  $X_{\tau} \notin E$ a.s. Recalling $X_{\tau} \sim \nu$, we conclude the proof.
\end{proof}

Recalling \eqref{main_rho} and \eqref{main_rho_2} along with using
  Theorem \ref{rho_Type_1} and Lemma \ref{kappa_1(0,x)} for Type (I) and Theorem \ref{rho_Type_2} with Lemma \ref{nu_1_E} for Type (II), we conclude:

\begin{theorem}[Solutions to the Fractional Stefan Problem]\label{solution_eta_stefan}
Let $(\eta,\rho)$ be the  Eulerian variables associated to the optimal stopping time $\tau$ of $\mathcal{P}_0(\mu,\nu)$, and assume that $\tau>0$ a.s. Then \begin{itemize}
    \item For Type (I),  $\eta$ solves \eqref{St1nu} with 
  initial data $(\mu, E(\eta))$. 
 \item For Type (II), $\eta$ solves \eqref{St2nu} with  initial data $(\mu, E(\eta))$.
 \end{itemize}
\end{theorem}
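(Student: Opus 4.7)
The plan is to combine the weak form of the Eulerian equation \eqref{non_local_heat_weak}--\eqref{non_local_heat} with the characterizations of $\rho$ established in Theorems \ref{rho_Type_1} and \ref{rho_Type_2}, and then read off that the resulting identity is exactly the weak formulation in Definition \ref{weighted_stefan_def}. The two cases are structurally parallel so I would treat Type (II) first and then indicate the (essentially identical) Type (I) argument.

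First I would verify the structural properties (a)--(b)/(a')--(b'). Openness of $\{\eta>0\}$ holds after the modification in Remark \ref{modififcation}; uniform boundedness of $\{\eta(t,\cdot)>0\}$ follows from the compact active region $\tau\le\tau_r$, which forces $\eta$ to be supported in $\overline{B_r(0)}$; monotonicity in $t$ follows from Lemma \ref{open_barrier} together with the fact that the potential barrier $R^{U,f}$ (resp. $R^{U,b}$) is forward (resp. backward). The properties $\tilde\rho\ge 0$ and $\tilde\rho([0,\infty),\cdot)\le\nu$ are obtained by identifying $\tilde\rho$ with the restriction of the non-negative measure $\rho$ to $\{\eta=0\}$: on the open set $\{\eta>0\}$ the PDE \eqref{non_local_heat} reads $\rho=-\partial_t\eta-(-\Delta)^s\eta$ with $\eta\equiv 0$ there in the interior of the complement, so $\rho\chi_{\{\eta=0\}}=-[(-\Delta)^s\eta]\chi_{\{\eta=0\}}=\tilde\rho$; the bound is then $\tilde\rho([0,\infty),\cdot)\le\rho([0,\infty),\cdot)=\nu(\cdot)$.

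To obtain the weak equation for Type (II), I test \eqref{non_local_heat_weak} against an arbitrary $\varphi\in C_c^{\infty}(\R^+\times\R^d)$. Since $\tau>0$ a.s., one has $\rho(\{0\}\times\R^d)=0$ and the initial data in \eqref{non_local_heat} is exactly $\mu$, so
\begin{equation*}
\iint\varphi\,\rho=\int\varphi(0,\cdot)\mu\,dx+\iint(\partial_t\varphi)\eta\,dtdx-\iint[(-\Delta)^s\varphi]\,\eta\,dtdx.
\end{equation*}
Equating this with the identity \eqref{442} from Theorem \ref{rho_Type_2} and using the identification $\rho((0,t),x)\chi_{\{\eta=0\}}=\kappa_2(t,x)\chi_{\{\eta=0\}}$ established at the end of that proof (via Proposition \ref{w_s_laplacian_remark} and \eqref{w_explicit}), one obtains exactly
\begin{equation*}
\iint(-\partial_t\varphi\cdot h+[(-\Delta)^s\varphi]\,\eta)\,dtdx=\int\varphi(0,\cdot)\mu\,dx,\qquad h=h_2(\eta).
\end{equation*}
For Type (I), the same algebraic manipulation with \eqref{decomposition} in place of \eqref{442} and the identification \eqref{444} gives $h=h_1(\eta)$, with an extra boundary term $\int\varphi(0,\cdot)\nu\,dx$ on the right-hand side that combines with the $\mu$ term to produce $\int\varphi(0,\cdot)(\mu-\nu)\,dx$.

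For the initial data, in Type (II), Lemma \ref{nu_1_E} yields $\nu\chi_E=0$ a.e., so the right-hand side $\int\varphi(0,\cdot)(\mu+\nu\chi_E)\,dx$ in \eqref{weak_eq_St_2} reduces to $\int\varphi(0,\cdot)\mu\,dx$ as required; the inclusion $\mathrm{supp}(\mu)\subseteq E$ is automatic because $\{\eta(t,\cdot)>0\}$ is non-decreasing in $t$. In Type (I), decomposing $\nu=\nu\chi_E+\nu\chi_{E^c}$ and applying Lemma \ref{kappa_1(0,x)}'s identity $\kappa_1(0,\cdot)=\nu(\cdot)$ a.e.\ on $E^c$, I rewrite $\int\varphi(0,\cdot)(\mu-\nu)\,dx=\int\varphi(0,\cdot)(\mu-\nu\chi_E-\kappa_1(0,\cdot)\chi_{E^c})\,dx$, which is precisely the right-hand side of \eqref{weak_eq_St_1}; the inclusion $\mathrm{supp}(\mu)\subseteq E$ here uses the opposite monotonicity, namely that $E=\bigcup_{t>0}\{\eta(t,\cdot)>0\}$ contains $\{\mu>0\}$ since $\eta(0,\cdot)=\mu$.

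The main obstacle is really upstream of this theorem: the identifications $\rho((0,t),x)\chi_{\{\eta=0\}}=\kappa_2\chi_{\{\eta=0\}}$ and $\rho((t,\infty),x)\chi_{\{\eta=0\}}=\kappa_1\chi_{\{\eta=0\}}$ are where the jump nature of the $2s$-stable process enters, since they express the a priori singular measure $\rho$ restricted to the inactive region as the explicit time integral of $-(-\Delta)^s\eta$ over that region. Once these identifications (already carried out in the proofs of Theorems \ref{rho_Type_1} and \ref{rho_Type_2}) and Proposition \ref{w_s_laplacian_remark} are in hand, the present theorem is a direct rearrangement of the Eulerian PDE against a test function, with the initial data matched term-by-term using Lemmas \ref{kappa_1(0,x)} and \ref{nu_1_E}.
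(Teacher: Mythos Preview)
Your proposal is correct and follows essentially the same approach as the paper: the paper's proof is a one-sentence citation of Theorems \ref{rho_Type_1}, \ref{rho_Type_2} and Lemmas \ref{kappa_1(0,x)}, \ref{nu_1_E}, and you have faithfully unpacked how these combine with the Eulerian equation \eqref{non_local_heat_weak} to produce the weak formulations \eqref{weak_eq_St_1} and \eqref{weak_eq_St_2}. Your additional verification of the structural hypotheses (a)--(b) and the conditions $\tilde\rho\ge 0$, $\tilde\rho([0,\infty),\cdot)\le\nu$ from Definition \ref{weighted_stefan_def} is a useful service the paper leaves implicit.
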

We will later show that for some solutions of $\eqref{St1nu}$, the initial trace can be characterized as the positivity set of the elliptic obstacle problem (see Theorem \ref{nu_identify}) for the solutions we generate in Section \ref{connection_stefan}. {Also for $\eqref{St2nu},$ when $\mu$ has no mushy region,  the initial trace of  solutions we construct later will be shown to be the support of $\mu$.}

\subsection{Solutions to Weighted Stefan Problems}  \label{solution_stefans_1}

In this section, we interpret the solution to the weighted Stefan problem as the Eulerian variables $(\eta,\rho)$ from Lemma \ref{Eulerian_Variables} associated to $\mathcal{P}_0(\mu,\nu)$ from \eqref{Var_Skoro}. Assume that 
\begin{equation}\label{initial_0}
0 \leq \nu \in L^{\infty}(\R^d) \cap L^1(\R^d), \quad   0 \leq \eta_0 \in L^{\infty}(\R^d) \hbox{ of compact support.}
\end{equation} 
\begin{theorem}[Consistency of \eqref{St1nu}] \label{St_1_Consistent}
For $\nu$ and $\eta_0$ satisfying  \eqref{initial_0}, let $\eta$ be a weak solution to \eqref{St1nu} with initial data $(\eta_0,E)$. Define $s(x) := \sup\{t: \eta(t,x)>0\}$, and let $\kappa_1 := -[(-\Delta)^s v_1]$ where $v_1(t,x) = \int_t^{\infty} \eta(a,x) da$ is from Definition \ref{weighted_stefan_def}. Now we define $\rho \in \mathcal{M}(\R^+ \times \R^d)$ as follows: for any $t>0$,
\begin{align} \label{def}
    \rho((t,\infty),\cdot):= 
\nu \chi_{ \{\eta(t,\cdot)>0\} } \chi_{ \{s(\cdot)<\infty\} } + \kappa_1(t,\cdot) \chi_{ \{\eta(t,\cdot)=0\} } \quad \hbox{ with }\rho(0,\cdot) \equiv 0. 
\end{align}
 Then, $(\eta,\rho)$
 is the optimal Eulerian variables from Lemma \ref{Eulerian_Variables} for a cost of Type (I) between $\eta_0$  and  $\tilde{\nu}$ whose density is given by
 \begin{equation}\label{tilde_nu_2}
 \tilde{\nu}(x) :=  \rho([0,\infty),x) = \nu(x) \chi_{ \{0<s(x)<\infty\} } + \kappa_1(0,x) \chi_{ \{ s(x) = 0 \}  }.
 \end{equation}
 \end{theorem}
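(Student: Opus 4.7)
The plan is to identify $(\eta,\rho)$ as the Eulerian variables of the unique Type (I) optimizer $\tau^*$ of $\mathcal{P}_0(\eta_0,\tilde\nu)$. The argument runs in three stages: (1) verify that $(\eta,\rho)$ satisfies the Eulerian PDE with initial datum $\eta_0$; (2) produce $\tau^*$ and its Eulerian variables $(\eta^*,\rho^*)$ via Theorem \ref{barrier_existence}; (3) conclude $(\eta,\rho) = (\eta^*,\rho^*)$ by uniqueness.

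For stage (1), substituting $h = \eta - \nu\chi_{\{\eta>0\}} - \kappa_1\chi_{\{\eta=0\}}$ into \eqref{weak_eq_St_1} yields
$$\iint \partial_t\varphi\cdot(\nu\chi_{\{\eta>0\}}+\kappa_1\chi_{\{\eta=0\}})\,dtdx = \iint(\partial_t\varphi\cdot\eta - [(-\Delta)^s\varphi]\eta)\,dtdx + \int \varphi(0)(\eta_0 - \nu\chi_E - \kappa_1(0)\chi_{E^c})\,dx,$$
while the definition \eqref{def} of $\rho$ together with $\rho(\{0\},\cdot)=0$ and an integration-by-parts in time (in the spirit of Theorems \ref{rho_Type_1}--\ref{rho_Type_2}) gives
$$\iint \varphi\,d\rho = \int \varphi(0)\tilde\nu\,dx + \iint \partial_t\varphi\cdot\rho((t,\infty),x)\,dtdx.$$
The key algebraic identity is $\nu\chi_{\{\eta>0\}}+\kappa_1\chi_{\{\eta=0\}} = \rho((t,\infty),x) + \nu\chi_F$, where $F := \{s = \infty\}$ collects the ``never-stopped'' particles, which appear in the enthalpy but contribute nothing to $\rho$. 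Combined with $\iint \partial_t\varphi\cdot\nu\chi_F\,dtdx = -\int \varphi(0)\nu\chi_F\,dx$ and the definition \eqref{tilde_nu_2} $\tilde\nu = \nu\chi_{E\setminus F} + \kappa_1(0)\chi_{E^c}$ (using $F \subset E$), all boundary contributions collapse to produce the Eulerian PDE $\iint \varphi\,d\rho = \int \varphi(0)\eta_0\,dx + \iint(\partial_t\varphi - (-\Delta)^s\varphi)\eta\,dtdx$.

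For stages (2)--(3), testing the Eulerian PDE against $s$-subharmonic $\varphi$ yields $\eta_0 \leq_{s\text{-SH}} \tilde\nu$; combined with the uniform boundedness of $\{\eta(t,\cdot)>0\}$ (giving a compact active region for $\tilde\nu$) and $\tilde\nu \in L^\infty$ (from $\kappa_1(0,\cdot) \leq \nu$ via the standing hypothesis $\tilde\rho([0,\infty),\cdot) \leq \nu$), Assumption \ref{assume1} is verified. Theorem \ref{barrier_existence} then delivers a unique Type (I) optimizer $\tau^*$ as a hitting time to a forward barrier with Eulerian variables $(\eta^*,\rho^*)$. The set $R := \{\eta = 0\}$ is a closed forward barrier (by Definition \ref{weighted_stefan_def}(b) and openness of $\{\eta>0\}$) that embeds $\eta_0$ into $\tilde\nu$ via Step 1, so Lemma \ref{pot_barrier_biggest} combined with Theorem \ref{equality_hitting_time_type_1} forces $\{\eta>0\} \subset \{\eta^*>0\}$. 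On the open set $\{\eta^*>0\}$ both $\eta$ and $\eta^*$ weakly solve $\partial_t u + (-\Delta)^s u = 0$ --- for $\eta$ because $t\mapsto \rho((t,\infty),\cdot)$ is locally constant on $\{\eta>0\}$ and hence $\rho$ has no distributional time-mass there --- with shared vanishing Dirichlet data on the barrier and common initial trace $\eta_0$. Uniqueness of weak $L^2$-solutions (Lemma \ref{regularity_lemma}) forces $\eta = \eta^*$, and the Eulerian identity then gives $\rho = \rho^*$.

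The delicate step is the bookkeeping in stage (1) around $F = \{s = \infty\}$: the never-stopped particles contribute to the enthalpy through $\nu\chi_{\{\eta>0\}}$ but not to $\rho$, and only the precise definition \eqref{tilde_nu_2} of $\tilde\nu$ makes the initial-value terms cancel. A secondary subtlety is the active-region identification $\{\eta>0\} = \{\eta^*>0\}$ in stage (3), where one direction is immediate from the support properties of $\eta^*$ and the other relies on the maximality of the potential forward barrier encoded in Lemma \ref{pot_barrier_biggest}.
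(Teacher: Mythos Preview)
Your stage (1) is essentially the paper's computation: the handling of $F=\{s=\infty\}$ and the resulting collapse of initial-value terms into the Eulerian identity \eqref{non_local_heat_weak} with initial datum $\eta_0$ matches the paper's argument line by line.

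Your stage (3), however, has a genuine gap. The claim that $\eta$ solves $\partial_t u+(-\Delta)^s u=0$ on $\{\eta^*>0\}$ does not follow from what you have. You correctly observe that $\rho$ has no mass on $\{\eta>0\}$ (because $t\mapsto\rho((t,\infty),\cdot)$ is constant there), but you then use this to conclude the heat equation holds on the potentially larger set $\{\eta^*>0\}$. On $\{\eta^*>0\}\setminus\{\eta>0\}$ you have $\eta=0$ and $\rho((t,\infty),x)=\kappa_1(t,x)$, which is not constant in $t$: indeed $-\partial_t\kappa_1(t,x)=-[(-\Delta)^s\eta(t,x)]\chi_{\{\eta=0\}}\ge 0$ and is generically positive. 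So $\rho$ does put mass on this strip, and the heat equation for $\eta$ fails there; your comparison with $\eta^*$ collapses. A secondary problem: your appeal to Lemma \ref{pot_barrier_biggest} presupposes that the barrier $\{\eta=0\}$ arises from a stopping time embedding $\eta_0$ into $\tilde\nu$, which is exactly what you are trying to prove---stage (1) shows $(\eta,\rho)$ satisfies the Eulerian PDE with spatial marginal $\tilde\nu$, not that the hitting time to $\{\eta=0\}$ has law $\tilde\nu$. Finally, Lemma \ref{regularity_lemma} is a regularity estimate, not a uniqueness statement.

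The paper avoids all of this by invoking Theorem \ref{uniqueness} directly. One simply defines $\tau:=\inf\{t\ge 0: t\ge s(X_t)\}$, lets $(\bar\eta,\bar\rho)$ be its Eulerian variables, and observes that both $(\eta,\rho)$ and $(\bar\eta,\bar\rho)$ solve \eqref{non_local_heat} with the constraint $\eta[R]=0$, $\rho[R]=1$ for the closed forward barrier $R=\{\eta=0\}$. Theorem \ref{uniqueness} then forces $(\eta,\rho)=(\bar\eta,\bar\rho)$. This sidesteps any comparison of active regions and makes no use of the optimizer $\tau^*$ or of Lemma \ref{pot_barrier_biggest}.
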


\begin{proof}
First, let us verify the second identity in \eqref{tilde_nu_2}. Since   $E$ is the collection of limit points of $\{\eta(t,\cdot)>0\}$ as $t\to 0^+$ (see Definition \ref{weighted_stefan_def}), by the fact $\rho(\{0\} \times \R^d)=0$ along with the dominated convergence theorem,
$$\rho( [0,\infty) \times A) = \lim_{t \rightarrow 0^+} \rho( (t,\infty) \times A) = \int_A \left( \nu(x) \chi_{E } \chi_{ \{s(x)<\infty\} } + \kappa_1(0,x) \chi_{E^c} \right) dx. $$ 
This implies \eqref{tilde_nu_2}, since $E = \{x\in \R^d: s(x)>0\}.$

Next note that, since the support of $\eta(t,\cdot)$ is non-increasing in time and $E$ is compact, there is  $r>0$ such that $B_r(0)$ contains the support of $\eta(t,\cdot)$ for all $t \ge 0$. We show that $(\eta,\rho)$ solve \eqref{non_local_heat} in $(0,\infty) \times B_r(0)$. By the definition of $\rho$, for any $\varphi \in C^{\infty}_c(\R^+ \times B_r(0))$,
\begin{align*}
 \iint \p_t \varphi & \cdot  \rho( (t,\infty),x) dtdx  \\
 &= \iint \p_t \varphi \cdot( \nu \chi_{ \{\eta>0\} } \chi_{ \{s<\infty\} } + \kappa_1 \chi_{ \{\eta=0\} }) dtdx \\
 &= \iint \p_t \varphi \cdot( \nu \chi_{ \{\eta>0\} }  + \kappa_1 \chi_{ \{\eta=0\} }) dtdx - \iint \p_t \varphi \cdot  \nu \chi_{ \{\eta>0\}} \chi_{\{s=\infty\}  } dtdx\\ 
 &= \iint \p_t \varphi \cdot( \nu \chi_{ \{\eta>0\} }  + \kappa_1 \chi_{ \{\eta=0\} }) dtdx + \int \varphi(0,\cdot) \nu \chi_{ \{s=\infty\} } dx,
 \end{align*}
 where the last equality follows from the following reason: As $\{\eta>0\}$ is open in space-time  and  $\{\eta(t,\cdot)>0\}$ is monotone in time (see Definition \ref{weighted_stefan_def}), the definition of $s(x)$ yields  $\{\eta(t,x)>0\} = \{ t < s(x) \}$ and thus
 $$
 \int \partial_t  \varphi  \cdot\chi_{ \{\eta>0\} } dt = \int_0^{s(x)} \partial_t \varphi (t,x) dt = \varphi(s(x),x)-\varphi(0, x).
 $$
Observe that one can also write 
\begin{align*}
    \iint \p_t \varphi \cdot  \rho( (t,\infty), x ) dtdx &= \iint \p_t \varphi \cdot  \tilde{\nu} dtdx - \iint \p_t \varphi \cdot  \rho( (0,t],x) dtdx \\
&=  -\int \varphi(0,\cdot )\tilde{\nu} dx + \iint \varphi \rho(dt,dx),
\end{align*}
where the first equality is due to the definition of $\tilde{\nu}$ and the second equality follows from the integration by parts (Lemma \ref{diff_rho_t}), along with the fact  $\rho( \{0\} \times \R^d)=0$.

Putting the above displays together, 
\begin{align} \label{tilde_nu_22}
     \iint \varphi \rho(dt,dx) &= \iint \p_t \varphi \cdot (\nu \chi_{ \{\eta>0\} } + \kappa_1 \chi_{ \{\eta=0\} }) dtdx+\int \varphi(0,\cdot) \nu \chi_{ \{s=\infty\} } dx  + \int \varphi(0,\cdot )\tilde{\nu} dx  \nonumber \\ 
&=\iint \p_t \varphi \cdot (\nu \chi_{ \{\eta>0\} } + \kappa_1 \chi_{ \{\eta=0\} }) dtdx + \int  \varphi(0,\cdot) 
\nu \chi_E dx + \int \varphi(0,\cdot) \kappa_1(0,\cdot) \chi_{E^c} dx,
\end{align}
where the last identity follows from the fact that  $E = \{ s(\cdot)>0\}$ is a disjoint union of $  E \cap \{s(\cdot)< \infty\}$ and $ E \cap  \{s(\cdot)=\infty\} = \{s(\cdot)=\infty\}$. 
Therefore, by the weak form of \eqref{St1nu}, namely  \eqref{weak_eq_St_1}, we conclude that $(\eta,\rho)$ solves \eqref{non_local_heat} with initial data $\eta_0$ in $(0,\infty) \times B_r(0)$. \\

Finally, we show that $(\eta,\rho)$ is the Eulerian variable associated to the optimizer of $\mathcal{P}_0(\eta_0,\tilde{\nu})$ for a Type (I) cost. Let us define $\tau := \inf \{t \ge 0 : t \geq s(X_t) \}$. Since $\tau$ is bounded by the exit time of   $B_r(0)$, it has finite moments. 
Let $(\overline{\eta},\overline{\rho})$  be the  Eulerian variables associated to $(\eta_0,\tau)$. Then as $\{\eta=0\}$ is closed, $\overline{\eta}[ \{\eta=0\} ] = 0$ and $ \overline{\rho}[ \{\eta=0\} ] = 1$.  Hence by Theorem \ref{uniqueness},  $\overline{\eta}=\eta$ and $\overline{\rho}=\rho$, concluding the proof.
\end{proof}

\begin{remark} The formula of $\rho$ in Theorem \ref{St_1_Consistent} is consistent with the one in \eqref{main_rho}. This is because for the cost of Type (I), we have $\nu = 0$ on $\{s=\infty\}$. 
\end{remark}

For the consistency result of \eqref{St2}, the corresponding result to Theorem~\ref{St_1_Consistent} is implied from Theorem \ref{melting_solve} when there is no initial mushy region.


\subsection{Enthalpy variable} \label{enthalpy_section} 
In this section, we show that in the melting scenario, the weighted temperature-based Stefan problem \eqref{St2nu} can be rewritten in terms of the  associated enthalpy variable. This connects our approach to the several recent papers  \cite{del2021one,del2017distributional,del2017uniqueness} which consider a particular case of  the enthalpy equation, \eqref{Ste}, to study fractional Stefan problems.  Our approach furthermore connects \eqref{Ste} to the parabolic obstacle problem studied in \cite{caffarelli2013regularity, ros2021optimal, barrios2018free, borrin2021obstacle},  extending the well-known connection in the local case to the non-local case for the first time.

\medskip

\begin{definition}[Enthalpy variable] 
Assume that $\nu$ is a  bounded and nonnegative function in $\R^d$. Let   $\eta$ be the solution to \eqref{St1nu} or \eqref{St2nu} with initial domain $E$. Then the enthalpy variable $h$ is defined as 
\begin{align*}
     h(t,x) := \begin{cases}
    \eta(t,x) + \rho( [0,t),x) - \nu(x) &\hbox{ for \eqref{St1nu}}, \\
    \eta(t,x) +  \tilde{\rho}( [0,t),x) + \nu(x) \chi_E  &\hbox{ for \eqref{St2nu}},
\end{cases} 
\end{align*} 
where $\rho$ for \eqref{St1nu} is from \eqref{def} and  $\tilde{\rho}$ for \eqref{St2nu} is defined  via
\[ \tilde{\rho}( [0,t),\cdot) := \nu(\cdot) \chi_{ \{\eta(t,\cdot)>0\} } \chi_{E^c} + \kappa_2(t,\cdot) \chi_{ \{\eta(t,\cdot)=0\}  } \text{ with } \tilde{\rho}(0,\cdot) \equiv 0,  \] where $\kappa_2:= [-(-\Delta)^s v_2]$ and $v_2(t,x) := \int_0^t \eta(a,x) da$. Our  definition of $\tilde{\rho}$ is motivated from \eqref{main_rho_2}  and Lemma \ref{nu_1_E}.
    \label{enthalpy_def}
\end{definition}

\begin{remark} For Type (II),  $\eta$ from Lemma \ref{Eulerian_Variables} solves \eqref{St2nu} due to Theorem \ref{solution_eta_stefan}. In this case  \eqref{main_rho_2} yields $\tilde{\rho}([0,t), \cdot ) = \rho([0,t),\cdot ) - \nu \chi_E$. 
Hence, our definition {of the enthalpy} coincides with \eqref{enthalpy0} .
We have chosen to remove the $\nu \chi_E$ term from $\rho$ to emphasize its role as part of the initial enthalpy (see Theorem \ref{stefan_enthalpy} below for details).


\end{remark}

Recall that \eqref{main_rho} and \eqref{main_rho_2} (for Type (I) and (II) respectively) allow us to rewrite $h$ in terms of $\eta$ when $\tau>0$ a.s.  Below we show that the converse holds for the melting case \eqref{St2nu} and in particular for \eqref{St2}. Namely in this case we can recast $\eta$ in terms of the enthalpy variable, leading to an enthalpy-based formulation of \eqref{St2nu}. When $\nu \equiv 1$, our definition is equivalent to that of \cite{del2021one}.

\begin{definition}[Enthalpy form of \eqref{St2nu}] Let $\nu$ and $h_0$ be two bounded and nonnegative functions in $\R^d$. We say that $h \in L^{\infty}(\R^+\times \R^d)$ is a weak solution to
\begin{equation}
\tag{$St_{h,\nu}$}
    \p_t h = -(-\Delta)^s (h-\nu)_+\quad \text{ in } (0,\infty) \times \R^d, \quad h(0,\cdot ) = h_0,    \label{weighted_st2_h} 
\end{equation} if for any $\varphi \in C^{\infty}_c(\R^+ \times \R^d)$ we have
\begin{equation} \int_0^{\infty}\int_{\R^d} (-h\cdot \p_t \varphi +  (h-\nu)_+  (-\Delta)^s \varphi  )dxdt = \int_{\R^d} \varphi(0,\cdot)h_0 dx  .\end{equation} 
In particualr, if $h$ is a weak solution to \eqref{weighted_st2_h} with $\nu \equiv 1$, then we say that $h$ is a weak solution to  \eqref{Ste}. \label{enthalpy_weighted_def}
\end{definition}

Note that in our definition of solutions to \eqref{St2nu} and \eqref{weighted_st2_h}, we view $\nu$ as a target measure. In contrast, in the equations \eqref{St2} and \eqref{Ste}, we regard  {$\nu\equiv 1$ as a weight.}

\begin{theorem} Let $\nu$ be a  bounded and nonnegative function in $\R^d$.
\begin{enumerate}
    \item Assume that $\eta$ solves \eqref{St2nu} with initial data $(\eta_0,E)$. Then the  enthalpy $h$ from Definition \ref{enthalpy_def} is a weak solution of \eqref{weighted_st2_h} with  $h_0=\eta_0 + \nu \chi_E$. 
    \item  Assume that $\eta$ solves \eqref{St2} (see Definition~\ref{def_St}). Then the  enthalpy $h$ from Definition \ref{enthalpy_def} is the unique weak solution of \eqref{Ste} with $h_0 = \eta_0+ \chi_E$, and $\eta$ is the unique weak solution of \eqref{St2} with initial data $(\eta_0,E)$.
\end{enumerate}

\label{stefan_enthalpy}
\end{theorem}

\begin{proof} Assume that $\eta $ solves \eqref{St2nu}. Let us first show that $\eta(t,x) = (h(t,x)-\nu(x))_+$. To check this, {fix $t>0$. Then if $ \eta(t,x)>0$}, then we have from Definition \ref{enthalpy_def} that $\tilde{\rho}([0,t),x) = \nu(x) \chi_{E^c}(x)$ and thus $h(t,x) = \eta(t,x) + \nu(x)$. Next {if $ \eta(t,x)=0$}, then we have $h(t,x) = \kappa_2(t,x) \leq \nu(x)$ (see Definition \ref{weighted_stefan_def}), and thus $(h-\nu)_+(t,x)=0$. Hence we have verified $\eta=(h-\nu)_+$, implying that  for any test function $\varphi \in C^{\infty}_c(\R^+ \times \R^d)$, we have from \eqref{weak_eq_St_2} that 
\begin{equation} \int_0^{\infty}\int_{\R^d} (h  ( -\p_t \varphi) +  (h-\nu)_+  (-\Delta)^s \varphi  )dxdt = \int_{\R^d} \varphi(0,\cdot )(\eta_0 + \nu \chi_E) dx.  \label{weak_distrbutional_form_h} \end{equation}

\noindent This shows that $h$ is a weak solution to \eqref{weighted_st2_h} with initial data $\eta_0 + \nu \chi_E$.   

~

Next, we consider the equation \eqref{St2}. We claim that $(h-1)_{+} = \eta$. Fix $t>0.$ Recalling Definition \ref{def_St},  $\nu(x)=1$  {when $\eta(t,x)>0$}, implying $h(t,x) = \eta(t,x)+1$. Also since $\kappa_2(t,x) \leq \nu(x) \leq 1$, if  $\eta(t,x)=0$, then  $(h(t,x)-1)_{+} = (\kappa_2(t,x)-1)_{+} = 0$. Hence we have verified $(h-1)_{+} = \eta$. Since {$E\subset  \{x \in \R^d:\eta(t,x)>0 \hbox{ for some } t>0\}$,} recalling Definition \ref{def_St} again,   we have $\nu \chi_E = \chi_E$. Thus  by \eqref{weak_eq_St_2}, for any test function $\varphi$,
\begin{equation} \int_0^{\infty}\int_{\R^d} (h  ( -\p_t \varphi) +  (h-1)_+  (-\Delta)^s \varphi  )dxdt = \int_{\R^d} \varphi(0,\cdot )(\eta_0 +  \chi_E) dx. \end{equation} In particular, $h$ is a weak solution to \eqref{Ste} with initial data $\eta_0 + \chi_E$. The uniqueness of weak solutions of \eqref{Ste} follows from \cite[Theorem 2.3]{del2021one}. Lastly observe that this yields the uniqueness of $\eta =(h-1)_+$.

\end{proof}

Based on our probabilistic characterization of $h$, we are able to prove a rate of convergence for $h$ as $t \rightarrow \infty$ for Type (II). This type of result does not appear to be known for $\eqref{St2}$ in the literature. 

\begin{lemma}
    Let $\mu,\nu \in \mathcal{M}(\R^d)$ satisfy Assumption \ref{assume1}. Let
     $(\eta,\rho)$ be the Eulerian variables associated to $\tau$, the optimal stopping time for $\mathcal{P}_0(\mu,\nu)$ with a  Type (II) cost. 
There exist  constants {$\gamma=\gamma(r,d,s)>0$ and $C=C(r,d,\mu,s)>0$ (here, $r>0$ is from (4) in Assumption \ref{assume1}) such that if $\tau > 0$ a.s., then for any $t \ge 0,$}
\[ ||h(t,\cdot)-\nu||_{L^1(\R^d)} \leq Ce^{-\gamma t},\]
where $h$ is the enthalpy variable from Definition~\ref{enthalpy_def}.
\label{asymptotic_h}
\end{lemma}

\begin{proof}
From Proposition \ref{w_s_laplacian_remark}, we have that $\kappa_2(t,x) = \rho( (0,t),x)$, and thus by definition of $h$,
\begin{equation} h(t,x)-\nu(x) = \eta(t,x) + \left( \nu(x) \cdot \chi_{ \{\eta>0\} }(t,x) + \rho( (0,t),x) \cdot \chi_{ \{\eta=0\} }(t,x) - \nu(x) \right). \label{diff_enthalpy_nu} \end{equation} Now from Lemma \ref{Eulerian_Variables} and in particular \eqref{non_local_heat}, we have $\{x:\eta(t,x)>0\} \subset B_r(0)$ for all $t \geq 0$. By Cauchy-Schwarz and Lemma \ref{regularity_lemma},  there exists $K=K(r,d),\gamma = \gamma(r,d,s) > 0$ such that
\[ ||\eta(t,\cdot)||_{L^1(\R^d)} \leq K ||\eta(t,\cdot)||_{L^2(\R^d)} \leq K e^{-\gamma t} || \mu ||_{L^2(\R^d)}. \] Using the triangle inequality in \eqref{diff_enthalpy_nu} along with the above inequality, we obtain
$$
\begin{array}{lll}
||h(t,\cdot)-\nu||_{L^1(\R^d)} &\leq& K e^{-\gamma t} || \mu||_{L^2(\R^d)} +  || (\nu-\rho( (0,t),\cdot) ) \chi_{ \{\eta=0 \} } ||_{L^1(\R^d)}  \\
 &\leq& K e^{-\gamma t} || \mu ||_{L^2(\R^d)} +  || \nu-\rho( (0,t),\cdot)   ||_{L^1(\R^d)}.
 \end{array}
 $$
 Since $\nu(x) = \rho( (0,\infty),x)$, we have  $\nu(x)-\rho( (0,t),x)  = \rho( [t,\infty),x)$. Hence we arrive at 
 \begin{equation}\label{h_nu_bound}   ||h(t,\cdot)-\nu||_{L^1(\R^d)} \leq  K e^{-\gamma t} || \mu ||_{L^2(\R^d)} +  \rho([t,\infty) \times \R^d). 
 \end{equation}
Recalling that $\rho \sim (\tau,X_{\tau})$ from Lemma~\ref{Eulerian_Variables}, we have  $\rho([t,\infty) \times \R^d) = \mathbb{P}[\tau \geq t]$. Since $\mathbb{E}[e^{\gamma \tau}] \leq \mathbb{E}[e^{\gamma \tau_r}] < \infty$ (see Remark \ref{exp_moments_exit_time}), by Markov's inequality $\mathbb{P}[\tau \geq t] \leq e^{-\gamma t} \mathbb{E}[e^{\gamma \tau_r}] $.
 Putting  these observations into \eqref{h_nu_bound}, we see that
\[ ||h(t,\cdot)-\nu||_{L^1(\R^d)} \leq e^{-\gamma t}( K ||\mu||_{L^2(\R^d)} + \mathbb{E}[e^{\gamma \tau_r}]), \]
concluding the proof.
 \end{proof}

\subsubsection{A remark on Continuity of Enthalpy variable}  In the case of equation \eqref{Ste}, it was shown \cite{del2021one} that for self-similar solutions, the enthalpy variable is continuous. This is rather surprising because in the local case, the enthalpy variable has a jump discontinuity across the free boundary.  It would be thus interesting to consider regularity of the enthalpy variable in the nonlocal setting. 

\medskip

One possible way to obtain a regularity of the enthalpy variable is to use the newly obtained connection of \eqref{St1nu} and \eqref{St2nu} with the parabolic nonlocal obstacle problem \eqref{w_obstacle}. It was shown in \cite[Theorem 2.1]{caffarelli2013regularity} that  if $\nu-\mu$ is sufficiently smooth, that the solution $w$ of \eqref{w_obstacle} for Type (II) has a  continuous $(-\Delta)^s w$. This is in contrast to  the local case, where the optimal spatial regularity for $w$ is $C^{1,1}$.  Continuity of $(-\Delta)^sw$  implies from our enthalpy formula for Type (II) and Proposition \ref{w_s_laplacian_remark} that the enthalpy for \eqref{St2nu} is continuous. In addition, the free boundary regularity for \eqref{w_obstacle} with Type (II) is  well studied when $\nu-\mu$ is sufficiently smooth (see \cite{ros2021optimal, barrios2018free, borrin2021obstacle}). 

\medskip

These results unfortunately do not apply to \eqref{St2} since we will see in Theorem \ref{melting_solve} later that {we require $\mu>1$ in its support, which is equivalent to the absence of an initial mushy region}. This is rather natural since one expects irregularity near the initial data. As in the local case, one would need a localization argument to show continuity of the enthalpy variable away from the initial time. It is unclear to the authors at the moment how such localization can be carried out for the parabolic nonlocal obstacle problem.
 
\medskip

Definition \ref{enthalpy_def} suggests that one can might be able to show the continuity of the enthalpy using probabilistic arguments. Let us focus on Type (I) case, where the optimal stopping time is given by $\tau = \inf\{ t\ge 0 : t \geq s(X_t)  \}$  for some measurable function $s : \R^d \rightarrow \R^+$ (see Theorem \ref{barrier_existence}). By Lemma \ref{open_barrier},  $\{\eta(t,x)>0\} = \{ t < s(x) \}$.  Thus, recalling $h = \eta - \nu \chi_{ \{\eta>0\} } - \kappa_1 \chi_{ \{\eta=0\} }$, if $\eta$ is continuous, then by using a version of $\kappa_1$ for Type (I), we have 
\[ \lim_{t \rightarrow s(x)^+} -h(t,x) = \nu(x) \mathbb{P}[ \tau > s(x) | X_{\tau}=x ] \text{ and } \lim_{t \rightarrow s(x)^-} -h(t,x) = \nu(x). \]  
Hence, if $\mathbb{P}[\tau=s(x)|X_{\tau}=x]=0$ and $s(x)$ is continuous, then $h(t,x)$ is continuous in time across the free boundary.

\medskip

We expect that under mild assumptions on $s(x)$, such as all of its level sets having measure zero, we have $\mathbb{P}[\tau=s(x) \mid X_{\tau}=x]=0$. This is because it seems unlikely for the particles to land on the graph of $s(x)$ under this assumption. There are some recent investigations in computing $\mathbb{P}[\tau=s(x) \mid X_{\tau}=x]$ for one dimensional Levy processes under some strong assumptions on $s(x)$ (see \cite{chaumont2022creeping,chi2016exact}).

 \section{Optimal Target Problem}\label{sec:target}

In this section, we consider the optimal target problem which will yield weak solutions of \eqref{St1} and \eqref{St2}. Recall that $\mathcal{C}(\tau)$ denotes the cost defined in \eqref{Cost}, i.e. $\mathcal{C}(\tau) = \mathbb{E} \left[ \int_0^{\tau} L(s,X_s) ds \right]$, where the Lagrangian  $L$ satisfies Assumption \ref{assum2}. For the family of measures  $\mathcal{A}_{\mu,M, R} $ given in \eqref{A_mu_e_R}, we associate the corresponding optimal stopping time:
\begin{align*}
    \tilde{\mathcal{A}}_{\mu,M, R} := \{ (\nu, \tau) : \nu \in  \mathcal{A}_{\mu,M, R},  \ \tau 
 \ \text{is an optimizer of}   
 \ \mathcal{P}_0 (\mu,\nu) \text{ in } \eqref{Var_Skoro}\}.
\end{align*}
Throughout this section, we assume that the initial measure $\mu \in L^{\infty}(\R^d)$ is compactly supported and  $f$ satisfies Assumption \ref{assume3}.

~

Consider the constrained family of target measures and stopping times
\[ \mathcal{A}_{R,f}(\mu) := \{ (\nu, \tau) \in \bigcup_{M>0}\tilde{\mathcal{A}}_{\mu,M, R} \hbox{ with } \nu \leq f\},\]
and define
\[ \mathcal{A}_f(\mu) := \bigcup_{R>0}\mathcal{A}_{R,f}(\mu). \]

Now, we consider the optimal target problem associated with $f$:
\begin{equation} \mathcal{P}_f(\mu) := \inf\{ \mathcal{C}(\tau) : (\nu,\tau) \in \mathcal{A}_f(\mu) \}.
\label{opt_target} \end{equation}
It will also be useful to define the truncated problem
\[ \mathcal{P}_{R,f}(\mu) := \inf \{ \mathcal{C}(\tau) : (\nu,\tau) \in \mathcal{A}_{R,f} (\mu) \}. \] By Lemma \ref{non_empty_P_f},  $\mathcal{A}_{R,f}(\mu)$ is non-empty for sufficiently large $R>0$, and thus $\mathcal{A}_f(\mu)$ is non-empty.

\subsection{Existence and Uniqueness}

In this section, we establish the well-posedness of  the variational problem $\mathcal{P}_f(\mu)$. To accomplish this, we first show the monotonicity property of the variational problem, a central ingredient for the analysis in the local case \cite{kim2021stefan}. 
 
\begin{theorem}[Monotonicity]   \label{monotonicity}
Assume that $\mu_1 \leq \mu_2$ and $\nu_i \leq f$  for $i=1,2$. Let  $\tau_i$ be the optimal stopping time for $\mathcal{P}_0(\mu_i,\nu_i)$. Then, the following holds:

\begin{enumerate}
    \item If $(\nu_1,\tau_1)$ is optimal for $\mathcal{P}_f(\mu_1)$, then as stopping times from the initial distribution $\mu_1$, we have that $\tau_1 \leq \tau_2$ a.s.

    \item If $(\nu_i,\tau_i)$ is optimal for $\mathcal{P}_f(\mu_i)$ for $i=1,2$, then $\nu_1 \leq \nu_2$ a.e.
    
\end{enumerate} 

   If we further assume that $\tau_i \leq \tau_R = \inf \{ t : X_t \notin B_R(0) \}$  ($R>0$ is some constant) for $i=1,2$, then the conclusions $(1)$ and $(2)$ also hold for  the version of $\mathcal{P}_{R,f}(\mu_i)$.
	
\end{theorem}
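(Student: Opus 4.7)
The plan is to couple the two optimization problems via the natural decomposition of initial mass and then apply an exchange argument that contradicts optimality, following the strategy used in the local case \cite{kim2021stefan}. I focus on Type (I) cost; Type (II) follows by a time-reversed analog.

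\textbf{Coupling.} Since $\mu_1 \leq \mu_2$ as densities, I would realize both problems jointly on an enlarged probability space by starting a $2s$-stable process from $\mu_2$ and tagging each initial particle at $x$ as ``of type $1$'' independently with probability $(d\mu_1/d\mu_2)(x)$. The tagged sub-population has initial law $\mu_1$, and the restriction $\tau_2^{(1)}$ of $\tau_2$ to the tagged particles is a randomized stopping time whose target $\nu_2^{(1)}$ satisfies $\nu_2^{(1)} \leq \nu_2 \leq f$. Since $\tau_2 \leq \tau_r$ by the compact-active-region property (Definition~\ref{active_region}), also $\tau_2^{(1)} \leq \tau_r$, so $(\nu_2^{(1)},\tau_2^{(1)})$ is admissible for $\mathcal{P}_f(\mu_1)$.

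\textbf{Part (1).} I would argue by contradiction: assume $\mathbb{P}(\tau_1 > \tau_2^{(1)}) > 0$ under the coupling. On this event I would replace $\tau_1$ by the earlier time $\tau_2^{(1)}$, yielding a new rule $\tau_1' := \tau_1 \wedge \tau_2^{(1)}$ whose cost is strictly smaller by strict time-monotonicity of $L$ (Assumption~\ref{assum2}). Admissibility of $\tau_1'$ requires the new target $\nu_1'$ to satisfy $\nu_1' \leq f$; for this I would invoke the saturation property Theorem~\ref{Pf_solutions}(d), which identifies the set $\{\nu_1 < f\}$ with the complement of the stopping shadow of $\tau_1$. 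Since the relocated mass lands at $X_{\tau_2^{(1)}}$, a point whose density under $\nu_2$ is bounded by $f$, a controlled (possibly infinitesimal) swap preserves the pointwise bound. The resulting strict inequality $\mathcal{C}(\tau_1') < \mathcal{C}(\tau_1)$ contradicts the optimality of $\tau_1$ for $\mathcal{P}_f(\mu_1)$.

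\textbf{Part (2).} Once part (1) is established, the optimal stopping times are hitting times to forward barriers $R_1, R_2$ by Theorem~\ref{barrier_existence} and Theorem~\ref{equality_hitting_time}, with corresponding potential flows analyzed in Section~\ref{pot_flow}. The ordering $\tau_1 \leq \tau_2^{(1)}$ combined with the potential characterization implies that the associated obstacle-problem solutions $v_i := U_{\mu_t^i} - U_{\nu_i}$ from Theorem~\ref{theorem 3.13} satisfy $v_1 \geq v_2$ on the relevant region. Using $\mu_1 \leq \mu_2$ (hence $U_{\mu_1} \leq U_{\mu_2}$), this yields $U_{\nu_1} \leq U_{\nu_2}$ pointwise, and applying $(-\Delta)^s$ to both sides gives $\nu_1 \leq \nu_2$ a.e. The truncated version $\mathcal{P}_{R,f}(\mu_i)$ is handled identically once the coupling is restricted to particles stopping before $\tau_R$.

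\textbf{Main obstacle.} The principal difficulty is verifying target admissibility $\nu_1' \leq f$ after the exchange in part (1): a naive pointwise swap can push the density above $f$ at locations where $\tau_1$ and $\tau_2^{(1)}$ both deposit mass. Resolving this hinges on exploiting Theorem~\ref{Pf_solutions}(d) to localize the swap to the region of available capacity, and on careful disintegration of the randomized stopping times as in Section~\ref{Eulerian_variable_section}. A secondary bookkeeping challenge is that $\tau_2^{(1)}$ is intrinsically randomized (even when $\tau_2$ is not), which requires the measure-valued framework of Definition~\ref{randomized} throughout.
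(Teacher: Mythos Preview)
Your Part~(1) sketch is in the right spirit—the tagging coupling and the exchange idea mirror \cite[Theorem~7.1]{kim2021stefan}, to which the paper's proof explicitly defers (the only new ingredient being Lemma~\ref{stop_inside_barrier}). But your resolution of the ``main obstacle'' does not work as written. Even granting saturation, the competitor $\tau_1' = \tau_1 \wedge \tau_2^{(1)}$ is not admissible: on the event $\{\tau_1 > \tau_2^{(1)}\}$ the particle has not yet hit $R_1$, so $(\tau_2^{(1)}, X_{\tau_2^{(1)}}) \in R_1^c$, which forces $s_1(X_{\tau_2^{(1)}}) > 0$. By saturation, $\nu_1 = f$ precisely on this set, so the relocated mass is being deposited where the constraint is already tight, and the new target can exceed $f$. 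The phrase ``a controlled (possibly infinitesimal) swap preserves the pointwise bound'' is not an argument; the actual proof in \cite{kim2021stefan} uses a more careful competitor that accounts for both barriers simultaneously. Separately, invoking saturation here reverses the paper's logical order (Theorem~\ref{saturation} is proved after Theorem~\ref{monotonicity}), so you would need to verify independently that no circularity is introduced.

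Your Part~(2) contains a genuine error. The implication ``$U_{\nu_1} \leq U_{\nu_2}$ pointwise, hence $\nu_1 \leq \nu_2$ a.e.'' is false: $(-\Delta)^s$ does not preserve pointwise ordering. For a counterexample, take any nontrivial smooth compactly supported $g \leq 0$ and set $\psi := (-\Delta)^s g$; then $U_\psi = g \leq 0$, while $\psi$ necessarily changes sign (for $x$ outside the support of $g$ one has $(-\Delta)^s g(x) = C_{d,s}\int \frac{-g(y)}{|x-y|^{d+2s}}\,dy > 0$). Adding $\psi$ to a suitable base density yields $\nu_1, \nu_2$ with $U_{\nu_1} \leq U_{\nu_2}$ but $\nu_1 \not\leq \nu_2$. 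The ordering $\nu_1 \leq \nu_2$ cannot be read off from a potential comparison; the argument in \cite{kim2021stefan} instead proceeds by constructing a second competitor, this time for $\mathcal{P}_f(\mu_2)$, and again exploiting the barrier structure together with Lemma~\ref{stop_inside_barrier}.
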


\begin{proof} The proof is analogous to that of \cite[Theorem 7.1]{kim2021stefan}. The only difference is that if $R_i$ is the 
 barrier associated  to $\tau_i$, then one can use {Lemma \ref{stop_inside_barrier}} to conclude that $(\tau_i,X_{\tau_i}) \in R_i$ (if $\tau_i>0$ a.s. for Type (II)).
\end{proof}

\begin{theorem}[Existence and Uniqueness of Optimizer]
There exists an optimal pair $(\nu,\tau)$ for $\mathcal{P}_f(\mu)$ with a $R=R(\mu,d,s,f)>0$ such that $\tau \leq \tau_R$ almost surely. In addition, the optimizer is unique. \label{exist_unique_Pf}
\end{theorem}

\begin{proof} 
We  first show the existence of an optimizer for the truncated problem $\mathcal{P}_{R,f}(\mu)$ for large $R$. Since  $f$ satisfies Assumption \ref{assume3}, $\mathcal{A}_{R,f}(\mu)\neq \emptyset$ for large enough $R>0$. Let $(\nu_n,\tau_n)_{n \ge 1}$ be the minimizing sequence for $\mathcal{P}_{R,f}(\mu)$.  Since  $\{\nu_n\}_{n \ge 1}$ is tight by Lemma \ref{nu_decay}, up to the subsequence, $(\nu_n)_{n \ge 1}$ converges weakly to  some $\nu_{R,f}$.  {Denoting by $\tau_{R,f}$ the optimal stopping time for $\mathcal{P}_0(\mu,\nu_{R,f})$, we have  $(\nu_{R,f},\tau_{R,f}) \in \mathcal{A}_{R,f}(\mu)$}.

\medskip

We claim that $\nu_{R,f}$ is the optimal target measure for $\mathcal{P}_{R,f}(\mu)$.
Note that  by Appendix \ref{opt_sko_embed}, 
$\mathcal{P}_0(\mu,\nu_n) = \mathcal{D}_0(\mu,\nu_n)$ ($\mathcal{D}_0$ denotes the dual problem of  $\mathcal{P}_0$, see  \eqref{d0}). Also, since the map $\nu \mapsto \mathcal{D}_0(\mu,\nu)$ is convex, it is lower semi-continuous with respect to the weak topology. Thus,
\[ \mathcal{P}_0(\mu,\nu_{R,f}) = \mathcal{D}_0(\mu,\nu_{R,f}) \leq \liminf_{n \rightarrow \infty} \mathcal{D}_0(\mu,\nu_n) = \liminf_{n \rightarrow \infty} \mathcal{P}_0(\mu,\nu_n) = \liminf_{n \rightarrow \infty} \mathcal{C}(\tau_n), \]
verifying the claim. Hence, $(\nu_{R,f},\tau_{R,f})$ is optimal  for $\mathcal{P}_{R,f}(\mu)$.  

\medskip

Observe that for $R'>R$,  the optimal pair $(\nu_{R,f},\tau_{R,f})$ is admissible for $\mathcal{P}_{R',f}(\mu)$ as well. Thus, by Theorem \ref{monotonicity}, $\tau_{R',f} \leq \tau_{R,f}$ a.s., implying that  $\nu_{R'}$ is admissible for $\mathcal{P}_{R,f}(\mu)$ and $(\nu_{R',f},\tau_{R',f}) \in \mathcal{A}_{R,f}(\mu)$.  Theorem \ref{monotonicity} again yields that $\tau_{R,f} \leq \tau_{R',f}$, thus $\tau_{R,f} = \tau_{R',f}$ a.s. and in particular $\nu_{R,f} = \nu_{R',f}$ a.e. This shows that $(\nu_{R,f}, \tau_{R,f})$ is an optimal pair for $\mathcal{P}_f(\mu)$ as long as $R>0$ is sufficiently large so that $\mathcal{A}_{R,f}(\mu) \neq \emptyset$.

\medskip
 
The uniqueness of the optimal pair $(\nu, \tau)$  follows from Theorem \ref{monotonicity}.
    \end{proof}

By following the argument in \cite[Theorem 7.6]{kim2021stefan}, we obtain the universality result for the optimal target measure. 

\begin{theorem}[Universality] 
Let $\nu_i$ be the optimal target measure for $\mathcal{P}_f(\mu)$ with costs $\mathcal{C}_i$, $i=1,2$, with  different Lagrangians  and of possibly different Types $(I)$ or (II). Then $\nu_1=\nu_2$ a.e.
\label{universal}
\end{theorem}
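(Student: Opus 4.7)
The approach is a wedge argument combined with the monotonicity property (Theorem~\ref{monotonicity}) and the uniqueness of the optimizer (Theorem~\ref{exist_unique_Pf}). Let $(\nu_1,\tau_1)$ and $(\nu_2,\tau_2)$ be optimal for costs $\mathcal{C}_1,\mathcal{C}_2$, and set $\tilde\nu := \nu_1 \wedge \nu_2$, the pointwise minimum of the two densities. The plan is to show that $\tilde\nu \in \mathcal{A}_f(\mu)$ with $\tilde\nu \leq \nu_i$ for each $i$, and then use uniqueness together with monotonicity to conclude $\nu_i=\tilde\nu$, giving $\nu_1 = \nu_2$.

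To localize where $\nu_1$ and $\nu_2$ can disagree, I would invoke the saturation property, part~(d) of Theorem~\ref{Pf_solutions}: $\nu_i = f$ on the projection to $\R^d$ of $(R_i)^c$, where $R_i$ is the potential barrier associated to $\tau_i$ (Theorems~\ref{barrier_existence} and \ref{equality_hitting_time}). Since $\tilde\nu=f$ on the union of these projections, $\nu_1,\nu_2$ and $\tilde\nu$ already coincide outside the common frozen region $F:=\{x : (t,x)\in R_1\cap R_2 \text{ for all }t>0\}$, so the task reduces to identifying $\nu_1=\nu_2$ on $F$.

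The key technical step is to construct a randomized stopping time $\tilde\tau\leq\tau_r$ with $X_0\sim\mu$ and $X_{\tilde\tau}\sim\tilde\nu$; this would in particular establish $\mu \leq_{s\textup{-SH}} \tilde\nu$ and hence $\tilde\nu\in\mathcal{A}_f(\mu)$. I would do this by a splicing procedure combining $\tau_1$ and $\tau_2$: trajectories whose eventual stopping site lies outside $F$ inherit the stopping rule of $\tau_1=\tau_2$ on the saturation set, while trajectories destined for $F$ are stopped at the first hitting time of $R_1\cap R_2$ and then redistributed (using an auxiliary randomization at the hitting location) so as to reproduce the density $\tilde\nu$. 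Closedness of the barriers (Theorem~\ref{Pf_solutions}(b)) and the strong Markov property of the $2s$-stable process justify this construction. Once $\tilde\nu$ is admissible with $\tilde\nu\leq\nu_i$, applying the uniqueness of Theorem~\ref{exist_unique_Pf} to each cost $\mathcal{C}_i$ and comparing against the competitor $\tilde\nu$ through Theorem~\ref{monotonicity} forces $\nu_i=\tilde\nu$ a.e.

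The main obstacle is this splicing construction, since in general the pointwise minimum of two measures admissible in the subharmonic order is not itself admissible. The nonlocal setting adds a further wrinkle that is absent for Brownian motion: stopped particles are supported throughout the frozen region rather than only on its boundary (see Figure~\ref{fig:Stopped_Active_Region}), so the redistribution of stopping events must respect the jump distribution of the $2s$-stable process. This extends to the nonlocal setting the argument carried out in \cite[Theorem 7.6]{kim2021stefan} for the local case.
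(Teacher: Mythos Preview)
Your wedge approach has a fatal mass-counting problem. Any $\tilde\nu\in\mathcal{A}_f(\mu)$ must arise as the law of $X_{\tilde\tau}$ for some randomized stopping time with $X_0\sim\mu$, so in particular $\tilde\nu(\R^d)=\mu(\R^d)=\nu_i(\R^d)$. But the pointwise minimum $\tilde\nu=\nu_1\wedge\nu_2$ satisfies $\int\tilde\nu\leq\int\nu_i$ with equality \emph{only if} $\nu_1=\nu_2$ a.e. Hence whenever $\nu_1\neq\nu_2$ on a set of positive measure, $\tilde\nu$ has strictly less total mass than $\mu$ and cannot lie in $\mathcal{A}_f(\mu)$ at all. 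The splicing construction you describe cannot repair this: no stopping time from $\mu$ can land on a measure of smaller total mass. So the admissibility step you flag as ``the main obstacle'' is not merely hard---it is impossible unless the conclusion already holds.

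The argument the paper invokes from \cite[Theorem 7.6]{kim2021stefan} is both simpler and avoids this trap. Fix cost $\mathcal{C}_1$. Since $\nu_2\leq f$ and $\nu_2$ has a compact active region, the pair $(\nu_2,\sigma_2)$ with $\sigma_2$ optimal for $\mathcal{P}_0(\mu,\nu_2)$ under $\mathcal{C}_1$ is admissible for $\mathcal{P}_f(\mu)$. Monotonicity (Theorem~\ref{monotonicity}(1)) with $\mu_1=\mu_2=\mu$ then yields $\tau_1\leq\sigma_2$ a.s. Because $X_{\tau_1}\sim\nu_1$ and $X_{\sigma_2}\sim\nu_2$, running the process from $\tau_1$ to $\sigma_2$ shows $\nu_1\leq_{s\textup{-SH}}\nu_2$. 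By symmetry (swap the roles of $\mathcal{C}_1$ and $\mathcal{C}_2$) one gets $\nu_2\leq_{s\textup{-SH}}\nu_1$. The potentials therefore satisfy $U_{\nu_1}\geq U_{\nu_2}$ and $U_{\nu_2}\geq U_{\nu_1}$ (Lemma~\ref{time_reg}), so $U_{\nu_1}=U_{\nu_2}$ and hence $\nu_1=\nu_2$ a.e. No wedge, no splicing, and no cross-barrier construction is needed.
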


In addition, the monotocity result (Theorem \ref{monotonicity}) implies the following $L^1$ contraction:

\begin{theorem}[$L^1$ Contraction] Assume that $\nu_i$ is optimal for $\mathcal{P}_f(\mu_i)$, $i=1,2$. Then,
\[ || (\nu_1-\nu_2)_{+} ||_{L^1(\R^d)} \leq ||  (\mu_1 - \mu_2)_{+} ||_{L^1(\R^d)}.  \]
\end{theorem}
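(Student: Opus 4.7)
The plan is to reduce the $L^1$ contraction to the monotonicity property of Theorem \ref{monotonicity} by coupling through a common upper bound of the initial data. Specifically, I would set $\mu_3 := \mu_1 \vee \mu_2 = \max(\mu_1,\mu_2)$, which is again bounded, nonnegative, and compactly supported, and then compare the optimizers for $\mu_1$ and $\mu_2$ to the optimizer associated with $\mu_3$.

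The first step is to verify that $\mathcal{P}_f(\mu_3)$ admits an optimizer. By Assumption \ref{assume3}, $f \geq \delta > 0$ outside some compact set $K$. Lemma \ref{non_empty_P_f} produces a target in $\mathcal{A}_{\mu_3,M,R}$ supported on $B_R(0)^c$ whose density is uniformly small for large $R$; choosing $R$ large enough that this density is bounded by $\delta$ and that $B_R(0)^c \subset K^c$ shows $\mathcal{A}_f(\mu_3) \neq \emptyset$, so Theorem \ref{exist_unique_Pf} yields a unique optimizer $\nu_3$ for $\mathcal{P}_f(\mu_3)$.

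The second step is the core comparison. Since $\mu_i \leq \mu_3$ a.e.\ and all three targets lie below $f$, part $(2)$ of Theorem \ref{monotonicity} applied to the pairs $(\mu_1,\mu_3)$ and $(\mu_2,\mu_3)$ gives $\nu_i \leq \nu_3$ a.e.\ for $i=1,2$. Combining $\nu_1 \leq \nu_3$ with the non-negativity of $\nu_3-\nu_2$ yields the pointwise bound
\[
(\nu_1-\nu_2)_+ \leq \nu_3 - \nu_2 \quad \text{a.e.}
\]

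The third and final step is to integrate and invoke the total-mass identity from the fractional subharmonic ordering. Since $\mu_i \leq_{s\textup{-SH}} \nu_i$ forces $\nu_i(\R^d)=\mu_i(\R^d)$ (Definition \ref{deffso}), the above pointwise estimate gives
\[
\int_{\R^d} (\nu_1-\nu_2)_+\, dx \;\leq\; \int_{\R^d}(\nu_3-\nu_2)\, dx \;=\; \mu_3(\R^d)-\mu_2(\R^d) \;=\; \int_{\R^d}(\mu_1-\mu_2)_+\, dx,
\]
where the last equality uses the pointwise identity $\mu_1\vee\mu_2 - \mu_2 = (\mu_1-\mu_2)_+$. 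The only mildly technical point is the non-emptiness of $\mathcal{A}_f(\mu_3)$ handled in the first step; beyond that the result is a direct consequence of the already established monotonicity and mass conservation.
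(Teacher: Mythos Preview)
Your proof is correct and follows essentially the same route as the paper, which simply cites Theorem~\ref{monotonicity} and refers to \cite[Theorem~7.8]{kim2021stefan} for the details; your argument via the common upper bound $\mu_3=\mu_1\vee\mu_2$, the monotonicity $\nu_i\le\nu_3$, and mass conservation is precisely the standard derivation carried out there.
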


\begin{proof} This is a consequence of Theorem \ref{monotonicity} (see \cite[Theorem 7.8]{kim2021stefan}).
\end{proof}

This also implies a BV bound when $f$ is constant:

\begin{theorem}[BV Bound] Assume that $f$ is constant and that $\nu$ is optimal for $\mathcal{P}_f(\mu)$. Then,
\[ ||\nu||_{\textup{BV}} \leq ||\mu||_{\textup{BV}}. \]
\end{theorem}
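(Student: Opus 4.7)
The plan is to deduce the BV bound from the preceding $L^1$ contraction estimate via a translation argument. The key observation is that the constant-$f$ hypothesis makes the entire optimization problem translation invariant in space, so that translating the source $\mu$ results in translating the optimal target $\nu$ by the same vector. The BV bound will then follow from the $L^1$-translation characterization of $BV(\R^d)$.

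First, I would verify translation invariance of the map $\mu\mapsto\nu$. The isotropic $2s$-stable L\'evy process is translation invariant, and when $f$ is constant the pointwise constraint $\nu\le f$ is manifestly preserved under translation. By the universality result (Theorem~\ref{universal}), the optimal $\nu$ is independent of the choice of Lagrangian satisfying Assumption~\ref{assum2}; in particular I may choose $L=L(t)$ depending only on time, which makes the cost functional $\mathcal{C}(\tau)$ translation invariant in space as well. Combined with uniqueness of the optimizer (Theorem~\ref{exist_unique_Pf}), this shows that for every $h\in\R^d$, if $\nu$ is the optimal target for $\mathcal{P}_f(\mu)$ then $\nu(\cdot-h)$ is the optimal target for $\mathcal{P}_f(\mu(\cdot-h))$.

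Next, I would apply the $L^1$ contraction with $\mu_1=\mu$, $\mu_2=\mu(\cdot-h)$ and the corresponding optimal targets $\nu_1=\nu$, $\nu_2=\nu(\cdot-h)$. Using $|a-b|=(a-b)_++(b-a)_+$ and applying the contraction in each direction gives
\[
\|\nu-\nu(\cdot-h)\|_{L^1(\R^d)} \;\le\; \|\mu-\mu(\cdot-h)\|_{L^1(\R^d)} \qquad\text{for every } h\in\R^d.
\]

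Finally, I would pass from this $L^1$-Lipschitz-in-translation estimate to the $BV$ bound via the standard characterization: for every coordinate direction $e_i$, testing the distributional partial derivative against $\varphi\in C_c^1(\R^d)$ with $|\varphi|\le 1$ and using the difference-quotient identity $\int \nu\,\partial_i\varphi\,dx = \lim_{t\to 0^+}\frac{1}{t}\int[\nu(x-te_i)-\nu(x)]\varphi(x)\,dx$ yields
\[
|\partial_i\nu|(\R^d) \;\le\; \liminf_{t\to 0^+}\frac{\|\nu-\nu(\cdot-te_i)\|_{L^1}}{t} \;\le\; \liminf_{t\to 0^+}\frac{\|\mu-\mu(\cdot-te_i)\|_{L^1}}{t} \;=\; |\partial_i\mu|(\R^d),
\]
where the last equality uses that $\mu\in BV$. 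Taking the appropriate supremum or sum over directions gives $\|\nu\|_{BV}\le\|\mu\|_{BV}$.

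The only step requiring real care is the translation invariance in step one; the subsequent two steps are routine once this is in place. Note that the use of Theorem~\ref{universal} is essential, since it lets us replace a possibly space-dependent $L$ with a purely time-dependent one without affecting $\nu$. Without the constant-$f$ hypothesis the admissibility constraint would be destroyed by translation, and the entire argument would break down.
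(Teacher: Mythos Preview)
Your proposal is correct and follows essentially the same approach as the paper, which simply records that the bound ``follows from the $L^1$ contraction and that the problem is homogeneous with respect to spatial shifts'' and defers to \cite[Theorem~7.10]{kim2021stefan}. Your explicit invocation of the universality theorem (Theorem~\ref{universal}) to reduce to a purely time-dependent Lagrangian is a clean way to handle the potential $x$-dependence of $L$ in Assumption~\ref{assum2}; the paper leaves this detail implicit.
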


\begin{proof} This follows from the $L^1$ contraction and that the problem is homogenous with respect to spatial shifts (see \cite[Theorem 7.10]{kim2021stefan}).
\end{proof}

\subsection{Identification the optimal target measure}

In this section, we  show that the optimal target measure  saturates the constraint for $\mathcal{P}_f(\mu)$. 
 
\begin{theorem}[Saturation on the Active Region] Let ($\nu^*,\tau^*$) be the optimizer for $\mathcal{P}_f(\mu)$ and $(\eta,\rho)$ be the associated Eulerian variables. Then $\nu^* = f$
a.e. in $\{x \in \R^d:\eta(t,x)>0 \hbox{ for some } t>0\}$.
 \label{saturation}
\end{theorem}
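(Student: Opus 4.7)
The plan is a perturbation-by-contradiction argument: if $\nu^*<f$ on a set of positive measure inside the active region, I thin $\tau^*$ so that extra particles are stopped in that region. This preserves the constraint $\tilde\nu \le f$ while strictly reducing the cost, since $\mathcal{C}(\cdot)$ is a space-time integral of the strictly positive Lagrangian $L$. Suppose the conclusion fails, so
$$A := \{x : \nu^*(x)<f(x)\} \cap \{x : \eta(t,x)>0 \text{ for some }t>0\}$$
has positive Lebesgue measure, and by passing to a subset we may assume $\nu^* \le f-\delta$ on $A$ for some $\delta>0$. Since $\{\eta>0\}$ is open in $\R^+\times \R^d$ (Remark \ref{modififcation}), for each $x\in A$ the slice $\{t>0 : \eta(t,x)>0\}$ is nonempty and open, hence of positive Lebesgue measure, so by Fubini
$$G := \{(t,x)\in (0,\infty)\times A : \eta(t,x)>0\}$$
has positive space-time Lebesgue measure, with $\iint_G \eta\,dt\,dx > 0$.

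The competitor is obtained by Poisson-thinning inside $G$. Let $T\sim\mathrm{Exp}(1)$ be independent of the process and of the randomization defining $\tau^*$, and for $\varepsilon>0$ set
$$\sigma_\varepsilon := \inf\Big\{t\ge 0 : \varepsilon\int_0^t \chi_G(s,X_s)\,ds \ge T\Big\}, \qquad \tilde\tau_\varepsilon := \tau^* \wedge \sigma_\varepsilon.$$
Then $\tilde\tau_\varepsilon$ is a randomized stopping time with $\tilde\tau_\varepsilon \le \tau_r$, and $\{\tilde\tau_\varepsilon<\tau^*\}$ has positive probability because $\mathbb{E}\!\left[\int_0^{\tau^*}\chi_G(t,X_t)\,dt\right] = \iint_G \eta\,dt\,dx > 0$. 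Since $L$ is strictly monotone in time and non-negative, $L>0$ on $(0,\infty)\times\R^d$ for Type (I) and on $[0,\infty)\times\R^d$ for Type (II), which yields the strict cost reduction
$$\mathcal{C}(\tau^*) - \mathcal{C}(\tilde\tau_\varepsilon) = \mathbb{E}\!\left[\int_{\tilde\tau_\varepsilon}^{\tau^*} L(s,X_s)\,ds\right] > 0.$$

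It remains to verify that the law $\tilde\nu_\varepsilon$ of $X_{\tilde\tau_\varepsilon}$ lies in $\mathcal{A}_f(\mu)$ for small $\varepsilon$. Compact active region is inherited from $\tilde\tau_\varepsilon \le \tau_r$, and $\mu \le_{s\text{-SH}} \tilde\nu_\varepsilon$ follows from It\^o's formula (Proposition \ref{ito}). On $\{\tilde\tau_\varepsilon<\tau^*\}$ the particle stops at a space-time point of $G$, so $X_{\tilde\tau_\varepsilon}\in A$ on that event; thus $\tilde\nu_\varepsilon \le \nu^* \le f$ on $A^c$ automatically, while on $A$ the standard Poisson-thinning computation gives the density bound
$$\tilde\nu_\varepsilon(x) - \nu^*(x) \le \varepsilon\int_0^\infty \chi_G(t,x)\,\eta(t,x)\,dt \le \varepsilon\int_0^\infty \eta(t,x)\,dt \le \varepsilon C,$$
from which absolute continuity of $\tilde\nu_\varepsilon$ is also manifest. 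Choosing $\varepsilon<\delta/C$ gives $\tilde\nu_\varepsilon \le f$; denoting by $\tilde\tau^\sharp$ the optimizer of $\mathcal{P}_0(\mu,\tilde\nu_\varepsilon)$, we obtain $(\tilde\nu_\varepsilon,\tilde\tau^\sharp)\in\mathcal{A}_f(\mu)$ with $\mathcal{C}(\tilde\tau^\sharp) \le \mathcal{C}(\tilde\tau_\varepsilon) < \mathcal{C}(\tau^*)$, contradicting the optimality of $(\nu^*,\tau^*)$.

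The main obstacle is establishing the uniform bound $\int_0^\infty \eta(t,x)\,dt \le C$. The short-time part is controlled by the pointwise estimate $\eta \le \|\mu\|_{L^\infty}\chi_B$ from Lemma \ref{spatial_reg}, while the long-time tail is handled by the exponential decay of $\eta(t,\cdot)$ in $L^2$ from Lemma \ref{regularity_lemma}, combined with ultracontractivity of the killed fractional heat semigroup on $B$ (which follows from the Dirichlet spectral gap in Remark \ref{exp_moments_exit_time}) to promote this decay into $L^\infty$.
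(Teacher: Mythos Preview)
Your argument is correct and follows the same perturbation-by-contradiction strategy as the paper, but the competitor is built differently. The paper localizes to a single deterministic time: using Fubini and the monotonicity of $\{\eta(t,\cdot)>0\}$, it finds $\bar t>0$ with $\mathbb{P}[\bar t<\tau^*,\,X_{\bar t}\in H_\delta]>0$ and then stops a small fraction of active particles exactly at time $\bar t$ (deferring the construction details to \cite{kim2021stefan}). You instead thin continuously in time via an independent exponential clock. Your route is a bit more self-contained and treats Type~(I) and Type~(II) uniformly, at the price of needing the uniform bound $\int_0^\infty\eta(t,x)\,dt\le C$; the paper's single-time stopping only needs the pointwise bound $\eta(\bar t,\cdot)\le\|\mu\|_{L^\infty}$, which is immediate.

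Two remarks on your write-up. First, the claim that $X_{\sigma_\varepsilon}\in A$ on $\{\sigma_\varepsilon<\tau^*\}$ is correct but deserves one line of justification: conditionally on the path and on $\tau^*$, the distribution of $\sigma_\varepsilon$ is atomless (since $t\mapsto F(t):=\int_0^t\chi_G(s,X_s)\,ds$ is continuous) with Lebesgue density $\varepsilon\chi_G(t,X_t)e^{-\varepsilon F(t)}$, which vanishes on $\{t:(t,X_t)\notin G\}$. Second, your bound on $\int_0^\infty\eta(t,x)\,dt$ via ultracontractivity is correct but unnecessarily indirect: the paper already gives $\int_0^\infty\eta(t,x)\,dt=U_\mu(x)-U_{\nu^*}(x)$ (combine \eqref{w_explicit} with Lemma~\ref{unif_limits}), and the right-hand side is uniformly bounded by Lemma~\ref{spatial_reg_1}. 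Alternatively, the comparison $\eta\le u$ from Lemma~\ref{spatial_reg} yields $\int_0^\infty\eta(t,x)\,dt\le G_B\mu(x)\le\|\mu\|_{L^\infty}\int_B N(x-y)\,dy<\infty$, where $G_B$ is the Green kernel of $(-\Delta)^s$ on $B$.
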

\begin{proof}
     Let us consider the Type (I) case since parallel arguments work for Type (II).  
Since $\nu^*\leq f$, we have  $\nu^*=f=0$ when $f=0$. Thus, it suffices to show that  for any $t>0$, 
 \begin{equation}\label{claim}
 | \{\nu^*<f\} \cap \{f>0\} \cap \{\eta(t,\cdot)>0\}  |=0.
 \end{equation}  
For $\delta>0$, define $H_{\delta} := \{  \nu^* \leq f-\delta \} \cap \{  f  > 2\delta \}$. As $\{\nu^*<f\} \cap \{f>0\}= \bigcup_{n\in\N} H_{1/n}$, it reduces to show that  for any $t>0$,
\begin{align}
    |H_{\delta} \cap  \{\eta(t,\cdot)>0\}|=0.
\end{align} 

 Assume that $H_{\delta} \cap  \{\eta(a,\cdot)>0\}$ has a positive Lebesgue measure for some $a>0$. Since the positive set of $\eta$ is non-increasing in time,  
\[ \iint_{[0,a]\times H_{\delta} } \eta dt dx > 0. \] 
By a characterization of $\eta$ (Lemma \ref{Eulerian_Variables}) and Fubini's theorem,
\[ \int_0^{\infty} \mathbb{P}[ t < \tau^*, (t,X_t) \in [0,a]\times H_{\delta}  ] dt > 0. \] Therefore, there exists $\overline{t}>0$ such that
\begin{equation} \mathbb{P}[\overline{t} < \tau^*, X_{\overline{t}} \in H_{\delta} ] > 0. \label{new_stopping_1} \end{equation} 
One can deduce that this violates the optimality of $\tau^*$. Indeed, if \eqref{new_stopping_1} holds, then one can construct a 
 stopping time $\overline{\tau}$ which attains the strictly smaller value of $\mathcal{P}_f(\mu)$, by stopping some of the active particles at time $\overline{t}$. We refer to \cite[Theorem 8.3]{kim2021stefan} (Case (II)) for details in the case of Brownian motion.  
\end{proof}

Given Theorem \ref{saturation}, one can characterize the optimal target measure $\nu$ for $\mathcal{P}_f(\mu)$ in terms of the non-local obstacle problem.
Note that by Theorem~\ref{universal}, $\nu$ is the same for the cost of either Type (I) or Type (II).
\begin{theorem}\label{nu_identify}  Let $u:\R^d \rightarrow \R$ be the unique continuous viscosity solution of
\begin{equation}
     \min \{ (-\Delta)^s u + f - \mu, u  \} = 0, \quad \lim_{|x|\to\infty} u(x)=0 \text{ in } \R^d. \label{initial_set_expansion} 
\end{equation} 
Then the optimal target measure $\nu$ for $\mathcal{P}_f(\mu)$ is given by $\nu = \mu - (-\Delta)^s u$ in $\R^d$. Also it satisfies that $\nu = f$ in $E := \{x\in \R^d: u(x)>0\}$.
\end{theorem}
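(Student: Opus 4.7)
The plan is to exhibit $u$ as a potential-theoretic quantity naturally attached to the optimal pair $(\nu,\tau)$ of $\mathcal{P}_f(\mu)$, and to verify directly that it solves \eqref{initial_set_expansion}. By the universality result (Theorem \ref{universal}) I may fix the Type (I) cost without loss of generality. Let $\tau$ be the optimal stopping time for $\mathcal{P}_f(\mu)$ with this cost, and $(\eta,\rho)$ the associated Eulerian variables. Define
\[
u(x) := U_\mu(x) - U_\nu(x),
\]
which by \eqref{w_explicit} (applied at $t=0$) equals $w(0,x)=\int_0^\infty \eta(s,x)\,ds$ and is therefore non-negative. Applying $(-\Delta)^s$ to the potential identity gives $(-\Delta)^s u = \mu - \nu$ in the sense of distributions.

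Next I would verify regularity and decay. Continuity of $u$ follows from Lemma \ref{spatial_reg_1} applied to $\mu-\nu$, which is bounded and has sufficient decay (Lemma \ref{nu_decay} together with Assumption \ref{assume1}). For decay at infinity, I use the mass-balance $\mu(\R^d)=\nu(\R^d)$ implicit in $\mu\leq_{s\textup{-SH}}\nu$: writing $u(x)=\int(N(x-y)-N(x))d(\mu-\nu)(y)$, one applies dominated convergence using the decay $\nu(y)=O(|y|^{-d-2s})$ from \eqref{decay} and the uniform boundedness of $\mu$ with compact support. Let $E:=\{u>0\}$; this open set coincides (up to null sets) with $\{x\in\R^d:\eta(t,x)>0\text{ for some }t>0\}$ since $u(x) = \int_0^\infty \eta(s,x)\,ds$ and $\eta\geq 0$. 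By the saturation property (Theorem \ref{saturation}), $\nu=f$ a.e. on $E$. Consequently, on $E$ we have $(-\Delta)^s u + f - \mu = f-\nu = 0$, while on $E^c$ we have $u=0$ and $\nu\le f$, yielding $(-\Delta)^s u + f - \mu = f - \nu \geq 0$. This shows $u$ satisfies \eqref{initial_set_expansion} distributionally; since $\{u>0\}$ is open and $u$ is continuous, a standard argument upgrades this to the viscosity formulation.

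Uniqueness of the continuous viscosity solution of \eqref{initial_set_expansion} vanishing at infinity is classical for the fractional obstacle problem and follows from a comparison principle (Perron's method suffices, using $f-\mu\in L^\infty$ and the decay condition at infinity to build barriers). Given uniqueness, the identities $\nu = \mu - (-\Delta)^s u$ on $\R^d$ and $\nu=f$ on $E$ are read off from what was established above. The main technical point is the decay $u(x)\to 0$ as $|x|\to\infty$: the two potentials $U_\mu$ and $U_\nu$ individually only decay like $|x|^{-(d-2s)}$, so their difference must be handled with care using the equal-mass cancellation together with the uniform tail bound on $\nu$ provided by the compact active-region framework of Section \ref{sec2}.
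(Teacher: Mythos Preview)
Your proposal is correct and follows essentially the same route as the paper: define $u=U_\mu-U_\nu=w(0,\cdot)$, use saturation (Theorem~\ref{saturation}) on the positivity set to get $\nu=f$ there, verify the obstacle inequality everywhere via $\nu\le f$, and invoke uniqueness from comparison. You are in fact somewhat more careful than the paper about the decay $u(x)\to 0$ (the paper simply asserts it from Lemma~\ref{nu_decay} without spelling out the equal-mass cancellation) and about the passage from distributional to viscosity solution.
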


\begin{proof} 
By Theorem~\ref{universal},
it suffices to only consider Type (I).  Let  $(U_{\mu_t})_{t\ge 0}$ be the  potential flow associated to the optimizer $(\nu,\tau)$ of $\mathcal{P}_f(\mu)$. By \eqref{relation} and Theorem \ref{saturation}, $\nu =f$ a.e. in $\{x \in \R^d:w(t,x)>0 \hbox{ for some } t>0\}$ (recall that $w$ is defined in \eqref{w}). Since $w$ is continuous and non-increasing in time,
setting  
\begin{align} \label{def e}
    E':=\{x\in \R^d: w(0,x)>0\},
\end{align}
we obtain
\[ \nu  = f \quad \hbox{ in } E'. \] Hence  $u(x):=w(0,x) = U_{\mu}(x) - U_{\nu}(x)$ satisfies  
\[ (-\Delta)^su=  \mu-\nu=\mu-f \quad \text{ in } E'. \] 
In addition, since  $\nu \leq f$,
\[ 0 = (-\Delta)^s u + \nu - \mu \leq (-\Delta)^s u + f - \mu \quad \text{ in } \R^d.  \] This shows that
\[ \min \{  (-\Delta)^s u + f - \mu,u  \} = 0 \quad \text{ in } \R^d. \] 
Since $\nu$ has a compact active region, by Lemma \ref{nu_decay},    $\nu$  satisfies the decay condition \eqref{decay}. As $\mu$ is compactly supported, $u = U_{\mu} - U_{\nu}$ decays at infinity.

\medskip

Hence we deduce that $u$ satisfies \eqref{initial_set_expansion}. Due to the uniqueness of \eqref{initial_set_expansion},  which follows from the standard comparison principle  (see \cite{fernandez2023integro}), we deduce $E=E'$ and  conclude  the proof.

\end{proof}

\begin{remark}  
Theorem \ref{nu_identify} allows the classical  variational problem to yield the optimal target measure $\nu$ for $\mathcal{P}_f(\mu)$. Assume that $\varphi : \R^d \rightarrow \R$ satisfies $(-\Delta)^s \varphi = f-\mu$ with $\lim_{|x| \rightarrow \infty} \varphi(x)=0$. Then it is known that $v := u  - \varphi$ minimizes the functional 
\[ \int_{\R^d} |(-\Delta)^\frac{s}{2} w|^2  \] in $\{w \in H^s(\R^d): w \geq \varphi\}.$ (see  for instance \cite{fernandez2023integro}). 
   
\end{remark}

\begin{lemma} Let $(\eta,\rho)$ be the   Eulerian variable associated to the optimizer of $\mathcal{P}_f(\mu)$ for Type (I). Then the initial domain $E(\eta)$ in \eqref{initial} is same as  the set $E$ in Theorem \ref{nu_identify} (up to zero-measure).\label{initial_trace_identification}
\end{lemma}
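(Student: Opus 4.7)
The strategy is to identify both $E(\eta)$ and $E$ with the positivity set of the same continuous potential function $w(t,x) = U_{\mu_t}(x) - U_\nu(x)$ at time $0$, and then pass from positivity at times $t>0$ to positivity at $t=0$ using the uniform convergence $U_{\mu_t}\to U_\mu$ from Lemma \ref{unif_limits}.

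The first step is to recall that, after the modification in Remark \ref{modififcation}, Lemma \ref{open_barrier} gives $\{\eta>0\} = \{w>0\}$ as subsets of $\R^+\times\R^d$, where $w$ is the potential defined in \eqref{w} for Type (I). Since for Type (I) the formula \eqref{w_explicit} reads $w(t,x) = \int_t^{\infty}\eta(s,x)\,ds$, the function $w(t,\cdot)$ is non-increasing in $t$, so the slices $\{\eta(t,\cdot)>0\}=\{w(t,\cdot)>0\}$ form a non-increasing family. Consequently
\[
E(\eta)=\bigcup_{n\in \N}\{w(1/n,\cdot)>0\}
\]
up to a set of Lebesgue measure zero (cf. Remark \ref{inital_domain_well_posed}).

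Next I would compare this with $E=\{u>0\}$, where $u=U_\mu-U_\nu$ by the identification in Theorem \ref{nu_identify}. The forward inclusion $E(\eta)\subseteq E$ is immediate from the time monotonicity: if $w(1/n,x)>0$ for some $n$, then $u(x)=w(0,x)\geq w(1/n,x)>0$ by Lemma \ref{time_reg}, so $x\in E$. For the reverse inclusion, Lemma \ref{unif_limits} gives $U_{\mu_t}\to U_\mu$ uniformly as $t\to 0^+$, hence $w(t,\cdot)\to u(\cdot)$ uniformly. Therefore, for any $x\in E$, i.e.\ $u(x)>0$, continuity in $t$ yields $w(1/n,x)>0$ for all sufficiently large $n$, so $x\in E(\eta)$.

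I do not expect any serious obstacle here: the two ingredients (the identification $\{\eta>0\}=\{w>0\}$ and the uniform time continuity of the potential flow at $t=0$) have already been established in Section \ref{barrier_property}, and the time-monotonicity of $w$ for Type (I) is explicit from \eqref{w_explicit}. The only point that requires mild care is that equalities of the form $\{\eta(t,\cdot)>0\}=\{w(t,\cdot)>0\}$ and the definition of $E(\eta)$ are to be interpreted modulo Lebesgue-null sets, consistent with Remark \ref{inital_domain_well_posed}, so that the conclusion $E=E(\eta)$ is stated up to a zero-measure set.
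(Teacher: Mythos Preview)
Your proposal is correct and follows essentially the same approach as the paper: both identify $E(\eta)$ with $\bigcup_{t>0}\{w(t,\cdot)>0\}$ via the relation $\{\eta>0\}=\{w>0\}$ from Lemma~\ref{open_barrier}, and then use the time-continuity and monotonicity of $w$ to conclude that this union equals $\{w(0,\cdot)>0\}=\{u>0\}=E$. You spell out the two inclusions more explicitly than the paper (which simply asserts $\cup_{t>0}\{w(t,\cdot)>0\}=\{w(0,\cdot)>0\}$), but the substance is the same.
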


\begin{proof} 
Since $\{\eta(t,\cdot)>0\}$ is non-increasing in $t$, we have $E(\eta)=\cup_{t > 0} \{\eta(t,\cdot)>0\}$. By \eqref{relation}, this set is same as $\cup_{t > 0} \{w(t,\cdot)>0\} = \{w(0,\cdot)>0\}$ (up to zero-measure). This together with \eqref{def e} imply that $E(\eta)=E$ (recall that $E=E'$ in the proof of Theorem \ref{nu_identify}).  
\end{proof}

Finally, with the aid of our characterization of $\nu$, we show that the conversion of particles from the distribution $\mu$ to $\nu$ is  completed in a finite time for Type (I), if $f$ is uniformly positive.

\begin{theorem}
\label{everything_freezes} 
Further assuming that $f(x) \geq \delta$ for all $x\in \R^d$ ($\delta>0$ is a constant),
let $\tau$ be the  optimal stopping time for $\mathcal{P}_f(\mu)$ for Type (I). Then for some constant  $\overline{T}= \overline{T}(\delta,||\mu||_{L^1})<\infty,$ 
\[ \tau \leq \overline{T} \ \text{ a.s.} \]   
\end{theorem}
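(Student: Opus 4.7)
The plan is to reduce the a.s.\ bound on $\tau$ to a uniform finite-time extinction of the potential $w(t,x) := \int_t^\infty \eta(s,x)\,ds$. By Theorem~\ref{theorem 3.13} for the Type (I) cost, $w$ solves the parabolic obstacle problem
\[
\min\{\partial_t w + (-\Delta)^s w + \nu,\; w\} = 0, \qquad w(0,\cdot) = U_\mu - U_\nu =: u,
\]
where $u$ is the elliptic obstacle solution from Theorem~\ref{nu_identify}. Combining the saturation (Theorem~\ref{saturation}) with the hypothesis $f \geq \delta$ forces $\nu = f \geq \delta$ a.e.\ on the active region $E = \{u>0\}$, and by Lemma~\ref{open_barrier} we have $\{w(t,\cdot)>0\} \subseteq E$ for all $t \geq 0$. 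Thus the forcing in the obstacle PDE is bounded below by $\delta$ wherever $w$ is strictly positive.

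Next, I would show that $w$ vanishes for $t \geq T_0 := \|u\|_{L^\infty}/\delta$ via a maximum principle argument. Since $u$ decays at infinity and $w(t,\cdot)\leq u$ pointwise (by the time-monotonicity established in \eqref{w_time_deriv}), the spatial supremum $M(t) := \|w(t,\cdot)\|_{L^\infty}$ is attained at some $x^\star(t)\in E$ whenever $M(t) > 0$. At such a point the nonlocal maximum principle gives $(-\Delta)^s w(t,x^\star) \geq 0$, and the obstacle PDE on $\{w>0\}$ yields $\partial_t w(t,x^\star) \leq -\nu(x^\star) \leq -\delta$. To avoid reliance on classical $C^{1,2}$ regularity, I would compare $w$ to the space-constant barriers $\phi_\alpha(t) := \|u\|_{L^\infty} - (\delta-\alpha)t$ for arbitrary $\alpha \in (0,\delta)$: if a first touching time $t^\star < \|u\|_{L^\infty}/(\delta-\alpha)$ and a touching point $x^\star$ existed with $w(t^\star,x^\star) = \phi_\alpha(t^\star) > 0$, then the fact that $w(\cdot,x^\star)$ crosses $\phi_\alpha$ from below combined with Lipschitz time-regularity (Lemma~\ref{time_reg}) gives $\partial_t w(t^\star,x^\star) \geq \phi_\alpha'(t^\star) = -(\delta-\alpha)$, strictly contradicting $\partial_t w(t^\star,x^\star) \leq -\delta$. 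Hence $w(t,\cdot) \leq \phi_\alpha(t)_+$, and letting $\alpha \downarrow 0$ yields $w(t,\cdot) \equiv 0$ for $t \geq T_0$.

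This extinction translates directly into the desired bound on $\tau$. From $w(T_0,\cdot) \equiv 0$ and the definition of $s^U$ in Definition~\ref{barrier def}, we get $s^U(x) \leq T_0$ for a.e.\ $x$, so $(T_0, X_{T_0}) \in R^U$ almost surely. Theorem~\ref{equality_hitting_time} identifies $\tau$ with the barrier hitting time $\tau^U = \inf\{t : (t,X_t) \in R^U\}$ for Type (I), so $\tau \leq T_0$ a.s.

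Finally, $T_0 = \|u\|_{L^\infty}/\delta$ must be bounded in terms of $\delta$ and the norms of $\mu$. Integrating the saturation $\nu \geq \delta$ over $E$ against $\int \nu = \|\mu\|_{L^1}$ yields $|E| \leq \|\mu\|_{L^1}/\delta$; since $u$ satisfies $(-\Delta)^s u = \mu - f \leq \|\mu\|_{L^\infty}$ on $E$ with $u = 0$ on $E^c$, comparison with the fractional torsion function $\phi_E$ (defined by $(-\Delta)^s\phi_E = 1$ on $E$, $\phi_E=0$ on $E^c$) gives $u \leq \|\mu\|_{L^\infty}\phi_E$, and a Faber--Krahn type estimate provides $\|\phi_E\|_{L^\infty} \leq C(d,s)|E|^{2s/d}$. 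Combining these, $\overline{T} \leq C(d,s)\|\mu\|_{L^\infty}(\|\mu\|_{L^1}/\delta)^{2s/d}/\delta < \infty$, which depends only on $\delta$ and the quantitative data of $\mu$. The main technical obstacle is making the pointwise maximum-principle argument of paragraph~2 fully rigorous in the absence of classical regularity of $w$; this can be handled through the viscosity formulation of the parabolic obstacle problem together with sup-convolution, but requires care due to the nonlocal character of $(-\Delta)^s$.
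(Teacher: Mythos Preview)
Your comparison-principle argument is essentially the one the paper has in mind when it defers to the parallel result \cite[Theorem~8.6]{kim2021stefan}: use saturation to get $\nu\geq\delta$ on $\{w>0\}$, compare $w$ with the spatially-constant supersolution $(\|u\|_{L^\infty}-\delta t)_+$, and then invoke Theorem~\ref{equality_hitting_time} to conclude $\tau\leq\|u\|_{L^\infty}/\delta$ a.s. The only discrepancy between your write-up and the stated conclusion is quantitative: your torsion-function bound on $\|u\|_{L^\infty}$ brings in $\|\mu\|_{L^\infty}$, so you obtain $\overline{T}=\overline{T}(\delta,\|\mu\|_{L^1},\|\mu\|_{L^\infty})$ rather than the claimed $\overline{T}(\delta,\|\mu\|_{L^1})$, and the fractional Faber--Krahn/Talenti inequality $\|\phi_E\|_{L^\infty}\leq C(d,s)|E|^{2s/d}$ you invoke is correct but non-elementary and deserves a reference.
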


\begin{proof} 
By Lemma \ref{initial_trace_identification}, denoting by $s: \R^d \rightarrow \R^+$  the barrier function associated to $\tau$, we have  $s = 0$ a.e. on $E^c$. Hence, by Theorems \ref{equality_hitting_time} and  \ref{saturation}, we obtain the result by a parallel argument, similar to the one in \cite[Theorem 8.6]{kim2021stefan}.
\end{proof}

 \section{Connection to Fractional Stefan Problem} \label{connection_stefan}
In this section, given any function $f$ satisfying Assumption \ref{assume3}, we construct a solution to the (weighted) fractional Stefan problem \eqref{St1f} and \eqref{St2f}, as defined in   Definition~\ref{weighted_stefan_def};
\begin{equation}
      \partial_t h   +(-\Delta)^s \eta = 0, \qquad h=h_1(\eta):=\eta -f \chi_{\{\eta>0\}} - \kappa_1 \chi_{ \{\eta=0\} }\label{St1f} \tag{$St_{1,f}$} 
\end{equation}
and
\begin{equation}
\p_t h + (-\Delta)^s \eta = 0, \quad \quad h =h_2(\eta):=\eta + f\chi_{\{\eta>0\}} +  \kappa_2 \chi_{ \{\eta=0\} }\tag{$St_{2,f}$}     \label{St2f}
\end{equation}
with $\kappa_1$ and $\kappa_2$ defined in \eqref{Type_I_jump} and \eqref{Type_II_jump} respectively with the property $\kappa_i \leq f$ in $\{\eta=0\}$. 

We use the letter $f$ to emphasize {both its role as a weight} and the role of the saturation result (Theorem \ref{saturation}) 
in constructing our solutions.
Let us briefly outline the strategy. By Theorem \ref{exist_unique_Pf}, there exists  a unique optimizer $\nu$ for $\mathcal{P}_f(\mu)$. Also by  Theorem \ref{saturation}, $\nu=f$ on the active region, which allows us to apply Theorem \ref{solution_eta_stefan} to obtain the result. 

\medskip

In the special case of $f \equiv 1$ we obtain solutions to the unweighted fractional Stefan problem \eqref{St1} and \eqref{St2}.  The results are parallel to the local case obtained in \cite{kim2021stefan}.  For  \eqref{St2}, we obtain a unique solution with an active region which traces back to that of the initial data.  While for \eqref{St1}, by allowing $f$ to be a characteristic function, we will produce a unique solution in terms of the \emph{insulated region} that has a prescribed initial data, but with an enlarged initial trace (see Theorem \ref{Solve_Insulated} later).

\medskip

Let us first construct solutions to the weighted melting Stefan problem \eqref{St2f} when $\{0<\mu\leq f\}$ is empty.

\begin{theorem}\label{melting_solve} Let $f$ satisfy Assumption \ref{assume3} {and further assume that $f>0$ a.e.}  Consider $\mu = (f + \eta_0) \chi_{\Sigma}$, where $\eta_0\in L^{\infty}(\R^d)$ is positive and $\Sigma$ is a bounded Borel set  in $\R^d$ having a positive Lebesgue measure. Let $(\nu,\tau)$ be the optimizer for $\mathcal{P}_f(\mu)$ with a cost of Type (II), and let $(\eta,\rho)$ be the associated  Eulerian variables. Then $\eta$ is a weak solution to \eqref{St2f} with initial data $(\eta_0 \chi_{\Sigma}, \Sigma)$.
\medskip

In particular when $f \equiv 1$, $\eta$ is the unique weak solution to  \eqref{St2} with initial data $(\eta_0\chi_{\Sigma}, \Sigma)$, and the corresponding enthalpy $h$ from Definition \ref{enthalpy_def} is the unique weak solution of \eqref{Ste} with initial data $h_0 = (\eta_0+1)\chi_{\Sigma}$.  In addition, there exist  constants $\gamma,C>0$ depending only on $\mu,d$ and $s$ such that for any $t \ge 0,$
\begin{equation}\label{rate_h} ||h(t,\cdot)-\nu||_{L^1(\R^d)} \leq Ce^{-\gamma t}. \end{equation}

\end{theorem}

\begin{proof} Recall from Theorem \ref{barrier_existence} that the optimal stopping time $\tau$ is a hitting time to some backward barrier {with a potential randomization on $\{\tau=0\}$}. Denoting by $s:\R^d \rightarrow \R^+$ the associated barrier function, we have $s=0$ a.e. on  $\Sigma$, since $\mu>f$ on $\Sigma$. In particular, the stopping time randomizes to stop the initial $f \chi_{\Sigma}$ particles. The remaining particles $\tilde{\mu} := \eta_0 \chi_{\Sigma}$ are transported for a positive time to yield a final distribution $\tilde{\nu}$, the optimal target measure for $\mathcal{P}_{ f (1-\chi_{\Sigma}) }(\tilde \mu)$.
 
Let $\tilde{\tau}>0$ be the stopping time $\tau$ restricted to the initial distribution $\tilde{\mu}$, and let $(\tilde{\eta},\tilde{\rho})$ be the Eulerian variables associated to $(\tilde{\mu},\tilde{\tau})$.   
{Recalling that the initial particles $f \chi_{\Sigma}$ 
 stop immediately,  not affecting the active distribution $\eta$ and thus $\tilde{\eta}=\eta$.}
Also, as $\tilde{\rho} + (f \chi_{\Sigma})  \delta_{ \{t=0\} } \otimes dx  = \rho$,  the spatial marginal distribution of  $\tilde{\rho}$ is $\tilde{\nu}:= \nu-f \chi_{\Sigma}$. 

\medskip

Since $\tilde{\tau}>0$ a.s., Theorem \ref{solution_eta_stefan} implies  that for any $\varphi \in C^{\infty}_c(\R^+ \times \R^d)$,  
\[  \int_{0}^{\infty}  \int_{\R^d} (-\p_t \varphi\cdot (\eta + \tilde{\nu} \chi_{ \{\eta>0\} } + \kappa_2 \chi_{ \{\eta=0\}} ) +  (-\Delta)^s \varphi \cdot\eta )dx dt = \int_{\Sigma} \varphi(0,\cdot) \eta_0  dx  \] 
($\kappa_2$ denotes the quantity \eqref{Type_II_jump}, recall that $\tilde{\eta}=\eta$). Recall that the  saturation result (Theorem \ref{saturation}) implies $\nu \chi_{ \{\eta>0\} }  = f \chi_{ \{\eta>0\} } $.  Since $\tilde{\nu} = \nu - f\chi_{\Sigma}$, it follows that  $\tilde{\nu} \chi_{ \{\eta>0\} } = f \chi_{\Sigma^c} \chi_{ \{\eta>0\}}$. Therefore, 
\[  \int_0^{\infty} \int_{\R^d} (-\p_t \varphi \cdot (\eta +  f \chi_{ \{\eta>0\} } + \kappa_2 \chi_{ \{\eta=0\}} ) + (-\Delta)^s \varphi\cdot \eta)dxdt = \int_{\Sigma} \varphi(0,\cdot ) \eta_0 dx + \mathcal{A} \]
with
\begin{align*}
\mathcal{A}:=\int_0^{\infty}\int_{\Sigma}  -\p_t \varphi\cdot f  \chi_{ \{\eta>0\} } dx dt = \int_0^{\infty}\int_{\Sigma} -\partial_t \varphi  \cdot  f dx dt = \int_{\Sigma} \varphi(0,\cdot ) f dx,
\end{align*} where the second equality is obtained from the fact that $\Sigma \subset \{\eta(t,\cdot)>0\}$ for all $t>0$. 

\medskip

Now, we verify that  $ E (\eta) = \Sigma$ a.e. Since $\Sigma \subset E(\eta)$ a.e., we aim to show the reverse inclusion.  By Lemma \ref{nu_1_E},  we have $\tilde{\nu} \chi_{E(\eta)} = 0$ a.e. Also by Theorem \ref{saturation},
\[ \tilde{\nu} = {f} \text{ a.e. on } \bigcup_{t>0} \{\eta(t,\cdot)>0\} \setminus \Sigma. \]
Thus, {as $f>0$ a.e.,} we have that  $E(\eta)  \cap \left( \bigcup_{ t>0 } \{\eta(t,\cdot)>0\} \setminus \Sigma \right) $ is a null set. Hence, as $E(\eta) \subset \bigcup_{t>0}  \{\eta(t,\cdot)>0\}$, we conclude that $\eta$ solves \eqref{St2f} with  initial data $(\eta_0 \chi_{\Sigma}, \Sigma)$. \\

When $f \equiv 1$, \eqref{St2f} simplifies to \eqref{St2} (see Definition \ref{def_St}). This implies $\eta$ is a weak solution to \eqref{St2}. The uniqueness of $\eta$ to \eqref{St2} along with the uniqueness  of  $h$ to \eqref{Ste} follow from Theorem \ref{stefan_enthalpy}.\\

Lastly we show \eqref{rate_h}. Recall that $\tilde{\tau}>0$ a.s. and $(\eta,\tilde{\rho})$ are its associated Eulerian variables.
By definition $\tilde{\nu} = \nu  - \chi_{\Sigma}$ and $\tilde{\rho}([0,t),x) = \rho( [0,t),x) - \chi_{\Sigma}(x) $, and thus $\tilde{h}(t,x)-\tilde{\nu}(x)  = h(t,x)-\nu(x)$,  where $\tilde{h}(t,x) := \eta(t,x) + \tilde{\rho}([0,t),x)$. Now by Theorem \ref{exist_unique_Pf}, there is an $R=R(\mu,d,s)>0$ such that $\tau \leq \tau_R$ almost surely and $\mu$ is supported on $B_R(0)$. So as $\tilde{\tau}>0$ a.s., we may apply Lemma \ref{asymptotic_h} on the pair $(\mu,\tilde{\nu})$ to deduce there are constants $\gamma=\gamma(\mu,d,s),C=C(\mu,d,s)>0$ such that
\[ ||\tilde{h}(t,\cdot)-\tilde{\nu}(\cdot)||_{L^1(\R^d)} \leq Ce^{-\gamma t}. \] Recalling that  $\tilde{h}-\tilde{\nu} = h-\nu$, we conclude the proof.

 \end{proof}

One can also show the Duvuat transform of our solution solves the parabolic obstacle problem:

\begin{theorem}[Melting Obstacle Problem] \label{associated_obstacle_melt}
Let $f,\mu$ and $\eta$ be as given in Theorem \ref{melting_solve}. Then $w(t,x) := \int_0^t \eta(a,x)da$ solves the parabolic obstacle problem
\begin{equation}
    \min\{ \p_t w + (-\Delta)^s w + f - \mu, w \} = 0, \qquad w(0,\cdot)=0. \label{stefan_obstacle_melt}
\end{equation} 
\end{theorem}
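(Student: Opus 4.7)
The plan is to derive the obstacle problem directly from the distributional Stefan equation $\partial_t h + (-\Delta)^s \eta = 0$ established in Theorem \ref{melting_solve}, combined with the enthalpy identity $h = \eta + f \chi_{\{\eta > 0\}} + \kappa_2 \chi_{\{\eta = 0\}}$ from Definition \ref{weighted_stefan_def} (specialized to weight $\nu = f$). Time-integrating this equation from $0$ to $t$, using the initial trace $h(0,\cdot) = \mu$ read from the weak formulation \eqref{weak_eq_St_2} (recall $\mu = (\eta_0 + f)\chi_\Sigma$), yields the distributional identity
\[
(-\Delta)^s w(t,x) = \mu(x) - h(t,x), \qquad \text{where } w(t,x) = \int_0^t \eta(a,x)\, da.
\]
Combining this with $\partial_t w = \eta$ and substituting the enthalpy formula, most terms cancel and one is left with
\[
\partial_t w + (-\Delta)^s w + f - \mu = (f - \kappa_2)\,\chi_{\{\eta = 0\}} \qquad \text{in } (0,\infty)\times\R^d.
\]

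With this identity in hand, the obstacle problem follows by splitting $\R^+ \times \R^d$ into $\{w > 0\}$ and $\{w = 0\}$. Using Lemma \ref{open_barrier} (after the modification of $\eta$ recorded in Remark \ref{modififcation}), these sets coincide respectively with $\{\eta > 0\}$ and $\{\eta = 0\}$ up to a space-time null set. On $\{w > 0\}$ the right-hand side vanishes, giving $\partial_t w + (-\Delta)^s w + f - \mu = 0$. On $\{w = 0\}$ the right-hand side equals $f - \kappa_2$, which is non-negative since the admissibility requirement in Definition \ref{weighted_stefan_def} forces $\kappa_2(t,\cdot) = \tilde\rho([0,t),\cdot) \leq \tilde\rho([0,\infty),\cdot) \leq f$. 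Together with $w \geq 0$ and the immediate initial value $w(0,\cdot) = 0$, these facts verify \eqref{stefan_obstacle_melt}.

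The main (modest) technical point will be justifying the time-integration and its commutation with $(-\Delta)^s$ at the distributional level, which requires testing against functions of the form $\varphi(x)\chi_{[0,t]}(s)$ smoothed in time, and exploiting the Sobolev regularity $\eta \in L^2_\gamma(\R^+; H^s_0(B))$ furnished by Lemma \ref{regularity_lemma}. As a cross-check, one may alternatively deduce the result from the Type (II) case of Theorem \ref{theorem 3.13} applied to the effective pair $(\tilde\mu, \tilde\tau) = (\eta_0\chi_\Sigma, \tilde\tau)$ with $\tilde\tau > 0$ a.s., constructed inside the proof of Theorem \ref{melting_solve}: by \eqref{w_explicit} the associated potential equals $w$, the right-hand side $\tilde\nu - \tilde\mu$ equals $\nu - \mu$, and the saturation result Theorem \ref{saturation} together with $\nu \leq f$ permits replacing $\nu$ by $f$ inside the obstacle inequality without disturbing either of its two branches.
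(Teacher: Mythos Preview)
Your proposal is correct. The cross-check you give at the end—applying Theorem \ref{theorem 3.13} to the effective pair $(\tilde\mu,\tilde\tau)$ with $\tilde\tau>0$ a.s.\ from the proof of Theorem \ref{melting_solve}, noting via \eqref{w_explicit} that the associated potential is $w$, and then using saturation (Theorem \ref{saturation}) together with $\nu\le f$ to replace $\nu$ by $f$—is exactly the paper's proof; you are in fact more explicit than the paper about the need to pass to $(\tilde\mu,\tilde\tau)$ so that the hypothesis $\tau>0$ a.s.\ of Theorem \ref{theorem 3.13} is satisfied. Your primary route, time-integrating the weak Stefan equation to obtain the pointwise identity $\partial_t w+(-\Delta)^s w+f-\mu=(f-\kappa_2)\chi_{\{\eta=0\}}$ and then reading off both branches of the obstacle problem, is just the computation inside the proof of Theorem \ref{theorem 3.13} redone from the weak-formulation side rather than the potential-flow side; it is equally valid and slightly more self-contained, and the identification $\{w>0\}=\{\eta>0\}$ for $t>0$ can be argued directly from $w=\int_0^t\eta$, the openness of $\{\eta>0\}$, and the time-monotonicity of $\{\eta(t,\cdot)>0\}$ without needing to invoke Lemma \ref{open_barrier}.
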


\begin{proof}
By  Theorem \ref{theorem 3.13} and \eqref{w_explicit}, it suffices to show that  $\nu$ in \eqref{w_obstacle} can be replaced with $f$. Indeed, the  saturation result (Theorem \ref{saturation}) along with \eqref{relation} imply that $\nu = f$ in $\{x\in \R^d:w(t,x)>0 \text{ for some } t >0\}$. As $\nu \leq f$,  we may apply the arguments in Theorem \ref{nu_identify} to conclude the proof.
\end{proof}

\begin{remark} The regularity of $(-\Delta)^s w$ and the free boundary of \eqref{stefan_obstacle_melt} have been studied in the literature \cite{caffarelli2013regularity, barrios2018free, ros2021optimal, figalli2023regularity, ros2023semiconvexity} when $f-\mu$ is smooth. For instance, if $f-\mu$ is smooth, then $(-\Delta)^s w$ is continuous for $0<s<1$, which would imply that the enthalpy is continuous. This would be an interesting contrast to the local case where the enthalpy  jumps by $1$ across the free boundary. Unfortunately this condition does not apply for us because our assumption in Theorem~\ref{melting_solve}, $\mu>f$ in $\{\mu>0\}$, yields a discontinuous $\mu$ for most choices of $f$. While the discontinuity of $\mu$ only occurs at the initial free boundary, the local regularity result for \eqref{stefan_obstacle_melt} is not available at the moment.
\end{remark}

Next, we analyze the freezing problem \eqref{St1}. 

\begin{theorem} \label{freeze_solve} Assume that $f$ satisfies Assumption \ref{assume3}. Consider $\mu = (f + \eta_0) \chi_{\Sigma}$, where $\eta_0\in L^{\infty}(\R^d)$ is positive and $\Sigma$ is a Borel set  in $\R^d$ having a positive Lebesgue measure. Let $(\nu,\tau)$ be the optimizer for $\mathcal{P}_f(\mu)$ with a cost of Type (I), and let $(\eta,\rho)$ be the associated Eulerian variables. Let $u$ be a solution to \eqref{initial_set_expansion}, and set  $E: = \{ u > 0 \}$. Then $\eta$ is a solution of \eqref{St1f} with initial data $(\mu, E)$. In addition, $\Sigma \subseteq E$.
\end{theorem}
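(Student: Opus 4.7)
The overall strategy is to apply Theorem~\ref{solution_eta_stefan} for the Type (I) case, which produces a weak solution $\eta$ of \eqref{St1nu} with initial data $(\mu, E(\eta))$, and then to use Lemma~\ref{initial_trace_identification} together with the saturation Theorem~\ref{saturation} to convert this into a solution of \eqref{St1f} with initial data $(\mu, E)$. The only point requiring care is verifying the hypothesis $\tau>0$ a.s.\ in Theorem~\ref{solution_eta_stefan}, which will follow from the containment $\Sigma\subseteq E$.

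We first establish $\Sigma\subseteq E$ by a fractional maximum principle argument on the obstacle problem \eqref{initial_set_expansion}. Since $\mu = (f+\eta_0)\chi_\Sigma > f$ on $\Sigma$, we have $f - \mu < 0$ there. Suppose, toward a contradiction, that $u(x_0)=0$ for some $x_0\in\Sigma$. Because $u\ge 0$ on $\R^d$ with $u(x)\to 0$ as $|x|\to\infty$, the function $u$ attains a global minimum at $x_0$, so by the definition \eqref{frac_laplacian}
$$(-\Delta)^s u(x_0) \;=\; -C_{d,s}\int_{\R^d}\frac{u(y)}{|x_0 - y|^{d+2s}}\,dy \;\le\; 0.$$
Hence $(-\Delta)^s u(x_0) + f(x_0) - \mu(x_0) < 0$, which contradicts the obstacle condition $\min\{(-\Delta)^s u + f - \mu,\, u\} = 0$. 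Thus $\Sigma\subseteq\{u>0\}=E$ up to a null set.

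Next we verify $\tau>0$ a.s. By Theorem~\ref{barrier_existence}, $\tau$ is the hitting time of the space-time process to a forward barrier $R = \{(t,x):t\ge s(x)\}$ for some measurable $s:\R^d\to\R^+$, so $\tau>0$ if and only if $s(X_0)>0$. By Lemma~\ref{open_barrier} together with the time monotonicity of $\{\eta(t,\cdot)>0\}$ for Type (I),
$$E(\eta) \;=\; \bigcup_{t>0}\{\eta(t,\cdot)>0\} \;=\; \{x\in\R^d : s(x)>0\}$$
up to a null set, which by Lemma~\ref{initial_trace_identification} equals $E$. Since $\mu$ is supported in $\Sigma\subseteq E$, we conclude $s(X_0)>0$ almost surely, so $\tau>0$ a.s.

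Finally, Theorem~\ref{solution_eta_stefan} applies and yields that $\eta$ is a weak solution of \eqref{St1nu} with initial data $(\mu, E(\eta))$, which we have just identified with $(\mu, E)$. The saturation Theorem~\ref{saturation} gives $\nu = f$ almost everywhere on $\bigcup_{t>0}\{\eta(t,\cdot)>0\} = E$, so that $\nu\chi_{\{\eta>0\}} = f\chi_{\{\eta>0\}}$ and $\nu\chi_E = f\chi_E$ almost everywhere; comparing the enthalpy formula $h_1$ and the initial term in \eqref{weak_eq_St_1} of Definition~\ref{weighted_stefan_def}, we see that \eqref{St1nu} and \eqref{St1f} coincide for our $\eta$, concluding the proof. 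The only genuinely new ingredient is the fractional maximum principle step giving $\Sigma\subseteq E$; the remaining identifications are essentially bookkeeping on top of previously established results.
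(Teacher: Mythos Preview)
Your proof is correct and follows the same overall architecture as the paper: verify $\tau>0$ a.s., invoke Theorem~\ref{solution_eta_stefan}, identify $E(\eta)=E$ via Lemma~\ref{initial_trace_identification}, and upgrade \eqref{St1nu} to \eqref{St1f} using saturation. The one substantive difference is how you obtain $\Sigma\subseteq E$. The paper argues probabilistically: if $s=0$ on some $A\subset\Sigma$ of positive measure, the particles starting in $A$ stop immediately, forcing $\nu\ge\mu>f$ on $A$ and contradicting $\nu\le f$. You instead argue analytically via the obstacle problem, deriving the same contradiction from the minimum principle for $(-\Delta)^s$.

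One technical point deserves attention in your version: you invoke the pointwise formula \eqref{frac_laplacian} for $(-\Delta)^s u(x_0)$, but $u$ is only known to be $C^\alpha$ with $\alpha<2s$ (Lemma~\ref{spatial_reg_1}), so the principal value integral is not a priori well-defined at a given point. The cleanest fix is to use the identity $u=U_\mu-U_\nu$ from the proof of Theorem~\ref{nu_identify}, which gives $(-\Delta)^s u=\mu-\nu$ a.e.; then for a.e.\ $x_0\in\Sigma\cap\{u=0\}$ the principal value exists, equals $\mu(x_0)-\nu(x_0)>f(x_0)-\nu(x_0)\ge 0$, yet is also $\le 0$ because $u\ge 0=u(x_0)$ makes the integrand one-signed. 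Equivalently, you can phrase it as a viscosity supersolution test with $\varphi\equiv 0$, but then you should note that the discontinuity of $\mu$ across $\partial\Sigma$ is harmless since the argument is only needed at a.e.\ point of $\Sigma$. Either way, the underlying contradiction is the same as the paper's ($\nu>f$ on a set of positive measure), just reached through the PDE rather than the stopping-time description.
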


\begin{proof} 

Note that by  Lemma \ref{open_barrier}  and Theorem \ref{equality_hitting_time},  the initial trace of $\eta$ is $\{ s>0\}$, where $s:\R^d \rightarrow \R^+$ denotes the barrier function  associated  to the optimal stopping time $\tau$. By Lemma \ref{initial_trace_identification}, this set is equal to $E$ a.e.  We claim that $\Sigma \subseteq E$ a.e. Since $\tau$ is the hitting time to the epigraph of $s$, if $s=0$ on some $A \subseteq \Sigma$ with $|A|>0$, then the initial $\mu \chi_A$ particles stop immediately. Thus $\nu \chi_A \geq \mu \chi_A > f \chi_A$, which yields a contradiction since $\nu \leq f$. Therefore $\Sigma \subseteq E$.
\medskip

Observe that by our choice of $\mu$, the optimal stopping time $\tau$ satisfies $\tau>0$ a.s., since the above argument implies that  $s>0$ a.e. on $\Sigma$.  Also by the saturation result (Theorem \ref{saturation}),  $\nu = f$ a.e. on $\{x \in \R^d:\eta(t,x)>0 \text{ for some  } t>0 \}$. Hence by Theorem \ref{solution_eta_stefan} {and Lemma \ref{initial_trace_identification}}, $\eta$ solves \eqref{St1f} with initial data $(\mu,E)$.
\end{proof}

Next, we state a connection between \eqref{St1f} and the parabolic obstacle problem, which is a direct consequence of Theorem~\ref{theorem 3.13}. Based on the instability of the supercooled problem, this indicates that $w$ has a low regularity compared to the one for the melting case. At the level of the obstacle problem, this is due to the fact that $\p_t w = -\eta\leq 0$. We refer to the examples constructed in \cite[Remark 3.7]{caffarelli2013regularity} in the case $s=1/2$.

 \begin{theorem}  Let $f,\mu$ and $\eta$ be as given in Theorem \ref{freeze_solve}. Then $w(t,x) := \int_t^{\infty} \eta(a,x) da$ solves the parabolic obstacle problem
 \begin{equation} \min\{ \p_t w + (-\Delta)^sw + \nu ,w \}= 0, \qquad w(0,\cdot) = U_{\mu} - U_{\nu}. \label{stefan_obstacle_freeze}    \end{equation} Here $\nu$ is the optimal target measure of $\mathcal{P}_f(\mu)$ and is also given by Theorem \ref{nu_identify}.  \label{associated_freezing_obstacle}
\end{theorem}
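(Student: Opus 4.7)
The plan is to reduce this statement to Theorem~\ref{theorem 3.13} applied with Type (I) cost, together with the explicit integral representation of the potential variable established in \eqref{w_explicit}. By construction of $\mathcal{A}_f(\mu)$, the optimal pair $(\nu,\tau)$ for $\mathcal{P}_f(\mu)$ has the property that $\tau$ is the optimal stopping time for the classical Skorokhod problem $\mathcal{P}_0(\mu,\nu)$. Moreover, the condition $\Sigma \subseteq E$ shown in Theorem~\ref{freeze_solve} guarantees that no initial mass is stopped instantaneously, i.e. $\tau>0$ a.s. This puts us exactly in the setting where Theorem~\ref{theorem 3.13} applies for Type (I).

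First I would set $\tilde w(t,x) := U_{\mu_t}(x) - U_\nu(x)$ as in \eqref{w} using the potential flow of $\tau$. Theorem~\ref{theorem 3.13} then gives directly that $\tilde w$ solves
\[
\min\{\p_t \tilde w + (-\Delta)^s \tilde w + \nu,\ \tilde w\} = 0, \qquad \tilde w(0,\cdot) = U_\mu - U_\nu.
\]
Next, combining \eqref{w_time_deriv} for Type (I) (which gives $\p_t \tilde w = -\eta$) with the limit $\lim_{t\to\infty} U_{\mu_t} = U_\nu$ from Lemma~\ref{unif_limits}, the identity \eqref{w_explicit} yields
\[
\tilde w(t,x) = \int_t^{\infty} \eta(a,x)\,da.
\]
Hence the $w$ defined in the statement of the theorem coincides with the $\tilde w$ produced by the potential flow, so it satisfies the same obstacle problem with the same initial data.

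Finally, the identification of $\nu$ with the solution of the elliptic obstacle problem \eqref{initial_set_expansion} is exactly the content of Theorem~\ref{nu_identify}, which applies since $(\nu,\tau)$ is optimal for $\mathcal{P}_f(\mu)$ by hypothesis. The only mildly delicate point is the already-handled issue of $\tau>0$ a.s.; without it, one would need the initial-data revision described in Remark~\ref{eul_initial_data}, but Theorem~\ref{freeze_solve} precisely rules this out. Thus no essential obstacle remains, and the proof consists of assembling Theorem~\ref{theorem 3.13}, identity \eqref{w_explicit}, and Theorem~\ref{nu_identify}.
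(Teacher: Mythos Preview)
Your proposal is correct and follows exactly the paper's approach: the paper states this theorem as ``a direct consequence of Theorem~\ref{theorem 3.13}'' without further proof, and your argument spells out precisely that deduction via \eqref{w_explicit} and Theorem~\ref{nu_identify}. One minor remark: the hypothesis $\tau>0$ a.s.\ in Theorem~\ref{theorem 3.13} is only required for Type (II) costs, so your discussion of this point (while correct, since Theorem~\ref{freeze_solve} does establish $\tau>0$ a.s.) is unnecessary here.
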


Note that, in contrast to \eqref{St2f}, the initial domain $E$ depends on the choice of $f$ in the case \eqref{St1f}. Hence $f\equiv 1$ no longer corresponds to a unique solution of \eqref{St1} with initial data $\mu$. In fact uniqueness  may not hold even with the given initial data and domain, see the example for the local case in \cite{kim2021stefan}.  We can however uniquely characterize solutions of \eqref{St1} in terms of the \emph{insulated region}, a region where the particles never freezes.

\begin{definition}[Insulated Region] For a solution $\eta$ of \eqref{St1}, its insulated region is defined as
\begin{equation}
     \Sigma(\eta) := \bigcap_{t \geq 0} \{\eta(t,\cdot)>0\}.
\end{equation}
\end{definition}

{By the argument of Remark \ref{inital_domain_well_posed}, we see that if $\eta_1$ and $\eta_2$ are two versions of $\eta$ (in $\R^+ \times \R^d$) whose  positive sets are open and  {non-increasing} in time, then  $\Sigma(\eta_1)=\Sigma(\eta_2)$ up to zero-measure.} 

\begin{theorem}
Assume that $\mu \in L^{\infty}(\R^d)$ with a  compact support which is not identically zero. Let $G$ be a non-empty bounded open set in $\R^d$ that contains $\{ x \in \R^d: 0 < \mu(x) \leq 1 \}$.  Let $\eta$ and $E$ be given in Theorem \ref{freeze_solve} with  $f:= 1-\chi_{G}$. Then,  $\eta$ solves \eqref{St1} with initial data $(\mu,E)$ and an insulated region $G$. 
  \label{Solve_Insulated}
\end{theorem}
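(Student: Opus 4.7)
Theorem \ref{freeze_solve}, applied with the weight $f = 1-\chi_G$ and initial data $(\mu, E)$, already produces a pair $(\eta,\rho)$ solving the weighted freezing equation \eqref{St1f}. The proof thus reduces to (a) identifying the insulated region as $\Sigma(\eta) = G$, and (b) recasting this weighted solution as a solution of \eqref{St1} in the sense of Definition \ref{def_St}. I would first check $G \subseteq E$ from \eqref{initial_set_expansion}: at any $x_0 \in G$ with $u(x_0)=0$, the obstacle inequality $(-\Delta)^s u(x_0) \geq \mu(x_0) - f(x_0) = \mu(x_0)$ combined with $(-\Delta)^s u(x_0) \leq 0$ (from $u \geq 0$, $u(x_0)=0$) yields $u \equiv 0$, which is incompatible with $u > 0$ somewhere---guaranteed by the nontriviality of $\mu$ and the hypothesis $\{0 < \mu \leq 1\}\subseteq G$ (forcing $\mu > 1$ at some point of $G^c$, where the obstacle forces $u > 0$).

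The easier containment $\Sigma(\eta) \subseteq G$ can be handled by a direct barrier argument: suppose $A \subseteq E\setminus G$ has positive measure and the barrier function $s$ is infinite on $A$. Saturation (Theorem \ref{saturation}) and $\nu \leq f$ give $\nu = f = 1$ on $A$, so $\nu(A) > 0$; but the forward barrier $R = \{(t,x): t \geq s(x)\}$ is disjoint from $\R^+ \times A$, and since $R$ is closed and the sample paths are c\`adl\`ag one has $(\tau, X_\tau) \in R$ almost surely, forcing $X_\tau \notin A$ a.s.\ and hence $\nu(A) = 0$, a contradiction.

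The main difficulty is proving $G \subseteq \Sigma(\eta)$. I plan to argue by contradiction via the parabolic obstacle problem from Theorem \ref{associated_freezing_obstacle}: let $T^* := \inf\{t > 0 : w(t, x) = 0 \text{ for some } x \in G\}$ and assume $T^* < \infty$. Since $\nu = 0$ on $G$, at a contact point $(T^*, x_0)$ with $x_0 \in G$, the obstacle condition $\partial_t w + (-\Delta)^s w + \nu \geq 0$ together with $\partial_t w(T^*, x_0) = -\eta(T^*, x_0) = 0$ (using Lemma \ref{time_reg} and Lemma \ref{open_barrier}) and the elementary bound $(-\Delta)^s w(T^*, x_0) = -C \int w(T^*, y)|x_0 - y|^{-d-2s}\, dy \leq 0$ forces equality throughout, which by the nonlocal strong minimum principle yields $w(T^*, \cdot) \equiv 0$. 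In turn $\eta \equiv 0$ on $[T^*, \infty)\times \R^d$, i.e.\ $\tau \leq T^*$ almost surely. To derive the desired contradiction I would use the identity $\int_0^{T^*}\int_G \eta\, dx\, dt = \int_G u(x)\, dx > 0$ (thanks to $G \subseteq E$ and $w(T^*,\cdot)\equiv 0$) to pick $t_1 < T^*$ and $\delta > 0$ with $\int_{G_\delta} \eta(t_1, x)\, dx > 0$, where $G_\delta := \{x \in G : d(x, \partial G) > \delta\}$. The exit-time tail for the $2s$-stable process from $G$ (cf.~Remark \ref{exp_moments_exit_time}) provides a uniform lower bound $\mathbb{P}(\tau_G > T^* - t_1 \mid X_0 = x) \geq q > 0$ for $x \in G_\delta$, so by the Markov property a positive mass of active particles remains inside $G$ past $T^*$ without stopping (since $\nu(G) = 0$), contradicting $\tau \leq T^*$. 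Interlocking the nonlocal strong minimum principle with this probabilistic exit-time estimate is the crux and the main obstacle of the proof.

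Finally, with $\Sigma(\eta) = G \subseteq E$ in hand, I rewrite \eqref{St1f} as \eqref{St1nu} with $\nu \equiv 1$. The enthalpy formulas differ by $-(\nu - f)\chi_{\{\eta > 0\}} = -\chi_G \chi_{\{\eta > 0\}}$, and since $G \subseteq \Sigma(\eta)$ ensures $\chi_{\{\eta > 0\}}(t, x) = 1$ for all $t > 0$ and $x \in G$, testing against $\varphi \in C_c^\infty(\R^+ \times \R^d)$ contributes a boundary term $-\int_G \varphi(0, x)\, dx$ to the left-hand side of \eqref{weak_eq_St_1} that exactly matches the corresponding shift $-\int (\nu - f) \chi_E \varphi(0, \cdot)\, dx = -\int_G \varphi(0, \cdot)\, dx$ in the initial data on the right, using $G \subseteq E$. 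Thus the weak formulation holds with $\nu \equiv 1$, which by Definition \ref{def_St} means $(h, \eta)$ is a weak solution of \eqref{St1} with insulated region $G$.
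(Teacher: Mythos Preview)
Your proof is correct, and the final step (absorbing the $\chi_G$-discrepancy between $f=1-\chi_G$ and $\nu\equiv 1$ into a boundary term via $G\subseteq\Sigma(\eta)$) matches the paper exactly. The two middle steps, however, are handled differently.

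For the inclusion $G\subseteq\Sigma(\eta)$, the paper's route is shorter. It first notes that since $\nu\le f=0$ on $G$, one has $\rho=0$ on $\R^+\times G$, so $\eta$ solves the free fractional heat equation there with non-negative exterior data; the nonlocal strong maximum principle then gives $\eta(t,\cdot)\not\equiv 0$ (hence $w(t,\cdot)\not\equiv 0$) for every $t$. With this in hand, at any hypothetical point $(t_0,x_0)\in(0,\infty)\times G$ with $w(t_0,x_0)=0$ one has the \emph{strict} inequality $(-\Delta)^s w(t_0,x_0)<0$, which together with $\partial_t w\le 0$ and $\nu(x_0)=0$ immediately contradicts the obstacle inequality $\partial_t w+(-\Delta)^s w+\nu\ge 0$. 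Your approach bypasses the preliminary ``$\eta$ never vanishes'' step: you only obtain $(-\Delta)^s w\le 0$ (not strict) at the contact point, deduce $w(T^*,\cdot)\equiv 0$, and then close via the probabilistic exit-time estimate together with $\nu(G)=0$. This works but is longer, and it forces you to establish $G\subseteq E$ separately through the elliptic obstacle problem \eqref{initial_set_expansion}; the paper gets $G\subseteq E$ for free once $G\subseteq\Sigma(\eta)$ is known. One small technical point in your version: the infimum $T^*=\inf_{x\in G}s(x)$ need not be attained at a point of the open set $G$, so you should run the contact-point analysis at some $(T^*+\varepsilon,x_\varepsilon)$ with $x_\varepsilon\in G$ and $w(T^*+\varepsilon,x_\varepsilon)=0$, then let $\varepsilon\to 0$.

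For the reverse inclusion $\Sigma(\eta)\subseteq G$, your direct barrier argument (saturation gives $\nu=1$ on $A\subseteq\{s=\infty\}\setminus G$, while $(\tau,X_\tau)\in R$ a.s.\ forces $X_\tau\notin A$, hence $\nu(A)=0$) is in fact more elementary than the paper's, which instead re-applies the obstacle-problem reasoning on the set $\{\nu=0\}$ to show $\{\nu=0\}=G$ and then invokes $\Sigma(\eta)\subseteq\{\nu=0\}$.
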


\begin{proof} Our first goal is to show that $G$ is the insulated region of $\eta$. Before verifying this, we show that for any time $t>0$, $\eta(t,\cdot)$ is not identically zero. Since  $\rho = 0$ on  $\R^+ \times G$, by the Eulerian variable equation \eqref{non_local_heat}, $\eta$ solves the fractional heat equation $\p_t \eta  = -(-\Delta)^s \eta$ on the open set $\R^+ \times G$ with initial data $\mu$ and the Dirichlet condition $\eta$. As $\mu(\cdot) = \eta(0,\cdot)$ is not identically zero, the maximum principle implies that $\eta(t,\cdot)$ is not identically zero for any $t \ge 0$.

\medskip

Now we show that the insulated region of $\eta$ is $G$. To see that $G \subseteq \Sigma(\eta)$, recall by \eqref{relation} that $\{\eta>0\} = \{w>0\}$, where $w$ solves \eqref{w_obstacle}. Since {$\nu=0$ on $G$},
\[ \p_t w + (-\Delta)^sw \geq 0\quad  \text{ on }  {(0,\infty) \times G.} \] 
As $\eta(t,\cdot)$ is not identically zero for any $t$, the same holds for $w$. Thus on its global minimum set $\{w=0\}$, we have $\p_t w + (-\Delta)^s w < 0$. Therefore, $w>0$ on $(0,\infty) \times G$, which implies $G \subseteq \Sigma(\eta)$. 

For the reverse inclusion, we first show that $\{\nu=0\} \subseteq G$. To see this, recall from \eqref{w_obstacle} that
\[ \p_t w + (-\Delta)^s w \geq 0\quad  \text{ on }  (0,\infty) \times \{\nu=0\} . \] 
Hence using the parallel argument as above, 
\begin{align*}
    \{\nu=0\} \subseteq \{x\in \R^d:w(t,x)>0 \text{ for some  } t>0 \} = \{x\in \R^d:\eta(t,x)>0 \text{ for some  } t>0 \}.
\end{align*}
By the saturation result (Theorem \ref{saturation}), we have $\{\nu=0\} \subseteq  G$. As $\Sigma(\eta) \subseteq \{\nu=0\}$, we deduce that $\Sigma(\eta) \subseteq 
 G$. Therefore, we establish  $\Sigma(\eta) =
 G$.

\medskip
 
    Finally, we  verify that  $\eta$ solves \eqref{St1}. Observe that for any test function $\varphi,$
    \[ \int_0^{\infty}\int_G \p_t \varphi \cdot \chi_{ \{\eta>0\}} dx dt = -\int_G \varphi(0,x) dx. \] 
    By Theorem \ref{freeze_solve},  $\eta$ is a solution to \eqref{St1f} with initial data $(\mu,E)$. Thus recalling $f:= 1-\chi_{G}$ and adding the above identity to the weak form of \eqref{St1f}, setting  $h := \eta - \chi_{ \{\eta>0\} } - \kappa_1 \chi_{ \{\eta=0\} }$,
\[     \iint (-\p_t \varphi \cdot  h +   (-\Delta)^s \varphi \cdot   \eta ) dtdx  = \int \varphi(0,\cdot) (\mu - \chi_E -  \kappa_1 (0, \cdot)\chi_{E^c}) dx.  \] 
Hence we conclude the proof.
\end{proof}

Finally, we show that the initial data and the insulated region are enough to characterize \eqref{St1}. In particular, our choice of the initial trace coming from the elliptic obstacle problem is \emph{necessary}.

\begin{theorem} Assume that $\mu \in L^{\infty}(\R^d)$ with a  compact support.  Then for any bounded measurable set $G$ in $\R^d$, there is at most one solution to \eqref{St1} with initial data $\mu$ and an insulated region $G$. If the solution exists (which we call $\eta$), then $\eta$ is the Eulerian variable associated to the optimal stopping time of $\mathcal{P}_f(\mu)$ with $f:=1-\chi_{G}$.
In addition, the initial domain $E=\{\eta( 0^+,\cdot)>0\}$ is given by a positivity set of the obstacle solution solving \eqref{initial_set_expansion}, and $E$ contains the support of $\mu$.

\label{uniqueness_St1}
\end{theorem}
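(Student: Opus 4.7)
The plan is to reduce the uniqueness statement to the already-established uniqueness of the optimizer of $\mathcal{P}_f(\mu)$ with $f = 1-\chi_{G}$. Namely, I want to show that any weak solution $\eta$ of \eqref{St1} with initial data $\mu$ and insulated region $G$ must coincide with the Eulerian variable for the optimal stopping time of $\mathcal{P}_f(\mu)$. Uniqueness of $\eta$ will then follow from Theorem \ref{exist_unique_Pf} combined with Theorem \ref{barrier_existence}, and the description of $E$ will be read off from Theorem \ref{nu_identify}.

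Given such an $\eta$, Definition \ref{def_St} lets me view it as a weak solution of \eqref{St1nu} for some $\nu \leq 1$ satisfying $\nu = 1$ on $\{\eta>0\}$. Since $\{\eta(t,\cdot)>0\}$ is uniformly bounded and monotone, $\nu$ effectively restricted to the initial domain $E$ lies in $L^{\infty} \cap L^1$, so Theorem \ref{St_1_Consistent} applies and produces a stopping time $\tau$ together with a target measure $\tilde{\nu}$ of the form \eqref{tilde_nu_2} such that $(\eta,\rho)$ is the optimal Eulerian variable for $\mathcal{P}_0(\mu,\tilde{\nu})$ with a Type (I) cost.

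The heart of the proof is to identify $\tilde{\nu}$ with the optimizer $\nu^*$ of $\mathcal{P}_f(\mu)$. First I check $\tilde{\nu} \leq f$ by examining \eqref{tilde_nu_2} on the three natural regions: on $G = \{s = \infty\}$ both summands vanish so $\tilde{\nu} = 0 = f$; on $E \setminus G = \{0 < s < \infty\}$, $\tilde{\nu} = \nu = 1 = f$ because the point has belonged to $\{\eta>0\}$ at some positive time; on $E^c = \{s=0\}$, $\tilde{\nu} = \kappa_1(0,\cdot) \leq \tilde{\rho}([0,\infty),\cdot) \leq \nu \leq 1 = f$, using the bound from Definition \ref{weighted_stefan_def}. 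These checks simultaneously show that $\tilde{\nu} = f$ holds exactly on $E$. I then introduce $\tilde{u}(x) := \int_0^\infty \eta(a,x)\,da$, which by the potential flow analysis of Section \ref{pot_flow} equals $U_{\mu}(x) - U_{\tilde{\nu}}(x)$. This $\tilde{u}$ is non-negative, decays at infinity (by the compact support of $\mu$ and $\tilde{\nu}$), and satisfies $(-\Delta)^s \tilde{u} = \mu - \tilde{\nu}$. Combining $\tilde{\nu} \leq f$ globally with $\tilde{\nu} = f$ on $\{\tilde{u}>0\} = E$ shows that $\tilde{u}$ solves the elliptic obstacle problem \eqref{initial_set_expansion}. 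Its unique viscosity solution is the function $u$ in Theorem \ref{nu_identify}, so $\tilde{\nu} = \mu - (-\Delta)^s u = \nu^*$ and $E = \{u>0\}$.

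With $\tilde{\nu} = \nu^*$ in hand, Theorem \ref{barrier_existence} gives that $\tau$ is the unique optimal stopping time of $\mathcal{P}_0(\mu,\nu^*)$ with a Type (I) cost, so $\eta$ is uniquely determined as the Eulerian variable coming from $(\mu,\tau)$. The main technical difficulty I anticipate is the identification $\tilde{\nu} = f$ on all of $E$: it requires carefully coordinating the decomposition $E = (E \setminus G) \sqcup G$ with the three regimes in \eqref{tilde_nu_2}, and relies crucially on the structural constraint that any admissible $\nu$ in Definition \ref{def_St} must take the value $1$ wherever $\eta$ has ever been positive, together with the monotone-in-time character of the positivity set for Type (I).
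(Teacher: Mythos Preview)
Your proposal is correct and follows essentially the same route as the paper's proof: apply Theorem~\ref{St_1_Consistent} to realize any weak solution as an optimal Eulerian variable for some target $\tilde\nu$, then show $\tilde\nu$ is pinned down by the elliptic obstacle problem~\eqref{initial_set_expansion} with $f=1-\chi_G$, and conclude uniqueness. Two minor remarks: your claim that $\tilde u$ decays at infinity ``by the compact support of $\tilde\nu$'' is imprecise in the nonlocal case (particles jump, so $\tilde\nu$ is not compactly supported); the correct justification is the decay estimate of Lemma~\ref{nu_decay}, exactly as in the proof of Theorem~\ref{nu_identify}. Also, for the final uniqueness step you invoke Theorem~\ref{barrier_existence} (uniqueness of the optimal stopping time), whereas the paper instead appeals to uniqueness of the parabolic obstacle problem~\eqref{w_obstacle} for $w(t,x)=\int_t^\infty\eta$; both routes are valid and short.
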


  \begin{proof} By the consistency theorem (Theorem \ref{St_1_Consistent}), we have that $(\eta,\rho)$ (where $\rho$ is defined in \eqref{def}) are the optimal Eulerian variables for a cost of Type (I) between $\mu$ and $\nu = \chi_{G^c \cap E}  + \rho([0,\infty),\cdot ) \chi_{E^c}$.  We show that this is enough to characterize the target measure $\nu$ even on $E^c$. 

First it follows from Section \ref{barrier_property} that $w(t,x) := \int_t^{\infty} \eta(a,x) da$ solves the obstacle problem \eqref{w_obstacle} for Type (I). In addition, we have $u(\cdot) := w(0,\cdot) = U_{\mu}-U_{\nu}$, and thus $\nu = \mu - (-\Delta)^s u$. Now let us show that $u$ is uniquely specified by our assumptions. 

\medskip

Note that $G \subseteq E$, thus $\nu \leq 1-\chi_G$ with an equality on $E$ (recall $\rho([0,\infty),\cdot ) \leq 1$ due to Definition \ref{weighted_stefan_def}). Hence, by the arguments in the proof of Theorem \ref{nu_identify}, we deduce that  $u$ is a unique continuous viscosity solution of \eqref{initial_set_expansion} with $f = 1-\chi_{G} $. In particular, any solution of \eqref{St1} satisfying the assumptions in Theorem \ref{uniqueness_St1} induces a unique target measure $\nu$. 

Since the choice of $\nu$ is unique, a solution of \eqref{St1} satisfying the assumptions in Theorem \ref{uniqueness_St1} is unique. This is because $w$ solves the obstacle problem \eqref{w_obstacle} for Type (I), which has a unique solution due to the comparison principle (see \cite{fernandez2023integro}). 

Note that $\nu$ is the same target measure as $\mathcal{P}_{1-\chi_G}(\mu)$ due to Theorem \ref{nu_identify}, and then Lemma \ref{initial_trace_identification} implies that the initial trace is $E = \{u>0\}$. Lastly, the fact that $E$ contains the support of $\mu$ follows from Theorem~\ref{freeze_solve}.

\end{proof}

A special case is when $G$ is an empty set and $f\equiv  1$.  As a consequence of Theorems~\ref{everything_freezes}, \ref{freeze_solve} 
and  \ref{uniqueness_St1}, we establish the following corollary.
\begin{corollary}
    \label{vanish_finite_time}
 Let $\mu$ and $\eta$ be as in Theorem~\ref{freeze_solve} with  $f \equiv 1$. Then  $\eta$ is the unique solution to \eqref{St1} with initial data $\mu$ that vanishes in a finite time. The initial domain of $\eta$ is a positivity set  of the obstacle solution solving \eqref{initial_set_expansion} with $f \equiv  1$.

\end{corollary}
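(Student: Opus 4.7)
The plan is to deduce the corollary by assembling three earlier results: Theorem~\ref{freeze_solve} for existence, Theorem~\ref{everything_freezes} for finite-time extinction, and Theorem~\ref{uniqueness_St1} for uniqueness. First I would apply Theorem~\ref{freeze_solve} with $f \equiv 1$: this produces Eulerian variables $(\eta,\rho)$ associated to the optimizer of $\mathcal{P}_f(\mu)$ for a Type (I) cost such that $\eta$ solves \eqref{St1f}. Since $f \equiv 1$ by Definition~\ref{def_St}, the equation \eqref{St1f} coincides with \eqref{St1}, and the initial domain is $E=\{u>0\}$ where $u$ solves \eqref{initial_set_expansion} with $f \equiv 1$.

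Next, I would invoke Theorem~\ref{everything_freezes}, which applies since $f \equiv 1 \ge 1$, to obtain a deterministic $\overline{T}<\infty$ with $\tau \le \overline{T}$ almost surely. By the Eulerian characterization (Lemma~\ref{Eulerian_Variables}), $\eta(t,\cdot) \equiv 0$ for every $t>\overline{T}$, so $\eta$ vanishes in finite time. In particular the insulated region $\Sigma(\eta) = \bigcap_{t\ge 0}\{\eta(t,\cdot)>0\}$ is empty (up to a null set).

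For uniqueness I would argue as follows. Suppose $\tilde\eta$ is any weak solution of \eqref{St1} with initial datum $\mu$ that vanishes in a finite time. Then $\tilde\eta(t,\cdot)\equiv 0$ for all sufficiently large $t$, which forces $\Sigma(\tilde\eta) = \emptyset$. Hence we may apply Theorem~\ref{uniqueness_St1} with $G=\emptyset$: there is at most one solution of \eqref{St1} with initial datum $\mu$ and insulated region $G=\emptyset$, and if it exists it must coincide with the Eulerian variable attached to the optimizer of $\mathcal{P}_f(\mu)$ with $f = 1-\chi_\emptyset = 1$. Since the $\eta$ produced above is precisely this Eulerian variable and has empty insulated region, it is the unique such solution, and Theorem~\ref{uniqueness_St1} also identifies its initial domain as the positivity set of the solution to \eqref{initial_set_expansion} with $f \equiv 1$.

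There is no substantive obstacle; the only thing worth double-checking is the reduction of a ``vanishes in finite time'' solution to one with empty insulated region, which is immediate from the definition of $\Sigma(\tilde\eta)$. Once that is noted, the corollary is a direct combination of Theorems~\ref{everything_freezes}, \ref{freeze_solve}, and \ref{uniqueness_St1}.
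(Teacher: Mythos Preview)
Your proposal is correct and matches the paper's approach exactly: the paper simply states the corollary as a consequence of Theorems~\ref{everything_freezes}, \ref{freeze_solve}, and \ref{uniqueness_St1}, and you have correctly fleshed out the chain of implications, including the observation that vanishing in finite time forces $\Sigma(\tilde\eta)=\emptyset$ so that Theorem~\ref{uniqueness_St1} applies with $G=\emptyset$ and $f=1-\chi_\emptyset=1$.
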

  
\appendix

\section{Preliminaries  on Measures} \label{measures}
 
 For any Borel set $A$ in the Euclidean space,  we denote  by $\mathcal{M}(A)$ (resp. $\mathcal{M}_1(A)$) the space of  {finite {(signed)}} Radon measures (resp. probability measures) on $A$. As mentioned in the introduction, we are often interested in expressions of the form $\pi([0,t],x)$, where $\pi$ is a space-time measure. We now give a precise definition of this expression:

\begin{definition}[Disintegration and Density]\label{disintegration}
Assume that both $\pi \in \mathcal{M}(\R^+ \times \R^d)$ and $f \in L^1(\R^d) \cap L^{\infty}(\R^d)$ are non-negative. We say that $\pi$ has a \emph{spatial marginal} $f$ if for any Borel set $A \subset \R^d$, $\pi([0,\infty) \times A) = \int_A f(x) dx$.
 This particularly implies that for any Borel set $I \subset \R^+$, the spatial measure $A \mapsto \pi(I \times A)$ is absolutely continuous w.r.t. Lebesgue measure on $\R^d$.  We denote the associated density by $\pi(I,x)$. 

One can disintegrate the measure $\pi$ into {probability measures} $\{\pi_x\}_{x \in \R^d}$ on $\R^+$ such that
\begin{equation}
    \pi(I \times A) = \int_{A} \pi_x(I)f(x) dx,
\end{equation} 
where $I \subset \R^+$ is Borel. In other words, $\pi(I,x) = \pi_x(I)f(x)$ $x$-a.e. 
\end{definition}
 We show that the choice of including or excluding the endpoints of the interval $[0,t]$ in $\pi([0,t],x)$ does not affect the expression in the above definition:

\begin{lemma} \label{lambda_equality}
Let $\pi$ be as in Definition \ref{disintegration}. Set $\lambda_1(t,x) := \pi([0,t],x)$ and $\lambda_2(t,x) := \pi([0,t),x)$. Then $\lambda_i \in L^{\infty}(\R^+; \R^d)$ for $i=1,2$. Also    for any $x \in \R^d$, we have $\lambda_1(t,x) = \lambda_2(t,x)$ $t$-a.e.
\end{lemma}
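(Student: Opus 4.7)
The plan is to reduce both claims to elementary properties of the disintegration $\{\pi_x\}_{x \in \R^d}$ provided in Definition \ref{disintegration}. Recall that this disintegration gives us $\pi(I, x) = \pi_x(I) f(x)$ (as the $x$-density of the measure $A \mapsto \pi(I \times A)$), where each $\pi_x$ is a probability measure on $\R^+$. I will fix once and for all this disintegration version of the density, so that the expressions $\lambda_1(t,x) = \pi_x([0,t]) f(x)$ and $\lambda_2(t,x) = \pi_x([0,t)) f(x)$ are defined for every $(t,x)$.

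For the $L^\infty$ bound, since $\pi_x$ is a probability measure, $\pi_x(J) \le 1$ for every Borel $J \subset \R^+$. Applied to $J = [0,t]$ and $J = [0,t)$ this gives
\[
0 \le \lambda_i(t,x) \le f(x) \le \|f\|_{L^\infty(\R^d)}, \qquad i=1,2,
\]
which yields $\lambda_i \in L^\infty(\R^+ \times \R^d)$.

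For the pointwise-in-$x$, a.e.-in-$t$ equality, write
\[
\lambda_1(t,x) - \lambda_2(t,x) = \pi_x(\{t\}) f(x).
\]
Fix $x \in \R^d$. Since $\pi_x$ is a finite (probability) Borel measure on $\R^+$, the set of atoms
\[
\{\, t \in \R^+ : \pi_x(\{t\}) > 0 \,\} = \bigcup_{n \in \N} \{\, t \in \R^+ : \pi_x(\{t\}) > 1/n \,\}
\]
is at most countable (each of the sets in the union is finite because $\pi_x$ has total mass one). Countable subsets of $\R^+$ are Lebesgue null, so for each $x$ we get $\pi_x(\{t\})=0$ for $t$-a.e., hence $\lambda_1(t,x) = \lambda_2(t,x)$ for $t$-a.e.

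I do not anticipate any serious obstacle here, as the proof is essentially bookkeeping with disintegration combined with the classical fact that a finite Borel measure on the line has at most countably many atoms. The only minor subtlety is the choice of version of the density $\pi(I,x)$: one must commit to the disintegration-induced version so that quantities like $\pi_x(\{t\})f(x)$ are meaningful for every $x$, rather than only up to an $x$-null set that might depend on $t$.
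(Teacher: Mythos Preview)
Your proof is correct and follows essentially the same approach as the paper: both use the disintegration bound $\pi_x(J)\le 1$ for the $L^\infty$ claim, and both reduce the a.e.\ equality to the fact that a finite measure on $\R^+$ has at most countably many atoms (the paper phrases this equivalently as ``a monotone function has at most countably many jumps''). Your remark about fixing the disintegration-induced version of the density is a worthwhile clarification that the paper leaves implicit.
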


\begin{proof} The $L^{\infty}$ property follows from the fact that  the  spatial marginal  of $\pi$ is  $f \in L^{\infty}(\R^d)$.
The equality of $\lambda_1$ and $\lambda_2$ follows from the fact that for any fixed $x\in\R^d$,  $\lambda_1(t,x)$ is monotone in $t$, and $\lambda_1(t,x) \neq \lambda_2(t,x)$ if and only if $\lambda_1(t,x)$ jumps at time $t$. As the set of times where monotone functions can jump is countable, we conclude the proof.
\end{proof}

Next, we state a general integration by parts formula for $\pi([0,t],x)$.

\begin{lemma}  \label{diff_rho_t} 
Let $\pi$ be as in Definition \ref{disintegration} and $0 \leq a < b$. Then for any $\varphi \in C^{\infty}_c( \R^+  \times \R^d)$,
\begin{align}
    \int_{\R^d} \int_{a}^{b} &\p_t \varphi(t,x) \cdot \pi([0,t],x) dtdx \nonumber \\
&= \int_{\R^d} \varphi(b,x) \pi([0,b],x) - \int_{\R^d} \varphi(a,x) \pi([0,a],x) -\iint_{[a,b] \times \R^d } \varphi(t,x) \pi(dt,dx). \label{diff_rho[t]}
\end{align} 
\end{lemma}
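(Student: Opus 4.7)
The plan is to reduce the identity to a clean application of Fubini's theorem together with the fundamental theorem of calculus. The key tool is the defining property of the density $\pi([0,t],x)$ from Definition~\ref{disintegration}: for every Borel $A\subset\R^d$ one has $\int_A \pi([0,t],x)\,dx=\pi([0,t]\times A)$, and hence, by linearity and monotone approximation, for every bounded Borel function $\psi$,
\[
\int_{\R^d}\psi(x)\,\pi([0,t],x)\,dx=\iint_{[0,t]\times\R^d}\psi(y)\,d\pi(s,y).
\]
Applying this identity with $\psi(\cdot)=\partial_t\varphi(t,\cdot)$ and then integrating over $t\in[a,b]$, the left-hand side of \eqref{diff_rho[t]} becomes
\[
\int_a^b\iint_{[0,t]\times\R^d}\partial_t\varphi(t,y)\,d\pi(s,y)\,dt \;=\; \iint_{\R^+\times\R^d}\Bigl(\int_a^b \chi_{\{s\le t\}}\,\partial_t\varphi(t,y)\,dt\Bigr)d\pi(s,y),
\]
where Fubini is legitimate since $\varphi\in C_c^\infty$ and $\pi$ is a finite Radon measure.

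Next I would evaluate the inner $t$-integral pointwise in $(s,y)$ by the fundamental theorem of calculus: for $s\le b$ it equals $\varphi(b,y)-\varphi(\max(a,s),y)$, and for $s>b$ it vanishes. Splitting the remaining $s$-integration on $[0,b]$ according to whether $s\in[0,a]$ (where $\max(a,s)=a$) or $s\in(a,b]$ (where $\max(a,s)=s$), one obtains three pieces
\[
\iint_{[0,b]\times\R^d}\varphi(b,y)\,d\pi - \iint_{[0,a]\times\R^d}\varphi(a,y)\,d\pi - \iint_{(a,b]\times\R^d}\varphi(s,y)\,d\pi.
\]
The first two pieces are rewritten via the density identity as $\int\varphi(b,y)\pi([0,b],y)\,dy$ and $\int\varphi(a,y)\pi([0,a],y)\,dy$, respectively, while the third is already in the desired form; this assembles precisely the right-hand side of \eqref{diff_rho[t]}.

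A minor bookkeeping subtlety is that the last integral naturally lives on $(a,b]$ rather than $[a,b]$; the two agree whenever $\pi(\{a\}\times\R^d)=0$, which is the case for a.e.\ $a$ by Lemma~\ref{lambda_equality} and holds in all applications of the lemma in the paper (e.g.\ at $a=0$ under the hypothesis $\tau>0$ a.s.\ used in \eqref{rho_time_deriv_e}). There is no real conceptual obstacle: the entire argument is two applications of Fubini plus the fundamental theorem of calculus, and the only care required is tracking the left/right endpoint conventions in the Lebesgue--Stieltjes splitting.
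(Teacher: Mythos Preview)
Your proof is correct and is essentially the same argument as the paper's: the paper applies the Riemann--Stieltjes integration by parts formula to $t\mapsto \pi_x([0,t])f(x)$ for each fixed $x$ via the disintegration and then integrates over $\R^d$, whereas you swap the order of operations by first invoking the density identity and then Fubini directly on the product measure---but both unwind to Fubini plus the fundamental theorem of calculus. Your explicit remark about the $(a,b]$ versus $[a,b]$ endpoint convention is a subtlety the paper leaves implicit.
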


\begin{proof}
   
We use the notation $\pi_x([0,t])$ from Definition \ref{disintegration}. Since $\pi([0,t],x) = \pi_x([0,t])f(x)$, by the integration by parts formula for the Riemann–Stieltjes integral, for each $x\in \R^d,$
\[ \int_{a}^{b} \p_t\varphi(t,x) \cdot \pi([0,t],x)  dt = \varphi(b,x) \pi([0,b],x) - \varphi(a,x) \pi([0,a],x) -\int_{a}^{b} \varphi(t,x) f(x) \pi_x(dt). \] Integrating this expression over $\R^d$ concludes the proof.
\end{proof}

\section{Fractional Sobolev Space \texorpdfstring{$H^s$}{Hs} and Optimal Control}
\label{hs_info}

\subsection{Fractional Sobolev space $H^s$} \label{Hs_review}

In this section, we briefly recall  the fractional Sobolev spaces and the fractional Laplacian  on $\R^d$.  For $s \in (0,1)$, define the Gagliardo seminorm
\[ [u]_{H^s(\R^d)} := \iint_{\R^d \times \R^d} \frac{|u(x)-u(y)|^2}{|x-y|^{d+2s}} dxdy. \]
With the aid of Fourier transform,  Gagliardo seminorm can be written as 
\begin{equation}  \label{gali_norm_and_laplacian}
[u]_{H^s(\R^d)} = 2C_{d,s}^{-1} || (-\Delta)^{\frac{s}{2}} u||^2_{L^2(\R^d)} , \end{equation}
where $C_{d,s}$ denotes the normalization factor in the fractional Laplacian \cite[Proposition 3.6]{di2012hitchhiker}. 
\begin{definition} We say that $u \in H^s(\R^d)$ if 
\[ ||u||^2_{H^s(\R^d)} := ||u||^2_{L^2(\R^d)} + [u]_{H^s(\R^d)} < \infty. \]
Also, given a bounded and open set $\Om$ in $ \R^d$, we say that $u \in H^s_0(\Om)$ if $u = 0$ \text{ a.e. in } $\Om^c$ and $||u||_{H^s_0(\Om)} := ||u||_{H^s(\R^d)} < \infty$.
\end{definition}

We state the following Sobolev embedding theorem \cite[Theorem 7.1]{di2012hitchhiker}.

    \begin{theorem}[Sobolev embedding]
    Let $\Om$ be a bounded and open  set in $\R^d$ with a   Lipschitz boundary. Suppose that $\mathcal{F}\subset L^2(\Om)$ satisfies
\[ \sup_{f \in \mathcal{F}} ||f||_{L^2(\Om)} < +\infty \quad \textup{and}\quad  \sup_{f \in \mathcal{F}} [f]_{H^s(\R^d)} < +\infty. \] 
Then, $\mathcal{F}$ is pre-compact in $L^2(\Om)$.
\end{theorem}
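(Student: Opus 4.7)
The plan is to deduce pre-compactness in $L^2(\Omega)$ from the Riesz--Fr\'echet--Kolmogorov compactness criterion, which states that a bounded subset of $L^2(\R^d)$ is pre-compact if and only if it is uniformly $L^2$-equicontinuous under translations and tight. Since $\Omega$ is bounded and we are testing $L^2(\Omega)$-convergence, tightness of $\{f\chi_\Omega: f\in\mathcal{F}\}$ in $L^2(\R^d)$ is automatic. Thus everything reduces to exhibiting a uniform modulus of continuity for translations.

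The key step is the estimate
\[
\|f(\cdot+h)-f\|_{L^2(\R^d)}^{2}\;\le\; C\,|h|^{2s}\,[f]_{H^{s}(\R^d)}^{2},\qquad h\in\R^d,
\]
which I would prove by Fourier analysis. By Plancherel and the identity \eqref{gali_norm_and_laplacian} for the Gagliardo seminorm,
\[
[f]_{H^{s}(\R^d)}^{2}=c_{d,s}\int_{\R^d}|\xi|^{2s}|\hat f(\xi)|^{2}\,d\xi,\qquad \|f(\cdot+h)-f\|_{L^2(\R^d)}^{2}=\int_{\R^d}\bigl|e^{ih\cdot\xi}-1\bigr|^{2}|\hat f(\xi)|^{2}\,d\xi.
\]
The elementary bound $|e^{ih\cdot\xi}-1|^{2}\le C\min(1,|h|^{2}|\xi|^{2})$ combined with the crude interpolation $\min(1,r^{2})\le r^{2s}$ for all $r\ge 0$ (split the cases $|h||\xi|\le 1$, giving $|h|^{2}|\xi|^{2}\le |h|^{2s}|\xi|^{2s}$ since $2-2s\ge 0$, and $|h||\xi|>1$, giving $1\le |h|^{2s}|\xi|^{2s}$) yields $|e^{ih\cdot\xi}-1|^{2}\le C|h|^{2s}|\xi|^{2s}$ pointwise. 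Plugging this into the Fourier representation gives the claimed translation estimate.

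With the uniform upper bound $\sup_{f\in\mathcal{F}}[f]_{H^{s}(\R^d)}<\infty$, the previous estimate immediately produces a modulus of continuity $\omega(|h|)=C|h|^{s}$ valid for every $f\in\mathcal{F}$, so $\sup_{f\in\mathcal{F}}\|f(\cdot+h)-f\|_{L^{2}(\Omega)}\to 0$ as $|h|\to 0$. Combined with the given uniform $L^{2}(\Omega)$-bound and the automatic tightness from boundedness of $\Omega$, the Riesz--Fr\'echet--Kolmogorov theorem delivers pre-compactness of $\mathcal{F}$ in $L^{2}(\Omega)$. No serious obstacle is anticipated: the Lipschitz regularity of $\partial\Omega$ is not even invoked in this route, since the hypothesis is phrased in terms of $[f]_{H^{s}(\R^d)}$ rather than an intrinsic Gagliardo seminorm on $\Omega$ (Lipschitz regularity would only enter if one needed an extension operator $H^{s}(\Omega)\to H^{s}(\R^d)$); the only genuinely quantitative input is the Fourier-side inequality on $|e^{ih\cdot\xi}-1|^{2}$, which is elementary.
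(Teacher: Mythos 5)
Your proof is correct. Note that the paper does not actually prove this statement: it is quoted verbatim with a citation to \cite[Theorem 7.1]{di2012hitchhiker}, whose proof runs along entirely different lines --- one covers $\Om$ by small cubes, replaces $f$ by its piecewise-constant average over the cubes, bounds the approximation error by the Gagliardo seminorm, and concludes total boundedness because the approximants live in a finite-dimensional space; that argument works for general $W^{s,p}$ with the \emph{intrinsic} seminorm on $\Om$, which is exactly where the Lipschitz (extension-domain) hypothesis enters. Your Fourier/Riesz--Fr\'echet--Kolmogorov route is cleaner but is tied to $p=2$ and to the \emph{global} seminorm $[f]_{H^s(\R^d)}$; given that the hypothesis here is indeed phrased with the global seminorm, your observation that the Lipschitz boundary becomes superfluous is accurate, and all the quantitative steps (Plancherel, $|e^{ih\cdot\xi}-1|^2\le C\min(1,|h|^2|\xi|^2)\le C|h|^{2s}|\xi|^{2s}$, and automatic tightness from the boundedness of $\Om$) check out. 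The one point you should make explicit is that each $f\in\mathcal{F}$ must be understood as a function on all of $\R^d$ (in the paper's applications, the zero extension of an element of $H^s_0(B)$) for $[f]_{H^s(\R^d)}$ and the Plancherel identity to make sense; with that convention, pre-compactness of the extensions in $L^2(\R^d)$ restricts to pre-compactness in $L^2(\Om)$, and the argument is complete.
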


We introduce some relationships between the Gagliardo seminorm and the fractional Laplacian (see for instance \cite[Proposition 9]{servadei2013variational}):
\begin{proposition}[Poincare's Inequality]  \label{poincare_ineq} 
 Let $\Om$ be a bounded and open  set in $\R^d$ with a Lipschitz boundary. Then, there is a constant $C>0$ such that for any $u \in H^s_0(\Om)$, \[ ||u||_{L^2(\Om)}^2 \leq C[u]_{H^s(\R^d)}. \]
 The optimal constant $C$ above is given by $C_{d,s}/(2\lambda)$, where $C_{d,s}$ is the normalization constant in \eqref{gali_norm_and_laplacian} and $\lambda$ is the principal eigenvalue of the Dirichlet problem
\begin{equation}
	\begin{cases} (-\Delta)^s u = \lambda u &\text{ in } \Om, \\ u = 0 &\text{ on } \Om^c. \end{cases}
\end{equation} 
\end{proposition}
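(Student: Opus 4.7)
The plan is to prove the inequality by a standard compactness-and-contradiction argument, and then identify the sharp constant via the variational characterization of the principal Dirichlet eigenvalue $\lambda$ of $(-\Delta)^s$ on $\Omega$. For the inequality itself, I would argue by contradiction. If no such $C$ works, there is a sequence $\{u_n\} \subset H^s_0(\Omega)$ with $\|u_n\|_{L^2(\Omega)}^2 > n\,[u_n]_{H^s(\R^d)}$. Normalizing $\|u_n\|_{L^2(\Omega)} = 1$ forces $[u_n]_{H^s(\R^d)} \to 0$, so $\{u_n\}$ is bounded in $H^s_0(\Omega)$. The Sobolev embedding theorem stated just above then yields a subsequence converging in $L^2(\Omega)$ to some $u$ with $\|u\|_{L^2(\Omega)} = 1$. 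Fatou's lemma applied to the Gagliardo double integral gives $[u]_{H^s(\R^d)} \leq \liminf_n [u_n]_{H^s(\R^d)} = 0$, so $u$ is constant a.e.\ on $\R^d$. Since $u_n \equiv 0$ on $\Omega^c$ for every $n$ and this vanishing passes to the $L^2$ limit, the only possibility is $u \equiv 0$ on $\R^d$, contradicting $\|u\|_{L^2(\Omega)} = 1$.

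For the sharp constant, I would use the identity \eqref{gali_norm_and_laplacian} to recast the inequality as
\[
\|u\|_{L^2(\Omega)}^2 \leq \frac{2C}{C_{d,s}}\, \|(-\Delta)^{s/2} u\|_{L^2(\R^d)}^2,
\]
so that the optimal $C$ equals $C_{d,s}/(2\lambda_*)$ where
\[
\lambda_* := \inf_{u \in H^s_0(\Omega)\setminus\{0\}} \frac{\|(-\Delta)^{s/2} u\|_{L^2(\R^d)}^2}{\|u\|_{L^2(\Omega)}^2}.
\]
It remains to show $\lambda_* = \lambda$. The direct method of the calculus of variations produces a minimizer $u_* \in H^s_0(\Omega)$: take a minimizing sequence normalized in $L^2$, extract an $L^2$-convergent subsequence via the Sobolev embedding above, and use lower semi-continuity of the Dirichlet form under $L^2$ convergence (Fatou again) to see the limit attains $\lambda_*$. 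The first-order condition for the Rayleigh quotient tested against $H^s_0(\Omega)$ functions, together with the symmetric fractional integration-by-parts formula, yields $(-\Delta)^s u_* = \lambda_* u_*$ weakly in $\Omega$ with $u_* \equiv 0$ on $\Omega^c$, so $\lambda_*$ is a Dirichlet eigenvalue.

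The main subtlety, requiring the most care, is verifying that $\lambda_*$ is the \emph{principal} eigenvalue rather than some larger one. This would be handled by the pointwise inequality $\bigl|\,|u(x)|-|u(y)|\,\bigr| \leq |u(x)-u(y)|$, which gives $[\,|u|\,]_{H^s(\R^d)} \leq [u]_{H^s(\R^d)}$ while preserving the $L^2$ norm: a minimizer may thus be taken nonnegative, and the strong maximum principle for $(-\Delta)^s$ on bounded domains then forces $u_* > 0$ in $\Omega$. Since $u_*$ cannot be orthogonal to any other sign-definite eigenfunction, this identifies $\lambda_*$ with the principal eigenvalue $\lambda$, completing the identification of the sharp constant.
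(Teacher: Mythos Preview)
Your proof is correct, but note that the paper does not actually prove this proposition: it simply cites \cite[Proposition 9]{servadei2013variational} and states the result. So there is no ``paper's own proof'' to compare against.

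Your compactness-and-contradiction argument for existence of the constant is the standard one and works as written; the one point worth being explicit about is that after extracting an $L^2(\Omega)$-convergent subsequence you should extend the limit $u$ by zero on $\Omega^c$, so that $u_n \to u$ in $L^2(\R^d)$ (trivially, since both vanish on $\Omega^c$) and then Fatou on the Gagliardo double integral applies on all of $\R^d \times \R^d$. Your identification of the sharp constant via the Rayleigh quotient is also fine. In fact the final step is slightly simpler than you make it: since every eigenvalue equals the Rayleigh quotient of its eigenfunction, $\lambda_*$ is automatically the smallest eigenvalue once you have shown it is attained, so $\lambda_* = \lambda$ without needing the positivity/orthogonality argument. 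That said, the sign-definiteness argument you give is correct and handles the alternative definition of ``principal'' as ``having a positive eigenfunction.''
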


  In particular, \eqref{gali_norm_and_laplacian} together with Poincar\'e's inequality imply that for any $u \in H_0^s(\Om)$, the following fractional Poincar\'e inequality  holds: 
  \begin{align}  \label{pi}
      ||u||_{L^2(\Om)}^2 \leq \frac{1}{\lambda} || (-\Delta)^{\frac{s}{2}} u||^2_{L^2(\R^d)}. 
  \end{align}
 This immediately implies that the norm $||u||_{H^s_0(B)} := ||(-\Delta)^{\frac{s}{2}} u||_{L^2(\R^d)}$ is equivalent to the usual $H^s(\R^d)$ norm.

\subsection{Viscosity Solutions with respect to  $(-\Delta)^s$} \label{visc_optimal}

In this section, we review some crucial properties of the fractional Laplacian and viscosity solutions.

\begin{definition}[Viscosity solution]
  Let $K>0$ be a constant and  $\Om$ be a bounded and open  set in $\R^d$.
  We say that a lower semi-continuous function $\psi$  is a \emph{viscosity
super solution} to the equation $(-\Delta)^s \psi  =  -K$  in $\Om$ if for any $x_0\in \Om$ and $\varphi \in C^2_b(\R^d)$  such that $\psi  - \varphi$
has a global minimum at $x_0$,
\begin{align*}
    (-\Delta)^s \varphi(x_0) \geq -K.
\end{align*}
We often say that $\psi$ is  a viscosity solution to  $(-\Delta)^s u \geq -K$ if
 $\psi$ is a viscosity super solution to  $(-\Delta)^s u = -K$.
\end{definition}

We show that viscosity solutions to $(-\Delta)^s u \geq -K$ are in fact also distributional solutions. 

\begin{lemma}
Let $K>0$ be a constant and  $\Om$ be a bounded and open  set in $\R^d$.
Assume that  $(-\Delta)^s  u \geq -K$ in $\Om$ in the viscosity sense, $u(x) = 0$ for $x \notin \Om$, and $u$ is continuous. Then, for any non-negative function $\varphi \in C^{\infty}_c(\Om)$,
\[ \int_{\R^d} u \cdot (-\Delta)^s \varphi \geq -K \int_{\R^d} \varphi.  \]  \label{visc_dist}
\end{lemma}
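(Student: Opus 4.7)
The plan is to prove the lemma via the standard passage from viscosity to distributional supersolutions, using inf-convolution regularization. First, I would extend $u$ by zero outside $\Omega$ (obtaining a bounded continuous function on $\R^d$) and introduce
$$u_\epsilon(x) := \inf_{y \in \R^d}\Big\{ u(y) + \tfrac{1}{\epsilon}|x-y|^2 \Big\}.$$
Standard properties of this inf-convolution give that $u_\epsilon$ is continuous, satisfies $\|u_\epsilon\|_{L^\infty(\R^d)} \leq \|u\|_{L^\infty(\R^d)}$, is semi-concave (the map $x \mapsto u_\epsilon(x) - |x|^2/\epsilon$ is concave), and converges to $u$ locally uniformly as $\epsilon \to 0^+$. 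On a shrunken open set $\Omega_\epsilon \subset\subset \Omega$ with $\Omega_\epsilon \uparrow \Omega$, a standard computation (carefully treating the nonlocal tail) shows that $u_\epsilon$ remains a viscosity supersolution of $(-\Delta)^s u_\epsilon \geq -K$ in $\Omega_\epsilon$.

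By Alexandrov's theorem applied to the semi-concave $u_\epsilon$, a pointwise second-order Taylor expansion exists at a.e. $x_0 \in \Omega_\epsilon$, and the principal-value integral defining $(-\Delta)^s u_\epsilon(x_0)$ converges absolutely at such $x_0$ (the tail is controlled by $\|u_\epsilon\|_{L^\infty}$ and the integrability of $|x_0-y|^{-d-2s}$ far from $x_0$). At each such $x_0$, I would construct a global test function $\varphi \in C^2_b(\R^d)$ with $\varphi \leq u_\epsilon$ and $\varphi(x_0)=u_\epsilon(x_0)$ by using the Taylor polynomial (slightly shifted downward) inside a small ball about $x_0$, then gluing it smoothly to $u_\epsilon$ itself outside; this is possible precisely because $u_\epsilon$ is bounded and semi-concave. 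Applying the viscosity condition to such $\varphi$ and letting the ball shrink (using the local Taylor expansion and dominated convergence on the tail) yields the pointwise inequality $(-\Delta)^s u_\epsilon(x_0) \geq -K$ a.e.

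Once this pointwise bound is in hand, the distributional inequality for $u_\epsilon$ is immediate: for any non-negative $\varphi \in C_c^\infty(\Omega_\epsilon)$, Fubini (using $u_\epsilon \in L^\infty(\R^d)$ and $(-\Delta)^s \varphi \in L^1(\R^d)$, since $(-\Delta)^s \varphi$ decays like $|x|^{-d-2s}$ as $\varphi$ has compact support) justifies
$$\int_{\R^d} u_\epsilon \cdot (-\Delta)^s \varphi\,dx = \int_{\R^d} (-\Delta)^s u_\epsilon \cdot \varphi\,dx \geq -K \int_{\R^d} \varphi\,dx.$$
Passing to the limit then yields the lemma. For any $\varphi \in C_c^\infty(\Omega)$, its support lies inside $\Omega_\epsilon$ for all sufficiently small $\epsilon$. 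Since $u_\epsilon \to u$ uniformly on $\mathrm{supp}(\varphi)$ and $\|u_\epsilon\|_{L^\infty} \leq \|u\|_{L^\infty}$ globally, dominated convergence against the integrable $(-\Delta)^s \varphi$ passes the inequality from $u_\epsilon$ to $u$.

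The main obstacle is the step establishing the pointwise a.e. supersolution property at Alexandrov points of $u_\epsilon$. Unlike the local case where a touching Taylor polynomial is purely local, here the viscosity condition demands a genuinely global test function, so the nonlocal contribution $\int (u_\epsilon(y)-\varphi(y))|x_0-y|^{-d-2s}\,dy$ must be controlled when constructing $\varphi$ and when passing to the limit as the small ball shrinks. This is a well-documented technical point in the viscosity theory of fractional Laplacians, and it is also responsible for the need to first verify that inf-convolution preserves the viscosity supersolution property for nonlocal operators.
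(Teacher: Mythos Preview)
Your approach is essentially the same as the paper's: regularize by inf-convolution, obtain a pointwise supersolution inequality for $u_\epsilon$, integrate by parts, and pass to the limit using $u_\epsilon\to u$ locally uniformly together with the decay $(-\Delta)^s\varphi(y)=O(|y|^{-d-2s})$. The only difference is that the paper cites \cite[Proposition~5.4]{caffarelli2009regularity} directly for the statement $(-\Delta)^s u_\epsilon \geq -K - c_\epsilon$ in the classical sense (with $c_\epsilon\to 0$) and then mollifies, whereas you sketch that result by hand via Alexandrov points and correctly flag the global-test-function construction as the delicate step.
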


\begin{proof} 
 For $\e>0$, let $u_\e$ be the inf-convolution of $u$, i.e. 
\[ u_{\e}(x) = \inf_{y\in \R^d} \left\{ u(y) + \frac{|x-y|^2}{2\e} \right\}. \] Then, there is some constant $c_{\e}$ with $ \lim_{\e \rightarrow 0} c_{\e} =0$  such that   $(-\Delta)^s u_{\e} \geq -K  - c_{\e}$ in $\Om$ in the classical sense 
(see  \cite[Proposition 5.4]{caffarelli2009regularity}).  By using a mollification, we may further assume that $u_{\e}$ is smooth. Then, by integration by parts, for any non-negative $\varphi \in C^{\infty}_c(\Om)$,
\[  \int_{\R^d} u_{\e} \cdot (-\Delta)^s \varphi = \int_{\R^d} (-\Delta)^s u_{\e}\cdot \varphi  \geq (-K-c_{\e}) \int_{\R^d} \varphi.  \]  
The standard properties of the inf-convolution implies that  $u_{\e} \rightarrow u$ locally uniformly.
Since $(-\Delta)^s \varphi(y) = O( |y|^{-d-2s})$ as $y \rightarrow \infty$,  by sending  $\e \rightarrow 0$, we conclude the proof. 
\end{proof}

\section{Optimal Skorokhod Embeddings} \label{opt_sko_embed}

In this section, we consider the Skorokhod's embedding problem which involves the variational problem $\mathcal{P}_0(\mu,\nu)$ in \eqref{Var_Skoro}. The variational problem \eqref{Var_Skoro} is a linear minimization problem over a convex set, and thus $\mathcal{P}_0(\mu,\nu)$ admits a dual problem. 
The associated dual problem is given by
\begin{equation} \label{d0}
    \mathcal{D}_0(\mu,\nu) := 
    \sup_{\substack{\psi \in C_c(\R^d)\\ G=(G_t)_{t\ge 0} \in \mathcal{K}^{+}_{-\gamma} }}\left\{ \int_{\R^d} \psi(z) \nu(dz) - \mathbb{E}[G_0] :  G_t - \psi(X_t) \geq -\int_0^t L(s,X_s) ds \text{ for } t \leq \tau_r \right\},
\end{equation} where $\mathcal{K}^{+}_{-\gamma}$ denotes the set of progressively measurable continuous supermartingales with respect to our $2s$-stable process with $\gamma$-exponential growth  (see \cite{ghoussoub2019pde} for the local case).

Note that   in the above variational problem  \eqref{d0},  by the Doob-Meyer decomposition,  we may assume that $G=(G_t)_{t\ge 0}$ is a martingale. In addition, we may further assume that
\begin{align} \label{support}
    \psi(x) = 0, \quad \forall x \notin B,
\end{align} where we recall $B=B_r(0)$ with $r$ from Assumption \ref{assume1}. To see this, for $\psi \in C_c(\R^d)$, let $\hat{\psi}$ be a solution (the notions of weak solution and of viscosity solution agree, see \cite[Remark 2.11]{ros2014dirichlet} for explanations) to the equation
\[
    \begin{cases}
        (-\Delta)^s \hat{\psi} = 0 &\text{ in } B, \\ \hat{\psi}= \psi &\text{ on } B^c.
    \end{cases}
\] Note that $\hat{\psi} \in C^{\infty}(B) \cap C_b(\R^d)$ due to \cite{ros2014dirichlet}, and thus by Ito's formula 
$\hat{\psi}(X_{t\wedge \tau_r})$ is  martingale. Thus,  if $(\psi,(G_t)_{t\ge 0})$ is an admissible pair in \eqref{d0}, then  $(\psi-\hat{\psi}, (G_t-\hat{\psi}(X_t))_{t\ge 0})$ is also an  admissible pair with the same value on the action functional, since $\mu \leq_{s\text{-SH}}  \nu$. 
Hence, one can assume \eqref{support} in the variational problem \eqref{d0}. 

Therefore this implies that  by \cite[Theorem 2.3]{ghoussoub2021optimal}, for any $\mu$ and $\nu$ satisfying Assumption \ref{assume1} and $L$ satisfying Assumption \ref{assum2}, 
\begin{align} \label{eq0}
    \mathcal{P}_0(\mu,\nu) = \mathcal{D}_0(\mu,\nu).
\end{align}
We refer to \cite[Theorem 1.2]{beiglbock2017optimal} and  \cite[Theorem A.1]{ghoussoub2019pde} for the Brownian motion case.  
In addition,  by \cite[Theorem 5.7]{ghoussoub2021optimal}, there exists an optimizer for $\mathcal{D}_0(\mu,\nu)$. 

\medskip

Now, we apply \cite[Theorem 6.2]{ghoussoub2021optimal} to establish Theorem \ref{barrier_existence}.\\

\noindent \emph{Proof of Theorem \ref{barrier_existence}.}  
First, we verify the assumptions \textbf{(A0)}, \textbf{(A1)}, \textbf{(B0)}, \textbf{(B1)}, \textbf{(C0)}-\textbf{(C2)}, \textbf{(D0)} and \textbf{(D1)}  in \cite[Theorem 6.2]{ghoussoub2021optimal} (under the additional disjointness assumption  on $\mu$ and $\nu$ for a Type (II) cost).  

\medskip

 Let $S_t := \int_0^t L(s,X_s) ds$, then $S_t$ satisfies Assumption \textbf{(A0)}, due to the continuity of $t \mapsto S_t$ which is a consequence of $L \in L^{\infty}$. To see \textbf{(A1)}, note that our compact active region assumption  and non-negativity of $L$  imply $S_t \leq S_{\tau_r} \in L^1$ ($r$ is from Assumption \ref{assume1}), and thus $S$ is  uniformly integrable over
stopping times. 

To verify the condition \textbf{(B0)},  note that if $\psi \in C(\R^d)$  satisfies $\psi(x)=0$ for $x \notin B_r(0)$, then the r{\'e}duite $\hat{\psi}$ of $\psi$ on $B_r(0)$ is the viscosity solution  to
\[ \begin{cases} \min \{ (-\Delta)^s \hat{\psi}, \hat{\psi} - \psi \} = 0 &\text{ in } B_r(0), \\  \hat{\psi}(x)=\psi (x) = 0  &\text{ on } [B_r(0)]^c .
\end{cases} \] 

Also, this solution is known to be continuous \cite{fernandez2023integro}.  Note that the condition \textbf{(B0)} is about the continuity of $\hat{\psi}$ for $\psi \in C_b(\R^d)$, but from our above argument of reducing the optimization set of $\mathcal{D}_0(\mu,\nu)$ to have $\psi(x)=0$ for $x \notin B_r(0)$, we only need to check \textbf{(B0)} for $\psi \in C_b(\R^d)$ with $\psi(x)=0$ for $x \notin B_r(0)$. Next, as $L \geq 0$, $S_t$ is a submartingale,  and thus \textbf{(B1)} holds. 

Assumption \textbf{(C0)} and \textbf{(C1)} are satisfied due to Proposition \ref{poincare_ineq} and  Lemma \ref{visc_dist} respectively. Also \textbf{(C2)} follows from the condition $L \in L^{\infty}$.

Finally Assumptions \textbf{(D0)} and \textbf{(D1)} are shown to be satisfied in \cite{ghoussoub2021optimal} (if further $\mu$ and $\nu$ are disjoint in Type (II) case). This was proved in  the example after the definition of \textbf{(D0)} and \textbf{(D1)} in \cite{ghoussoub2021optimal}. 

\medskip

Now with these assumptions having been verified, one can apply \cite[Theorem 6.2]{ghoussoub2021optimal} to obtain for costs of Type (I) that the optimal stopping time $\tau^*$ is given by 
\begin{equation} \label{optimal_hitting_time_form} \tau^* = \inf \{t  \ge 0: \varphi(t,X_t) = \psi(X_t)  \} \leq \tau_r , \end{equation} where $\psi$ denotes the  optimizer in a dual problem \eqref{d0}, which is bounded and lower semi-continuous that is zero outside $B_r(0)$, and 
\begin{equation} \varphi(t,x) := \sup_{ \tau \in \mathcal{S}, \tau \geq t } \mathbb{E} \left[ \psi(X_{\tau}^{t,x}) - \int_t^{\tau} L(a,X^{t,x}_a) da \right], \label{H_psi} \end{equation} where $X^{t,x} = (X_a^{t,x})_{a \ge t}$ denotes the $2s$-stable process started from $x$ at time $t$. 

We verify that  $ R_1:=\{ ( t,x)\in \R^+ \times \R^d : \varphi(t,x) = \psi(x) \}$ is a forward barrier in Type (I) case.
Since $L$ is strictly increasing in time, the dynamic programming principle \cite[Proposition 4.2]{ghoussoub2019pde} implies that $\varphi$ is non-increasing in time. In addition,  by taking $\tau=t$ in  \eqref{H_psi}, we have $\varphi(t,x) \geq \psi(x)$.
Therefore, it follows that if $\varphi(t,x) = \psi(x)$, then $\varphi(t+\delta,x) = \psi(x)$  for any $\delta > 0$, that is,  $R_1$ is a forward barrier. In other words,
\[ \tau^* = \inf\{t  \ge 0: t \geq s_1(X_t) \}, \]
where $s_1(x) := \inf\{ t \geq 0 : \varphi(t,x) = \psi(x) \}.$ \\

One can argue similarly for Type (II) case, with the only exception being that $\mu$ and $\nu$ are required to be disjoint to apply the result \cite[Theorem 6.2]{ghoussoub2021optimal}.  To achieve this, consider instead $\mathcal{P}_0(\tilde{\mu},\tilde{\nu})$ with $\tilde{\mu} := \mu - \mu_0$ and $\tilde{\nu} := \nu - \mu_0$, where $\mu_0$ is the shared mass between $\mu$ and $\nu$. It follows from optimality that if $\tau$ is the optimal stopping time for $\mathcal{P}_0(\mu,\nu)$, {then on $\{\tau>0\}$ we have that} $\tau$ is the optimal stopping time for $\mathcal{P}_0(\tilde{\mu},\tilde{\nu})$. $\qed$ \\    

Next, we verify that the stopped particles are inside the barrier. 

\begin{lemma} \label{stop_inside_barrier} Let $\tau$ be the optimizer of  $\mathcal{P}_0(\mu,\nu)$ in \eqref{Var_Skoro} (assume additionally that $\tau>0$ a.s. for Type (II)). Then if $R$ denotes the associated barrier from Theorem \ref{barrier_existence}, then   $(\tau,X_{\tau}) \in R$ a.s. In other words, denoting by   $s$ the associated barrier function, we have $\tau \geq s(X_{\tau})$ for Type (I) and $\tau \leq s(X_{\tau})$ for Type (II).
\end{lemma}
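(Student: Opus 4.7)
The plan is to use strong duality between $\mathcal{P}_0(\mu,\nu)$ and $\mathcal{D}_0(\mu,\nu)$ (equality \eqref{eq0}) together with the supermartingale structure of the value function $\varphi$ from \eqref{H_psi}, to show that the quantity $\mathbb{E}[\varphi(\tau,X_\tau)-\psi(X_\tau)]$ vanishes at the optimizer $\tau$. Combined with the pointwise inequality $\varphi(t,x)\ge\psi(x)$ (obtained by taking $\sigma=t$ in \eqref{H_psi}), this forces $\varphi(\tau,X_\tau)=\psi(X_\tau)$ almost surely, which is exactly $(\tau,X_\tau)\in R$ by the definition $R=\{(t,x):\varphi(t,x)=\psi(x)\}$ recorded in the proof of Theorem \ref{barrier_existence}.

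Concretely, for Type (I): by the dynamic programming principle for $\varphi$, the process $M_t:=\varphi(t,X_t)+\int_0^t L(a,X_a)\,da$ is a supermartingale. Optional sampling at $\tau\le\tau_r$ is justified by the exponential moments of $\tau_r$ (Remark \ref{exp_moments_exit_time}) together with the boundedness of $\psi$ and $L$, so integrating against $\mu$ yields
\[
\mathbb{E}[\varphi(\tau,X_\tau)]+\mathcal{C}(\tau)\le \int \varphi(0,\cdot)\,d\mu.
\]
Strong duality combined with the identification $\mathcal{D}_0(\mu,\nu)=\int \psi\,d\nu-\int \varphi(0,\cdot)\,d\mu$ (valid because the dual-optimal martingale $G$ in \eqref{d0} equals the Doob--Meyer martingale part of $M$, so $\mathbb{E}[G_0]=\int\varphi(0,\cdot)\,d\mu$) gives $\mathcal{C}(\tau)=\int \psi\,d\nu-\int\varphi(0,\cdot)\,d\mu$. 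Substituting and using $\mathbb{E}[\psi(X_\tau)]=\int\psi\,d\nu$ (from $X_\tau\sim\nu$), we obtain $\mathbb{E}[\varphi(\tau,X_\tau)-\psi(X_\tau)]\le 0$; since the integrand is non-negative, $\varphi(\tau,X_\tau)=\psi(X_\tau)$ a.s., i.e.\ $\tau\ge s(X_\tau)$.

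For Type (II), one first reduces to the disjoint case via $\widetilde\mu:=\mu-\mu\wedge\nu$ and $\widetilde\nu:=\nu-\mu\wedge\nu$, exactly as in the proof of Theorem \ref{barrier_existence}. On $\{\tau>0\}$, which has probability one by hypothesis, $\tau$ optimizes $\mathcal{P}_0(\widetilde\mu,\widetilde\nu)$ with mutually singular marginals, and the three-step argument above applies verbatim. The only modification is that $L$ is strictly decreasing in $t$, so by \eqref{H_psi} and the dynamic programming principle $\varphi(t,x)$ is non-decreasing in $t$; consequently $R_2=\{(t,x):\varphi(t,x)=\psi(x)\}$ is a backward barrier, and the conclusion $\varphi(\tau,X_\tau)=\psi(X_\tau)$ translates to $(\tau,X_\tau)\in R_2$, i.e.\ $\tau\le s(X_\tau)$ on $\{\tau>0\}$.

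The main obstacle is the identification $\mathbb{E}[G_0]=\int\varphi(0,\cdot)\,d\mu$, equivalently the statement that $\mathcal{D}_0$ is attained by the Snell-envelope Doob--Meyer martingale of the reward process $\psi(X_\cdot)-\int_0^\cdot L\,da$. This is a standard consequence of strong duality for optimal Skorokhod embeddings (\cite[Theorem 2.3]{ghoussoub2021optimal}) under the filtration hypotheses of Section \ref{sec2} and the compact-active-region condition in Assumption \ref{assume1}, but it should be spelled out explicitly in the proof. Once in hand, the remainder of the argument is a direct optional-sampling computation, and the same reduction trick handles the Type (II) randomization at $\{\tau=0\}$ without additional work.
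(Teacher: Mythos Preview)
Your strategy is exactly right and is essentially what the paper does: it invokes \cite[Theorem 2.4]{ghoussoub2021optimal} directly, whose proof is precisely the duality/Snell-envelope argument you spell out (after the reduction to disjoint $\mu,\nu$ in Type (II)).

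There is, however, a sign slip in your optional-sampling step. From \eqref{H_psi} and the dynamic programming principle, the Snell envelope of the reward process $\psi(X_t)-\int_0^t L$ is
\[
Z_t \;=\; \varphi(t,X_t)\;-\;\int_0^t L(a,X_a)\,da,
\]
and it is $Z_t$ that is the supermartingale, not $M_t=\varphi(t,X_t)+\int_0^t L$. Optional sampling at $\tau\le\tau_r$ therefore gives
\[
\mathbb{E}[\varphi(\tau,X_\tau)]\;-\;\mathcal{C}(\tau)\;\le\;\int \varphi(0,\cdot)\,d\mu.
\]
Combining this with strong duality $\mathcal{C}(\tau)=\int\psi\,d\nu-\int\varphi(0,\cdot)\,d\mu$ (your identification $\mathbb{E}[G_0]=\int\varphi(0,\cdot)\,d\mu$ is correct, since the optimal $G$ for a fixed $\psi$ is the Snell envelope itself) yields $\mathbb{E}[\varphi(\tau,X_\tau)]\le\int\psi\,d\nu=\mathbb{E}[\psi(X_\tau)]$, and the pointwise inequality $\varphi\ge\psi$ finishes the argument exactly as you wrote. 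With your sign, substituting the duality identity produces $\mathbb{E}[\varphi(\tau,X_\tau)]\le 2\int\varphi(0,\cdot)\,d\mu-\int\psi\,d\nu$, which does not close. Once the sign is corrected, the rest of your proof (including the Type (II) reduction and the monotonicity of $\varphi$ in $t$) is fine.
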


\begin{proof} 
We only consider the  Type (II) case since the argument is similar for Type (I) case. As $\tau>0$ a.s., $\mu$ and $\nu$ are disjoint, {since otherwise the optimizer $\tau$ would randomize at $t=0$ to instantly stop the shared mass.} Thus the optimal stopping time $\tau$ is given by \eqref{optimal_hitting_time_form}. Then by \cite[Theorem 2.4]{ghoussoub2021optimal},  $\varphi(\tau,X_{\tau}) = \psi(X_{\tau})$ a.s., implying that $(\tau,X_{\tau}) \in R$ a.s.

\end{proof}

Finally a useful fact for this paper is the following uniqueness theorem for Eulerian variables for Type (I), which we will use to uniquely characterize the freezing Stefan problem. The statement is identical to \cite[Lemma 4.5]{ghoussoub2019pde} which was originally proven for the Brownian motion.

\begin{theorem}[Uniqueness of Eulerian Variables for Type (I)] \label{uniqueness} Assume that $R$ is a measurable forward barrier in $\R^+ \times \R^d$.
Then, for any $\mu \in L^{\infty}(\R^d)$ with a compact support, there is at most one solution  $(\eta,\rho)$ to \eqref{non_local_heat} satisfying $\eta(R)=0$ and $\rho(R)=1$.

\end{theorem}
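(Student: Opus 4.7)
Let $(\eta_1,\rho_1)$ and $(\eta_2,\rho_2)$ be two solutions sharing the initial datum $\mu$ and both satisfying $\eta_i(R)=0$ and $\rho_i(R)=1$. Setting $u:=\eta_1-\eta_2$ and $\sigma:=\rho_1-\rho_2$, the difference solves distributionally
\[
\partial_t u + (-\Delta)^s u = -\sigma, \qquad u(0,\cdot)=0,
\]
with $u=0$ a.e.\ on $R$ and $\sigma$ supported on $R$. Following the strategy of \cite[Lemma 4.5]{ghoussoub2019pde}, adapted to the non-local setting, the plan is a potential-flow energy argument in which the forward-barrier structure of $R$ forces the source coupling $\iint u\,d\sigma$ to vanish, giving $u\equiv 0$ and hence $\sigma\equiv 0$.

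The first step is to introduce the Riesz potential $W_i(t,x):=U_{\mu_i(t,\cdot)}(x)$ of the time-indexed total mass $\mu_i(t,\cdot):=\eta_i(t,\cdot)+\rho_i([0,t],\cdot)$. A short distributional computation using \eqref{non_local_heat} gives $\partial_t\mu_i+(-\Delta)^s\eta_i=0$, so $\partial_t W_i=-\eta_i$ and $(-\Delta)^s W_i=\mu_i$. The difference $D:=W_1-W_2$ then satisfies $D(0,\cdot)=0$, $\partial_t D=-u$, and
\[
\partial_t D + (-\Delta)^s D = \sigma([0,t],\cdot).
\]
Pairing this equation with $\partial_t D$, integrating over $\R^d$, and using the Plancherel-type identity $\int_{\R^d}(\partial_t D)(-\Delta)^s D\,dx = \tfrac12 \tfrac{d}{dt}\|(-\Delta)^{s/2}D(t,\cdot)\|_{L^2}^2$ produces the energy identity
\[
\int_{\R^d} u(t,x)^2\,dx + \tfrac12 \tfrac{d}{dt}\|(-\Delta)^{s/2} D(t,\cdot)\|_{L^2}^2 = -\int_{\R^d} u(t,x)\,\sigma([0,t],x)\,dx.
\]

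The structural heart of the argument is the vanishing of this right-hand side for a.e.\ $t$. Because $R$ is a forward barrier, $R^c$ is backward-closed, so for any $(t,x)\in R^c$ the entire backward slab $[0,t]\times\{x\}$ lies in $R^c$; since $\sigma$ is supported on $R$, the slice density $\sigma([0,t],x)$ from Definition \ref{disintegration} vanishes on $\{x:(t,x)\in R^c\}$. On the complementary set $\{x:(t,x)\in R\}$, $u(t,\cdot)$ vanishes a.e.\ by hypothesis. Hence the integrand is identically zero. Integrating in time, using $D(0,\cdot)=0$ and the non-negativity of both terms on the left, yields $\iint u^2\,dt\,dx=0$, so $u\equiv 0$; then $\sigma=-\partial_t u-(-\Delta)^s u=0$ distributionally.

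The main obstacle is the rigorous justification of this formal calculus---chiefly, the distributional identities $\partial_t W_i=-\eta_i$ and the energy equality, which require the parabolic regularity $\eta_i\in L^2_\gamma(\R^+;H^s_0(B))$ together with $\partial_t\eta_i$ in the dual space from Lemma \ref{regularity_lemma}, plus a careful interpretation of $\rho_i([0,t],\cdot)$ and $\sigma([0,t],\cdot)$ as slice densities in the sense of Definition \ref{disintegration} (their $L^\infty$ bounds follow from $\rho_i([0,t],\cdot)\leq \nu_i$). The final barrier-induced cancellation of the source term is the cleanest step once these prerequisites are in place.
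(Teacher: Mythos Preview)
Your proposal is correct and follows essentially the same potential-flow energy argument as the paper. The paper introduces the same Riesz potential $\varphi(t,\cdot)=N*(\eta(t,\cdot)+\rho([0,t),\cdot))$ of the difference (your $D$), derives the identical identities $\partial_t\varphi=-\eta$ and $(-\Delta)^s\varphi=\eta+\rho([0,t),\cdot)$, and obtains the same energy balance; the only cosmetic difference is that the paper tests the difference PDE for $\eta$ against $\varphi$ rather than testing the potential equation against $\partial_t D$, which produces the equivalent identity $2\iint\eta^2+\int|(-\Delta)^{s/2}\varphi_\infty|^2=0$, and it handles the $\iint\varphi\,d\rho$ term by noting $\varphi=\varphi_\infty$ on $R$ rather than your (equivalent) observation that $u\cdot\sigma([0,t],\cdot)\equiv 0$.
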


\begin{proof} 
The argument is based on \cite[Lemma 4.5]{ghoussoub2019pde} which deals with the case of Brownian motion. Let  $(\eta_i,\rho_i)$, $i=1,2$, be solutions to \eqref{non_local_heat}. Then, define $\eta := \eta_1 - \eta_2$ and $\rho := \rho_1 - \rho_2$.  Consider the potential function $\varphi$
\begin{equation}
     \varphi(t,\cdot) := N*(\eta(t,\cdot) + \rho([0,t),\cdot)),
\end{equation} where $N$ denotes the Riesz potential (see \eqref{riesz}). 
Then by similar arguments as in Lemma \ref{open_barrier},
\begin{equation}
    \begin{cases}
        \p_t \varphi = -\eta, \\ (-\Delta)^s \varphi(t,\cdot) = \eta(t,\cdot)  + \rho([0,t),\cdot) \label{C1_identity}.
    \end{cases}
\end{equation} 
By taking $\varphi$ as a test function in
 \eqref{non_local_heat} for  $i=1,2$, 
 \[ \iint \varphi \rho(dt,dx) = \iint  -2\eta^2 - \eta \cdot \rho([0,t),\cdot) .  \]
 Note that since $R$ is a measurable forward barrier, $\eta(t,x) \cdot \rho([0,t),x)  = 0$ for all $(t,x) \in \R^+ \times \R^d$.  
 
 Also by \eqref{C1_identity}, $\varphi(t,x) = \lim_{s \rightarrow \infty} \varphi(s,x) =: \varphi_{\infty}(x)$ on $R$. Hence, as $\rho$ is supported on $R$,
\[ \iint \varphi \rho(dt,dx) = \iint \varphi_{\infty} \rho(dt,dx) = \int \varphi_{\infty}(x) \rho([0,\infty),dx) =  \int | (-\Delta)^{s/2} \varphi_{\infty}(x)|^2, \] where in the last equality we used the fact $(-\Delta)^s \varphi_{\infty}(x) = \rho([0,\infty),x)$. By the above displays, 
 \[ \iint 2 \eta^2 + \int |(-\Delta)^{s/2} \varphi_{\infty}|^2 = 0. \] 
 Therefore we obtain $\eta_1 = \eta_2$, which yields $\rho_1 = \rho_2$ as well.
\vspace{0mm}
\end{proof}

\bibliographystyle{amsplain}
\bibliography{stefan_bib}

\end{document}